\documentclass[11pt]{article}
\usepackage{amssymb}
\usepackage{amsmath}
\usepackage{amsthm}
\usepackage{float}
\usepackage{fullpage}
\usepackage{graphicx}
\usepackage{mathtools}
\usepackage{mathrsfs}
\usepackage[round]{natbib}
\usepackage{subcaption}
\usepackage{tikz}
\usetikzlibrary{arrows.meta,calc,fit,positioning}
\usepackage[title]{appendix}
\usepackage[affil-it]{authblk}
\usepackage{hyperref}
\usepackage[capitalize,nameinlink]{cleveref}
\crefname{figure}{Figure}{Figures}
\Crefname{figure}{Figure}{Figures}

\newtheorem{definition}{Definition}
\newtheorem{theorem}{Theorem}
\newtheorem{lemma}{Lemma}
\newtheorem{assumption}{Assumption}

\title{Uncovering Symmetry Transfer in Large Language Models via Layer-Peeled Optimization}

\author[1]{Zhehang Du\thanks{Email: \texttt{duz@wharton.upenn.edu}.}}
\author[2]{Hangfeng He\thanks{Email: \texttt{hangfeng.he@rochester.edu}.}}
\author[1]{Weijie Su\thanks{Email: \texttt{suw@wharton.upenn.edu}.}}

\affil[1]{The Wharton School, University of Pennsylvania}
\affil[2]{University of Rochester}

\date\today

\begin{document}
\maketitle

\begin{abstract}
Large language models (LLMs) are pretrained by minimizing the cross-entropy loss for next-token prediction. In this paper, we study whether this optimization strategy can induce geometric structure in the learned model weights and context embeddings. We approach this problem by analyzing a constrained layer-peeled optimization program, which serves as a mathematically tractable surrogate for LLMs by treating the output projection matrix and last-layer context embeddings as optimization variables. Our analysis of this nonconvex optimization program demonstrates that symmetries in the target next-token distributions are transferred to the global minimizers of the layer-peeled model in a precise group-theoretic sense. Specifically, we prove that when the target tokens exhibit a cyclic-shift symmetry (such as the seven days of the week or the twelve months of the year), the optimal logit matrix is exactly circulant, and the Gram matrices of both the output projections and the context embeddings form circulant geometries as well. Next, for exchangeable target distributions invariant under the symmetric group and, more generally, under two-transitive group actions, we show that the global optimal output projection matrix forms a simplex equiangular tight frame, while the optimal logit matrix and context embeddings inherit the permutation symmetries present in the input data. A key technical step is to reduce the constrained nonconvex factorized problem to an explicit logit-level convex characterization for cyclic symmetry and to a symmetry-based lower bound for permutation symmetry, together with a sharp characterization of the optimal factorization. Finally, we empirically demonstrate that open-source LLMs naturally exhibit symmetries consistent with our theoretical predictions, despite being trained without any explicit regularization promoting such geometric structure.
\end{abstract}

\section{Introduction}

The past four years have witnessed rapid progress in large language models (LLMs)~\citep{brown2020language,achiam2023gpt4}, which are now ubiquitous in applications ranging from creative writing to coding to workflow automation. Remarkably, LLMs have recently begun to exhibit a level of mathematical reasoning capability that was beyond the imagination of nearly all researchers just one year ago~\citep{bubeck2025early}. These rapid advancements range from achieving gold medal level performance at the International Mathematical Olympiad (IMO) in July 2025~\citep{googledeepmind2025imo}, to resolving a long-standing optimization conjecture regarding Nesterov's accelerated gradient descent in October 2025~\citep{jang2025point}, and, just a few weeks ago, to cracking the celebrated Erd\H{o}s problem \#1196 in number theory~\citep{alexeev2026primitive}.

Interestingly, their widespread versatility and astounding capabilities contrast sharply with their simple training paradigm, which seeks merely to optimize the loss for predicting the next token~\citep{radford2018improving}. Despite this conceptual simplicity, unveiling the black box of LLMs has proven immensely difficult. This difficulty is driven largely by the massive scale of the models and the highly nonlinear nature of the Transformer architecture~\citep{vaswani2017attention}. From an optimization viewpoint, in particular, directly analyzing the full training dynamics of LLMs remains largely intractable with existing tools due to the nonconvexity of the loss landscape. In spite of these challenges, the optimization and machine learning communities are interested in understanding whether the optimized model parameters exhibit any geometric structures, and crucially, how such structures are transferred from the training data. The motivation to uncover these geometric phenomena is driven, on the one hand, by mathematical curiosity to explain the implicit biases of optimization algorithms, and on the other hand, by the practical desire to enhance training efficiency and improve model interpretability~\citep{soudry2018implicit,kaplan2020scaling,hoffmann2022training,elhage2021mathematical,zou2023representation}.

In this paper, we take an optimization perspective on the geometry of the output projection matrix---the last-layer linear map that produces vocabulary logits---and the context embeddings---the last-layer hidden states associated with given contexts. In particular, we investigate how these emergent structures explicitly relate to the underlying training data. This
question is closely related to the phenomenon of neural collapse~\citep{papyan2020prevalence}, which unveils surprising geometric simplicities in the last layer of optimized feedforward neural networks for classification. However, our problem arises in the context of next-token prediction within Transformer architectures, which presents several technical challenges. First, unlike the single-label classification setting, next-token prediction inherently deals with soft target distributions. Because a given context may have several plausible continuations, the target is naturally a smooth probability distribution over the vocabulary, which renders the data structure mathematically more complex to analyze. Moreover, the context embeddings themselves exhibit considerably more intricate geometric structures driven by these soft targets and the sequential nature of language modeling.

To overcome these challenges, we formulate a layer-peeled approach by constructing a mathematically tractable surrogate optimization program for LLM training, and analyze this surrogate to unveil the geometric symmetries of the last-layer representations. Our layer-peeled surrogate model builds upon~\citet{fang2021exploring} and extends it from classification with one-hot labels to LLM next-token prediction with soft target distributions. Building on this distributional viewpoint, we use smooth probability distributions to represent the target tokens in the training data~\citep{zhao2024implicit}. This formulation allows us to investigate how the token distributions, output projections, and context embeddings transform equivariantly under group actions formed by subgroups of the permutation group of the tokens. Formally, considering the problem of predicting among $m$ possible candidate tokens, let $W \in \mathbb{R}^{m \times d}$ denote the output projection matrix, let $H \in \mathbb{R}^{d \times n}$ collect the context embeddings across $n$ distinct contexts, and let $Y \in \mathbb{R}^{m \times n}$ denote the target probability distributions for these $n$ contexts. Our layer-peeled optimization program takes the form
\begin{equation}\label{eq:intro}
\min_{W,H}
\sum_{j=1}^n \mathcal L\left(\sigma\left((WH)_{:,j}\right),  Y_{:,j}\right)
\end{equation}
subject to $\|W\|_F^2 \le E_W$ and $\|H\|_F^2 \le E_H$, where $\|\cdot\|_F$ denotes the Frobenius norm, and $E_W$ and $E_H$ are scalar constants depending on weight decay, among other hyperparameters. In this nonconvex optimization program, $\mathcal L$ is the cross-entropy loss, $X_{:,j}$ denotes the $j$-th column of a matrix $X$, and \(\sigma(\boldsymbol z)_i=\exp(z_i)/\sum_{k=1}^{m}\exp(z_k)\) for \(i=1,\dots,m\) is the softmax function. For a matrix \(X\), \(\boldsymbol{\sigma}(X)\) denotes the matrix obtained by applying \(\sigma\) to each column of \(X\). In simple terms, this layer-peeled approach isolates the interaction between the last-layer weights and features and seeks to optimally approximate the target distributions $Y$ by minimizing the cross-entropy loss under norm constraints.

A central feature in~\cref{eq:intro} is that it captures \textit{local} and \textit{distributional} geometric structures inherent in LLM training. Rather than analyzing the entire token vocabulary, we focus on semantically coherent subsets of tokens whose internal structure can be described through group actions. This local geometric perspective stands in contrast to approaches that analyze the entire vocabulary simultaneously~\citep{zhao2024implicit}. For example, we consider tokens representing the days of the week---\texttt{Monday} through \texttt{Sunday}---which possess a natural cyclic structure. Our layer-peeled model in~\cref{eq:intro} adopts a \textit{local} viewpoint in the sense that it operates on a small collection (for example, \(m = 7\)) of conceptually related tokens. Furthermore, recognizing that the same prefix may admit different next-token completions,\footnote{For example, given the prompt ``\texttt{When will you finish the project if you start on Saturday? \textvisiblespace}'', the completion token might be \texttt{Sunday} with 50\% probability, \texttt{Monday} with 30\%, and \texttt{Tuesday} with 20\%.} we focus on a distributional setting in which each column \(Y_{:,j}\) is a probability vector aggregated across many contextual occurrences, rather than a one-hot label vector. Consequently, the number of distinct context distributions \(n\) in~\cref{eq:intro} is relatively small, thereby rendering a global optimality analysis mathematically tractable.

Despite the nonconvexity of its objective function, our analysis of the layer-peeled model establishes the following geometric phenomenon: if the training data, which is represented by the target distribution matrix $Y$, exhibits invariance under the action of a permutation group, then both the globally optimal output projections and the context embeddings precisely inherit this identical symmetry. More formally, we say that the training data \(Y\) has symmetry under a permutation group of degree $m$ if the unordered set of columns \(\{Y_{:,1}, \ldots, Y_{:,n}\}\) is identical to the set \(\{g\circ Y_{:,1}, \ldots, g\circ Y_{:,n}\}\) for any element $g$ in the group.

Specifically, we present the following two findings:

\vspace{0.5em}
\noindent{\bf Symmetry Transfer I: Cyclic-shift symmetry leads to circulant geometry.}
Consider prompts of the form ``\texttt{Starting on Monday, when will you finish the project? \textvisiblespace}''. By varying the context---replacing \texttt{Monday} with \texttt{Tuesday}, and so on up to \texttt{Sunday}---the target matrix $Y$ naturally exhibits a cyclic-shift symmetry. In this scenario, the number of distinct contexts is $n = m$, and any column of $Y$ can be obtained by applying a cyclic shift to the first column. For such locally symmetric training data, we prove that the globally optimal solutions $W^*$ and $H^*$ to the layer-peeled optimization program in~\cref{eq:intro} exhibit a circulant geometry. Specifically, the Gram matrices \(W^*(W^*)^\top\) and \((H^*)^\top H^*\) of the output projections and context embeddings, respectively, are exactly circulant matrices, where every column is a cyclic shift of the first column. Furthermore, the optimal logit matrix \(W^*H^*\) is circulant, implying that the learned logits depend entirely on relative positional differences modulo \(m\). This result illustrates that the cyclic geometry inherent in the data $Y$ is structurally transferred to the optimal model parameters $W^*$ and context embeddings $H^*$.

\vspace{0.5em}
\noindent{\bf Symmetry Transfer II: Permutation symmetry leads to a simplex equiangular tight frame (ETF).} 
Consider the prompt, ``\texttt{Alice, what is your favorite color? \textvisiblespace}''. Suppose the model must predict among $m = 3$ candidate colors---such as \texttt{red}, \texttt{green}, and \texttt{blue}---and assume these options are symmetrically exchangeable across a variety of contexts (e.g., \texttt{Bob}, \texttt{Charlie}, \texttt{David}, \texttt{Emily}, or \texttt{Frank} in addition to \texttt{Alice}). By considering all permutations, we obtain $n = m!$ ($n = 6$ in this case) distinct target distributions, yielding a target matrix $Y$ whose columns form an orbit that is invariant under the symmetric group of degree $m$. Given such a permutation-symmetric target matrix $Y$, we prove that this geometry is transferred to the global optimal solutions $W^*$ and $H^*$ of the nonconvex optimization program in~\cref{eq:intro}. First, we show that the $m$ rows of the optimal projection matrix $W^*$ form a simplex ETF; that is, the row vectors have an equal Euclidean norm, and every pair spans the identical, maximally possible angle. This structural finding effectively generalizes the neural collapse phenomenon~\citep{papyan2020prevalence} to the setting of next-token prediction under soft, distributional targets. Furthermore, the optimal logit matrix $W^* H^*$ has columns that form the exact orbit of a single vector under the action of the symmetric group, just as the columns of $Y$ do. The same group-theoretic orbit structure applies to the context embeddings $H^*$, up to an appropriate linear projection into an $m$-dimensional space.

\vspace{0.5em}

\begin{figure}[ht]
\centering
\[
\begin{tikzpicture}[
  >=Stealth,
  baseline=(current bounding box.center),
  every node/.style={inner sep=1pt, outer sep=0pt}
]

\node (Yi) at (0,0) {$Y_{:,i}$};
\node[anchor=base west] (commai) at ([xshift=0.04em]Yi.base east) {$,$};
\node[anchor=base west] (Hti) at ([xshift=0.04em]commai.base east) {$\widetilde H^*_{:,i}$};
\node[anchor=base east] (lpari) at ([xshift=-0.04em]Yi.base west) {$\bigl($};
\node[anchor=base west] (rpari) at ([xshift=0.04em]Hti.base east) {$\bigr)$};
\node[above=2.8em of Hti] (Hi) {$H^*_{:,i}$};
\node[fit=(lpari)(Yi)(commai)(Hti)(rpari), inner sep=1pt] (pairi) {};
\draw[->] (Hi.south) -- node[right] {$P_W^\top$} (Hti.north);

\begin{scope}[xshift=7.2cm]
\node (Yj) at (0,0) {$Y_{:,j}$};
\node[anchor=base west] (commaj) at ([xshift=0.04em]Yj.base east) {$,$};
\node[anchor=base west] (Htj) at ([xshift=0.04em]commaj.base east) {$\widetilde H^*_{:,j}$};
\node[anchor=base east] (lparj) at ([xshift=-0.04em]Yj.base west) {$\bigl($};
\node[anchor=base west] (rparj) at ([xshift=0.04em]Htj.base east) {$\bigr)$};
\node[above=2.8em of Htj] (Hj) {$H^*_{:,j}$};
\node[fit=(lparj)(Yj)(commaj)(Htj)(rparj), inner sep=1pt] (pairj) {};
\draw[->] (Hj.south) -- node[right] {$P_W^\top$} (Htj.north);
\end{scope}

\draw[->]
  ($(pairi.east)+(0.25,0.08)$)
  --
  node[above] {$g_jg_i^{-1} $}
  ($(pairj.west)+(-0.25,0.08)$);

\draw[->]
  ($(pairj.west)+(-0.25,-0.08)$)
  --
  node[below] {$g_ig_j^{-1} $}
  ($(pairi.east)+(0.25,-0.08)$);

\node at ($(pairi.south)!0.5!(pairj.south)+(0,-0.95)$)
  {$W^*\xrightarrow{\ \textstyle P_W\ }\widetilde W^*$};

\node at ($(pairi.south)!0.5!(pairj.south)+(0,-1.65)$)
  {$\widetilde W^*\bigl(g\circ(\cdot)\bigr)
  =
  g\circ\bigl(\widetilde W^*(\cdot)\bigr)
  \quad\text{for every group element }g$};

\end{tikzpicture}
\]
\caption{This diagram shows the common form behind both symmetry transfers~I and~II. Let
\(P_W:=W^{*\top}(W^*W^{*\top})^{\dagger/2}\in\mathbb R^{d\times m}\), where \((\cdot)^{\dagger/2}\) denotes the pseudoinverse square root, and define \(\widetilde H^*:=P_W^\top H^*\) and \(\widetilde W^*:=W^*P_W\). Multiplying by \(P_W^\top\) or \(P_W\) maps \(H^*\) and \(W^*\) from the \(d\)-dimensional space to the \(m\)-dimensional space, where the group action is defined. Let \(g_i\) and \(g_j\) be the group elements associated with columns \(i\) and \(j\). If \(Y_{:,i}=g_i\circ Y_{:,1}\), then \((g_jg_i^{-1})\circ\) maps \(Y_{:,i}\) to \(Y_{:,j}\), and the same action maps \(\widetilde H^*_{:,i}\) to \(\widetilde H^*_{:,j}\). Thus \(Y\) and \(\widetilde H^*\) share the same orbit structure. Moreover, \(\widetilde W^*\) is equivariant with respect to the group action: applying \(g\circ\) before \(\widetilde W^*\) gives the same result as applying \(g\circ\) after \(\widetilde W^*\), for every group element \(g\). In particular, commuting with any cyclic-shift action gives the circulant structure, and commuting with any permutation gives the simplex ETF structure.}
\end{figure}

These two symmetry transfer results are formally stated in~\cref{thm:single_block_cyclic} and in~\cref{thm:single_block_perm,thm:embedding_products_structure}, in~\cref{sec:single_block_cyclic,sec:single_block_perm}, respectively. Our proofs introduce several technical novelties to the optimization literature. These include leveraging logit symmetrization and Jensen's inequality to explicitly enforce orbit structures, and utilizing nuclear-norm bounds alongside active constraints to systematically eliminate nonsymmetric components from the global minimizers.

The significance of these symmetries---if they are indeed observed in empirical settings---extends beyond mathematical elegance. More importantly, they demonstrate that LLM weights self-organize into refined geometric configurations during gradient-based optimization, without the need for explicit regularization designed to promote such symmetries. Uncovering these optimal geometric configurations sheds light on how semantic information is structurally encoded within network weights, offering a connection between optimization and mechanistic interpretability for LLMs.

While these structural symmetry transfer phenomena are derived purely through the analysis of an idealized nonconvex optimization program, we find that open-source LLMs naturally exhibit these geometric symmetries in both their output projection matrices and context embeddings, as shown in~\cref{fig:teaser}. Our empirical evaluations employ carefully designed prompt templates and semantic word sets to naturally instantiate both cyclic and permutation symmetries, with details in~\cref{sec:exp}. 

\begin{figure}[ht]
  \centering
  \begin{subfigure}[t]{0.42\linewidth}
    \centering
    \includegraphics[width=\linewidth]{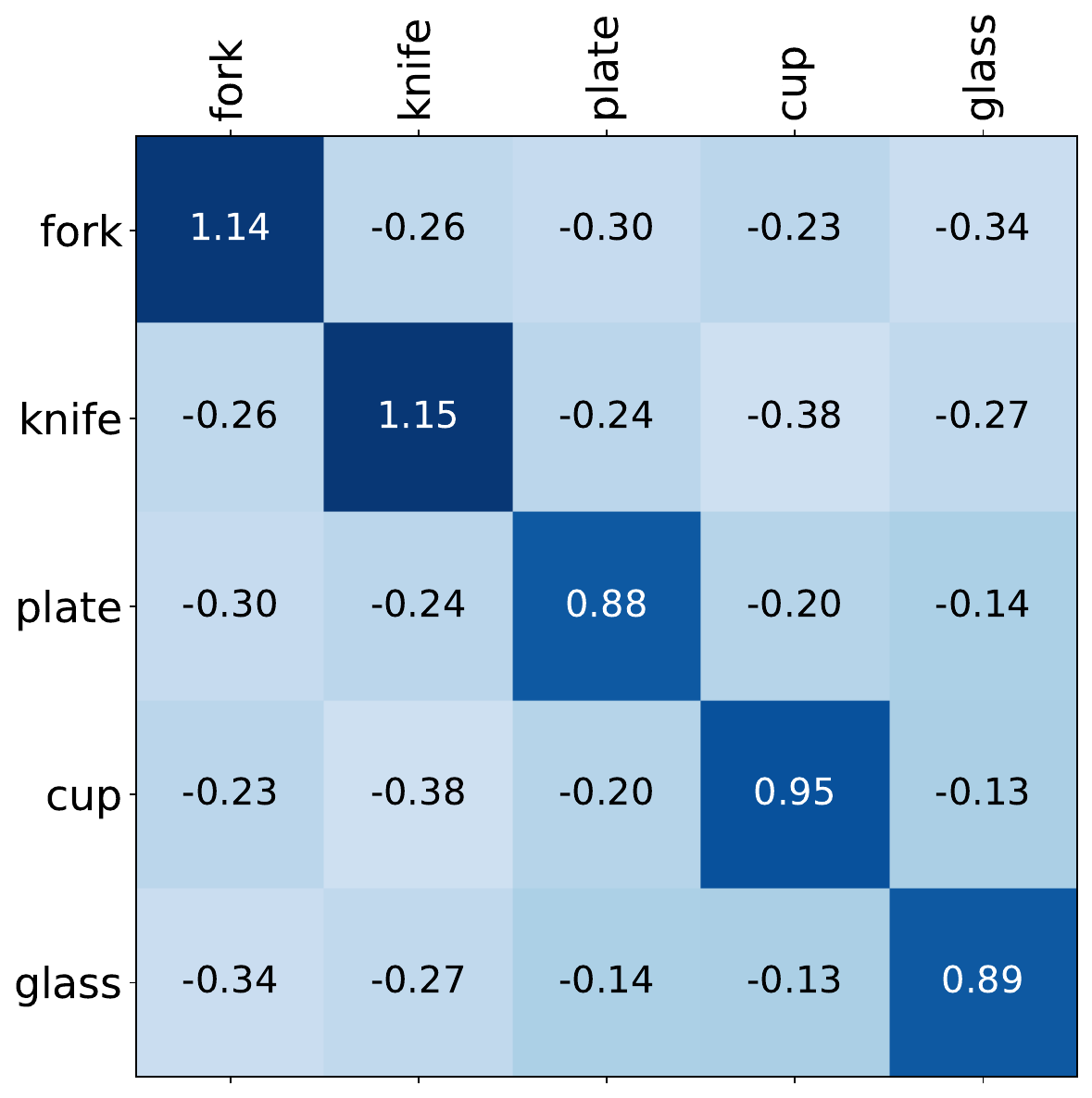}
  \end{subfigure}
  \begin{subfigure}[t]{0.49\linewidth}
    \centering
    \includegraphics[width=\linewidth]{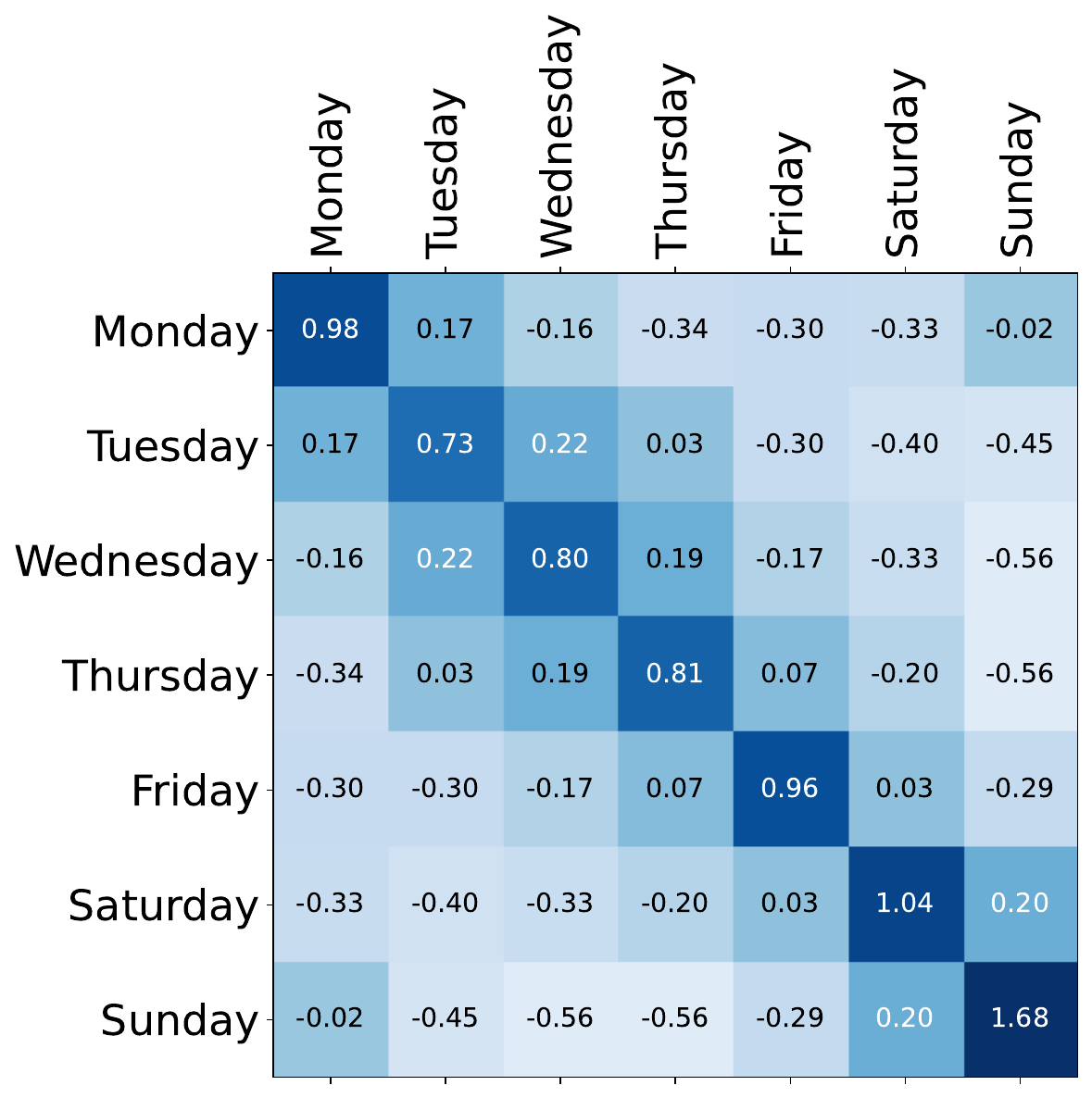}
  \end{subfigure}
  \caption{Normalized output projection Gram matrices for Mistral-7B-Instruct-v0.3~\citep{jiang2023mistral7b}.
  Left: full permutation symmetry implies a naturally occurring simplex ETF structure.
  Right: cyclic-shift symmetry implies a circulant structure. The corresponding next-token distribution matrices and context embedding Gram matrices are shown in~\cref{fig:perm-probs,fig:perm-gram,fig:ctx-cyclic-probs,fig:ctx-cyclic-gram}.}
  \label{fig:teaser}
\end{figure}

\subsection{Related Work}

Neural collapse~\citep{papyan2020prevalence} shows that class means and final-layer weights exhibit approximately a simplex ETF. Layer-peeled models~\citep{fang2021exploring}, also known as unconstrained feature models~\citep{zhu2021geometric,ji2021unconstrained,mixon2022neural}, analyze this geometry by treating the last-layer features and classifier weights as free optimization variables; this is motivated by the universal approximation property of overparameterized networks~\citep{hornik1989multilayer}. Since then, theory has developed in several ways: dynamics and landscape for cross-entropy~\citep{ji2021unconstrained}; for MSE the central-path picture and benign landscape of ETF minima~\citep{han2021neural,zhou2022optimization}; loss-design perspectives~\citep{zhou2022all,guo2025cross,fisher2024pushing}; robustness to data imbalance and long-tail structure~\citep{fang2021exploring,thrampoulidis2022imbalance,dang2023neural}; prevalence beyond standard classification~\citep{andriopoulos2024prevalence}; architectural/algorithmic regimes where neural collapse is provable~\citep{jacot2024wide}; persistence of collapse under low-rank bias~\citep{garrod2026the}, extended models~\citep{tirer2022extended}, and neural collapse emerging in intermediate layers~\citep{rangamani2023feature,he2023law}. Recent efforts extend the framework to soft-label targets~\citep{wu2024linguistic} and quantitative laws in next-token prediction in LLMs~\citep{he2024law}. Recent work also links next-token prediction to semantic embedding geometry via neural collapse-inspired analysis~\citep{zhao2025geometrysemantics}. In particular, prior studies model each context with a sparse probability vector over the next tokens and show that the learned logits decompose into a sparse-plus-low-rank component~\citep{zhao2024implicit,thrampoulidis2024implicit}.

Burer--Monteiro methods represent semidefinite programs through low-rank factors, reducing dimension while introducing nonconvexity~\citep{burer2003nonlinear,burer2005local}. Later work studies when such factorizations have benign landscapes or sharp rank requirements~\citep{boumal2016non,waldspurger2020rank}, with related results for matrix completion, matrix sensing, and low-rank recovery~\citep{sun2016guaranteed,ge2016matrix}. The proof also connects to nuclear-norm methods for low-rank recovery~\citep{recht2010guaranteed}. Because the language model output layer factorizes logits through hidden states and an output projection matrix, our formulation is also related to recent work studying low-rank structure directly at the level of LM logit matrices~\citep{golowich2025sequences} and identifying the LM head as an optimization bottleneck for backpropagated gradients~\citep{godey2026lmhead}. Our use of group symmetry is related to symmetry reduction in semidefinite and sums-of-squares optimization~\citep{gatermann2004symmetry,deklerk2010exploiting,deklerk2010qap}.

Foundational work on word representations spans latent semantic analysis~\citep{deerwester1990indexing}, neural language models~\citep{bengio2003neural}, and modern distributional embeddings~\citep{mikolov2013distributed,pennington2014glove,levy2014neural}, alongside theoretical accounts of isotropy and linear structure~\citep{arora2016latent}. ETFs arise in optimal packing and sensing~\citep{strohmer2003grassmannian}. A long line of research shows that when the data or task exhibits symmetries, constraining models to respect those symmetries yields structured, sample-efficient representations: permutation-invariant/equivariant set functions~\citep{zaheer2017deep}, group-equivariant convolutions~\citep{cohen2016group}, and cycle based and rotational parameterizations in knowledge graphs~\citep{sun2019rotate}. Recent work sharpens the geometric picture of LLM representations: Removing common components from word vectors strengthens linguistic regularities~\citep{mu2017all}; contextual embeddings are anisotropic~\citep{ethayarajh2019contextual}; Toy Models of Superposition explain how many features can be packed into fewer dimensions via interference~\citep{elhage2022toy}; correlations in data statistics can shape feature geometry, giving rise to semantic clusters and cyclic structures~\citep{prieto2026data}; Representation Engineering offers top-down tools for reading and controlling high-level features~\citep{zou2023representation}; knowledge editing~\citep{zhong2023mquake}; concept erasure~\citep{belrose2023leace}; identifying hallucination and assessing truthfulness from internal activations~\citep{burns2022discovering,azaria2023internal}; causal inner product~\citep{park2023linear}; categorical concepts as polytopes and hierarchical relations as geometric structure~\citep{park2025categorical}; linguistic collapse~\citep{wu2024linguistic}; translation symmetry in pairwise word co-occurrence statistics~\citep{karkada2026symmetry}; circular features for days of the week and months~\citep{engels2025not}; and representation manifolds~\citep{modell2025origins}.

Transformer circuits work views the residual stream as a shared communication space between model components~\citep{elhage2021mathematical}. Related methods decode intermediate states or project parameters through embedding space~\citep{belrose2023eliciting,dar2023analyzing}. Feedforward layers have also been interpreted as key-value memories and as mechanisms that promote concepts in vocabulary space~\citep{geva2021transformer,geva2022transformer}. Closest to our setting are mechanistic studies of ordered or algebraic structure: successor heads~\citep{gould2024successor}, GPT-2's greater-than behavior~\citep{hanna2023does}, modular addition models~\citep{nanda2023progress}, and group composition models~\citep{chughtai2023toy}. We analyze a layer-peeled optimization objective and show that group-orbit structure in the target distributions forces corresponding structure in the optimal last-layer geometry.

\subsection{Preliminaries}

\paragraph{Background on language models.}
Modern LLMs tokenize text into a sequence of tokens from a large vocabulary. In our analysis, we restrict to a local set of \(m\) candidate tokens and index them by \(\{1,\dots,m\}\).
An input embedding matrix converts each token index to a vector in \(\mathbb{R}^{d}\), which is combined with positional information and then fed into a stack of Transformer layers~\citep{vaswani2017attention} to produce a last-layer \emph{context embedding} \(\boldsymbol{h}\in\mathbb{R}^{d}\), which is used to predict the next token.
With the \emph{output projection} matrix \(W\) and bias \(\boldsymbol b\in\mathbb{R}^{m}\), the model forms logits \(\boldsymbol z=W\boldsymbol h+\boldsymbol b\in\mathbb{R}^{m}\), and the next-token probability for token index \(i\) is \(\sigma(\boldsymbol z)_i=\exp(z_i)\big/\sum_{k=1}^{m}\exp(z_k)\) for \(i=1,\dots,m\), from which the next token is sampled. Let each training sentence define a context indexed by \(j\).
For each context \(j\), the next-token target is a one-hot vector \(\boldsymbol{y}^{(j)}\in\{0,1\}^{m}\) with correct index \(i^*(j)\), and the logits are \(\boldsymbol{z}^{(j)}=W \boldsymbol{h}^{(j)}+\boldsymbol{b}\). The total loss is \(-\sum_{j=1}^{n}\log \sigma(\boldsymbol z^{(j)})_{i^*(j)}\), which sums the negative log-probability assigned to the correct next token in each context. Minimizing this total loss therefore encourages high probability on the correct token and is performed with gradient-based optimization on large text corpora. For simplicity of our analysis, we set \(\boldsymbol{b}=\boldsymbol{0}_{m}\).

\paragraph{Layer-peeled model.}
The classical layer-peeled model~\citep{fang2021exploring}, also known as the unconstrained feature model~\citep{mixon2022neural}, was introduced to study the geometry of deep classifiers in the terminal phase of training. The model isolates the final linear classifier from the rest of the network, and the last-layer feature produced by the network is treated as a decision variable for each sample. For \(n\) samples, we collect these features as \(H\in\mathbb{R}^{d\times n}\) (the context embedding matrix in our NTP setting), let \(W\in\mathbb{R}^{m\times d}\) be the classifier (the output projection matrix in our NTP setting), and let \(Y\in\{0,1\}^{m\times n}\) be the one-hot label matrix, with one target column per sample (the next token in our NTP setting). The layer-peeled optimization problem takes the form
\[
\min_{W,H} 
\sum_{j=1}^{n}
\mathcal L\left(\sigma\left((WH)_{:,j}\right),Y_{:,j}\right) \quad
\mathrm{s.t.}\quad \|W\|_F^2\le E_W,\quad
\|H\|_F^2\le E_H .
\]
Here \(E_W,E_H>0\) are norm constraints, and \(\mathcal L(\boldsymbol q,\boldsymbol p):=-\sum_{i=1}^{m}p_i\log q_i\) is the cross-entropy loss.

\section{Symmetry Transfer~I: Cyclic-Shift Symmetry}\label{sec:single_block_cyclic}

Full LLM training involves a vocabulary of size on the order of \(10^5\) and an enormous number of contexts. We instead restrict our analysis to a subset of \(m\) candidate tokens and a subset of \(n\) representative contexts. Although this simplifies the full optimization landscape, it captures the local geometry within the selected candidate tokens and context embeddings. This gives the single-block cyclic-shift and permutation subproblems studied in this section and~\cref{sec:single_block_perm}. In addition, \cref{sec:multi_block} considers the multi-block setting, which combines several such local subproblems by summing their losses, and the same symmetry transfer mechanisms extend to that setting.

The key difference from the classical one-hot setting in classification is that a context in language modeling may have several plausible next tokens across repeated occurrences. For example, for the context ``\texttt{A project starts on Monday and finishes on \textvisiblespace}'', over many occurrences the next token within the local candidate set of weekdays may be \(\texttt{Tuesday}\) in \(0.5\) of the cases, \(\texttt{Wednesday}\) in \(0.3\) of the cases, and \(\texttt{Thursday}\) in \(0.2\) of the cases. Thus, instead of assigning a single one-hot label to the context, we aggregate repeated next-token observations and use the resulting empirical distribution as the target column of \(Y\).

Specifically, suppose context \(j\) appears \(n_j\) times with observed next-token indices \(c^{(j,1)},\dots,c^{(j,n_j)}\in\{1,\dots,m\}\). We define the target column directly by
\[
Y_{:,j}
=
\frac{1}{n_j}
\left(
\sum_{r=1}^{n_j}\mathbf 1\{c^{(j,r)}=1\},
\dots,
\sum_{r=1}^{n_j}\mathbf 1\{c^{(j,r)}=m\}
\right)^\top.
\]
Since \(n_jY_{i,j}=\sum_{r=1}^{n_j}\mathbf 1\{c^{(j,r)}=i\}\), the total loss for context \(j\) is
\[
\sum_{r=1}^{n_j}
\mathcal L\left(\sigma\left((WH)_{:,j}\right),\boldsymbol e_{c^{(j,r)}}\right)
=
-\sum_{i=1}^{m} n_jY_{i,j}\log \sigma\left((WH)_{:,j}\right)_i
=
n_j
\mathcal L\left(\sigma\left((WH)_{:,j}\right),Y_{:,j}\right).
\]
Therefore, replacing one-hot labels by empirical next-token distributions preserves the same cross-entropy training objective after aggregating repeated occurrences of the same context.
For the example above, with the weekday tokens ordered as \((\texttt{Monday},\texttt{Tuesday},\dots,\texttt{Sunday})\), the context ``\texttt{A project starts on Monday and finishes on \textvisiblespace}'' may give \(Y_{:,1}=(0,0.5,0.3,0.2,0,0,0)^\top\), while the same context starting on \texttt{Tuesday} gives the cyclic shift \(Y_{:,2}=(0,0,0.5,0.3,0.2,0,0)^\top\).

Therefore, we first consider the unweighted layer-peeled objective (weighted variants can be viewed as a special case of the multi-block formulation in~\cref{sec:multi_block})
\begin{equation}
\label{eq:obj_function_constrained}
\min_{W,H} \mathcal J(W,H;Y)
:=
\sum_{j=1}^{n}
\mathcal L\left(\sigma\left((WH)_{:,j}\right),Y_{:,j}\right),
\quad
\mathrm{s.t.}\quad
\|W\|_F^2\le E_W,\quad
\|H\|_F^2\le E_H .
\end{equation}
This is an extension of the classical layer-peeled model, where the optimization variables and prediction rule are unchanged, while the target matrix \(Y\) contains empirical next-token distributions instead of one-hot labels. For simplicity of our analysis, we also impose the following active constraint assumption.
\begin{assumption}[Active Constraints]
\label{ass:active_constraint}
Assume \(E_W,E_H>0\). We assume that the constraints in~\cref{eq:obj_function_constrained} are active at the optimum. In other words, any optimal solution \((W^*,H^*)\) satisfies
\[
\|W^*\|_F^2=E_W
\quad\text{and}\quad
\|H^*\|_F^2=E_H .
\]
\end{assumption}

Many token sets are naturally ordered around a cycle, e.g., weekdays, months, or seasons. We model this by the one-step cyclic-shift permutation matrix \(\Pi\):
\begin{equation}
\label{eq:cyclic}
\Pi=\begin{bmatrix}
0 & 0 & \cdots & 0 & 1\\
1 & 0 & \cdots & 0 & 0\\
0 & 1 & \cdots & 0 & 0\\
\vdots & \vdots & \ddots & \vdots & \vdots\\
0 & 0 & \cdots & 1 & 0
\end{bmatrix},
\qquad
\Pi(y_1,\ldots,y_m)^\top=(y_m,y_1,\ldots,y_{m-1})^\top,
\end{equation}
so that the \(j\)-th column of \(Y\) is \(Y_{:,j}=\Pi^{j-1}\boldsymbol{y}\) for \(j=1,\ldots,m\).

\begin{definition}[Circulant Matrix]
A \(m\times m\) matrix \(A\) over \(\mathbb F\in\{\mathbb R,\mathbb C\}\) is called circulant with respect to \(\Pi\) if there exists a generating vector \(\boldsymbol a\in\mathbb F^m\) such that
\[
A_{:,j}=\Pi^{j-1}\boldsymbol a,\qquad j=1,\ldots,m.
\]
\end{definition}

\begin{theorem}[Optimal Embeddings for the Cyclic-Shift Group]
\label{thm:single_block_cyclic}
Let \(\boldsymbol{y}\) be a nonuniform target distribution, and let \(Y=[\boldsymbol{y},\Pi\boldsymbol{y},\ldots,\Pi^{m-1}\boldsymbol{y}]\) be the circulant target distribution matrix. For \(d \ge m\), under~\cref{ass:active_constraint}, any minimizer pair \((W^*,H^*)\) of the optimization problem~\cref{eq:obj_function_constrained} must satisfy the following properties:
The product logit matrix \(Z^*=W^*H^*\) is a circulant matrix with \(j\)-th column \(Z^*_{:,j}=\Pi^{j-1}\boldsymbol z^*\), where the generating vector \(\boldsymbol{z}^*  = (z^*_{1}, \ldots, z^*_{m})^\top\) is obtained as a solution to the convex optimization problem:
\[
\begin{aligned}
\min_{\boldsymbol{z} \in\mathbb R^{m}}
 & \mathcal L \left(\sigma(\boldsymbol{z} ), \boldsymbol{y}\right)
\\
\text{s.t. } & \sum_{k=1}^{m}\left| \sum_{l=1}^{m} z_{l} e^{-2\pi \mathrm{i} (k-1)(l-1)/m} \right| \le \sqrt{E_W E_H}.
\end{aligned}
\]
Furthermore,
\[ W^*(W^*)^\top = \sqrt{\frac{E_W}{E_H}} (Z^* (Z^*)^\top)^{1/2}, \qquad (H^*)^\top H^* = \sqrt{\frac{E_H}{E_W}} (Z^* (Z^*)^\top)^{1/2}. \]
Here the common matrix \((Z^* (Z^*)^\top)^{1/2}\) is circulant.
\end{theorem}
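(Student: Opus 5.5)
The plan is to pass from the factorized problem to a convex problem over the logit matrix \(Z=WH\), and then use cyclic symmetry together with strict convexity to pin down the circulant structure.

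\textbf{Step 1 (reduction to a nuclear-norm ball).} For \(d\ge m\), the set of achievable logit matrices is
\[
\{WH:\ \|W\|_F^2\le E_W,\ \|H\|_F^2\le E_H\}=\{Z\in\mathbb R^{m\times n}:\ \|Z\|_*\le \sqrt{E_WE_H}\}.
\]
The inclusion \(\subseteq\) follows from \(\|WH\|_*\le\|W\|_F\|H\|_F\), a Cauchy--Schwarz inequality applied to the variational form \(\|Z\|_*=\min_{Z=AB}\tfrac12(\|A\|_F^2+\|B\|_F^2)\) after rescaling. The inclusion \(\supseteq\) uses the balanced factorization \(Z=U\Sigma V^\top\), with
\[
W=(E_W/E_H)^{1/4}\,U\Sigma^{1/2}Q^\top,\qquad H=(E_H/E_W)^{1/4}\,Q\Sigma^{1/2}V^\top,
\]
where \(Q\in\mathbb R^{d\times m}\) has orthonormal columns; this requires \(d\ge m\). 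Hence \((W^*,H^*)\) is optimal if and only if \(Z^*\) minimizes the convex function \(F(Z)=\sum_j\mathcal L(\sigma(Z_{:,j}),Y_{:,j})\) over this ball.

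\textbf{Step 2 (circulant structure of \(Z^*\)).} The map \(Z\mapsto \Pi^{k}Z\Pi^{-k}\) preserves both \(\|Z\|_*\) and \(F\). Preservation of \(F\) uses \(Y=\Pi^kY\Pi^{-k}\), which holds because \(Y\) is circulant, together with equivariance of the softmax under coordinate permutations. Averaging over \(k\) therefore gives a feasible circulant \(\bar Z\) with \(F(\bar Z)\le F(Z^*)\), by Jensen's inequality.

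To show \(Z^*\) itself is circulant, use uniqueness. The function \(F\) is strictly convex on \(\{Z: \mathbf 1^\top Z=c^\top\}\) modulo column shifts by \(\mathbf 1\). I would first argue that the optimum lies on the boundary \(\|Z\|_*=\sqrt{E_WE_H}\). This follows from Assumption~\ref{ass:active_constraint}: an interior minimizer could be rescaled to one with an inactive constraint, contradicting the assumption. Since \(\boldsymbol y\) is nonuniform, the unconstrained infimum is not attained at finite \(Z\).

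Next, adding \(\mathbf 1 c^\top\) to \(Z\) leaves \(F\) unchanged. If \(c\neq 0\), this typically increases \(\|Z\|_*\), and one can show the optimal \(Z^*\) has \(\mathbf 1^\top Z^*=0\). I expect this to be the main obstacle: combining the nuclear-norm boundary with non-strict convexity along the \(\mathbf 1\)-directions to conclude that every minimizer equals its symmetrization.

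My first approach would be the following. Write \(F(Z^*)=F(\bar Z)\) and use Jensen with equality. Strict convexity of log-sum-exp, modulo constants, then forces each \(\Pi^kZ^*\Pi^{-k}\) to differ from \(Z^*\) only by per-column constant shifts. Finally, the nuclear-norm equality condition \(\|\bar Z\|_*=\frac1m\sum_k\|\Pi^kZ^*\Pi^{-k}\|_*\), together with minimality of the norm, eliminates those shifts.

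\textbf{Step 3 (the reduced problem).} Once \(Z^*\) is circulant, the DFT diagonalizes it. Its singular values are \(|\hat z_k|\), where \(\hat z_k=\sum_l z_l e^{-2\pi\mathrm i(k-1)(l-1)/m}\), so \(\|Z\|_*=\sum_k|\hat z_k|\). Each column contributes the same loss \(\mathcal L(\sigma(\boldsymbol z),\boldsymbol y)\), so the problem reduces to the stated convex program in \(\boldsymbol z\), up to a factor of \(m\) in the objective.

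\textbf{Step 4 (Gram matrices).} Every minimizer attains \(\|W^*\|_F\|H^*\|_F=\|Z^*\|_*\) by the active constraints. The equality case of the Cauchy--Schwarz/AM--GM step forces balancedness:
\[
W^{*\top}W^*\ \text{and}\ H^*H^{*\top}\ \text{are proportional on the relevant subspace, with ratio fixed by } E_W/E_H.
\]
This yields \(W^*W^{*\top}=\sqrt{E_W/E_H}\,(Z^*Z^{*\top})^{1/2}\) and the analogous identity for \(H^*\). To check this, take the SVD of \(W^*\) and \(H^*\) and use the trace inequality \(\operatorname{tr}(WH\,V U^\top)\le\|W\|_F\|H\|_F\) with equality conditions. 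Finally, \((Z^*Z^{*\top})^{1/2}\) is circulant because circulant matrices are closed under products, transposes, and functional calculus in the Fourier basis.
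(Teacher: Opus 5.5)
The gap is exactly where you locate the ``main obstacle.'' Your Jensen-equality step correctly gives $Z^*=N+\boldsymbol 1_m\boldsymbol\beta^\top$ with $N$ circulant (generator normalized to zero sum). However, neither tool you then invoke removes $\boldsymbol\beta$.

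First, the claim that adding $\boldsymbol 1_m c^\top$ ``typically increases'' $\|Z\|_*$ is false without a normalization: take $Z=-\boldsymbol 1_m c^\top$. What is actually needed is the strict inequality $\|N+\boldsymbol 1_m\boldsymbol\beta^\top\|_*>\|N\|_*$ whenever $\boldsymbol 1_m^\top N=0$ and $\boldsymbol\beta\neq\boldsymbol 0$. You never prove it.

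Second, the alternative you offer is the equality $\|\bar Z\|_*=\frac1m\sum_k\|\Pi^kZ^*\Pi^{-k}\|_*$. This is the equality case of a triangle inequality, i.e.\ a statement about a common dual certificate, not about the shifts. It holds trivially whenever all shifts coincide. Extracting even $\boldsymbol\beta\parallel\boldsymbol 1_m$ from it would require a further argument that you do not supply.

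The missing idea, which is how the paper closes its Step~3, is that the zero-sum normalization kills the cross terms:
\[
Z^{*\top}Z^*=N^\top N+m\boldsymbol\beta\boldsymbol\beta^\top .
\]
By Weyl's inequality no eigenvalue decreases, while the trace increases by $m\|\boldsymbol\beta\|_2^2>0$. So $\sum_i\sqrt{\lambda_i}$ strictly increases, i.e.\ $\|Z^*\|_*>\|N\|_*$. Since $N$ has the same loss, its balanced factorization is an optimal pair with both Frobenius constraints slack. This contradicts the active-constraint assumption, hence $\boldsymbol\beta=\boldsymbol 0$.

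Once you have this lemma, your ``uniqueness'' route becomes a valid alternative to the paper's argument:
\begin{itemize}
\item Applying the same contradiction to $(I_m-\frac1m\boldsymbol 1_m\boldsymbol 1_m^\top)Z^*$ shows every minimizer has zero column sums.
\item $F$ is strictly convex on that convex slice of the nuclear-norm ball, so the minimizer is unique.
\item Being unique, it is invariant under $Z\mapsto\Pi^kZ\Pi^{-k}$, i.e.\ circulant.
\end{itemize}
This bypasses the Jensen equality analysis and additionally yields uniqueness of $Z^*$, which the paper's contradiction argument does not state.

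Two smaller points:
\begin{itemize}
\item Your aside that the unconstrained infimum is not attained for nonuniform $\boldsymbol y$ is false when all entries of $\boldsymbol y$ are positive, since $\boldsymbol z=\log\boldsymbol y$ attains it. It is not load-bearing, because activity comes from the assumption.
\item The $H^*$ identity naturally comes out with $(Z^{*\top}Z^*)^{1/2}$. This equals $(Z^*Z^{*\top})^{1/2}$ only because circulant matrices are normal, and you should say so.
\end{itemize}
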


\begin{proof}[Proof of~\cref{thm:single_block_cyclic}] 
We begin with a brief roadmap.
The total loss depends only on the logit matrix \(Z=WH\), so we begin by analyzing \(Z\).
In Steps~1--3, we show that any optimal \(Z\) must be circulant.
Because a circulant matrix is determined by its first column, Step~4 reduces the original nonconvex problem to a convex program in that column vector.
Step~5 then characterizes the optimal Gram matrices \(WW^\top\) and \(H^\top H\).

Consider the original optimization problem~\cref{eq:obj_function_constrained}:
\begin{equation} \label{eq:prob_original}
\min_{\|W\|_F^2 \le E_W, \|H\|_F^2 \le E_H} \mathcal J(W,H;Y).
\end{equation}
Consider also the restricted optimization problem, where the product \(WH\) is constrained to be circulant:
\begin{equation} \label{eq:prob_circulant}
\min_{\|W\|_F^2 \le E_W, \|H\|_F^2 \le E_H,   WH \text{ is circulant}} \mathcal J(W,H;Y).
\end{equation}
We will prove that any optimal solution \((W,H)\) to~\cref{eq:prob_original} has a circulant product \(WH\), which implies that the solution sets of~\cref{eq:prob_original} and~\cref{eq:prob_circulant} coincide.

\paragraph{Step~1 (Symmetrization of logits leads to nonincreasing loss).}
For any logit matrix $Z=WH$, we construct its symmetrization $\widetilde{Z}$ and show that replacing $Z$ with $\widetilde{Z}$ does not increase the total loss.

First, let \((W,H)\) be any feasible solution to~\cref{eq:prob_original} with \(Z = WH\). Let \(\Pi\) be the \(m \times m\) cyclic-shift matrix defined in~\cref{eq:cyclic}. Since \(Y_{:,j}=\Pi^{j-1}\boldsymbol{y}\), we align the \(j\)-th logit column by the inverse shift: \( \boldsymbol z_j := \Pi^{-(j-1)} Z_{:,j}\), for \(j=1,\ldots,m\).
We then define an average aligned logit vector \(\bar{\boldsymbol z}\) from these:
\[ \bar{\boldsymbol z} := \frac{1}{m}\sum_{j=1}^{m} \boldsymbol{z}_j. \]
From this vector \(\bar{\boldsymbol z}\), we construct a new circulant logit matrix whose \(j\)-th column is \(\widetilde Z_{:,j}=\Pi^{j-1}\bar{\boldsymbol z}\). Equivalently, 
\[
\widetilde Z =\frac{1}{m}\sum_{t=1}^{m} \Pi^{t-1} Z \Pi^{-(t-1)}.
\]
Next, we prove that this symmetrization procedure does not increase the loss. The total loss for the original logits \(Z\) is
\[ \mathcal J(W,H;Y) = \sum_{j=1}^{m}\mathcal L(\sigma(Z_{:,j}), \Pi^{j-1}\boldsymbol{y}) = \sum_{j=1}^{m}\mathcal L(\sigma(\Pi^{-(j-1)}Z_{:,j}),  \boldsymbol{y}) = \sum_{j=1}^{m}f(\boldsymbol{z}_j), \]
where we define \(f(\boldsymbol u) \coloneqq \mathcal L(\sigma(\boldsymbol u), \boldsymbol{y})\). The loss for the new predictor \(\widetilde{Z}\) is \(m \cdot \mathcal L(\sigma(\bar{\boldsymbol z}),\boldsymbol{y}) = m \cdot f(\bar{\boldsymbol z})\). By~\cref{lem:softmax_ce_strict_convexity_V0}, \(f\) is convex, and Jensen's inequality gives
\begin{equation}
    \label{eq:jensen2}
f(\bar{\boldsymbol z}) = f\left(\frac{1}{m}\sum_{j=1}^{m} \boldsymbol{z}_j\right) \le \frac{1}{m}\sum_{j=1}^{m} f(\boldsymbol{z}_j). 
\end{equation}
Multiplying by \(m\) proves that the loss never increases: \(m \cdot f(\bar{\boldsymbol z}) \le \sum_{j=1}^{m}f(\boldsymbol{z}_j) \implies \mathcal J(\widetilde{W},\widetilde{H};Y) \le \mathcal J(W,H;Y)\). We now construct \( \widetilde{W}, \widetilde{H}\) in Step~2.

\paragraph{Step~2 (Symmetrization in Step~1 is feasible).}
With the symmetrized matrix $\widetilde{Z}$ from Step~1, we construct $\widetilde{W}$ and $\widetilde{H}$ that satisfy the constraints $E_W$ and $E_H$ and realize $\widetilde{Z}=\widetilde{W}\widetilde{H}$. Combined with Step~1, this shows that every feasible pair $(W,H)$ admits a feasible counterpart $(\widetilde W,\widetilde H)$ with no larger loss.

Consider the chain of inequalities involving the logit matrices \(Z=WH\) and \(\widetilde{Z}\):
\begin{equation}
\label{eq:budget_chain}
\sqrt{E_W E_H} \ge \|W\|_{F}\|H\|_{F} \ge \|WH\|_{*} = \|Z\|_{*} \ge \|\widetilde{Z}\|_{*}.
\end{equation}
The second inequality follows from~\cref{lem:nuclear_frobenius_combined}. The last inequality follows because the nuclear norm is unitarily invariant and convex, and $\widetilde Z$ is an average of orthogonal conjugates of $Z$:
\[
\|\widetilde Z\|_* = \left\|\frac{1}{m}\sum_{t=1}^{m} \Pi^{t-1} Z \Pi^{-(t-1)}\right\|_*
\le \frac{1}{m}\sum_{t=1}^{m} \left\|\Pi^{t-1} Z \Pi^{-(t-1)}\right\|_* = \|Z\|_*.
\]
This establishes that \(\|\widetilde{Z}\|_* \le \sqrt{E_W E_H}\). Now we show that a factor pair \((\widetilde{W}, \widetilde{H})\) can always be constructed to realize \(\widetilde{Z}\) and meet the budget constraint \(E_W\) and \(E_H\). 
Let the singular value decomposition of \(\widetilde{Z}\) be \( \widetilde{Z} = U\Sigma V^\top \), where \(U \in \mathbb{R}^{m \times r}\) and \(V \in \mathbb{R}^{m \times r}\) are matrices with orthonormal columns, \(r = \mathrm{rank}(\widetilde{Z})\), and \(\Sigma = \mathrm{diag}(s_1, \dots, s_r)\) is the diagonal matrix of positive singular values. Let \(Q\in\mathbb{R}^{d\times r}\) satisfy \(Q^\top Q=I_r\).
We define \((\widetilde{W}, \widetilde{H})\) by:
\begin{equation}
    \label{eq:wh_construction}
\widetilde{W} := \sqrt[4]{\frac{E_W}{E_H}} U\Sigma^{1/2}Q^\top \qquad \text{and} \qquad \widetilde{H} := \sqrt[4]{\frac{E_H}{E_W}} Q\Sigma^{1/2}V^\top,
\end{equation}
where \(\Sigma^{1/2} = \mathrm{diag}(\sqrt{s_1}, \dots, \sqrt{s_r})\). Then \(\widetilde W\widetilde H=\widetilde Z\).
Their Frobenius norms are:
\begin{equation}
\label{eq:budget_frobenius_W}
\|\widetilde{W}\|_F^2 = \sqrt{\frac{E_W}{E_H}} \|\Sigma^{1/2}\|_F^2 = \sqrt{\frac{E_W}{E_H}} \|\widetilde{Z}\|_{*} \le E_W,
\end{equation}
\begin{equation}
\label{eq:budget_frobenius_H}
\|\widetilde{H}\|_F^2 = \sqrt{\frac{E_H}{E_W}} \|\Sigma^{1/2}\|_F^2 = \sqrt{\frac{E_H}{E_W}} \|\widetilde{Z}\|_{*} \le E_H.
\end{equation}
The construction is complete. 

\paragraph{Step~3 (The optimal logit matrix is circulant via contradiction).}
If an optimal \(Z\) were non-circulant, we could construct a pair \((\widetilde{W}, \widetilde{H})\) achieving no larger loss but strictly smaller Frobenius norms, contradicting the assumption that any optimal solution saturates the budgets \(E_W\) and \(E_H\). Hence the optimal \(Z\) is circulant.

Let \((W^*, H^*)\) be any optimal solution for~\cref{eq:prob_original}, and let its logit matrix be \(Z^* = W^*H^*\). From the symmetrization argument, we can construct a feasible pair \((\widetilde{W},\widetilde{H})\) with a circulant product \(\widetilde Z^*\) such that its loss is no greater. Given that \((W^*, H^*)\) is optimal, we must have
\[ \mathcal J(\widetilde{W},\widetilde{H};Y) = \mathcal J(W^*,H^*;Y). \]
Assume \(Z^*\) is not circulant.
The equality of losses implies that the inequality in~\cref{eq:jensen2} must hold with equality. By the equality condition associated with strict convexity on \(\mathcal V_0 \coloneqq \{\boldsymbol v\in\mathbb R^m : \boldsymbol{1}_m^\top \boldsymbol v = 0\}\) from~\cref{lem:softmax_ce_strict_convexity_V0}, the aligned columns \(\boldsymbol{z}^*_j = \Pi^{-(j-1)}Z^*_{:,j}\) must satisfy \(\boldsymbol{z}^*_j = \boldsymbol{z}^*_1 + \beta_j^* \boldsymbol{1}_m\), where \(\boldsymbol{1}_m^\top \boldsymbol{z}^*_1 = 0\) and the vector of shifts \(\boldsymbol{\beta}^* = (\beta_1^*, \ldots, \beta_m^*)^\top\) is nonzero.
This allows us to decompose \(Z^*\) into a circulant part \(N^*\), defined by \(N^*_{:,j}=\Pi^{j-1}\boldsymbol z_1^*\), and a non-circulant shift part:
\[ Z^* = N^* + \boldsymbol{1}_m (\boldsymbol{\beta}^*)^\top. \]
The loss function is invariant to the shifts \(\beta_j^*\), so the objective value is unchanged when the product \(WH=Z^*\) is replaced by \(N^*\).

It remains to show that if \(\boldsymbol{\beta}^* \neq \boldsymbol{0}_m\), then the nuclear norm of \(Z^*\) is strictly greater than the nuclear norm of its circulant part \(N^*\). Let \(\mu_i\) be the eigenvalues of \(N^{*\top}N^*\) and \(\nu_i\) be the eigenvalues of \(Z^{*\top}Z^*\), both ordered nonincreasingly. The singular values are \(s_i = \sqrt{\mu_i}\) and \(t_i = \sqrt{\nu_i}\), respectively.
We compute \(Z^{*\top}Z^*\):
\[ Z^{*\top}Z^* = (N^* + \boldsymbol{1}_m (\boldsymbol{\beta}^*)^\top)^\top (N^* + \boldsymbol{1}_m (\boldsymbol{\beta}^*)^\top) = N^{*\top}N^* + \boldsymbol{\beta}^*(\boldsymbol{1}_m^\top N^*) + N^{*\top}\boldsymbol{1}_m(\boldsymbol{\beta}^*)^\top + \boldsymbol{\beta}^*(\boldsymbol{1}_m^\top \boldsymbol{1}_m)(\boldsymbol{\beta}^*)^\top. \]
The columns of the circulant matrix \(N^*\) are shifts of \(\boldsymbol{z}_1^*\), and since \(\boldsymbol{1}_m^\top\boldsymbol{z}_1^*=0\), all columns of \(N^*\) are in \(\mathcal V_0\) and are orthogonal to \(\boldsymbol{1}_m\). Thus, \(\boldsymbol{1}_m^\top N^* = \boldsymbol{0}_m^\top\), and the cross-terms vanish. The expression simplifies to
\[ Z^{*\top}Z^* = N^{*\top}N^* + m \boldsymbol{\beta}^*(\boldsymbol{\beta}^*)^\top. \]
Let \(G_N = N^{*\top}N^*\) and \(G_\beta = m \boldsymbol{\beta}^*(\boldsymbol{\beta}^*)^\top\). Both \(G_N\) and \(G_\beta\) are real, symmetric, and positive semidefinite (\(G_\beta \succeq 0\)). The eigenvalues of \(G_N\) are \(\{\mu_i\}\) and the eigenvalues of \(Z^{*\top}Z^*\) are \(\{\nu_i\}\). We can now apply Weyl's inequality (\cref{lem:weyl_inequality}) for the eigenvalues of a sum of Hermitian matrices:
\[ \nu_i = \lambda_i(G_N+G_\beta) \ge \lambda_i(G_N) + \lambda_m(G_\beta) \ge \lambda_i(G_N) = \mu_i \quad \text{for all } i=1, \dots, m. \]
This shows that no eigenvalue can decrease. To prove a strict increase for at least one eigenvalue, we examine the trace. Since \(\boldsymbol{\beta}^* \neq \boldsymbol{0}_m\), the matrix \(G_\beta\) is nonzero and has a strictly positive trace
\[ \mathrm{tr}(G_\beta) = \mathrm{tr}(m \boldsymbol{\beta}^*(\boldsymbol{\beta}^*)^\top) = m \cdot \mathrm{tr}((\boldsymbol{\beta}^*)^\top\boldsymbol{\beta}^*) = m\|\boldsymbol{\beta}^*\|_2^2 > 0. \]
The trace of the sum is the sum of traces
\[ \sum_{i=1}^m \nu_i = \mathrm{tr}(G_N+G_\beta) = \mathrm{tr}(G_N) + \mathrm{tr}(G_\beta) = \sum_{i=1}^m \mu_i + m\|\boldsymbol{\beta}^*\|_2^2 > \sum_{i=1}^m \mu_i. \]
We have a set of nonnegative differences \(\nu_i - \mu_i \ge 0\) whose sum \(\sum(\nu_i-\mu_i)\) is strictly positive. This is only possible if at least one difference is strictly positive. Thus, there exists at least one index \(j\) such that \(\nu_j > \mu_j\).
Since the singular values are the square roots of these eigenvalues (\(t_i = \sqrt{\nu_i}, s_i=\sqrt{\mu_i}\)), we have \(t_i \ge s_i\) for all \(i\), and \(t_j > s_j\) for at least one \(j\).
This leads to a strict inequality for the nuclear norms:
\[ \|Z^*\|_* = \sum_{i=1}^m t_i > \sum_{i=1}^m s_i = \|N^*\|_*. \]
Since \(\|N^*\|_* < \|Z^*\|_* \le \sqrt{E_W E_H}\), by the argument in Step~2, we can construct a feasible solution \((\widetilde{W}, \widetilde{H})\) (as in~\cref{eq:wh_construction} with the SVD of \(N^*\)) with \(\|\widetilde{W}\|_F^2 = \sqrt{E_W/E_H}\|N^*\|_{*} < E_W\) (as in~\cref{eq:budget_frobenius_W}) and \(\|\widetilde{H}\|_F^2 = \sqrt{E_H/E_W}\|N^*\|_{*} < E_H\) (as in~\cref{eq:budget_frobenius_H}). 
This contradicts~\cref{ass:active_constraint}, which requires both norm budgets to be active at the optimum.
Therefore, the initial assumption that an optimal solution \(Z^*\) could be non-circulant (i.e., \(\boldsymbol{\beta}^* \neq \boldsymbol{0}_m\)) must be false.
With \(\boldsymbol{\beta}^* = \boldsymbol{0}_m\), the decomposition \(\boldsymbol{z}^*_j = \boldsymbol{z}^*_1 + \beta_j^*\boldsymbol{1}_m\) implies \(\boldsymbol{z}^*_j = \boldsymbol{z}^*_1\) for all \(j=1,\ldots,m\). Undoing the alignment, \(Z^*_{:,j} = \Pi^{j-1}\boldsymbol{z}^*_1\), which is precisely the definition of a circulant matrix. This completes the proof that every optimal logit matrix of~\cref{eq:prob_original} must be circulant. 

\paragraph{Step~4 (Convex reduction via first column).}
With optimal \(Z\) circulant, we parameterize \(Z\) by its first column \(\boldsymbol z\) and rewrite the optimization as a convex problem in \(\boldsymbol z\).

Let the first column of the circulant logit matrix \(Z\) be the vector \(\boldsymbol{z}  \in \mathbb{R}^m\). The objective is to find the optimal \(\boldsymbol{z} \). First, we express the total loss function in terms of \(\boldsymbol{z} \). Since both the logit matrix \(Z\) and the target distribution matrix \(Y\) are circulant, the cross-entropy loss for each column is identical due to the shift-invariance property (\(\mathcal L(\sigma(\Pi^k\boldsymbol{z} ), \Pi^k\boldsymbol{y}) = \mathcal L(\sigma(\boldsymbol{z} ), \boldsymbol{y})\)). The total loss is thus
\[ \mathcal J(W,H;Y) = \sum_{j=1}^{m} \mathcal L(\sigma(Z_{:,j}), Y_{:,j}) = m \cdot \mathcal L(\sigma(\boldsymbol{z} ), \boldsymbol{y}). \]
Next, we re-parameterize the budget constraint in terms of \(\boldsymbol{z} \). From~\cref{eq:budget_chain}, we have \( \|Z\|_* \le \sqrt{E_W E_H} \). By~\cref{lem:fourier_diagonalization_circulant}, a circulant matrix \(Z\) is normal, so its singular values \(s_k\) are the magnitudes of its eigenvalues \(\lambda_k\). The eigenvalues are the components of the Discrete Fourier Transform of the first column \(\boldsymbol{z} \):
\[ \lambda_k = \sum_{l=1}^m z_{l} e^{-2\pi \mathrm{i} (k-1)(l-1)/m}. \]
The nuclear norm is therefore
\[ \|Z\|_* = \sum_{k=1}^m s_k = \sum_{k=1}^m |\lambda_k| = \sum_{k=1}^m \left| \sum_{l=1}^m z_{l} e^{-2\pi \mathrm{i} (k-1)(l-1)/m} \right|. \]
The budget constraint thus becomes an explicit constraint on the vector \(\boldsymbol{z} \).
Combining these results, the original nonconvex problem over matrices \((W,H)\) is equivalent to the following convex optimization problem over the single vector \(\boldsymbol{z}  \in \mathbb{R}^m\):
\[
\begin{aligned}
\min_{\boldsymbol{z} \in\mathbb R^{m}}
 & \mathcal L \left(\sigma(\boldsymbol{z} ), \boldsymbol{y}\right)
\\
\text{s.t. } & \sum_{k=1}^{m}\left| \sum_{l=1}^{m} z_{l} e^{-2\pi \mathrm{i} (k-1)(l-1)/m} \right| \le \sqrt{E_W E_H}.
\end{aligned}
\]
The objective function is convex in \(\boldsymbol{z} \), and the constraint set defined by the nuclear-norm bound is convex.
Once the optimal vector \(\boldsymbol{z} \) is found, the optimal logit matrix \(Z\) is fully determined as \(Z = [\boldsymbol{z} , \Pi^1\boldsymbol{z} , \dots, \Pi^{m-1}\boldsymbol{z} ]\).

\paragraph{Step~5 (Geometry of Gram matrices from equality conditions).} Tightness of the budget chain implies that \(W\) and \(H\) share singular directions and have proportional singular values, giving the circulant structure of the Gram matrices.

Let \(r=\mathrm{rank}(Z)\), and let \(Z=U\Sigma V^\top\) be a compact SVD, where \(U,V\in\mathbb R^{m\times r}\) have orthonormal columns and \(\Sigma\in\mathbb R^{r\times r}\) has positive diagonal entries. Equality in~\cref{eq:budget_chain} (\cref{lem:nuclear_frobenius_combined}) implies that \(W\)'s right singular vectors coincide with \(H\)'s left singular vectors and that the two sets of singular values are proportional.  Thus we may write
\[
  W = U \Sigma_W Q^\top,
  \qquad
  H = Q \Sigma_H V^\top,
  \qquad
  \Sigma_W = \eta \Sigma_H,
  \qquad
  Q^\top Q = I_r,
\]
where \(Q\in\mathbb R^{d\times r}\) has orthonormal columns and \(\Sigma_W,\Sigma_H\in\mathbb R^{r\times r}\) are positive diagonal matrices.
Moreover, we have
\[
\|W\|_F^2 = \|\Sigma_W\|_F^2 = E_W, \qquad
\|H\|_F^2 = \|\Sigma_H\|_F^2 = E_H,
\qquad\Longrightarrow\qquad
\Sigma_W = \sqrt{\frac{E_W}{E_H}}\Sigma_H.
\]
Therefore, 
\[ Z = WH = U\Sigma_W\Sigma_H V^\top.\]
This is also the compact SVD of \(WH\). 
We compute the matrices \(WW^\top\) and \(H^\top H\):
\[
WW^\top = (U \Sigma_W Q^\top)(U \Sigma_W Q^\top)^\top = U\Sigma_W^2 U^\top = \sqrt{\frac{E_W}{E_H}} U \Sigma_W\Sigma_H U^\top.
\]
Similarly, for \(H^\top H\):
\[
H^\top H = (Q \Sigma_H V^\top)^\top(Q \Sigma_H V^\top) = V\Sigma_H^2 V^\top = \sqrt{\frac{E_H}{E_W}} V \Sigma_W\Sigma_H V^\top.
\]
Extending this compact SVD to a full real SVD by padding zero singular values, we now apply~\cref{lem:svd_factor_symmetry}. 
Here, the circulant matrix is \(Z\), and the nonzero singular-value block is \(\Sigma_W\Sigma_H\). We conclude that
\[ WW^\top = \sqrt{\frac{E_W}{E_H}} (Z Z^\top)^{1/2}, \qquad H^\top H = \sqrt{\frac{E_H}{E_W}} (Z Z^\top)^{1/2}, \]
where the common matrix \((Z Z^\top)^{1/2}\) is a circulant matrix. This completes the proof.
\end{proof}

\section{Symmetry Transfer~II: Permutation Symmetry}\label{sec:single_block_perm}
In this section, we study the case where the target distribution matrix \(Y\) is the orbit of a single probability vector under a group of permutations of the indices. Concretely, let \(\mathcal G\) be a finite group acting on probability vectors \(\boldsymbol{y}\in\Delta^{m-1}\) by permuting their entries; we write this action as \(g\circ\boldsymbol{y}\). Given a base vector \(\boldsymbol{y}\), we collect its \(\mathcal G\)-orbit as columns to form
\[
Y = [  g\circ \boldsymbol{y} \mid g\in \mathcal G  ] \in \mathbb{R}^{m\times |\mathcal G|},
\]
so each column \(Y_{:,j}=g_j\circ \boldsymbol{y}\) for some enumeration \(\{g_j\}\) of \(\mathcal G\). 

Many token sets in natural language behave as interchangeable alternatives in families of analogous contexts, e.g., \{\texttt{dog}, \texttt{cat}, \texttt{rabbit}\}, \{\texttt{car}, \texttt{bus}, \texttt{bicycle}, \texttt{train}\}, or \{\texttt{fork}, \texttt{knife}, \texttt{plate}, \texttt{cup}, \texttt{glass}\}. Typical prompts such as ``\texttt{He barks and wags his tail. My pet is a \textvisiblespace}'', ``\texttt{She purrs and loves milk. My pet is a \textvisiblespace}'', or ``\texttt{He hops around the garden. My pet is a \textvisiblespace}'' yield next-token distributions concentrated on the same animal set; aggregated over many and varied contexts, swapping members within the set leaves the family of next-token distributions essentially unchanged. In our modeling, the label columns are permutations of a single base probability vector, forming an orbit under the symmetric group \(S_m\) that captures exchangeability within the set.

For \( \mathcal G = S_m \), the symmetric group of all permutations of \( \{1, 2, \dots, m\} \), we have \( |S_m| = m! \). Fix an enumeration \(\{g_j\}_{j=1}^{m!}\) of \(S_m\). The target distribution matrix becomes
\[
Y=[g_1\circ\boldsymbol{y}\mid g_2\circ\boldsymbol{y}\mid\cdots\mid g_{m!}\circ\boldsymbol{y}].
\]
Equivalently, for each column \(j\), define \(\tau_{j,i}\) by \((g_j\circ\boldsymbol{y})_i=y_{\tau_{j,i}}\) for \(i=1,\ldots,m\). Then the entries of \(Y\) are
\[
Y = \begin{bmatrix}
y_{\tau_{1,1}} & y_{\tau_{2,1}} & \cdots & y_{\tau_{m!,1}} \\
y_{\tau_{1,2}} & y_{\tau_{2,2}} & \cdots & y_{\tau_{m!,2}} \\
\vdots & \vdots & \ddots & \vdots \\
y_{\tau_{1,m}} & y_{\tau_{2,m}} & \cdots & y_{\tau_{m!,m}}
\end{bmatrix}.
\]

\begin{definition}[Simplex ETF~\citep{papyan2020prevalence}]
For \(m\ge 2\), the standard simplex ETF is the collection of points in \(\mathbb{R}^m\) specified by the rows of
\begin{equation}
\label{eq:simplex-etf-mstar}
M^*=\sqrt{\frac{m}{m-1}}\left(I_m-\frac{1}{m}\boldsymbol{1}_m\boldsymbol{1}_m^\top\right).
\end{equation}
In this paper, we allow rotations and rescalings, so a general simplex ETF is specified by the rows of \(M=c M^* U^\top\in\mathbb{R}^{m\times d}\), where \(c>0\) is a scale factor and \(U\in\mathbb{R}^{d\times m}\) is a partial orthogonal matrix satisfying \(U^\top U=I_m\).
\end{definition}

\begin{theorem}[Optimal Embeddings for the Symmetric Group]
\label{thm:single_block_perm}
Let \(\boldsymbol{y}\) be a nonuniform target distribution, and let \(Y\) be the target distribution matrix generated by the symmetric group \(S_m\).
For \(d\ge m\), under~\cref{ass:active_constraint}, the matrices \(W^*\) and \(H^*\) solve the constrained problem~\cref{eq:obj_function_constrained} if and only if 
\[
  W^*=\sqrt{\frac{E_W}{m-1}} UQ^\top,
  \qquad
  H^*=-\sqrt{\frac{E_H}{m-1}} QV^\top,
\]
for any partial orthogonal matrix \(Q\in\mathbb{R}^{d\times(m-1)}\) (\(Q^\top Q=I_{m-1}\)), where
\begin{itemize}
  \item \(C=A-Y\), with \(A\in\mathbb{R}^{m\times m!}\) constructed analogously to \(Y\) so that \(A_{:,j}=g_j\circ\boldsymbol{\alpha}\), where \(\boldsymbol{\alpha}=(\alpha_1,\ldots,\alpha_m)\); equivalently, \( A_{i,j}=\alpha_{\tau_{j,i}} \).
  Here \(\boldsymbol{\alpha}\) is the unique solution to
  \[
    k(\alpha_i-y_i)+\log\alpha_i=\bar{\ell}\ (i=1,\dots,m),\quad \sum_{i=1}^m\alpha_i=1,\quad
    k=\sqrt{\frac{E_W E_H}{m!(m-1)\sum_{i=1}^m(\alpha_i-y_i)^2}},
  \]
  where \(\bar{\ell}=(1/m)\sum_{r=1}^m \log \alpha_r\).
  \item \(C=U(\gamma I_{m-1}) V^\top\) is the rank-\((m-1)\) SVD of \(C\), with \(U\in\mathbb{R}^{m\times(m-1)}\) and \(V\in\mathbb{R}^{m!\times(m-1)}\).
  \item \(\gamma = \sqrt{m!/(m-1)}\|\boldsymbol{\alpha}-\boldsymbol{y}\|_2\).
  Equivalently, \(k=\sqrt{E_W E_H}/((m-1)\gamma)\).
\end{itemize}
\end{theorem}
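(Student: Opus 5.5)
The plan follows the template of the proof of \cref{thm:single_block_cyclic}: work at the level of the logit matrix \(Z=WH\), symmetrize over the group, reduce to a convex problem in one generating vector, and then read off the factorization from the equality cases.

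\textbf{Step 1: symmetrization.} For a feasible \((W,H)\) with \(Z=WH\), align each column by setting \(\boldsymbol z_j:=g_j^{-1}\circ Z_{:,j}\), so that \(\mathcal J=\sum_j f(\boldsymbol z_j)\) with \(f(\boldsymbol u)=\mathcal L(\sigma(\boldsymbol u),\boldsymbol y)\). Put \(\bar{\boldsymbol z}=\frac{1}{m!}\sum_j\boldsymbol z_j\) and \(\widetilde Z_{:,j}=g_j\circ\bar{\boldsymbol z}\).
\begin{itemize}
\item Jensen's inequality with \cref{lem:softmax_ce_strict_convexity_V0} gives \(\mathcal J(\widetilde Z)\le\mathcal J(Z)\).
\item \(\widetilde Z\) is an average of the matrices \(P_gZR_g\), where \(P_g\) permutes rows by \(g\) and \(R_g\) permutes columns by the regular action. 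These are orthogonal conjugates of \(Z\), so \(\|\widetilde Z\|_*\le\|Z\|_*\le\|W\|_F\|H\|_F\le\sqrt{E_WE_H}\).
\item The balanced SVD factorization \cref{eq:wh_construction} then realizes \(\widetilde Z\) feasibly.
\end{itemize}

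\textbf{Step 2: optimal logits form an orbit.} Suppose \(Z^*\) is optimal. Equality in Jensen forces \(\boldsymbol z_j^*=\boldsymbol z_1^*+\beta_j\boldsymbol 1_m\) with \(\boldsymbol 1_m^\top\boldsymbol z_1^*=0\), so \(Z^*=N^*+\boldsymbol 1_m\boldsymbol\beta^\top\), where \(N^*\) is the orbit matrix of \(\boldsymbol z_1^*\). Since \(\boldsymbol 1_m^\top N^*=0\), we get \(Z^{*\top}Z^*=N^{*\top}N^*+m\boldsymbol\beta\boldsymbol\beta^\top\). The Weyl and trace argument from Step~3 of \cref{thm:single_block_cyclic} then shows \(\|N^*\|_*<\|Z^*\|_*\) whenever \(\boldsymbol\beta\ne 0\). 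In that case \(N^*\) has the same loss and can be factored strictly inside both budgets, contradicting \cref{ass:active_constraint}. Hence \(Z^*_{:,j}=g_j\circ\boldsymbol z\) with \(\boldsymbol z\in\mathcal V_0\).

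\textbf{Step 3: convex reduction.} Compute the nuclear norm of an orbit matrix explicitly. The Gram matrix \(ZZ^\top=\sum_g (g\circ\boldsymbol z)(g\circ\boldsymbol z)^\top\) commutes with every permutation matrix, so it equals \(aI_m+b\boldsymbol 1_m\boldsymbol 1_m^\top\). Because \(\boldsymbol z\perp\boldsymbol 1_m\), the trace gives
\[
ZZ^\top=\tfrac{m!}{m-1}\|\boldsymbol z\|_2^2\bigl(I_m-\tfrac1m\boldsymbol 1_m\boldsymbol 1_m^\top\bigr).
\]
Thus \(Z\) has \(m-1\) equal singular values \(\gamma=\sqrt{m!/(m-1)}\,\|\boldsymbol z\|_2\), and \(\|Z\|_*=\sqrt{m!(m-1)}\,\|\boldsymbol z\|_2\). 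The problem becomes
\[
\min_{\boldsymbol z} f(\boldsymbol z)\quad\text{s.t.}\quad \boldsymbol 1_m^\top\boldsymbol z=0,\qquad \|\boldsymbol z\|_2\le \frac{\sqrt{E_WE_H}}{\sqrt{m!(m-1)}}.
\]
Here \(f\) is strictly convex on \(\mathcal V_0\), so the minimizer is unique.

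\textbf{Step 4: KKT conditions.} Set \(\boldsymbol\alpha=\sigma(\boldsymbol z)\). The KKT conditions read \(\boldsymbol\alpha-\boldsymbol y+\lambda\boldsymbol z\in\mathrm{span}(\boldsymbol 1_m)\), and the ball constraint must be active under \cref{ass:active_constraint}.
\begin{itemize}
\item Since \(\boldsymbol z\in\mathcal V_0\), we have \(z_i=\log\alpha_i-\bar\ell\). Both \(\boldsymbol\alpha-\boldsymbol y\) and \(\boldsymbol z\) are centered, so the span term vanishes and \(\boldsymbol z=-k(\boldsymbol\alpha-\boldsymbol y)\) with \(k=1/\lambda\).
\item This is exactly \(k(\alpha_i-y_i)+\log\alpha_i=\bar\ell\).
\item Activity of the ball constraint fixes \(k\) by \(k\|\boldsymbol\alpha-\boldsymbol y\|_2=\sqrt{E_WE_H}/\sqrt{m!(m-1)}\), which is the stated formula.
\item It follows that \(Z^*=-kC\), where \(C\) has singular values \(\gamma=\sqrt{m!/(m-1)}\,\|\boldsymbol\alpha-\boldsymbol y\|_2\). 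Then \(k\gamma(m-1)=\sqrt{E_WE_H}\).
\end{itemize}

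\textbf{Step 5: factorization.} Equality throughout the budget chain, via \cref{lem:nuclear_frobenius_combined}, forces \(W=U\Sigma_WQ^\top\) and \(H=Q\Sigma_HV^\top\) with proportional singular values. All singular values of \(Z^*\) equal \(k\gamma\), and the budgets \(\|\Sigma_W\|_F^2=E_W\), \(\|\Sigma_H\|_F^2=E_H\) hold. Therefore
\[
\Sigma_W=\sqrt{\tfrac{E_W}{m-1}}\,I_{m-1},\qquad \Sigma_H=\sqrt{\tfrac{E_H}{m-1}}\,I_{m-1},
\]
and the minus sign from \(Z^*=-kC\) is placed on \(H\). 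The left singular space of \(C\) is \(\boldsymbol 1_m^\perp\), so \(UU^\top=I_m-\frac1m\boldsymbol 1_m\boldsymbol 1_m^\top\) and the rows of \(W^*\) form a simplex ETF.

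For the converse, any pair of the stated form has product \(-kC=Z^*\), meets both budgets with equality, and therefore attains the optimal value.

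\textbf{Main obstacle.} I expect the delicate point to be Step 2 together with the uniqueness in Step 4. First, the regular-action bookkeeping must be checked carefully, so that the column-permuted symmetrization is a genuine orthogonal conjugation, i.e. \(\widetilde Z_{:,j}=g_j\circ\bar{\boldsymbol z}\). Second, \(\boldsymbol\alpha\) must be shown unique. For this I would rely on strict convexity on \(\mathcal V_0\) combined with the active ball constraint, rather than solving the nonlinear equation for \(\boldsymbol\alpha\) directly.
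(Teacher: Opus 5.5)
Your proposal is correct, and it follows a genuinely different (primal) route from the paper's.

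\textbf{The paper's route.} The paper never symmetrizes $Z$. It fixes an auxiliary $\boldsymbol\alpha\in\Delta^{m-1}$ and applies Jensen to the log-sum-exp to get $\mathcal J\ge\mathrm{tr}(C^\top WH)-m!\sum_i\alpha_i\log\alpha_i$. It then bounds the trace via \cref{lem:von_neumann_rectangular}, \cref{lem:2_transitive_svd} and \cref{lem:nuclear_frobenius_combined} to obtain $\mathcal J\ge\phi(\boldsymbol\alpha)$. Maximizing the strictly concave $\phi$ over the simplex produces $\boldsymbol\alpha^*$ and its system, and the paper checks that the stated pair attains the bound. Necessity comes from the equality cases, combined with the KKT equations of the nonconvex problem and \cref{lem:orbit_S_softmax_2_transitive}, which forces $\Sigma_{WH}=k_0I_{m-1}$.

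\textbf{The two are dual.} With $R=\sqrt{E_WE_H/(m!(m-1))}$ one has $\phi(\boldsymbol\alpha)=\min_{\boldsymbol z\in\mathcal V_0,\,\|\boldsymbol z\|_2\le R}m!\,[(\boldsymbol\alpha-\boldsymbol y)^\top\boldsymbol z-\sum_i\alpha_i\log\alpha_i]$. So maximizing $\phi$ is the Fenchel dual of your ball-constrained reduction, and the paper's $\boldsymbol\alpha^*$ is your $\sigma(\boldsymbol z^*)$ acting as a dual certificate.

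\textbf{What your route buys.} It makes the result a direct analogue of \cref{thm:single_block_cyclic}: the nuclear norm of an orbit matrix is $\sqrt{m!(m-1)}\|\boldsymbol z\|_2$ here, versus the $\ell_1$ norm of the DFT there. It also simplifies necessity. Once $Z^*=-kC$ is pinned down uniquely, the factor form follows from the equality case of \cref{lem:nuclear_frobenius_combined} plus the rotation $U_W=US$, $V_H=-VS$ of the singular bases. You need neither the nonconvex KKT system nor the softmax-orbit lemma.

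\textbf{What the paper's route buys.} It gives the ``if'' direction as a pure certificate argument, independent of \cref{ass:active_constraint}. It also gets existence and uniqueness of $\boldsymbol\alpha$ directly from strict concavity. You, by contrast, invoke the assumption twice: to kill $\boldsymbol\beta$ and to activate the ball.

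\textbf{Your flagged obstacles.} Both go through. First, $\widetilde Z=\frac1{m!}\sum_h\rho(h)ZR_h$, where $R_h$ is the column permutation with $(ZR_h)_{:,j}=Z_{:,i}$ whenever $g_i=h^{-1}g_j$. Second, any solution of the $\boldsymbol\alpha$-system gives, via $\boldsymbol z=\log\boldsymbol\alpha-\bar\ell\,\boldsymbol 1_m$, a KKT point of your convex program, hence $\boldsymbol z=\boldsymbol z^*$.

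\textbf{One step to make explicit.} In Step~4 you need $\lambda>0$, i.e.\ $\sigma(\boldsymbol z^*)\neq\boldsymbol y$, and activity of the ball does not give this by itself. Under \cref{ass:active_constraint} it fails exactly when $\boldsymbol y>0$ entrywise and $\|\log\boldsymbol y-\frac1m(\sum_r\log y_r)\boldsymbol 1_m\|_2=R$. In that knife-edge case $C=0$ and the theorem's $k$ is undefined. The paper's argument for $\boldsymbol\alpha^*\neq\boldsymbol y$ does not exclude it either, since it substitutes $\boldsymbol\alpha=\boldsymbol y$ into a system whose $k$ is then infinite. So this is a defect of the statement rather than of your approach.
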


\begin{proof}[Proof of~\cref{thm:single_block_perm}]
The proof is divided into five steps. Steps~1--3 derive an explicit lower bound for the total loss. In Step~4 (sufficiency), we show that \((W,H)\) with the form stated in the theorem attains this lower bound. In Step~5 (necessity), we prove that any optimal \((W,H)\) must have the form stated in the theorem.

\paragraph{Step~1 (Bounding the total loss as the sum of two terms).}
In this step, we apply Jensen's inequality to the total loss using an auxiliary vector \(\boldsymbol{\alpha}=(\alpha_1,\ldots,\alpha_m)\) (to be determined later). Define \(A\) by \(A_{:,j}=g_j\circ\boldsymbol{\alpha}\), using the same group element enumeration as \(Y_{:,j}=g_j\circ\boldsymbol{y}\), and set \(C:=A-Y\). The total loss is then lower bounded by $\mathrm{tr}(C^\top WH)-m!\sum_{i=1}^m \alpha_i\log \alpha_i$.

Expanding the cross-entropy loss for each column \( j \), we have:
\begin{align*}
\mathcal L\left( \sigma\left( \left( W H \right)_{:,j}\right), Y_{:,j} \right ) &= - \sum_{i=1}^m y_{\tau_{j,i}} \log \left( \frac{ \exp\left( \left( W H \right)_{i,j} \right ) }{ \sum_{k=1}^m \exp\left( \left( W H \right)_{k,j} \right ) } \right )\\
& = - \sum_{i=1}^m y_{\tau_{j,i}} \left( W H \right)_{i,j} + \log \left( \sum_{k=1}^m \exp\left( \left( W H \right)_{k,j} \right ) \right ).
\end{align*}
To further analyze the loss function, introduce a vector \( \boldsymbol{\alpha}=(\alpha_1,\ldots,\alpha_m) \) such that \( \alpha_i > 0 \) and \( \sum_{i=1}^m \alpha_i = 1 \). We can rewrite the denominator term in the logarithm as follows:
\[
\sum_{k=1}^m \exp\left( \left( W H \right)_{k,j} \right ) = \sum_{k=1}^m \alpha_{\tau_{j,k}} \cdot \frac{1}{\alpha_{\tau_{j,k}}} \exp\left( \left( W H \right)_{k,j} \right ).
\]
Applying Jensen's inequality to \( \log x \), we get
\begin{align}
\label{eq:jensen}
\log \left( \sum_{k=1}^m \alpha_{\tau_{j,k}} \cdot \frac{1}{\alpha_{\tau_{j,k}}} \exp\left( \left( W H \right)_{k,j} \right ) \right ) &\ge \sum_{k=1}^m \alpha_{\tau_{j,k}} \cdot \log \left( \frac{1}{\alpha_{\tau_{j,k}}} \exp\left( \left( W H \right)_{k,j} \right ) \right )\\
&= \sum_{k=1}^m \alpha_{\tau_{j,k}} \cdot \left( \left( W H \right)_{k,j} - \log \alpha_{\tau_{j,k}} \right ).  \notag
\end{align}
Thus, the cross-entropy loss for the \( j \)-th column is lower bounded by
\[
\mathcal L\left( \sigma\left( \left( W H \right)_{:,j}\right), Y_{:,j} \right ) \ge - \sum_{i=1}^m y_{\tau_{j,i}} \left( W H \right)_{i,j} + \sum_{k=1}^m \alpha_{\tau_{j,k}} \left( W H \right)_{k,j} - \sum_{k=1}^m \alpha_{\tau_{j,k}} \log \alpha_{\tau_{j,k}}.
\]
Summing over all columns \( j = 1, \ldots, m! \), we obtain a lower bound on the total loss
\[
\mathcal J(W,H;Y)
\ge
\sum_{j=1}^{m!}
\left[
- \sum_{i=1}^m y_{\tau_{j,i}} \left( W H \right)_{i,j}
+ \sum_{k=1}^m \alpha_{\tau_{j,k}} \left( W H \right)_{k,j}
- \sum_{k=1}^m \alpha_{\tau_{j,k}} \log \alpha_{\tau_{j,k}}
\right].
\]
Define \( C = A - Y \), where \( A_{:,j}=g_j\circ\boldsymbol{\alpha} \) and \( Y_{:,j}=g_j\circ\boldsymbol{y} \). Equivalently, each entry of \( C \) is \( C_{i,j} = \alpha_{\tau_{j,i}} - y_{\tau_{j,i}} \). 
Therefore, the lower bound reduces to
\[
\mathcal J(W,H;Y) \ge \mathrm{tr}\left( C^\top W H \right ) - m!\sum_{i=1}^m\alpha_i\log\alpha_i.
\]

\paragraph{Step~2 (Bounding the first term of the total loss in Step~1).}
In this step, we bound the first term of the total loss in Step~1, obtaining \(\mathrm{tr}(C^\top WH)\ge -\gamma(\boldsymbol{\alpha})\sqrt{E_W E_H}\), where $\gamma(\boldsymbol{\alpha})=\sqrt{m!/(m-1)}\|\boldsymbol{\alpha}-\boldsymbol{y}\|_2$.

Define the singular value decomposition \( C = U \Sigma V^\top \), where \( U \in \mathbb{R}^{m \times r} \), \( V \in \mathbb{R}^{m! \times r} \) are column-orthogonal matrices, and \( \Sigma \in \mathbb{R}^{r \times r} \) is a diagonal matrix with \( r = \text{rank}(C) \).
By~\cref{lem:2_transitive_svd}, $C$ has $m-1$ equal singular values $\gamma(\boldsymbol{\alpha})=\sqrt{m!/(m-1)}\|\boldsymbol{\alpha}-\boldsymbol{y}\|_2$.
From~\cref{lem:von_neumann_rectangular}, we have:
\begin{equation}
\label{eq:von_neumann}
\mathrm{tr}\left(C^\top W H\right)
\ge
- \sum_{i=1}^{\mathrm{rank}(C)} s_i(C)s_i(W H).
\end{equation}
Since \(C\) has rank \(m-1\) and each of its nonzero singular values is \(\gamma(\boldsymbol{\alpha})\), it follows that
\[
\mathrm{tr}\left(C^\top W H\right)
\ge
- \gamma(\boldsymbol{\alpha})\sum_{i=1}^{m-1} s_i(W H).
\]
But \(\sum_{i=1}^{m-1}s_i(W H) \le \sum_{i=1}^{\mathrm{rank}(W H)}s_i(W H) = \left\|W H\right\|_{*}\),
where \(\left\|\cdot\right\|_{*}\) denotes the nuclear norm. Hence,
\begin{equation}
\label{eq:trace_bound_1}
\mathrm{tr}\left(C^\top W H\right)
\ge
- \gamma(\boldsymbol{\alpha})\left\|W H\right\|_{*}.
\end{equation}
By~\cref{lem:nuclear_frobenius_combined}, one has
\begin{equation}
\label{eq:nuclear_frobenius_combined}
\left\|W H\right\|_{*}
\le
\left\|W\right\|_F \left\|H\right\|_F
\le
\sqrt{E_W E_H}.
\end{equation}
Combining these results, we obtain
\begin{equation}
\label{eq:trace_bound}
\mathrm{tr}\left(C^\top W H\right)
\ge
- \gamma(\boldsymbol{\alpha})\left\|W H\right\|_{*}
\ge
- \gamma(\boldsymbol{\alpha})\sqrt{E_W E_H}.
\end{equation}
Thus, for every feasible pair $\left(W,H\right)$, we have
$\mathrm{tr} \left(C^\top W H\right) \ge - \gamma(\boldsymbol{\alpha})\sqrt{E_W E_H}$. Therefore, we have shown for every $(W,H)$ and any $\boldsymbol{\alpha}\in\Delta^{m-1}$:
\begin{equation}
  \mathcal J(W,H;Y) \ge \phi(\boldsymbol{\alpha}) \coloneqq
  -\sqrt{\frac{m! E_W E_H}{ m-1 }} 
     \left\|\boldsymbol{\alpha}-\boldsymbol{y}\right\|_2
   - m!\sum_{i=1}^m \alpha_i\log\alpha_i.
  \label{eq:phi_def}
\end{equation}

\paragraph{Step~3 (Obtaining an explicit lower bound for the total loss).}
Substituting the estimate from Step~2 into the Jensen-based bound from Step~1 yields a lower bound depending only on $\boldsymbol{\alpha}$, namely the function on the right-hand side of~\cref{eq:phi_def}. We show that it is strictly concave. Maximizing this function over $\boldsymbol{\alpha}$ yields an explicit lower bound for the total loss.

Impose the simplex constraint $\sum_{i=1}^m\alpha_i=1$ with multiplier $\nu$ and
consider the Lagrangian of the right-hand side of~\cref{eq:phi_def}
\[
  \mathscr{J}_{\boldsymbol{\alpha}}(\boldsymbol{\alpha},\nu)
  =-\sqrt{\frac{m! E_W E_H}{ m-1 }} 
     \left\|\boldsymbol{\alpha}-\boldsymbol{y}\right\|_2
   - m!\sum_{i=1}^m\alpha_i\log\alpha_i
   + \nu\left(\sum_{i=1}^m\alpha_i-1\right).
\]
Taking derivatives,
\[
  \frac{\partial\mathscr{J}_{\boldsymbol{\alpha}}}{\partial\alpha_i}
  =-\sqrt{\frac{m! E_W E_H}{ m-1 }}
    \frac{\alpha_i-y_i}{\|\boldsymbol{\alpha}-\boldsymbol{y}\|_2}
   - m!\left(\log\alpha_i+1\right)
   + \nu
   = 0,
  \qquad i=1,\dots,m.
\]
Summing these $m$ equations and using $\sum_i(\alpha_i-y_i)=0$ gives
\[
  - m!\sum_{i=1}^m(\log\alpha_i+1)+m \nu=0 \qquad
   \Longrightarrow \qquad
  \nu=m!\left(1+\frac1m\sum_{i=1}^m\log\alpha_i\right).
\]
Subtract this average equation from the $i$-th one to eliminate $\nu$:
\begin{equation}
  -\sqrt{\frac{m! E_W E_H}{ m-1 }}
    \frac{\alpha_i-y_i}{\|\boldsymbol{\alpha}-\boldsymbol{y}\|_2}
   - m!\left(\log\alpha_i-\frac1m\sum_{r=1}^m\log\alpha_r\right)=0,
  \qquad i=1,\dots,m.
  \label{eq:alpha_eq1}
\end{equation}
Define
\[
  k:= \sqrt{\frac{E_W E_H}{ m!(m-1) \|\boldsymbol{\alpha}-\boldsymbol{y}\|_2^2}},
\]
so that~\cref{eq:alpha_eq1} is equivalent to the closed system
\begin{equation}
  \label{eq:alpha_eq2}
    k (\alpha_i-y_i) + \log\alpha_i
    = \frac1m\sum_{r=1}^m\log\alpha_r,
    \quad i=1,\dots,m,
    \qquad
    \sum_{i=1}^m\alpha_i=1.
  \end{equation}
We now show that the solution to~\cref{eq:alpha_eq2} exists and is unique. $\phi$ is continuous on the compact simplex
$
  \Delta^{m-1}
$
so by the Weierstrass theorem it attains a maximum; let
$\boldsymbol{\alpha}^*$ be one maximizer. Since $-\|\boldsymbol{\alpha}-\boldsymbol{y}\|_2$ is concave and
$-\sum_k\alpha_k\log\alpha_k$ is strictly concave on the simplex
$\Delta^{m-1}$, their positive linear combination $\phi$ is strictly concave. A strictly concave function
has at most one maximizer, hence $\boldsymbol{\alpha}^*$ is unique. Now, we prove that $\alpha_i^* >0$ for all $i$. Assume, toward a contradiction, that $\alpha_i^*=0$ for some coordinate
$i$.  Because $\sum_k\alpha_k^*=1$, there is at least one index
$j\neq i$ with $\alpha_j^*>0$. Move an infinitesimal mass $\delta>0$ from $j$ to $i$:
\[
  \tilde{\alpha}_k(\delta)=
  \begin{cases}
       \delta,            & k=i,\\[4pt]
       \alpha_j^*-\delta,& k=j,\\[4pt]
       \alpha_k^*,   & k\notin\{i,j\}.
  \end{cases}
\]
Now, consider the directional derivative of $\phi$.  
Set $\boldsymbol{\eta}=\boldsymbol{e}_i-\boldsymbol{e}_j$.  Then
\[
  \mathcal{D}_{\boldsymbol{\eta}} \phi(\boldsymbol{\alpha}^*)
  =-\sqrt{\frac{m! E_W E_H}{ m-1 }} \mathcal{D}_{\boldsymbol{\eta}}\|\boldsymbol{\alpha}-\boldsymbol{y}\|_2
     + m! \mathcal{D}_{\boldsymbol{\eta}}\mathcal{H}(\boldsymbol{\alpha})\Big|_{\boldsymbol{\alpha}^*}.
\]
\begin{itemize}
\item Entropy part.
      $\mathcal{H}(\boldsymbol{\alpha})=-\sum_k\alpha_k\log\alpha_k$ has
      $\partial_{\alpha_i}\mathcal{H}=-(\log\alpha_i+1)\to+\infty$ as
      $\alpha_i\to0^{+}$, while $\partial_{\alpha_j}\mathcal{H}$ is finite because
      $\alpha_j^*>0$.  Hence
      $m! \mathcal{D}_{\boldsymbol{\eta}}\mathcal{H}(\boldsymbol{\alpha}^*)=+\infty$.
\item $\ell_2$-norm part.
      $r(\boldsymbol{\alpha})=\|\boldsymbol{\alpha}-\boldsymbol{y}\|_2$ is
      $C^{1}$ on $\Delta^{m-1}\setminus\{\boldsymbol{y}\}$. If \(\boldsymbol{\alpha}^*\neq\boldsymbol{y}\), this gives \( |\mathcal{D}_{\boldsymbol{\eta}}r(\boldsymbol{\alpha}^*)|<\infty \). 
\end{itemize}
Combining the two pieces,
\[
  \mathcal{D}_{\boldsymbol{\eta}}\phi(\boldsymbol{\alpha}^*)=+\infty>0.
\]
Thus $\phi(\tilde{\boldsymbol{\alpha}}(\delta))>\phi(\boldsymbol{\alpha}^*)$
for all sufficiently small $\delta$, contradicting maximality.  Hence every
component of the unique maximizer is strictly positive:
$\alpha_i^*>0\text{ for all }i$. Therefore, the first-order Lagrange conditions are both necessary and sufficient for
optimality. Writing them out gives exactly the closed system~\cref{eq:alpha_eq2} whose unique solution inside the simplex is the maximizer
$\boldsymbol{\alpha}^*$. Then a lower bound of the loss is given by
\begin{equation}
  \label{eq:lower_bound}
\mathcal J(W,H;Y)
\ge -\sqrt{\frac{m! E_W E_H}{ m-1 }}  \left\|\boldsymbol{\alpha}^*-\boldsymbol{y}\right\|_2
-m!\sum_{i=1}^m\alpha_i^*\log\alpha_i^*.
\end{equation}
Let \(\mathcal J^*\) denote the right-hand side of~\cref{eq:lower_bound}.  Since \(\mathcal J^*\) depends only on the fixed vector \(\boldsymbol{\alpha}^*\), Steps~1--3 prove \(\mathcal J^*\le \mathcal J(W',H';Y)\) for every feasible competitor \((W',H')\); thus \(\mathcal J^*\) is a universal lower bound. Finally, we show that \( \boldsymbol{\alpha}^* \neq \boldsymbol{y} \). If \( \boldsymbol{\alpha}^* = \boldsymbol{y} \), then \( \alpha_i^* = y_i \) for all \( i \). Then~\cref{eq:alpha_eq2} becomes
\[
\log y_i =\frac1m\sum_{r=1}^m\log y_r.
\]
That forces \(y_1=\cdots=y_m=1/m\), contradicting the assumption that \(\boldsymbol{y}\) is nonuniform. Hence \(\boldsymbol{\alpha}^*\neq\boldsymbol{y}\). In particular, if we choose \(\boldsymbol{\alpha}=\boldsymbol{\alpha}^*\), then \(\gamma(\boldsymbol{\alpha}^*)>0\), the matrix \(C\) has exactly \(m-1\) nonzero singular values, and the division by \(\|\boldsymbol{\alpha}^*-\boldsymbol{y}\|_2\) in this step is valid.

\paragraph{Step~4 (Sufficiency: constructing $(W,H)$ that attain the Step~3 lower bound).}
If $(W,H)$ have the form stated in the theorem, each inequality in the preceding steps holds with equality; hence this construction attains the lower bound from Step~3.
It remains to verify: (E1) equality in Jensen~\cref{eq:jensen}; (E2) equality in the trace--nuclear bound~\cref{eq:trace_bound}; and (E3) that $\boldsymbol{\alpha}$ satisfies the closed system~\cref{eq:alpha_eq2}.

\medskip

\noindent\textit{(E3) System for $\boldsymbol{\alpha}$.}  First, choose $\boldsymbol{\alpha}=\boldsymbol{\alpha}^*$ to be the unique solution of the closed system~\cref{eq:alpha_eq2} and set $A, C$ as in the statement of the theorem. Then~\cref{eq:alpha_eq2} holds by construction. 

\medskip

\noindent\textit{(E2) Trace--nuclear equality~\cref{eq:trace_bound}.} 
Next, we verify that the trace bound~\cref{eq:trace_bound} holds with equality, i.e., \(\mathrm{tr}(C^\top WH) = -\gamma \sqrt{E_W E_H}\). We compute the product \(WH\) with the form stated in the theorem:
\[
WH 
= \left(\sqrt{ \frac{E_W}{m-1}} UQ^\top\right)
  \left(-\sqrt{ \frac{E_H}{m-1}} QV^\top\right)
= -\frac{\sqrt{E_W E_H}}{m-1} U (Q^\top Q) V^\top
= -\frac{\sqrt{E_W E_H}}{m-1} U V^\top.
\]
Hence,
\[
C^\top WH
= (U \Sigma V^\top)^\top \left(- \frac{\sqrt{E_W E_H}}{m-1} U V^\top\right)
= -\frac{\sqrt{E_W E_H}}{m-1} V \Sigma (U^\top U) V^\top.
\]
Taking the trace and using \(\Sigma=\gamma I_{m-1}\), \(U^\top U=I_{m-1}\), and \(V^\top V=I_{m-1}\):
\[
\mathrm{tr}(C^\top WH)
= -\frac{\sqrt{E_W E_H}}{m-1} \mathrm{tr} \left(V (\gamma I_{m-1}) V^\top\right) \\
= -\frac{\gamma \sqrt{E_W E_H}}{m-1} \mathrm{tr} (V^\top V) \\
= -\gamma \sqrt{E_W E_H}.
\]
Additionally, we verify that the individual constraints on the factors are satisfied. Let \(W^*\) and \(H^*\) be the constructed matrices. We compute each Frobenius norm separately:
\[
\left\|W^*\right\|_F^2 = \left\| \sqrt{\frac{E_W}{m-1}} U Q^\top \right\|_F^2 = \frac{E_W}{m-1} \|UQ^\top\|_F^2 = \frac{E_W}{m-1} \mathrm{tr}\left(Q U^\top U Q^\top\right) = \frac{E_W}{m-1} \mathrm{tr}(Q Q^\top).
\]
Since \(Q\) is a \(d \times (m-1)\) matrix with orthonormal columns, \(\mathrm{tr}(QQ^\top) = \mathrm{tr}(Q^\top Q) = \mathrm{tr}(I_{m-1}) = m-1\). Thus:
\[
\left\|W^*\right\|_F^2 = \frac{E_W}{m-1} (m-1) = E_W.
\]
Similarly, for \(H^*\):
\[
\left\|H^*\right\|_F^2 = \left\| -\sqrt{\frac{E_H}{m-1}} Q V^\top \right\|_F^2 = \frac{E_H}{m-1} \|QV^\top\|_F^2 = \frac{E_H}{m-1} \mathrm{tr}(V Q^\top Q V^\top) = \frac{E_H}{m-1} (m-1) = E_H.
\]
The construction thus satisfies the constraints \(\|W\|_F^2 = E_W\) and \(\|H\|_F^2 = E_H\). 

\medskip

\noindent\textit{(E1) Jensen equality~\cref{eq:jensen}.}  Finally, it suffices to show that Jensen's inequality~\cref{eq:jensen} holds with equality for our choice of $W,H$ and $\boldsymbol{\alpha}^*$. Recall~\cref{eq:jensen} asserts for each column $j$:
\[
      \log \left(\sum_{k=1}^m \alpha^*_{\tau_{j,k}}
      \frac{\exp\left((WH)_{k,j}\right)}{\alpha^*_{\tau_{j,k}}}\right)
      \ge \sum_{k=1}^m \alpha^*_{\tau_{j,k}}
      \log \left( \frac{\exp\left((WH)_{k,j}\right)}{\alpha^*_{\tau_{j,k}}}\right).
\]
Equality holds if the ratios \( \exp\left((WH)_{k,j}\right)/\alpha^*_{\tau_{j,k}} \) are the same for all $k=1,\dots,m$. But by~\cref{eq:alpha_eq2}, for each $i$
\[
\log\alpha^*_{i}
  = - k \left(\alpha^*_{i}-y_{i}\right)
    +\frac1m\sum_{r=1}^m\log\alpha^*_{r}
  \quad\Longrightarrow\quad
- k (\alpha^*_{i}-y_{i})
  = \log\alpha^*_{i}
    -\frac1m\sum_{r=1}^m\log\alpha^*_{r}.
\]
Since $WH=-k C$ and $C_{i,j}=\alpha^*_{\tau_{j,i}}-y_{\tau_{j,i}}$, we obtain
\[ (WH)_{k,j}
= - k\left(\alpha^*_{\tau_{j,k}}-y_{\tau_{j,k}}\right)
= \log\alpha^*_{\tau_{j,k}}
  -\frac1m\sum_{r=1}^m\log\alpha^*_{r}.
\]
Therefore
\[
\frac{\exp\left((WH)_{k,j}\right)}{\alpha^*_{\tau_{j,k}}}
= \frac{\exp \left(\log\alpha^*_{\tau_{j,k}}
                     -\frac1m\sum_{r=1}^m\log\alpha^*_{r}\right)}
       {\alpha^*_{\tau_{j,k}}}
= \exp\left(-\frac1m\sum_{r=1}^m\log\alpha^*_{r}\right),
\]
which is independent of $k$.  Hence the condition for equality in Jensen's
inequality is satisfied, and~\cref{eq:jensen} holds with equality for every
column $j$.  Therefore equality holds throughout the Step~1--Step~3 lower-bound
chain for the displayed factorization, so the constructed pair satisfies
\(\mathcal J(W,H;Y)=\mathcal J^*\).

\paragraph{Step~5 (Necessity: characterizing all $(W,H)$ that attain the Step~3 lower bound).}
At optimality, combining the equality conditions with the KKT stationarity equations implies: (1) $C$ and $WH$ share common left/right singular vectors; (2) each of $W$ and $H$ has rank $m-1$ and each has the same nonzero singular values; and (3) $W$ and $H$ take the stated forms with an arbitrary partial orthogonal matrix $Q$.

From the sufficiency proof, any optimizer must attain the lower bound in~\cref{eq:lower_bound}. Consequently, \cref{eq:jensen,eq:trace_bound_1,eq:trace_bound} must hold with equality. First, by the equality condition of Jensen's inequality~\cref{eq:jensen}, for each column \(j\) the ratios \( \exp\left((WH)_{k,j}\right)/\alpha^*_{\tau_{j,k}} \) must be equal over all \(k\).  Hence \( \boldsymbol{\sigma}(WH)_{:,j}
=(\alpha^*_{\tau_{j,1}},\ldots,\alpha^*_{\tau_{j,m}})^\top=A_{:,j} \). Thus \(\boldsymbol{\sigma}(WH)=A\).  
We now consider the KKT conditions for~\cref{eq:obj_function_constrained}. Since the active Frobenius constraints are saturated at the optimum, there exist nonnegative multipliers \(\lambda_1,\lambda_2\).  
With the Lagrangian
\[
  \mathscr{J}(W,H,\lambda_1,\lambda_2)
  = \sum_{j=1}^{m!}\left[-Y_{:,j}^\top (WH)_{:,j}
      +\log \sum_{i=1}^m e^{(WH)_{i,j}}\right]
   + \frac{\lambda_1}2 \left(\|W\|_F^2-E_W\right) + \frac{\lambda_2}2 \left(\|H\|_F^2-E_H\right),
\]
stationarity $\nabla_W\mathscr{J}=0$ gives
\[
  (-Y + \boldsymbol{\sigma}(WH)) H^\top + \lambda_1 W = 0.
\]
Since $\boldsymbol{\sigma}(WH)=A$ and \(A-Y=C\),
\begin{equation}\label{eq:K1}
  C H^\top + \lambda_1 W = 0.
\end{equation}
Similarly, stationarity $\nabla_H\mathscr{J}=0$ yields
\begin{equation}\label{eq:K2}
  W^\top C + \lambda_2 H = 0.
\end{equation}
The active constraints imply \(W\neq 0\) and \(H\neq 0\), so the nonzero equality
case applies. Equality in~\cref{eq:nuclear_frobenius_combined} (by~\cref{lem:nuclear_frobenius_combined}) implies that \(W\)'s right singular vectors coincide with \(H\)'s left singular vectors and that the two sets of singular values are proportional. Thus we can write the SVDs of \(W\) and \(H\) as
\begin{equation}\label{eq:general_SVD}
  W = U_W \Sigma_W Q^\top,
  \qquad
  H = Q \Sigma_H V_H^\top,
  \qquad
  \Sigma_W = \eta \Sigma_H,
  \qquad
  Q^\top Q = I_{m-1}.
\end{equation}
Moreover, we have the following observation:
\[
\|W\|_F^2 = \|\Sigma_W\|_F^2 = E_W, \qquad
  \|H\|_F^2 = \|\Sigma_H\|_F^2 = E_H
\qquad\Longrightarrow\qquad
\Sigma_H = \sqrt{\frac{E_H}{E_W}}\Sigma_W.
\]
By~\cref{lem:von_neumann_rectangular}, equality in~\cref{eq:von_neumann}
implies that \(C\) and the product \(WH\) share their singular directions, so
\begin{equation}\label{eq:shared_singular}
  C  = U \gamma I_{m-1} V^{\top},
  \qquad
  WH = U \Sigma_{WH} V^{\top}.
\end{equation}
The equality in~\cref{eq:trace_bound_1} implies that $\mathrm{rank}(WH) \le m-1$. Also, we have established $\boldsymbol{\sigma}(WH) = A$. And the columns of $A$ and $C$ are the $m!$ permutations of a single vector.
Therefore, by applying~\cref{lem:orbit_S_softmax_2_transitive}, we conclude that $\Sigma_{WH}$ must be a scalar multiple of the $(m-1) \times (m-1)$ identity matrix.
Let this scalar be $k_0$. Thus, \( \Sigma_{WH} = k_0 I_{m-1}\). Because \(U\) and \(U_W\) are both orthonormal bases for the same subspace
\(\mathrm{col}(WH)=\mathrm{col}(W)\), there exist orthogonal matrices \(S, T\in\mathbb{R}^{(m-1)\times(m-1)}\) such that
\begin{equation}\label{eq:bridge}
  U_W = U S, \qquad V_H = V T^{\top}.
\end{equation}
Inserting~\cref{eq:bridge} into~\cref{eq:general_SVD} and using \(\Sigma_H=\sqrt{E_H/E_W}\Sigma_W\) we obtain
\begin{equation}\label{eq:W_H_expanded}
  W = U S \Sigma_W Q^{\top},
  \qquad
  H = \sqrt{\frac{E_H}{E_W}} Q \Sigma_W T V^{\top},
  \qquad
  WH = \sqrt{\frac{E_H}{E_W}} U S \Sigma_W^2 T V^{\top}.
\end{equation}
Comparing the two singular value decompositions
\[
  WH = U \Sigma_{WH} V^\top
  \quad\text{and}\quad
  WH = U S \left(\sqrt{\frac{E_H}{E_W}} \Sigma_W^2\right) T V^{\top},
\]
the uniqueness of the singular values forces
\(
\Sigma_{WH} \) and \( \sqrt{E_H/E_W} \Sigma_W^2 \) to have the same diagonal entries (up to permutation). Since \(\Sigma_{WH}\) and \(\sqrt{E_H/E_W} \Sigma_W^2\) have the same diagonal entries, 
\[
  \|W\|_F^2  = \sum_{i=1}^{m-1}s_i(W)^2
                = \sqrt{\frac{E_W}{E_H}}\sum_{i=1}^{m-1}k_0
                = \sqrt{\frac{E_W}{E_H}} k_0 (m-1).
\]
The Frobenius norm constraint \(\|W\|_F^2=E_W\) then gives
\[
  \sqrt{\frac{E_W}{E_H}} k_0 (m-1) = E_W
  \quad\Longrightarrow\quad
  k_0=\frac{\sqrt{E_W E_H}}{m-1}.
\]
Hence the singular values themselves satisfy
\(s_i(W)=\sqrt{E_W / (m-1)}\) and \( s_i(H)=\sqrt{E_H / (m-1)}\).
Plug~\cref{eq:shared_singular,eq:W_H_expanded} into~\cref{eq:K1},
\[
  U \gamma I_{m-1} T^{\top}\Sigma_H Q^{\top}
  + \lambda_1 U S \Sigma_W Q^{\top} = 0.
\]
Multiplying on the left by \(U^{\top}\) and on the right by \(Q\) and using \(\Sigma_W=\sqrt{E_W/(m-1)} I_{m-1}\)  and \(\Sigma_H=\sqrt{E_H/(m-1)} I_{m-1}\) gives
\begin{equation}\label{eq:K1_matrix}
  \gamma\sqrt{E_H} T^{\top}
  + \lambda_1\sqrt{E_W} S 
  = 0.
\end{equation}
Plug~\cref{eq:shared_singular,eq:W_H_expanded} into~\cref{eq:K2},
\[
  Q \Sigma_W S^{\top}\gamma I_{m-1} V^{\top}
  + \lambda_2 Q \Sigma_H T V^{\top}=0.
\]
Left-multiplying by \(Q^{\top}\) and right-multiplying by \(V\) and using \(\Sigma_W=\sqrt{E_W/(m-1)} I_{m-1}\)  and \(\Sigma_H=\sqrt{E_H/(m-1)} I_{m-1}\) yields
\begin{equation}\label{eq:K2_matrix}
  \gamma \sqrt{E_W} S^{\top}
  + \lambda_2 \sqrt{E_H} T 
  = 0.
\end{equation}
Solving~\cref{eq:K1_matrix} for \(T^{\top}\) gives
\[
  T^{\top}  =  -\frac{\lambda_1}{\gamma}\sqrt{\frac{E_W}{E_H}} S.
\]
Transposing and substituting \(T = -(\lambda_1/\gamma)\sqrt{E_W/E_H}\, S^{\top}\) into~\cref{eq:K2_matrix} gives
\[
  \gamma \sqrt{E_W} S^{\top}
  + \lambda_2\left(-\frac{\lambda_1}{\gamma}\sqrt{E_W} S^{\top}\right)
   = S^{\top} \sqrt{E_W} \left(\gamma - \frac{\lambda_1 \lambda_2}{\gamma}\right) = 0.
\]
Thus, we have \( \gamma = \sqrt{\lambda_1 \lambda_2} \) and
\[
T = -\sqrt{\frac{\lambda_1 E_W}{\lambda_2 E_H}} S^{\top} = - S^\top.
\]
Since \( T \) and \( S^\top \) are both orthogonal matrices, the scalar factor \(\sqrt{(\lambda_1 E_W)/(\lambda_2 E_H)}\) must be 1.
Substituting into~\cref{eq:W_H_expanded} gives
\[
  W = U S \left(\sqrt{ \frac{E_W}{m-1}} I_{m-1}\right) Q^{\top}
    = \sqrt{ \frac{E_W}{m-1}} U S Q^{\top},
\]
\[
  H = - Q \left(\sqrt{ \frac{E_H}{m-1}} I_{m-1}\right) S^{\top}V^{\top}
    = -\sqrt{ \frac{E_H}{m-1}} Q S^{\top} V^{\top}.
\]
Set \(\widetilde Q = Q S^{\top}\).  Then
\(\widetilde Q^{\top}\widetilde Q = S Q^{\top}Q S^{\top}=I\), so
\(\widetilde Q\) is orthonormal. 
Renaming \(\widetilde Q\) back to \(Q\) gives exactly
\[
  W=\sqrt{\frac{E_W}{m-1}} UQ^\top,
  \quad
  H=-\sqrt{\frac{E_H}{m-1}} QV^\top.
\]
The proof is now complete.
\end{proof}

\begin{theorem}[Structure of Optimal Embeddings for the Symmetric Group]
\label{thm:embedding_products_structure}
Under the conditions of~\cref{thm:single_block_perm}, the optimal \(W^* \in \mathbb{R}^{m\times d}\) and \(H^* \in \mathbb{R}^{d\times m!}\) and their products satisfy
\[
  W^*(W^*)^\top=\frac{E_W}{m-1} \left(I_m-\frac{1}{m}\boldsymbol{1}_m\boldsymbol{1}_m^\top\right),\qquad
  (H^*)^\top H^*=\frac{E_H}{(m-1)\gamma^2} C^\top C,\qquad
  W^*H^*=-kC.
\]
Here the rows of \(W^*\) form a simplex ETF, and the context embedding Gram is a scalar multiple of \(C^\top C\). Finally, the optimal logit matrix satisfies the matrix identity \(W^*H^*=-kC\).
\end{theorem}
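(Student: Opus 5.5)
Everything will be read off the explicit form in \cref{thm:single_block_perm}: \(W^*=\sqrt{E_W/(m-1)}\,UQ^\top\) and \(H^*=-\sqrt{E_H/(m-1)}\,QV^\top\) with \(Q^\top Q=I_{m-1}\), where \(C=U(\gamma I_{m-1})V^\top\) is the rank-\((m-1)\) SVD. The three identities then reduce to computing \(UU^\top\), \(VV^\top\) and \(UV^\top\) in terms of \(C\).

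\textbf{Logit identity.} We have \(W^*H^*=-\frac{\sqrt{E_WE_H}}{m-1}UV^\top\). Since \(C=\gamma UV^\top\), this equals \(-\frac{\sqrt{E_WE_H}}{(m-1)\gamma}C\), and the statement of \cref{thm:single_block_perm} records \(k=\sqrt{E_WE_H}/((m-1)\gamma)\). Hence \(W^*H^*=-kC\).

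\textbf{Context Gram.} We have \((H^*)^\top H^*=\frac{E_H}{m-1}VQ^\top QV^\top=\frac{E_H}{m-1}VV^\top\). Because \(C^\top C=\gamma^2VV^\top\) and \(\gamma>0\) (shown in Step~3 of the proof of \cref{thm:single_block_perm}, using that \(\boldsymbol{\alpha}^*\neq\boldsymbol{y}\)), this gives \((H^*)^\top H^*=\frac{E_H}{(m-1)\gamma^2}C^\top C\).

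\textbf{Output-projection Gram.} We have \(W^*(W^*)^\top=\frac{E_W}{m-1}UU^\top\), and \(UU^\top\) is the orthogonal projector onto \(\mathrm{col}(C)\). Every column of \(C\) is \(g_j\circ(\boldsymbol{\alpha}-\boldsymbol{y})\), and \(\boldsymbol{1}_m^\top(\boldsymbol{\alpha}-\boldsymbol{y})=0\). So \(\mathrm{col}(C)\subseteq\mathcal V_0=\boldsymbol{1}_m^\perp\). Since \(\mathrm{rank}(C)=m-1=\dim\mathcal V_0\) (\cref{lem:2_transitive_svd}), the two subspaces are equal, and therefore \(UU^\top=I_m-\frac1m\boldsymbol{1}_m\boldsymbol{1}_m^\top\).

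To see the simplex ETF claim, write \(W^*=cM^*\widetilde U^\top\) with \(c=\sqrt{E_W/m}\). Here \(M^*=\sqrt{m/(m-1)}\,(I-\frac1m\boldsymbol{1}\boldsymbol{1}^\top)\) is the standard ETF from~\cref{eq:simplex-etf-mstar}, so \(cM^*=\sqrt{E_W/(m-1)}\,UU^\top\). We need a partial orthogonal \(\widetilde U\in\mathbb R^{d\times m}\) with \(\widetilde U^\top\widetilde U=I_m\) and \(UU^\top\widetilde U^\top=UQ^\top\). Take \(\widetilde U=QU^\top+\boldsymbol{q}\boldsymbol{1}_m^\top/\sqrt m\), where \(\boldsymbol q\) is a unit vector orthogonal to \(\mathrm{col}(Q)\).

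The main obstacle is that this construction needs \(d\ge m\) so that \(\boldsymbol q\) exists. If that became delicate, I would instead simply state the ETF property intrinsically from the Gram matrix: all rows have squared norm \(E_W/m\), and all pairwise inner products equal \(-E_W/(m(m-1))\).
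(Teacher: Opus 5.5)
Your proposal is correct and follows essentially the same route as the paper: you read off \(UU^\top\), \(VV^\top\) and \(UV^\top\) from the explicit factorization in \cref{thm:single_block_perm}, and your dimension-count argument for \(\mathrm{col}(C)=\boldsymbol{1}_m^\perp\) does the same job as the paper's step \(\boldsymbol{1}_m^\top U\Sigma=\boldsymbol{0}^\top\Rightarrow\boldsymbol{1}_m^\top U=\boldsymbol{0}^\top\). Your explicit \(\widetilde U=QU^\top+\boldsymbol q\boldsymbol{1}_m^\top/\sqrt m\) is a correct extra verification of the simplex ETF definition, which the paper leaves implicit, and the worry about needing \(d\ge m\) is moot because \(d\ge m\) is a hypothesis of the theorem.
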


\begin{proof}[Proof of~\cref{thm:embedding_products_structure}]
From~\cref{thm:single_block_perm}, the optimal forms are given by \( W = \sqrt{E_W/(m-1)} U Q^\top \) and \( H = -\sqrt{E_H/(m-1)} Q V^\top \). We derive the structure of their products based on these forms.
For \(WW^\top\):
\[
WW^\top = \left( \sqrt{\frac{E_W}{m-1}} U Q^\top \right) \left( \sqrt{\frac{E_W}{m-1}} U Q^\top \right)^\top = \frac{E_W}{m-1} U Q^\top Q U^\top = \frac{E_W}{m-1} U U^\top. \]
We use the SVD of \(C = U\Sigma V^\top\), where \(\Sigma = \gamma I_{m-1}\). Given that \( C \) has zero column sums, we have:
    \[
    \boldsymbol{1}_m^\top U \Sigma V^\top = \boldsymbol{0}_{m!}^\top.
    \]
    Multiplying both sides by \( V \) gives \( \boldsymbol{1}_m^\top U \Sigma = \boldsymbol{0}_{m-1}^\top \).
    Since \( \Sigma \) is a diagonal matrix with \( m-1 \) nonnegative entries, the above equation implies
    \( \boldsymbol{1}_m^\top U = \boldsymbol{0}_{m-1}^\top \).
This implies that \(\boldsymbol{1}_m\) is orthogonal to every column of \(U\). Since \(U^\top U=I_{m-1}\), the \(m-1\) columns of \(U\in\mathbb{R}^{m\times(m-1)}\) form an orthonormal basis for a \((m-1)\)-dimensional subspace, which must be \(\boldsymbol{1}_m^\perp\). Therefore \(UU^\top\) is the orthogonal projector onto \(\boldsymbol{1}_m^\perp\), 
    \[
    UU^\top = I_m - \frac{1}{m} \boldsymbol{1}_m \boldsymbol{1}_m^\top.
    \]
Substituting this into the expression for \( WW^\top \) yields
\[
WW^\top = \frac{E_W}{m-1} \left( I_m - \frac{1}{m} \boldsymbol{1}_m \boldsymbol{1}_m^\top \right ).
\]
For \(H^\top H\):
\[
H^\top H = \left( -\sqrt{\frac{E_H}{m-1}} Q V^\top \right)^\top \left( -\sqrt{\frac{E_H}{m-1}} Q V^\top \right) = \frac{E_H}{m-1} V Q^\top Q V^\top = \frac{E_H}{m-1} VV^\top.
\]
To relate this to \(C\), we use the SVD of \(C = U\Sigma V^\top\), where \(\Sigma = \gamma I_{m-1}\). From this, we have \(C^\top C = \gamma^2 VV^\top\).
Therefore, \(VV^\top = C^\top C/\gamma^2\). Substitution into the expression for \(H^\top H\) gives
\[ H^\top H = \frac{E_H}{(m-1)\gamma^2} C^\top C. \]
For \( WH \):
\[
WH = \left( \sqrt{\frac{E_W}{m-1}} U Q^\top \right ) \left( -\sqrt{\frac{E_H}{m-1}} Q V^\top \right ) = -\frac{\sqrt{E_W E_H}}{m-1} U Q^\top Q V^\top = -\frac{\sqrt{E_W E_H}}{m-1} U V^\top.
\]
From the SVD of \(C = \gamma U V^\top\), we have \( UV^\top = C/\gamma \). Hence
\[
WH = -\frac{\sqrt{E_W E_H}}{(m-1)\gamma} C.
\]
As shown in~\cref{thm:single_block_perm}, the parameter \(k\) in the system for \(\boldsymbol{\alpha}\) is precisely \(k = \sqrt{E_W E_H}/((m-1)\gamma)\). Thus, we can write \(WH = -kC\).
This completes the proof.
\end{proof}

The preceding result extends from \(S_m\) to any 2-transitive group action on \(m\) elements.

\begin{definition}[$2$-Transitive Group Action]
Let \(\mathcal G\) be a group acting on a set \(\mathcal X\) with \(|\mathcal X| \ge 2\). The action is $2$-transitive if, for any two ordered pairs of distinct elements \((x_1,x_2)\) and \((y_1,y_2)\) in \(\mathcal X\), there exists \(g\in\mathcal G\) such that \(g\circ x_1=y_1\) and \(g\circ x_2=y_2\). When \(\mathcal G\le S_m\) is a permutation group, we say that \(\mathcal G\) is $2$-transitive if its natural action on \(\{1,\ldots,m\}\) is $2$-transitive.
\end{definition}  

The symmetric group \(S_m\) (for \(m\ge2\)) is 2-transitive: given two ordered pairs of distinct elements \((x_1,x_2)\) and \((y_1,y_2)\) in a \(m\)-element set \(\mathcal X\), define a permutation \(\tau\in S_m\) by \(\tau(x_1)=y_1\) and \(\tau(x_2)=y_2\); then extend \(\tau\) arbitrarily to a bijection between \(\mathcal X\setminus\{x_1,x_2\}\) and \(\mathcal X\setminus\{y_1,y_2\}\), which is always possible, hence \(S_m\) acts 2-transitively. 

\begin{theorem}[Optimal Embeddings and Their Structure for 2-Transitive Groups]
\label{thm:2_transitive_optimal_embeddings}
Let \(\boldsymbol{y}\) be a nonuniform target distribution, and let \(Y\) be the target distribution matrix generated by a finite 2-transitive permutation group \(\mathcal G\).
For \(d\ge m\), under~\cref{ass:active_constraint},
\(W^*\in\mathbb{R}^{m\times d}\) and \(H^*\in\mathbb{R}^{d\times |\mathcal G|}\) solve the constrained
problem~\cref{eq:obj_function_constrained} if and only if
\[
  W^*=\sqrt{\frac{E_W}{m-1}} UQ^\top,\qquad
  H^*=-\sqrt{\frac{E_H}{m-1}} QV^\top,
\]
for any partial orthogonal matrix \(Q\in\mathbb{R}^{d\times(m-1)}\) with \(Q^\top Q=I_{m-1}\), where
\begin{itemize}
\item \(C=A-Y\), where \(A\in\mathbb{R}^{m\times |\mathcal G|}\) is obtained by the action of \(\mathcal G\) on \(\boldsymbol{\alpha}=(\alpha_1,\ldots,\alpha_m)\) where
\( A=\big[ g\circ\boldsymbol{\alpha}\mid g\in \mathcal G \big],
\)
with the same enumeration of \(\mathcal G\) as used to construct \(Y\) from \(\boldsymbol{y}\).
Here \(\boldsymbol{\alpha}\) is the unique solution to
\[
k(\alpha_i-y_i)+\log\alpha_i=\bar{\ell}\ (i=1,\dots,m),\quad \sum_{i=1}^m\alpha_i=1,\quad
k=\sqrt{\frac{E_W E_H}{|\mathcal G|(m-1)\sum_{i=1}^m(\alpha_i-y_i)^2}},
\]
with \(\bar{\ell}=(1/m)\sum_{r=1}^m \log \alpha_r\).
\item \(C=U\Sigma V^\top\) is the rank-\((m-1)\) SVD of \(C\), with \(U\in\mathbb{R}^{m\times(m-1)}\), \(V\in\mathbb{R}^{|\mathcal G|\times(m-1)}\), and \(\Sigma=\gamma I_{m-1}\).
\item  \( \gamma = \sqrt{|\mathcal G|/(m-1)}\|\boldsymbol{\alpha}-\boldsymbol{y}\|_2\). Equivalently, \(k=\sqrt{E_W E_H}/((m-1)\gamma)\).
\end{itemize}
Moreover,
\[
  W^*(W^*)^\top=\frac{E_W}{m-1} \left(I_m-\frac{1}{m}\boldsymbol{1}_m\boldsymbol{1}_m^\top\right),\qquad
  (H^*)^\top H^*=\frac{E_H}{(m-1)\gamma^2} C^\top C,\qquad
  W^*H^*=-k C.
\]
\end{theorem}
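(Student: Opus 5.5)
The plan is to rerun the five-step argument of \cref{thm:single_block_perm} and the product computations of \cref{thm:embedding_products_structure}, replacing \(m!\) by \(|\mathcal G|\) throughout. We then check that every step which used \(\mathcal G=S_m\) really only needed 2-transitivity. The helper results were already stated for 2-transitive groups (\cref{lem:2_transitive_svd}, \cref{lem:orbit_S_softmax_2_transitive}), so the argument should carry over almost verbatim.

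\emph{Lower bound.} Step~1 is purely columnwise and uses nothing about the group. Fix \(\boldsymbol\alpha\in\Delta^{m-1}\) with positive entries and set \(A=[g\circ\boldsymbol\alpha\mid g\in\mathcal G]\), \(C=A-Y\), using the same enumeration as for \(Y\). Jensen's inequality on \(\log\) gives
\[
\mathcal J\ge \mathrm{tr}(C^\top WH)-|\mathcal G|\sum_i\alpha_i\log\alpha_i .
\]
For Step~2, \cref{lem:2_transitive_svd} shows that \(C\) has exactly \(m-1\) equal nonzero singular values \(\gamma(\boldsymbol\alpha)=\sqrt{|\mathcal G|/(m-1)}\|\boldsymbol\alpha-\boldsymbol y\|_2\). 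The reason is that \(CC^\top\) commutes with the 2-transitive permutation representation, so it acts as a scalar on \(\boldsymbol 1_m^\perp\) and annihilates \(\boldsymbol 1_m\). The scalar is then fixed by the trace, \(\|C\|_F^2=|\mathcal G|\|\boldsymbol\alpha-\boldsymbol y\|^2\). Von Neumann's trace inequality together with \cref{lem:nuclear_frobenius_combined} gives
\[
\mathcal J\ge\phi(\boldsymbol\alpha):=-\sqrt{\frac{|\mathcal G|E_WE_H}{m-1}}\,\|\boldsymbol\alpha-\boldsymbol y\|_2-|\mathcal G|\sum_i\alpha_i\log\alpha_i .
\]
Step~3 is unchanged. \(\phi\) is strictly concave, its maximizer is interior because the entropy derivative blows up at the boundary, and the Lagrange conditions give the stated closed system with \(k\) defined using \(|\mathcal G|\). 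Nonuniformity of \(\boldsymbol y\) forces \(\boldsymbol\alpha^*\ne\boldsymbol y\), so \(\gamma>0\).

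\emph{Sufficiency and necessity.} Sufficiency is the same check as before. The candidate \(W^*,H^*\) has \(W^*H^*=-\frac{\sqrt{E_WE_H}}{m-1}UV^\top=-kC\), saturates both norm budgets, attains the trace bound, and by the \(\boldsymbol\alpha\)-system makes the ratios \(e^{(WH)_{k,j}}/A_{k,j}\) constant in \(k\), so Jensen holds with equality. For necessity, equality in Jensen gives \(\boldsymbol\sigma(WH)=A\), so the KKT equations become \(CH^\top+\lambda_1W=0\) and \(W^\top C+\lambda_2H=0\). Equality in \cref{lem:nuclear_frobenius_combined} aligns the SVDs of \(W\) and \(H\), and equality in von Neumann forces \(WH\) and \(C\) to share singular subspaces with \(\mathrm{rank}(WH)\le m-1\).

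\emph{Main obstacle.} The step that needs care is the one showing that \(WH\) has equal singular values. This is the only place where the orbit structure of the logits enters beyond \(C\), and for \(S_m\) it was handled by \cref{lem:orbit_S_softmax_2_transitive}. Since that lemma is stated for 2-transitive groups, I would invoke it directly: \(\boldsymbol\sigma(WH)=A\) means each column of \(WH\) equals \(\log A_{:,j}\) plus a constant, and the column space lies in \(\boldsymbol 1_m^\perp\). Hence \(WH\) is, up to column shifts that rank considerations eliminate, an orbit matrix of the centered \(\log\boldsymbol\alpha\), and 2-transitivity gives it equal singular values \(k_0\). The rest is identical to the \(S_m\) case. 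The budget gives \(k_0=\sqrt{E_WE_H}/(m-1)\), and the KKT system yields \(T=-S^\top\), which leads to the stated forms after absorbing \(S\) into \(Q\).

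\emph{Gram matrices.} These follow exactly as in \cref{thm:embedding_products_structure}. Zero column sums of \(C\) give \(UU^\top=I_m-\frac1m\boldsymbol 1_m\boldsymbol 1_m^\top\), which yields the simplex ETF \(W^*(W^*)^\top\). The identity \(C^\top C=\gamma^2VV^\top\) gives \((H^*)^\top H^*\), and \(UV^\top=C/\gamma\) gives \(W^*H^*=-kC\).
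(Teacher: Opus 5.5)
Your proposal is correct and follows the paper's proof essentially verbatim. You use the Jensen/von Neumann/nuclear--Frobenius lower bound with $|\mathcal G|$ in place of $m!$, \cref{lem:2_transitive_svd} for the isotropic spectrum of $C$, and \cref{lem:orbit_S_softmax_2_transitive} to force $\Sigma_{WH}=k_0I_{m-1}$, followed by the same KKT algebra and Gram computations as in the $S_m$ case. One small precision: the constant column shifts in $WH$ are eliminated by the zero column sums ($\mathrm{col}(WH)\subseteq\boldsymbol 1_m^\perp$), not by rank, and at that point the $\boldsymbol\alpha$-system already gives $WH=-kC$ directly.
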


See~\cref{pf:2_transitive_optimal_embeddings} for the proof of~\cref{thm:2_transitive_optimal_embeddings}.

\section{Symmetry Transfer in Multi-Block Models}\label{sec:multi_block}

In this section, we extend the single-block framework to target matrices formed by concatenating multiple group-orbit blocks. We consider this case since, in real-world corpora, a fixed word group appears under many contexts that induce different base target distributions. Each base distribution generates its own orbit under the group action and therefore contributes a separate block of columns to the overall target matrix \(Y\). Thus \(Y\) is formed by concatenating several group orbits, each obtained by applying a group action to a base probability vector.

\begin{definition}[Multi-Block Group-Orbit Target Distribution Matrix]
\label{def:multi_block_group_orbit_Y}
Let \(n_{\mathrm{blk}}\) be the number of blocks, and let \(\mathcal G_1, \mathcal G_2, \ldots, \mathcal G_{n_{\mathrm{blk}}}\) be finite groups, where each group \(\mathcal G_i\) acts on \(\mathbb{R}^m\); write \(g\circ \boldsymbol{x}\) for this action.
Let \(\boldsymbol{y}_1, \boldsymbol{y}_2, \ldots, \boldsymbol{y}_{n_{\mathrm{blk}}}\) be base probability vectors in \(\mathbb{R}^m\).
A multi-block group-orbit target distribution matrix \(Y \in \mathbb{R}^{m \times n}\) is defined as the horizontal concatenation of \(n_{\mathrm{blk}}\) blocks:
\[
Y = [Y_1 \mid Y_2 \mid \cdots \mid Y_{n_{\mathrm{blk}}}],
\]
where \(n = \sum_{i=1}^{n_{\mathrm{blk}}} n_i\), and each block \(Y_i \in \mathbb{R}^{m \times n_i}\) is constructed from its respective group \(\mathcal G_i\) and base vector \(\boldsymbol{y}_i\) according to the following properties:
\begin{itemize}
    \item Closure: The set of columns forming block \(Y_i\) consists exclusively of vectors generated by the action of the group \(\mathcal G_i\) on the base vector \(\boldsymbol{y}_i\). That is, every column in \(Y_i\) is of the form \(g\circ \boldsymbol{y}_i\) for some \(g \in \mathcal G_i\). Furthermore, if \(\boldsymbol{y}\) is any column vector present in block \(Y_i\), then for any group element \(g' \in \mathcal G_i\), the transformed vector \(g'\circ \boldsymbol{y}\) must also be a column within the same block \(Y_i\) (it could be an identical column or a column at a different position within \(Y_i\)).
    \item Balance: Each distinct vector appears an equal number of times as a column within block \(Y_i\).
\end{itemize}
\end{definition}

\paragraph{Motivating example.}
Consider the animal set \{\texttt{dog}, \texttt{cat}, \texttt{rabbit}\} and the four contexts:
\begin{itemize}
  \item \(\mathbb{P}(\texttt{dog})=1\): ``\texttt{He barks and wags his tail. My pet is a \textvisiblespace}''
  \item \(\mathbb{P}(\texttt{cat})=1\): ``\texttt{She purrs and loves milk. My pet is a \textvisiblespace}''
  \item \(\mathbb{P}(\texttt{rabbit})=1\): ``\texttt{He hops around the garden. My pet is a \textvisiblespace}''
  \item \(\mathbb{P}(\texttt{dog})=\mathbb{P}(\texttt{cat})=\mathbb{P}(\texttt{rabbit})=1/3\): ``\texttt{I have a pet at home. My pet could be a \textvisiblespace}''
\end{itemize}
Collecting the animal rows gives the \(3\times4\) submatrix
\[
\widetilde{Y}_{\mathrm{animals}}
=\begin{bmatrix}
1   & 0   & 0   & \frac{1}{3} \\[4pt]
0   & 1   & 0   & \frac{1}{3} \\[4pt]
0   & 0   & 1   & \frac{1}{3}
\end{bmatrix}.
\]
Here, the first three columns form Block 1: an \(S_3\)-orbit generated by the base vector \(e_{\texttt{dog}}\) (its permutations yield the one-hot vectors for \texttt{cat} and \texttt{rabbit}). The fourth column forms Block 2: the uniform vector \((1/3)\boldsymbol{1}_3\), which is a fixed point of the \(S_3\) action (orbit size one). Thus, \(Y\) naturally decomposes into multiple blocks, each with a base distribution. 

\begin{theorem}[Optimal Embeddings and Their Structure for Multi-Block Cyclic-Shift Groups]
\label{thm:multi_block_cyclic}
Let \( Y = [Y_1 | Y_2 | \cdots | Y_{n_{\mathrm{blk}}}] \in \mathbb{R}^{m \times n_{\mathrm{blk}}m} \) be a multi-block group-orbit target distribution matrix as defined in~\cref{def:multi_block_group_orbit_Y} where each group \( \mathcal G_i \) is a copy of \(C_m\) acting by cyclic shifts.
Here \(C_m=\langle\Pi\rangle=\{I_m,\Pi,\ldots,\Pi^{m-1}\}\) is the cyclic group of order \(m\) generated by the cyclic-shift matrix \(\Pi\).
Suppose at least one of the underlying probability vectors \(\boldsymbol{y}_i\) is nonuniform. For \(d \ge m\), under~\cref{ass:active_constraint}, any minimizer pair \((W^*,H^*)\) of the optimization problem must satisfy the following properties.
The product logit matrix \(Z^*=W^*H^*\) is a block-structured matrix formed by concatenating \(n_{\mathrm{blk}}\) circulant blocks, \(Z^* = [Z_1^* | Z_2^* | \cdots | Z_{n_{\mathrm{blk}}}^*]\), where each block is defined by the cyclic shifts of an optimal generating vector:
\[ (Z_i^*)_{:,j}=\Pi^{j-1}\boldsymbol z_i^*,\qquad j=1,\ldots,m. \]
The set of optimal generating vectors \(\{\boldsymbol{z}_{1}^*, \ldots, \boldsymbol{z}_{n_{\mathrm{blk}}}^*\}\) is obtained as a solution to the convex optimization problem:
\[
\begin{aligned}
\min_{\boldsymbol{z}_{1}, \ldots, \boldsymbol{z}_{n_{\mathrm{blk}}} \in \mathbb{R}^m} \quad & \sum_{i=1}^{n_{\mathrm{blk}}} \mathcal L \left(\sigma(\boldsymbol{z}_{i}), \boldsymbol{y}_i\right) \\
\text{s.t.} \quad & \left\| [ [\Pi^{j-1} \boldsymbol{z}_{1}]_{j=1}^{m} | \cdots | [\Pi^{j-1} \boldsymbol{z}_{n_{\mathrm{blk}}}]_{j=1}^{m} ] \right\|_* \le \sqrt{E_W E_H}.
\end{aligned}
\]
Furthermore,
\[ W^*(W^*)^\top = \sqrt{\frac{E_W}{E_H}} (Z^* (Z^*)^\top)^{1/2}, \]
which is a circulant matrix.
\end{theorem}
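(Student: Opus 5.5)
The plan is to mirror the single-block proof of \cref{thm:single_block_cyclic}, now carrying the symmetrization out blockwise with a common conjugation. Write $Z=WH=[Z_1|\cdots|Z_{n_{\mathrm{blk}}}]$ with $Z_i\in\mathbb R^{m\times m}$. By the Closure and Balance properties of \cref{def:multi_block_group_orbit_Y}, after reordering the columns inside each block (which changes neither the loss nor any norm), we may take $(Y_i)_{:,j}=\Pi^{j-1}\boldsymbol y_i$. For each block, align the columns as $\boldsymbol z_{i,j}:=\Pi^{-(j-1)}(Z_i)_{:,j}$ and set $\bar{\boldsymbol z}_i:=\frac1m\sum_j\boldsymbol z_{i,j}$. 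The symmetrized block $\widetilde Z_i$ is the circulant matrix generated by $\bar{\boldsymbol z}_i$. Equivalently,
\[
\widetilde Z=\frac1m\sum_{t=1}^m \Pi^{t-1} Z\,\bigl(I_{n_{\mathrm{blk}}}\otimes \Pi^{-(t-1)}\bigr),
\]
which is an average of matrices $P_tZR_t$ with $P_t,R_t$ orthogonal. Jensen's inequality applied blockwise with the convex $f_i(\boldsymbol u)=\mathcal L(\sigma(\boldsymbol u),\boldsymbol y_i)$ (\cref{lem:softmax_ce_strict_convexity_V0}) shows the loss does not increase. Unitary invariance and convexity of $\|\cdot\|_*$ give $\|\widetilde Z\|_*\le\|Z\|_*\le\|W\|_F\|H\|_F\le\sqrt{E_WE_H}$. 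The balanced SVD factorization \cref{eq:wh_construction} then realizes $\widetilde Z$ feasibly, exactly as in Step~2 of the earlier proof.

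Next, for an optimal $(W^*,H^*)$, equality in Jensen together with strict convexity on $\mathcal V_0$ forces $\boldsymbol z^*_{i,j}=\boldsymbol z^*_{i,1}+\beta_{i,j}\boldsymbol 1_m$ within each block. Write $\boldsymbol z^*_{i,1}$ for its projection onto $\mathcal V_0$, absorbing the mean into $\beta$. This gives $Z^*=N^*+\boldsymbol 1_m\boldsymbol\beta^{*\top}$, where $N^*$ has circulant blocks and columns in $\mathcal V_0$. Because $\boldsymbol 1_m^\top N^*=0$, we get $Z^{*\top}Z^*=N^{*\top}N^*+m\boldsymbol\beta^*\boldsymbol\beta^{*\top}$. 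The Weyl plus trace argument from Step~3 then gives $\|N^*\|_*<\|Z^*\|_*$ whenever $\boldsymbol\beta^*\ne0$, and the loss is unchanged by removing the shifts. The balanced factorization of $N^*$ would then have strictly slack budgets, contradicting \cref{ass:active_constraint}. Hence every block is circulant, $(Z_i^*)_{:,j}=\Pi^{j-1}\boldsymbol z_i^*$.

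The loss then equals $m\sum_i\mathcal L(\sigma(\boldsymbol z_i),\boldsymbol y_i)$. By the two-directional argument above (any feasible $(W,H)$ yields a concatenated-circulant $Z$ with $\|Z\|_*\le\sqrt{E_WE_H}$, and conversely any such $Z$ is realizable), the problem is equivalent to the stated convex program. That program is convex because its objective is a sum of convex functions and its constraint is a nuclear norm of a linear map of $(\boldsymbol z_i)$. Finally, tightness of $\|Z^*\|_*=\|W^*\|_F\|H^*\|_F$ (\cref{lem:nuclear_frobenius_combined}) gives $W^*=U\Sigma_WQ^\top$ and $H^*=Q\Sigma_HV^\top$ with $\Sigma_W=\sqrt{E_W/E_H}\,\Sigma_H$. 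Therefore $W^*W^{*\top}=\sqrt{E_W/E_H}\,(Z^*Z^{*\top})^{1/2}$.

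The step I expect to be the main obstacle is showing this matrix is circulant. The single-block \cref{lem:svd_factor_symmetry} does not apply directly, because $Z^*$ is no longer square circulant. Instead I would write $Z^*Z^{*\top}=\sum_iZ_i^*Z_i^{*\top}$. Each term is a product of circulants, hence circulant, and so the sum is circulant. All circulants are simultaneously diagonalized by the DFT (\cref{lem:fourier_diagonalization_circulant}), so $Z^*Z^{*\top}=F^*DF$ with $D\succeq0$ diagonal. Its PSD square root is $F^*D^{1/2}F$, again circulant, and real because $Z^*Z^{*\top}$ is real and PSD square roots are unique.
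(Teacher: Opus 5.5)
Your proposal is correct and follows essentially the same route as the paper's proof. Both use blockwise symmetrization via $\widetilde Z=\frac1m\sum_t \Pi^{t-1}Z(I_{n_{\mathrm{blk}}}\otimes\Pi^{-(t-1)})$, blockwise Jensen with the nuclear-norm contraction and balanced SVD realization, and the Weyl/trace argument against the shift part $\boldsymbol 1_m\boldsymbol\beta^{*\top}$. Both then use the decomposition $Z^*Z^{*\top}=\sum_i Z_i^*Z_i^{*\top}$ into circulants for the final claim, which is exactly \cref{lem:svd_factor_symmetry_block}.

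Your explicit column reordering via Closure and Balance, and your projection of $\boldsymbol z^*_{i,1}$ onto $\mathcal V_0$, are small clarifications the paper leaves implicit. You also state the tightness $\|Z^*\|_*=\|W^*\|_F\|H^*\|_F$ without justification, as the paper does. It follows from the same slack-budget argument applied to the balanced factorization of $Z^*$ itself.
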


See~\cref{pf:multi_block_cyclic} for the proof of~\cref{thm:multi_block_cyclic}.

\begin{theorem}[Optimal Embeddings and Their Structure for Multi-Block 2-Transitive Groups]
\label{thm:2_transitive_multiblock}
Let \( Y \) be a multi-block group-orbit target distribution matrix as defined in~\cref{def:multi_block_group_orbit_Y}, and assume that each group \( \mathcal G_i \) is 2-transitive.
Suppose at least one of the underlying probability vectors \(\boldsymbol{y}_i\) is nonuniform, and for each block, the columns of \(Y_i\) comprise the full set of \(n_i = |\mathcal G_i|\) vectors \(\{g \circ \boldsymbol{y}_i \mid g \in \mathcal G_i\}\).
For each block, introduce a probability vector $\boldsymbol{\alpha}_i=(\alpha_{i1},\dots,\alpha_{im})^{\top}\in\Delta^{m-1}$.
The vectors $\{\boldsymbol{\alpha}_i\}_{i=1}^{n_{\mathrm{blk}}}$ are jointly determined
as the unique solution of the system:
\begin{equation}
\label{eq:alpha_system_2trans}
  \begin{cases}
    \displaystyle k\left(\alpha_{i\ell}-y_{i\ell}\right)
      +\log\alpha_{i\ell}
     =\displaystyle\frac1m\sum_{r=1}^{m}\log\alpha_{ir},
    &  \ell=1,\dots,m,   i=1,\dots,n_{\mathrm{blk}},\\[1em]
    \displaystyle\sum_{\ell=1}^{m}\alpha_{i\ell}=1,
    &  i=1,\dots,n_{\mathrm{blk}},\\[1em]
    \displaystyle \gamma = \sqrt{\frac{1}{m-1}\sum_{j=1}^{n_{\mathrm{blk}}}|\mathcal G_j|\sum_{s=1}^m (\alpha_{js}-y_{js})^2}, \\[1em]
    \displaystyle k = \frac{\sqrt{E_W E_H}}{(m-1)\gamma}.
  \end{cases}
\end{equation}
Denote \(A_i=[g \circ \boldsymbol{\alpha}_i\mid g\in \mathcal G_i]\), \(A=[A_1|\dots|A_{n_{\mathrm{blk}}}]\), and \(C:=A-Y\).
Then, the matrix $C$ has the rank-$(m-1)$ singular-value decomposition
\[
  C
  =U \gamma I_{m-1} V^{\top},
  \quad
  U\in\mathbb{R}^{m\times(m-1)}, \quad
  V\in\mathbb{R}^{n\times(m-1)}.
\]
Let $Q\in\mathbb{R}^{d\times(m-1)}$ be any partial orthogonal matrix
($Q^{\top}Q=I_{m-1}$).
Then, for \(d \ge m\),  $W^* \in \mathbb{R}^{m \times d}$ and $H^* \in \mathbb{R}^{d \times n}$ solve the optimization problem~\cref{eq:obj_function_constrained} if and only if
\[
  W^*=\sqrt{\frac{E_W}{m-1}} UQ^{\top},
  \qquad
  H^*=-\sqrt{\frac{E_H}{m-1}} QV^{\top}.
\]
Furthermore, the following products hold:
\begin{equation}\label{eq:opt_products_2trans}
  W^*(W^*)^{\top}
    =\frac{E_W}{m-1}
        \left(I_{m}-\frac1m \boldsymbol1_{m}\boldsymbol1_{m}^{\top}\right),\qquad
  (H^*)^{\top}H^*
    =\frac{E_H}{(m-1)\gamma^{2}} C^{\top}C,\qquad
  W^*H^* =-k C.
\end{equation}
\end{theorem}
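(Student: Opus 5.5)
I will follow the five-step template of the proof of~\cref{thm:single_block_perm}, now carrying one auxiliary vector \(\boldsymbol{\alpha}_i\in\Delta^{m-1}\) per block. First, apply Jensen's inequality~\cref{eq:jensen} column by column, using in block \(i\) the weights \(g\circ\boldsymbol{\alpha}_i\) attached to the column \(g\circ\boldsymbol{y}_i\). This gives
\[
\mathcal J(W,H;Y)\ \ge\ \mathrm{tr}(C^\top WH)-\sum_{i=1}^{n_{\mathrm{blk}}}|\mathcal G_i|\sum_{\ell=1}^m\alpha_{i\ell}\log\alpha_{i\ell},
\qquad C=A-Y .
\]

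The key structural input is the singular structure of the concatenated \(C=[C_1|\cdots|C_{n_{\mathrm{blk}}}]\). Note that \(CC^\top=\sum_i C_iC_i^\top\). By~\cref{lem:2_transitive_svd} applied to each block, each \(C_iC_i^\top\) equals \(\frac{|\mathcal G_i|}{m-1}\|\boldsymbol{\alpha}_i-\boldsymbol{y}_i\|_2^2\,(I_m-\frac1m\boldsymbol 1_m\boldsymbol 1_m^\top)\). The reason is that 2-transitivity forces \(C_iC_i^\top\) to commute with all permutation matrices of \(\mathcal G_i\), hence to lie in the span of \(I_m\) and \(\boldsymbol 1_m\boldsymbol 1_m^\top\), and its columns have zero sum. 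Summing the blocks, \(CC^\top=\gamma^2(I_m-\frac1m\boldsymbol 1_m\boldsymbol 1_m^\top)\) with \(\gamma\) exactly as in~\cref{eq:alpha_system_2trans}. So \(C\) has \(m-1\) equal singular values \(\gamma\), which gives the claimed SVD \(C=U\gamma I_{m-1}V^\top\). Then von Neumann's inequality (\cref{lem:von_neumann_rectangular}) together with~\cref{lem:nuclear_frobenius_combined} gives \(\mathrm{tr}(C^\top WH)\ge-\gamma\sqrt{E_WE_H}\). This yields a lower bound \(\phi(\boldsymbol{\alpha}_1,\dots,\boldsymbol{\alpha}_{n_{\mathrm{blk}}})\), in which the entropy terms are separately strictly concave and \(-\gamma\) is concave as a negative Euclidean norm of the stacked weighted differences. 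Hence \(\phi\) is strictly concave on the product of simplices. It has a unique maximizer, which is interior by the same infinite-directional-derivative argument. Differentiating, the Lagrange conditions per block read \(\frac{\sqrt{E_WE_H}}{(m-1)\gamma}|\mathcal G_i|(\alpha_{i\ell}-y_{i\ell})+|\mathcal G_i|\log\alpha_{i\ell}=\text{const}_i\). Dividing by \(|\mathcal G_i|\) gives exactly~\cref{eq:alpha_system_2trans}. Nonuniformity of some \(\boldsymbol{y}_i\) ensures \(\gamma>0\), by the same argument as in Step~3 applied to that block.

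For sufficiency, I will check the equality cases as in Step~4. With \(WH=-\frac{\sqrt{E_WE_H}}{m-1}UV^\top=-kC\), we get \((WH)_{\ell,j}=\log\alpha_{i,\cdot}-\bar\ell_i\) on block \(i\), so the Jensen ratios are constant in each column. The trace bound is tight by construction and the norms are saturated. For necessity, I will repeat Step~5. Jensen equality forces \(\boldsymbol\sigma(WH)=A\), so the KKT equations become \(CH^\top+\lambda_1W=0\) and \(W^\top C+\lambda_2H=0\). Equality in the nuclear/Frobenius and von Neumann bounds aligns the singular spaces of \(W\), \(H\), \(WH\) and \(C\), and the rest of the algebra with \(S\), \(T\) carries over verbatim. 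The products~\cref{eq:opt_products_2trans} then follow as in the proof of~\cref{thm:embedding_products_structure}, using \(UU^\top=I_m-\frac1m\boldsymbol 1_m\boldsymbol 1_m^\top\).

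The main obstacle is the necessity step showing \(\Sigma_{WH}\) is a scalar multiple of \(I_{m-1}\). In the single-block case this came from~\cref{lem:orbit_S_softmax_2_transitive}. Here, however, \(WH=U\Sigma_{WH}V^\top\) with \(\boldsymbol\sigma(WH)=A\) mixes several orbits. I would try to recover it directly from KKT. Equations~\cref{eq:K1} and~\cref{eq:K2} give \(C^\top CH^\top=\lambda_1\lambda_2H^\top\) and \(WW^\top=\frac{1}{\lambda_1}\cdot(-CH^\top W^\top)\) type identities. Combined with \(CC^\top=\gamma^2 P_{\boldsymbol 1^\perp}\), these should force \(W^\top W\) and \(HH^\top\) to have a single nonzero eigenvalue on the relevant subspace. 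Alternatively, I would argue that the logit of each column is determined up to a shift by \(A\), and zero column sums (inherited from \(C\)) give \(WH=-kC\) directly, whose singular values are all \(k\gamma\).
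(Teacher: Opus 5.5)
Your Steps 1--4 and the overall necessity skeleton match the paper's proof. The one place you diverge is the isotropy of \(\Sigma_{WH}\), and of your two suggestions only the second works.

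\textbf{The KKT route cannot succeed.} Once \(\boldsymbol{\sigma}(WH)\) has been replaced by \(A\) in \cref{eq:K1,eq:K2}, those equations are satisfied by \(H=Q\Sigma_HV^\top\) and \(W=-(\gamma/\lambda_1)U\Sigma_HQ^\top\), with \(\lambda_1=\gamma\sqrt{E_H/E_W}\) and \(\lambda_2=\gamma\sqrt{E_W/E_H}\). This holds for \emph{every} positive diagonal \(\Sigma_H\) with \(\|\Sigma_H\|_F^2=E_H\). Such pairs also saturate both norms and attain equality in the von Neumann and nuclear--Frobenius bounds. The identity \(C^\top CH^\top=\lambda_1\lambda_2H^\top\) only places the row space of \(H\) in \(\mathrm{col}(V)\); it says nothing about the singular values of \(H\). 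So isotropy has to come from the equation \(\boldsymbol{\sigma}(WH)=A\) itself, not from its appearance in the KKT residual.

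\textbf{Your second route does exactly that, and it is correct and shorter than the paper's argument.} To make it complete, argue as follows.
\begin{itemize}
\item Since \(\boldsymbol{\alpha}^*\) is interior, \((WH)_{:,j}=\log A_{:,j}+c_j\boldsymbol{1}_m\).
\item Equality in \cref{eq:trace_bound_1} forces \(s_m(WH)=0\). The common singular vectors from \cref{lem:von_neumann_rectangular} then give \(\mathrm{col}(WH)\subseteq\mathrm{col}(C)=\mathcal V_0\), so \(c_j=-\bar\ell_i\) on block \(i\).
\item The \(\boldsymbol{\alpha}\)-system converts \(\log\alpha_{i\ell}-\bar\ell_i\) into \(-k(\alpha_{i\ell}-y_{i\ell})\), i.e.\ \(WH=-kC\). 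This is your sufficiency computation (E1) read backwards.
\end{itemize}

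\textbf{Comparison with the paper.} The paper proves \(\Sigma_{WH}=k_0I_{m-1}\) by a representation-theoretic argument:
\begin{itemize}
\item it derives blockwise intertwinings \(\rho_i(g)(WH)_i=(WH)_i\Lambda_i(g)\) and \(\rho_i(g)C_i=C_i\Lambda_i(g)\);
\item it picks a block \(i_0\) with \(C_{i_0}\neq 0\), so that \(V_{i_0}^\top\) has full row rank;
\item it concludes that \(\Sigma_{WH}\) commutes with \(U^\top\rho_{i_0}(g)U\), and invokes irreducibility of \(\mathcal V_0\) (\cref{lem:irreducibility_2_transitive_action,lem:two_orbit_matrix_form});
\item it then fixes \(k_0\) from the Frobenius constraint and the relative rotation from the \(S,T\) KKT algebra.
\end{itemize}

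Your route has three advantages. It uses 2-transitivity only for the spectrum of \(CC^\top\). It gives \(k_0=k\gamma\) at once. It makes the KKT algebra superfluous: the nuclear--Frobenius equality together with \(WH=-k\gamma UV^\top\) already forces \(W=\sqrt{E_W/(m-1)}\,USQ^\top\) and \(H=-\sqrt{E_H/(m-1)}\,QS^\top V^\top\) for some orthogonal \(S\). The paper's argument, in exchange, isolates a purely symmetry-based fact: isotropy follows from the blockwise orbit structure of \(\boldsymbol{\sigma}(WH)\) alone, without using the exact stationarity equations for \(\boldsymbol{\alpha}^*\).
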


See~\cref{pf:2_transitive_multiblock} for the proof of~\cref{thm:2_transitive_multiblock}.

\paragraph{Remark.}
\Cref{thm:2_transitive_multiblock} assumes that for each block, the number of columns \( n_i \) equals the group order \( |\mathcal G_i| \). 
We have not explicitly detailed the cases where, for example, a block \(Y_i\) might be formed by taking only the distinct vectors in the orbit of \(\boldsymbol{y}_i\) under \(\mathcal G_i\), or where \(Y_i\) represents multiple full repetitions of all transformations \(\{g\circ \boldsymbol{y}_i \mid g \in \mathcal G_i\}\).
However, we argue that these variations do not fundamentally alter the structural conclusions of the main results (e.g., that \(C=U\gamma I_{m-1}V^\top\), that \(\Sigma_{WH}=k_0 I_{m-1}\), and the resulting forms for \(W, H, WW^\top, H^\top H, WH\)). Such changes in the detailed construction of \(n_i\) columns for block \(i\) would primarily affect the scaling constant \(\lambda'_i\) associated with \(C_iC_i^\top\), and consequently the overall singular value \(\gamma\) of the total matrix \(C\). This change in \(\gamma\) would then proportionally adjust the parameter \(k\) that appears in the system defining \(\boldsymbol{\alpha}_i\) and in the expression \(WH = -kC\). The structural properties would remain the same. The proofs can be readily adapted for these cases by correctly accounting for the scaling factor in \(\gamma\) that arises from the precise number of distinct elements and their repetitions within each block \(Y_i\).

\section{Experiments}
\label{sec:exp}

In this section we report experiments on open-source LLMs to measure the extent to which the predicted geometric patterns are visible in these models. We use three models: GPT-OSS-20B~\citep{openai2025gptoss120bgptoss20bmodel}, Mistral-7B-Instruct-v0.3~\citep{jiang2023mistral7b}, and RWKV7-7.2B~\citep{peng2025rwkv}. Additional experiments appear in~\cref{app:more_exp}. We plot the normalized Gram matrices of the output projections \(WW^\top\) and the context embeddings \(H^\top H\), as well as the predicted next-token probability matrices for our word sets of interest (obtained by applying \(\sigma\) to the logit matrix \(Z=WH\), so the idealized theory predicts the same structural symmetries as the logits). 

For every Gram matrix we first mean-center the vectors (subtract the coordinate-wise mean) and rescale all vectors by a common factor so that the mean \(\ell_2\) norm equals one. We then report two scale-invariant diagnostics. The ETF distance \(\delta_{\mathrm{ETF}}\) (\cref{def:dis_etf}) measures how close \(G\) is to a simplex ETF: we choose the scalar that best fits \(G\) to the canonical simplex ETF in Frobenius norm and report the resulting relative residual. The circulant distance \(\delta_{\mathrm{circ}}\) (\cref{def:circ-distance}) measures how far \(G\) is from the subspace of circulant matrices: we project \(G\) onto that subspace and report the relative residual. Both metrics are invariant to multiplying \(G\) by a positive scalar, and smaller values indicate closer agreement with the predicted geometry. Full formulas and properties appear in~\cref{app:measure}.

\subsection{Output Projections}

\paragraph{Simplex ETF pattern.}
For exchangeable semantic categories, the rows of the output projection matrix are expected to approximate a simplex ETF (\cref{thm:single_block_perm}): equal norms on the diagonal and nearly constant off-diagonals. The kitchen utensils set \{\texttt{fork}, \texttt{knife}, \texttt{plate}, \texttt{cup}, \texttt{glass}\} in~\cref{fig:word-kitchen} is compared with this pattern using the reported \(\delta_{\mathrm{ETF}}\) values. 

\paragraph{Circulant pattern.}
For the weekday set \{\texttt{Monday}, \texttt{Tuesday}, \texttt{Wednesday}, \texttt{Thursday}, \texttt{Friday}, \texttt{Saturday}, \texttt{Sunday}\}, \cref{thm:single_block_cyclic} predicts a circulant output projection Gram: each day is a successor of another, so correlations should repeat with a fixed offset and wrap around at the boundary. Concretely, \(\left(WW^\top\right)_{ij}\) should depend primarily on the day difference \((j-i)\pmod{7}\), producing a banded, near-constant diagonal structure. We assess this behavior across models using the \(\delta_{\mathrm{circ}}\) values in~\cref{fig:word-weekdays}.

\begin{figure}[htbp]
  \centering
  \captionsetup[subfigure]{justification=centering}
  \begin{subfigure}[t]{0.32\textwidth}
    \centering
    \includegraphics[width=\linewidth]{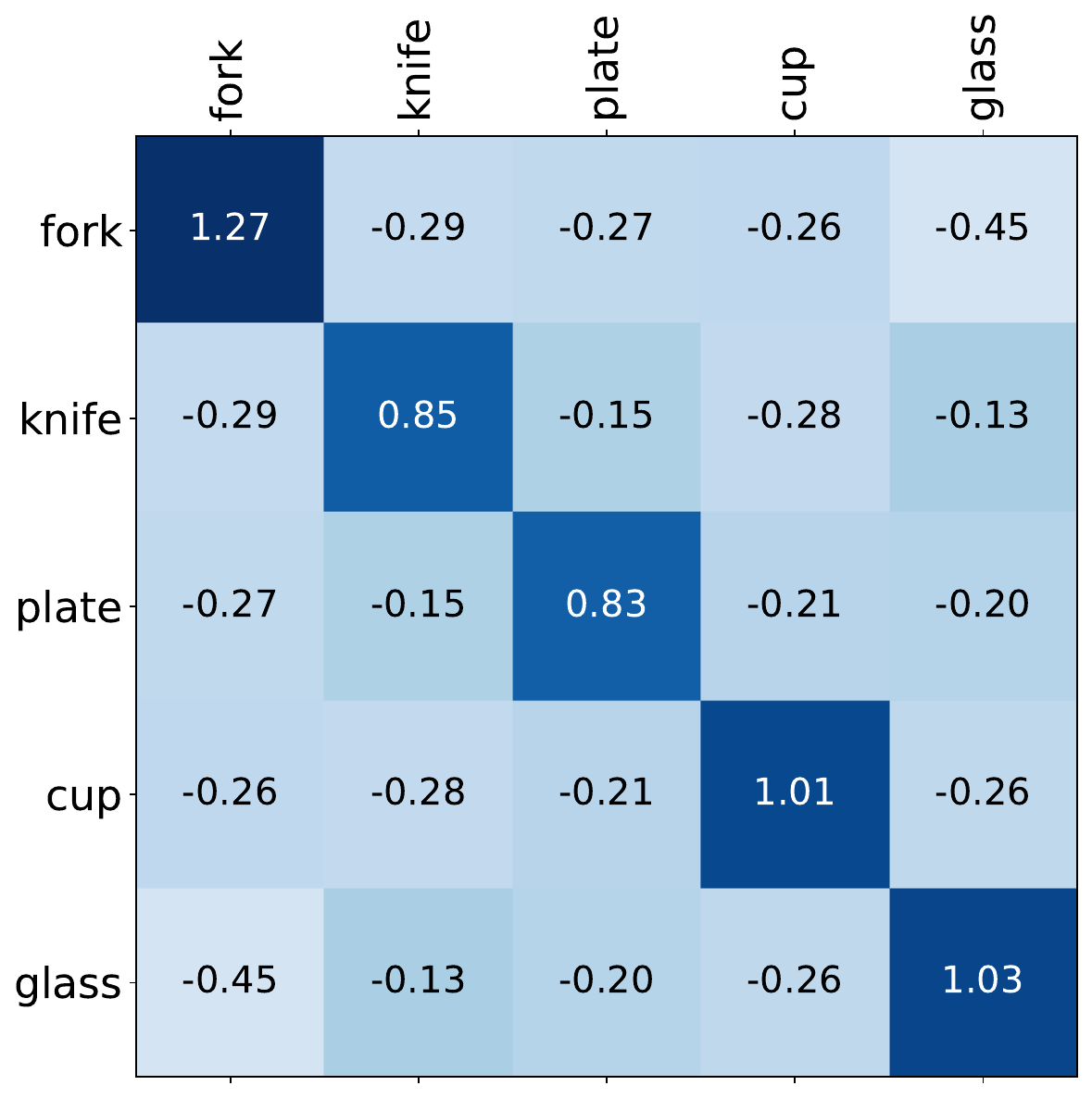}
    \caption{GPT-OSS-20B (\(\delta_{\mathrm{ETF}}=0.205\))}
  \end{subfigure}\hfill
  \begin{subfigure}[t]{0.32\textwidth}
    \centering
    \includegraphics[width=\linewidth]{figures/word/mistralai_Mistral-7B-Instruct-v0.3/Kitchen_Utensils.pdf}
    \caption{Mistral-7B-Instruct-v0.3 (\(\delta_{\mathrm{ETF}}=0.170\))}
  \end{subfigure}\hfill
  \begin{subfigure}[t]{0.32\textwidth}
    \centering
    \includegraphics[width=\linewidth]{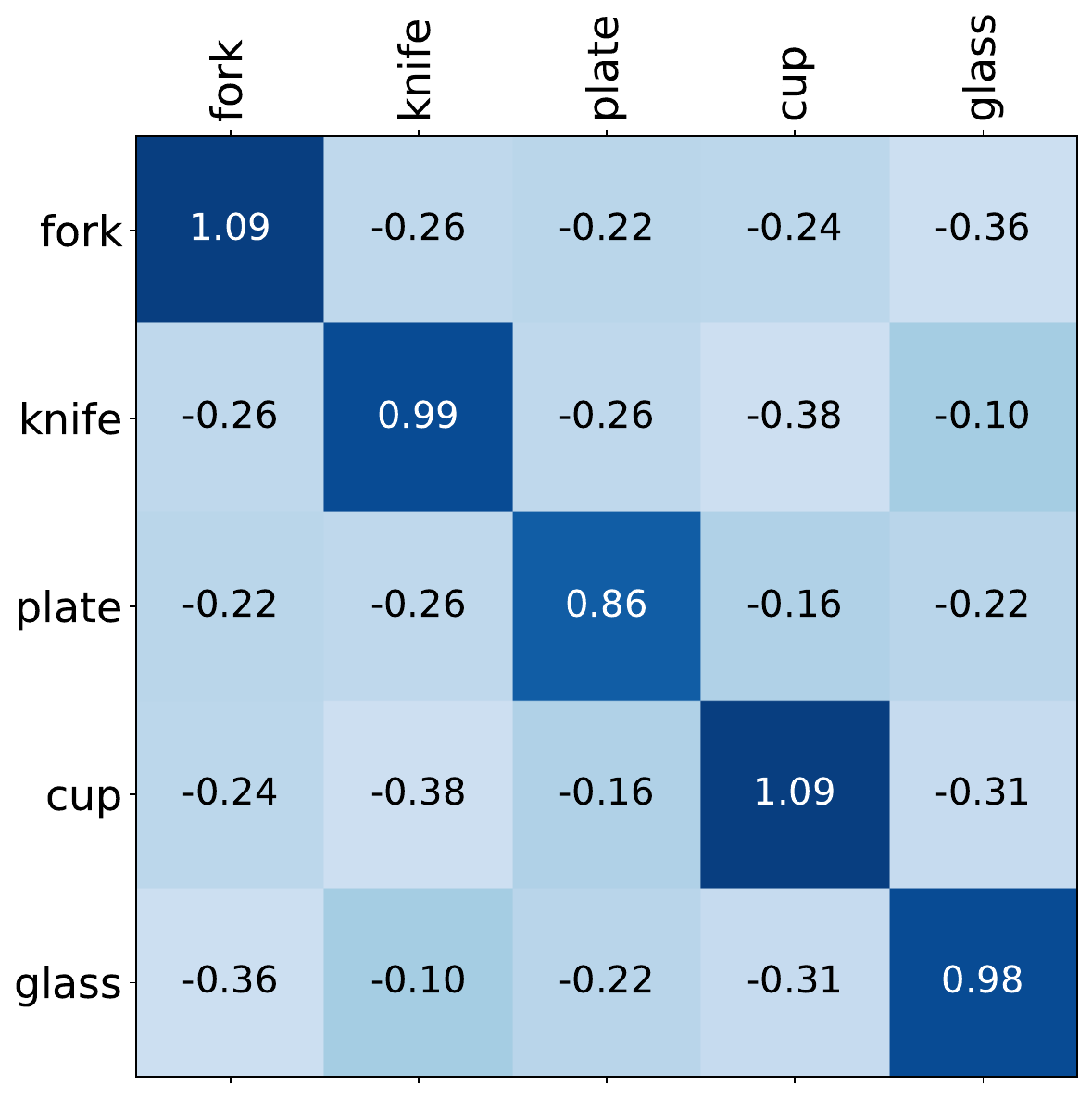}
    \caption{RWKV7-7.2B (\(\delta_{\mathrm{ETF}}=0.163\))}
  \end{subfigure}
  \caption{Output projection Grams for kitchen utensils.}
  \label{fig:word-kitchen}
\end{figure}

\begin{figure}[htbp]
  \centering
  \captionsetup[subfigure]{justification=centering}
  \begin{subfigure}[t]{0.32\textwidth}
    \centering
    \includegraphics[width=\linewidth]{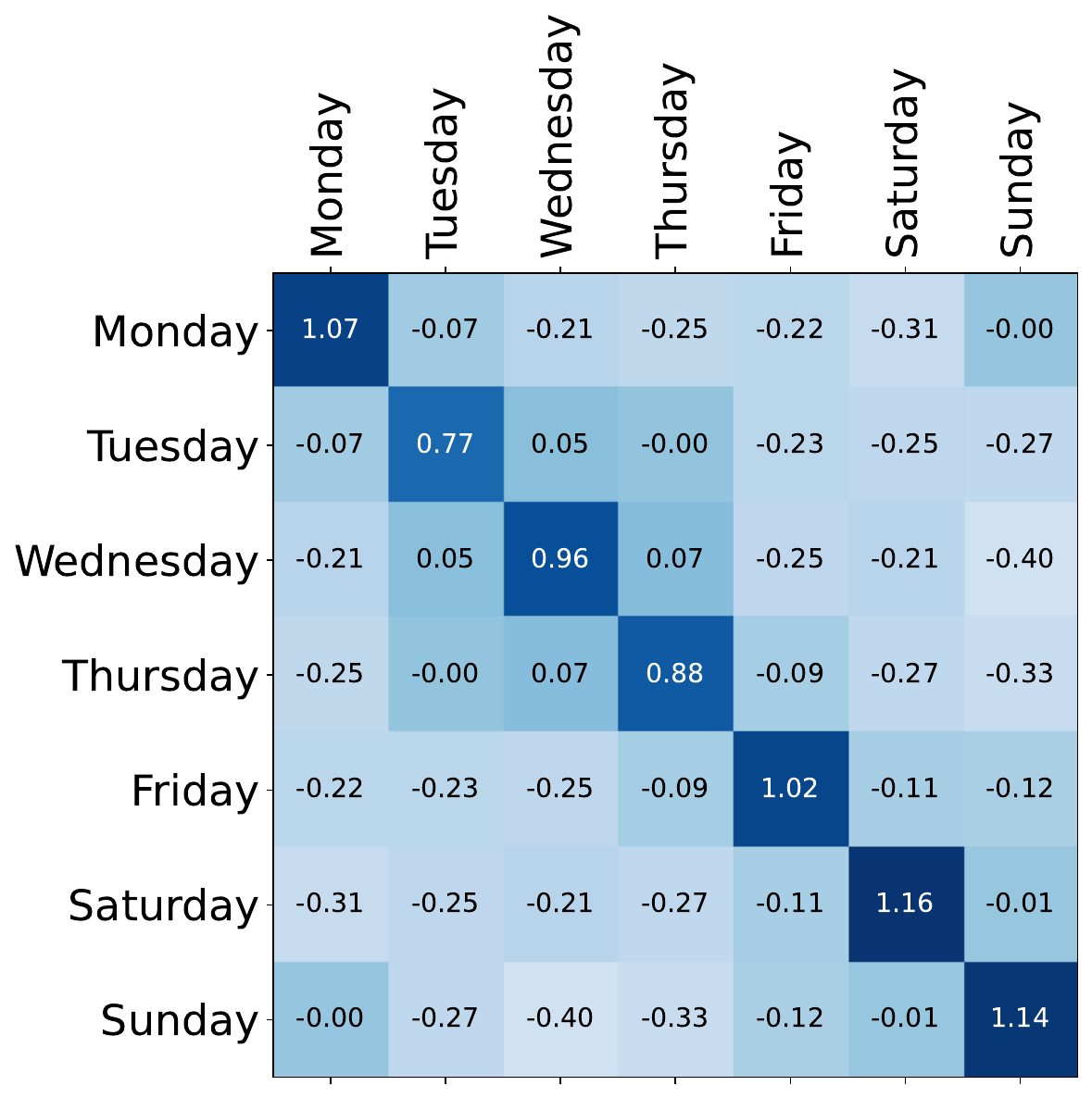}
    \caption{GPT-OSS-20B (\(\delta_{\mathrm{circ}}=0.202\))}
  \end{subfigure}\hfill
  \begin{subfigure}[t]{0.32\textwidth}
    \centering
    \includegraphics[width=\linewidth]{figures/word/mistralai_Mistral-7B-Instruct-v0.3/Days_of_the_Week.pdf}
    \caption{Mistral-7B-Instruct-v0.3 (\(\delta_{\mathrm{circ}}=0.319\))}
  \end{subfigure}\hfill
  \begin{subfigure}[t]{0.32\textwidth}
    \centering
    \includegraphics[width=\linewidth]{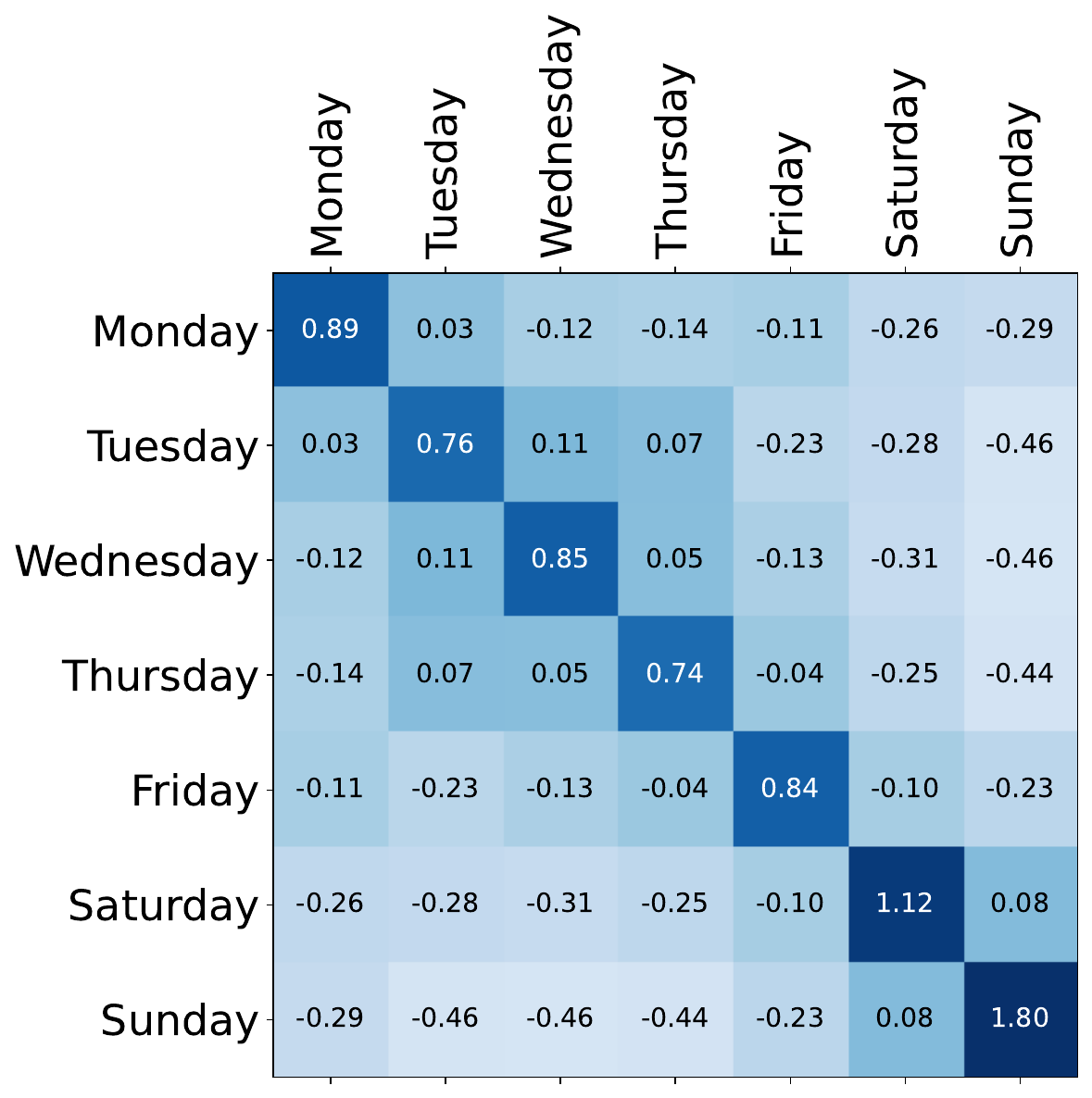}
    \caption{RWKV7-7.2B (\(\delta_{\mathrm{circ}}=0.396\))}
  \end{subfigure}
  \caption{Output projection Grams for weekdays.}
  \label{fig:word-weekdays}
\end{figure}

\subsection{Context Embeddings and Logits}

\paragraph{Permutation.}
For the permutation template ``\texttt{Among three primary colors, <TOKEN1> is a mix of <TOKEN2> and \textvisiblespace}'', the six contexts permute (\texttt{<TOKEN1>}, \texttt{<TOKEN2>}) as follows:
(\texttt{orange}, \texttt{red}), (\texttt{orange}, \texttt{yellow}), (\texttt{green}, \texttt{yellow}), (\texttt{green}, \texttt{blue}), (\texttt{purple}, \texttt{red}), (\texttt{purple}, \texttt{blue}).
\Cref{fig:perm-probs} shows the predicted next-token probability matrices; all three models place most of the probability mass on the intended color. \Cref{fig:perm-gram} shows the context embedding Grams. According to~\cref{thm:single_block_perm}, the theoretical context embedding Gram for the six contexts is
\[
\begin{bmatrix}
1&0&0&1&0&0\\
0&1&0&0&0&1\\
0&0&1&0&1&0\\
1&0&0&1&0&0\\
0&0&1&0&1&0\\
0&1&0&0&0&1
\end{bmatrix}.
\]
Here the entries labeled ``1'' and ``0'' indicate relative magnitudes: ``1'' denotes larger values and ``0'' denotes smaller values. This is the prediction in the idealized case where the model assigns probability one to the intended token. In practice, predictions are not deterministic, so the off-diagonal entries marked ``1'' may shrink in value; however, the prediction is that they should generally be \emph{larger than other off-diagonal entries marked ``0''}. The matrices in~\cref{fig:perm-gram} are consistent with this ordering.

\begin{figure}[htbp]
  \centering
  \captionsetup[subfigure]{justification=centering}
  \begin{subfigure}[t]{0.32\textwidth}
    \centering
    \includegraphics[width=\linewidth]{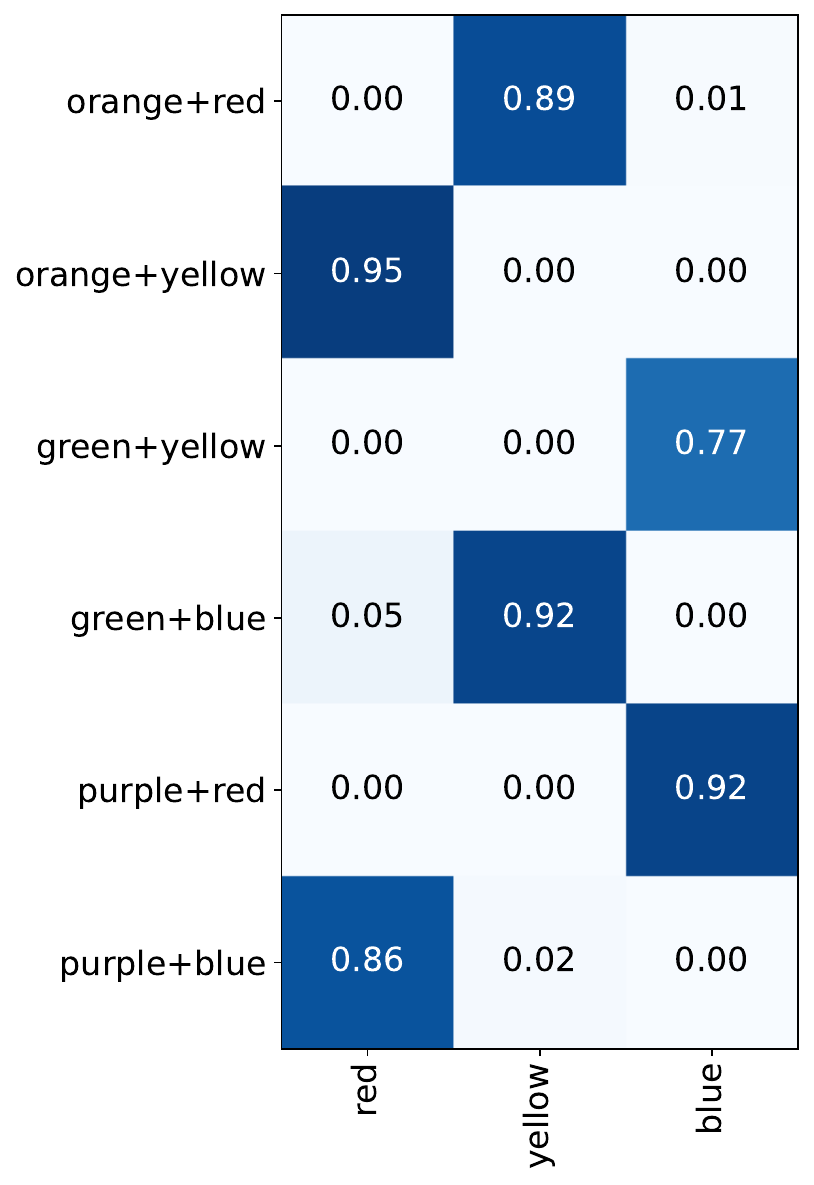}
    \caption{GPT-OSS-20B}
  \end{subfigure}\hfill
  \begin{subfigure}[t]{0.32\textwidth}
    \centering
    \includegraphics[width=\linewidth]{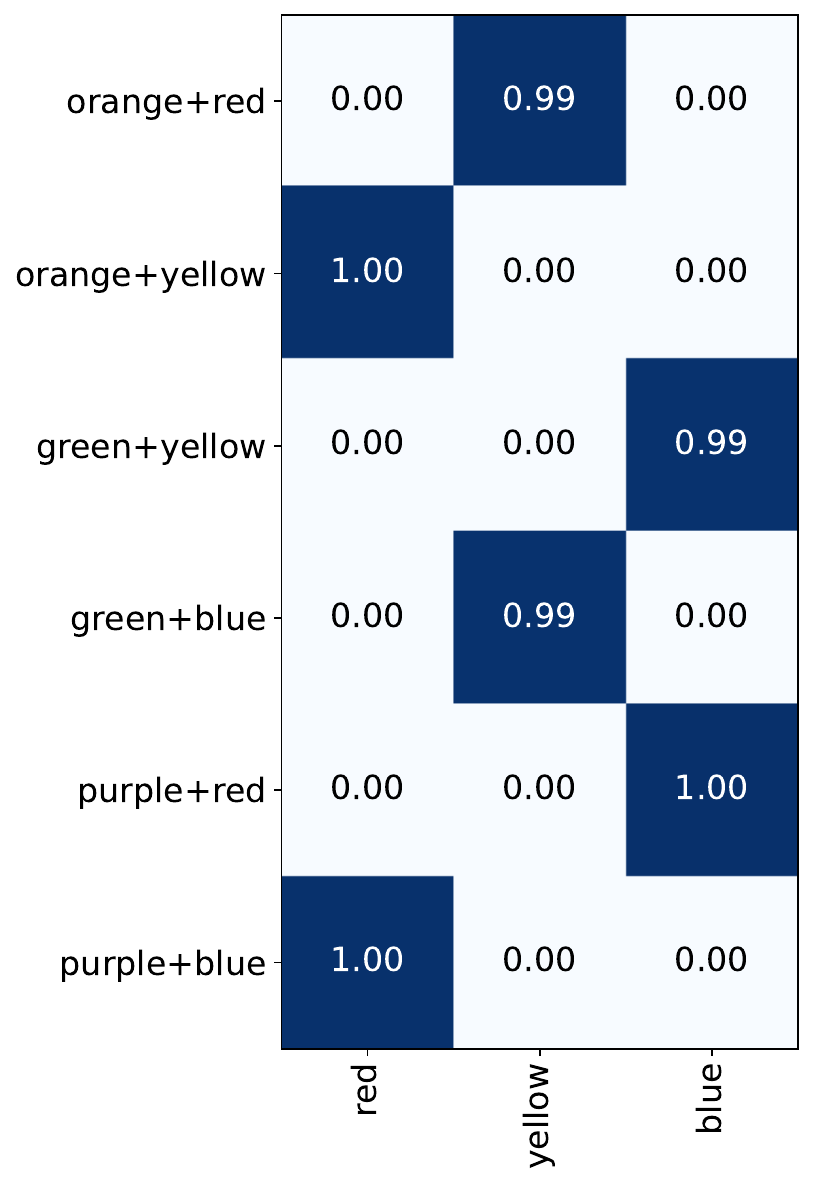}
    \caption{Mistral-7B-Instruct-v0.3}
  \end{subfigure}\hfill
  \begin{subfigure}[t]{0.32\textwidth}
    \centering
    \includegraphics[width=\linewidth]{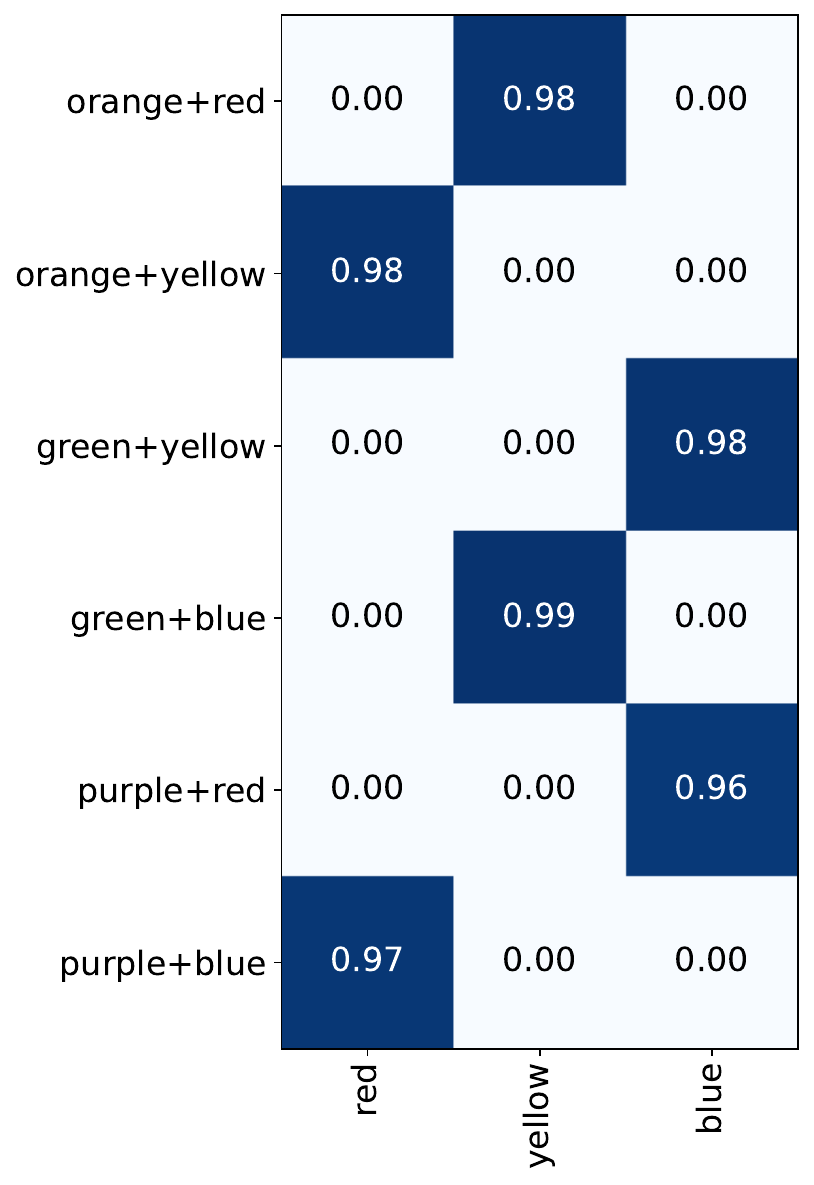}
    \caption{RWKV7-7.2B}
  \end{subfigure}
  \caption{Predicted next-token probability matrices for color-mix prompts. Row labels denote prompt contexts obtained by permuting \texttt{(<TOKEN1>,<TOKEN2>)}. Column labels are the candidate next tokens restricted to the primary colors. Each entry is the model's predicted probability for that token.}

  \label{fig:perm-probs}
\end{figure}

\begin{figure}[htbp]
  \centering
  \captionsetup[subfigure]{justification=centering}
  \begin{subfigure}[t]{0.32\textwidth}
    \centering
    \includegraphics[width=\linewidth]{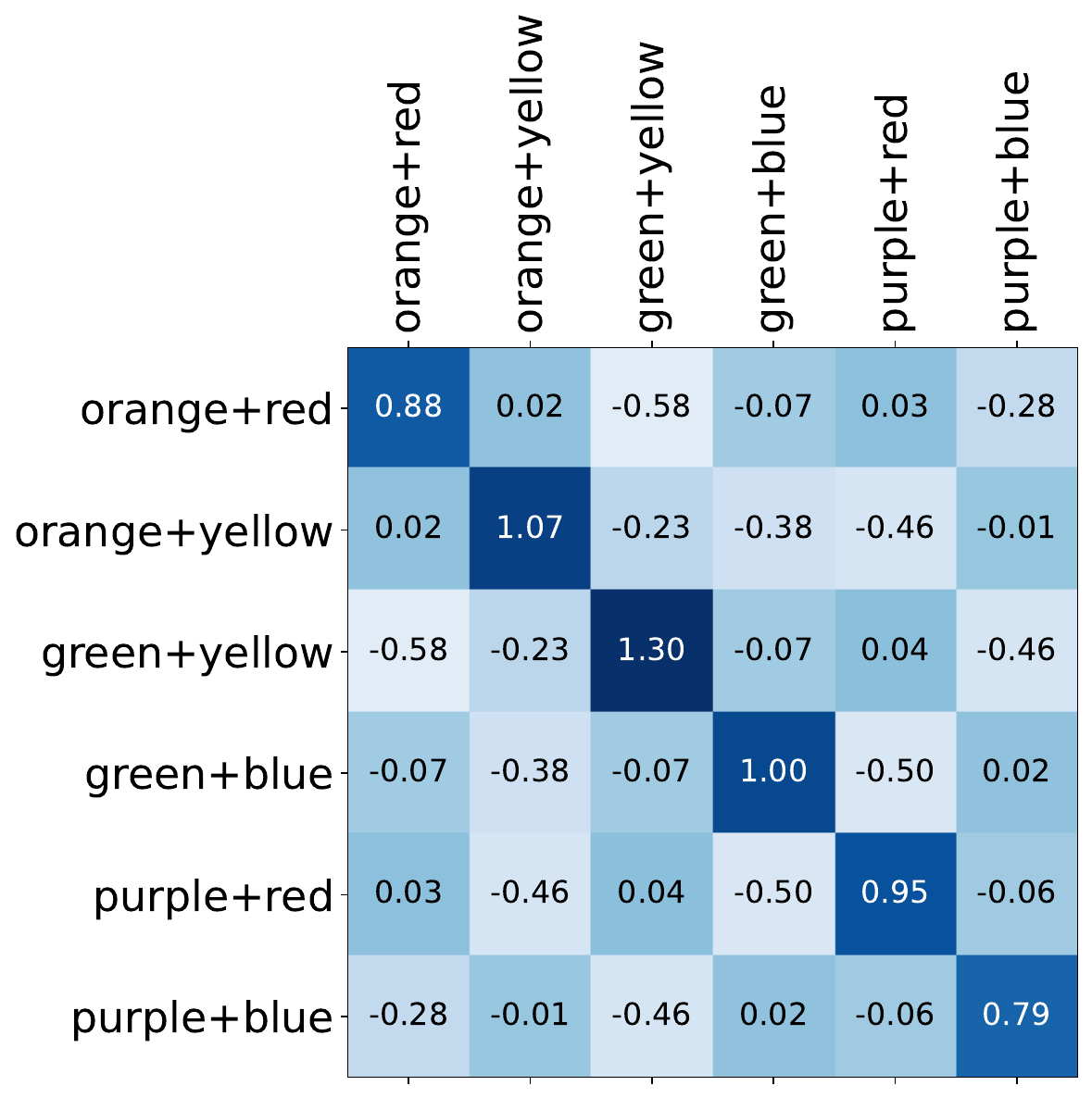}
    \caption{GPT-OSS-20B}
  \end{subfigure}\hfill
  \begin{subfigure}[t]{0.32\textwidth}
    \centering
    \includegraphics[width=\linewidth]{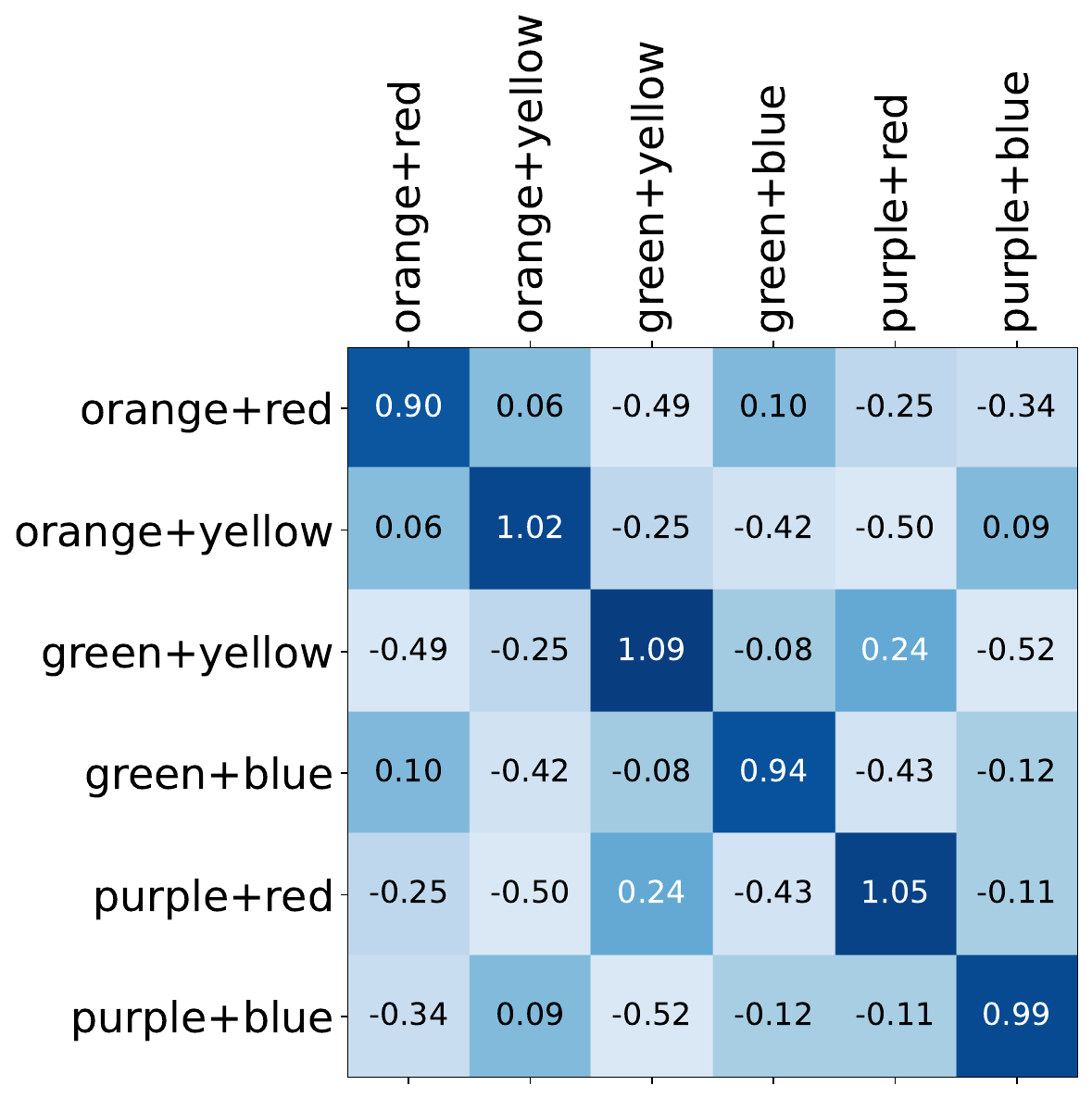}
    \caption{Mistral-7B-Instruct-v0.3}
  \end{subfigure}\hfill
  \begin{subfigure}[t]{0.32\textwidth}
    \centering
    \includegraphics[width=\linewidth]{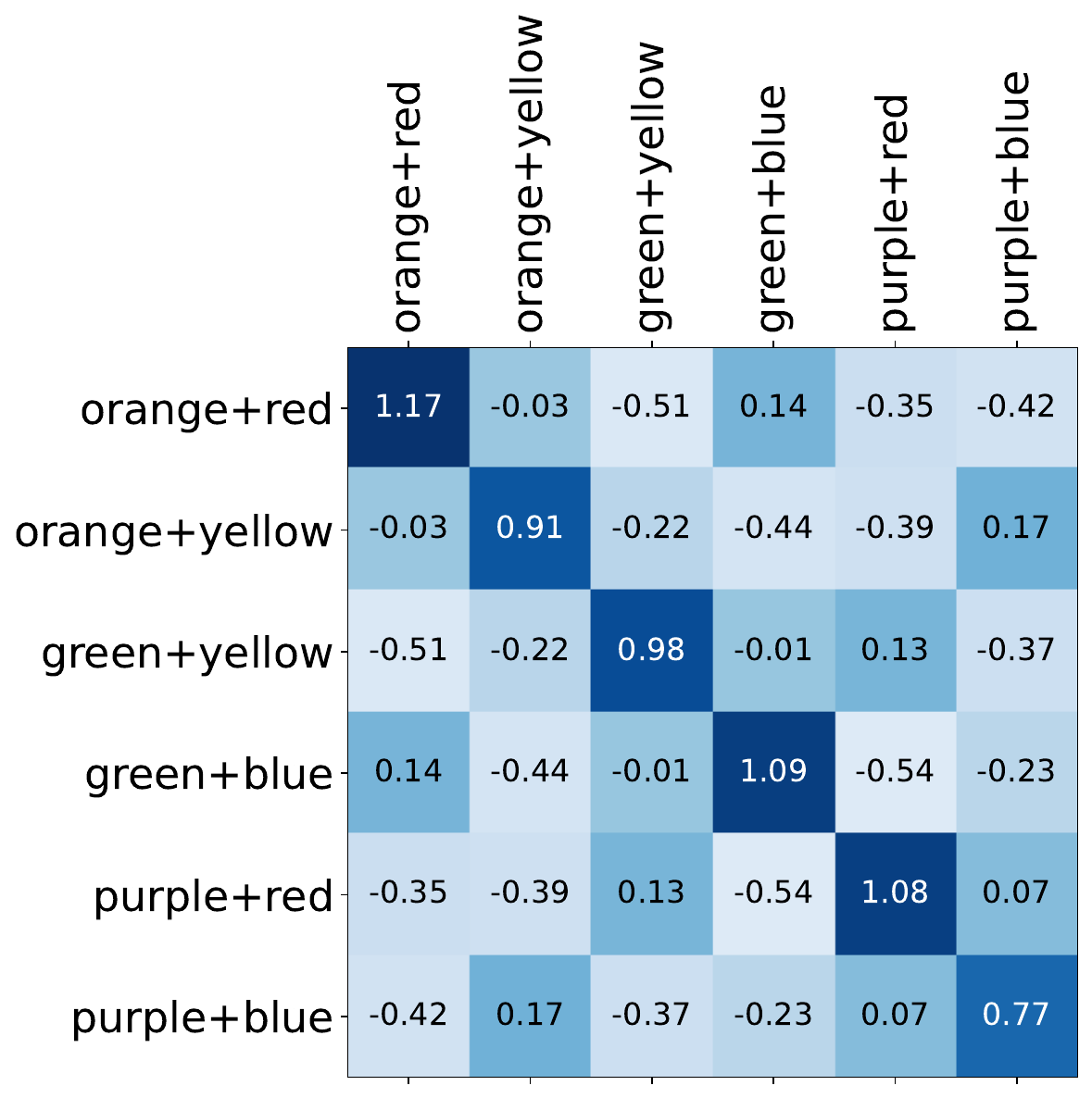}
    \caption{RWKV7-7.2B}
  \end{subfigure}
  \caption{Context embedding Grams for color-mix prompts.}
  \label{fig:perm-gram}
\end{figure}

\paragraph{Cyclic-shift prompts.}
For the prompts ``\texttt{Today is <TOKEN>. Tomorrow is \textvisiblespace}'' with \texttt{<TOKEN>} ranging from \{\texttt{Monday}, \texttt{Tuesday}, \texttt{Wednesday}, \texttt{Thursday}, \texttt{Friday}, \texttt{Saturday}, \texttt{Sunday}\}, the idealized cyclic-shift model motivates testing whether the next-token distributions approximately shift forward by one day, producing a near-circulant \(\boldsymbol{\sigma}(Z)\) with rows that are cyclic shifts of each other (\cref{fig:ctx-cyclic-probs}). Correspondingly, the diagnostics test whether the context embedding Gram \(H^\top H\) is close to circulant, depending on the day difference \((j-i)\bmod 7\) (\cref{thm:single_block_cyclic,fig:ctx-cyclic-gram}).

\begin{figure}[htbp]
  \centering
  \captionsetup[subfigure]{justification=centering}
  \begin{subfigure}[t]{0.32\textwidth}
    \centering
    \includegraphics[width=\linewidth]{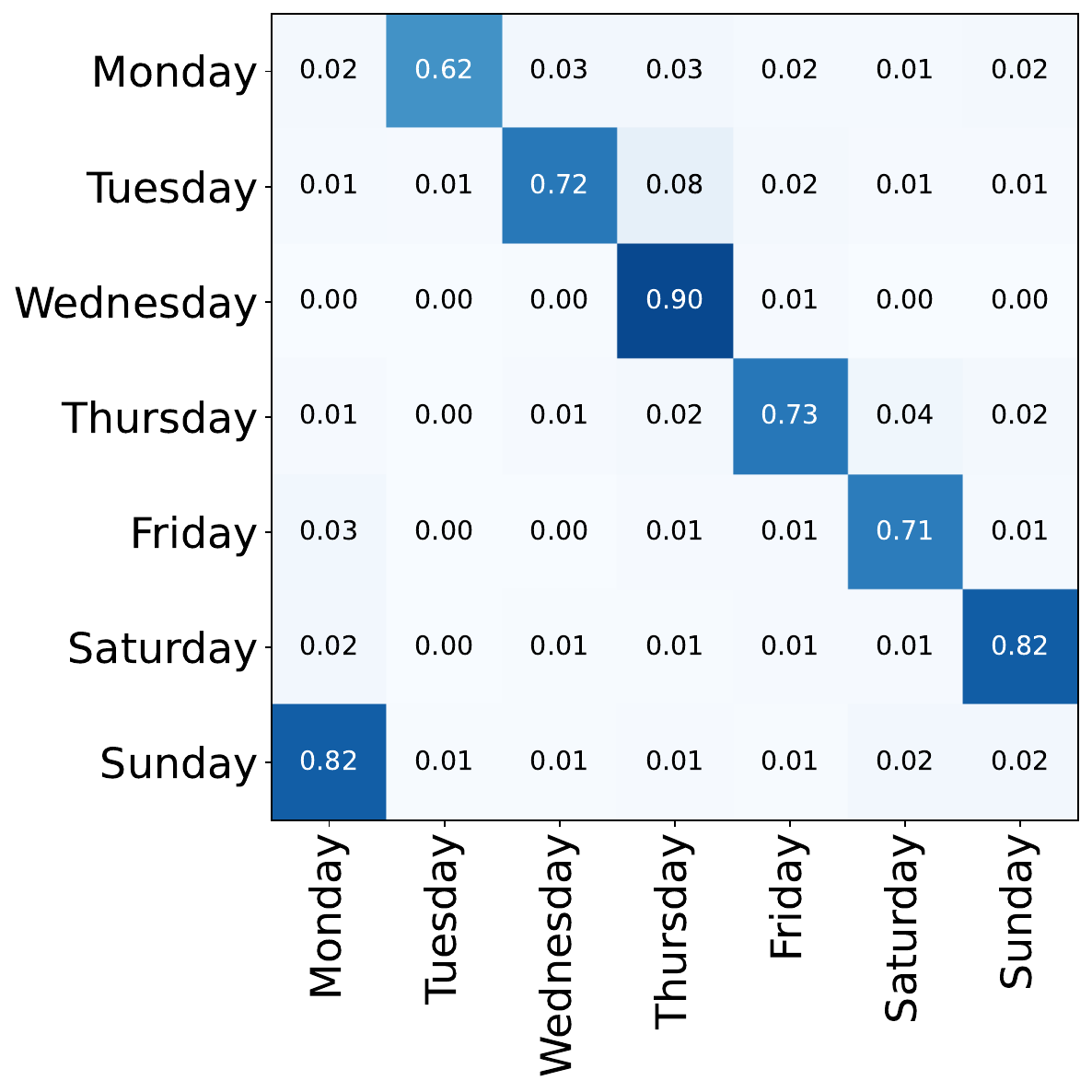}
    \caption{GPT-OSS-20B}
  \end{subfigure}\hfill
  \begin{subfigure}[t]{0.32\textwidth}
    \centering
    \includegraphics[width=\linewidth]{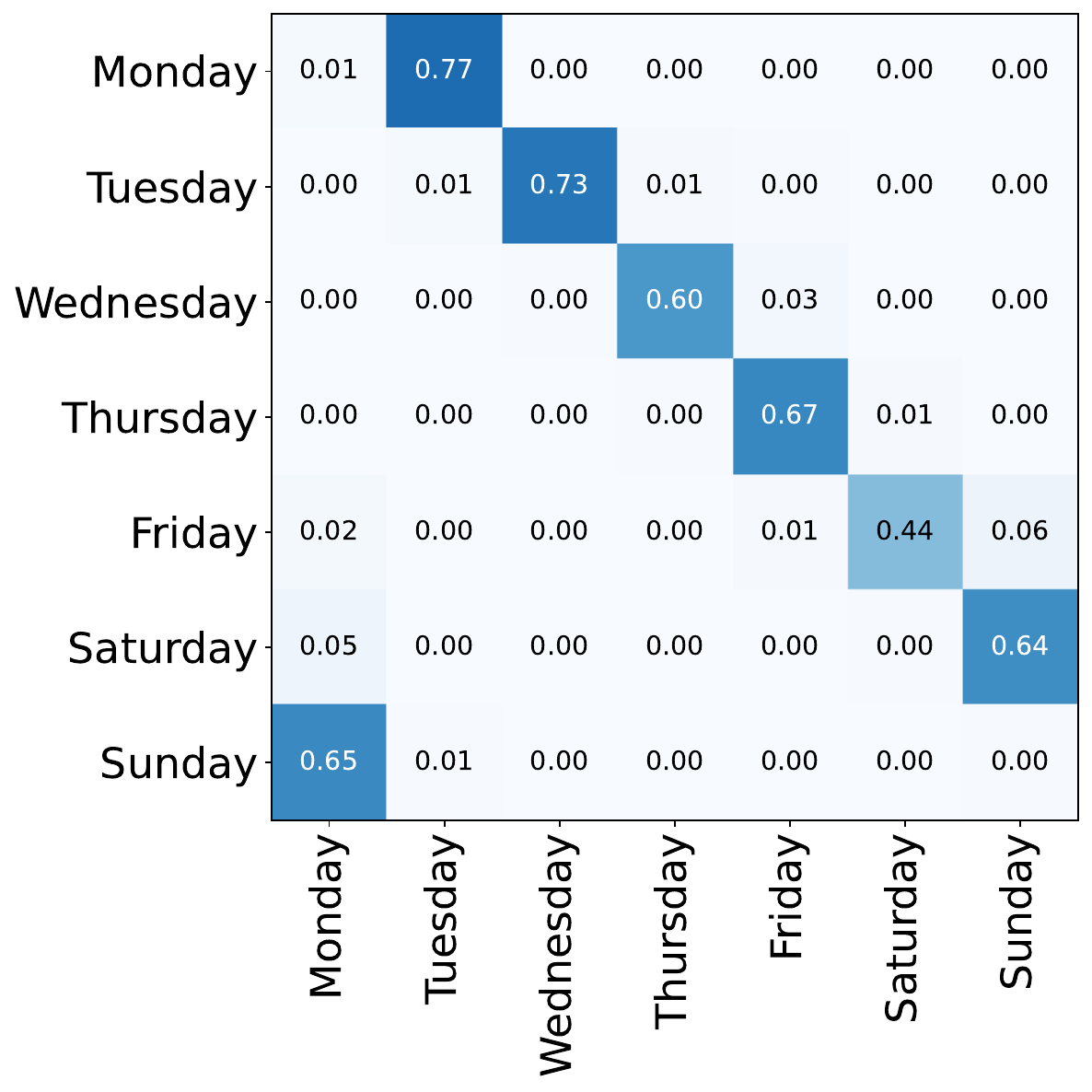}
    \caption{Mistral-7B-Instruct-v0.3}
  \end{subfigure}\hfill
  \begin{subfigure}[t]{0.32\textwidth}
    \centering
    \includegraphics[width=\linewidth]{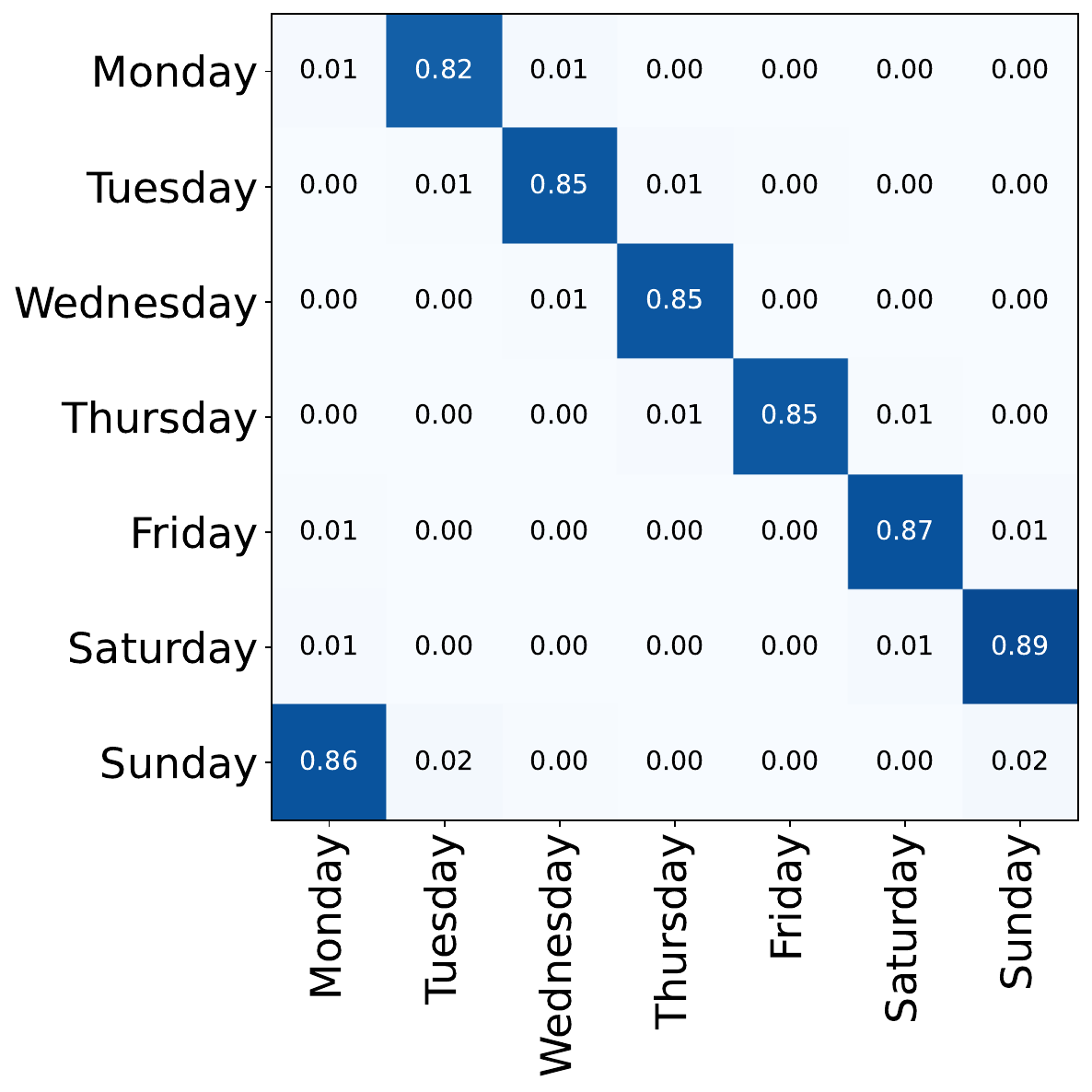}
    \caption{RWKV7-7.2B}
  \end{subfigure}
  \caption{Predicted next-token probability matrices for cyclic-shift weekday prompts. Row labels denote prompt contexts instantiated from the template. Column labels are the candidate next tokens restricted to the seven weekdays. Each cell shows the model's predicted probability for that token.}
  \label{fig:ctx-cyclic-probs}
\end{figure}

\begin{figure}[htbp]
  \centering
  \captionsetup[subfigure]{justification=centering}
  \begin{subfigure}[t]{0.32\textwidth}
    \centering
    \includegraphics[width=\linewidth]{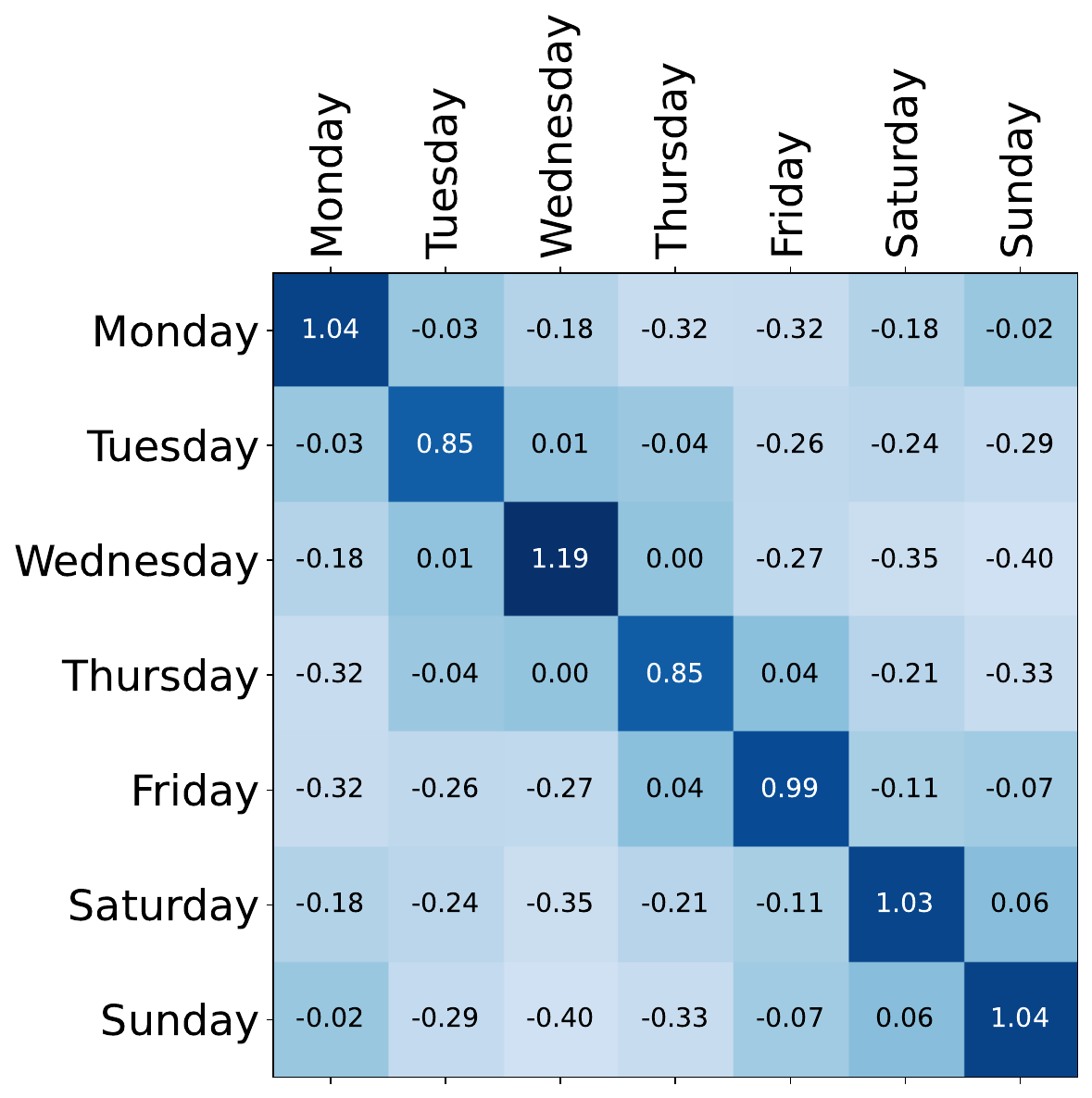}
    \caption{GPT-OSS-20B (\(\delta_{\mathrm{circ}}=0.171\))}
  \end{subfigure}\hfill
  \begin{subfigure}[t]{0.32\textwidth}
    \centering
    \includegraphics[width=\linewidth]{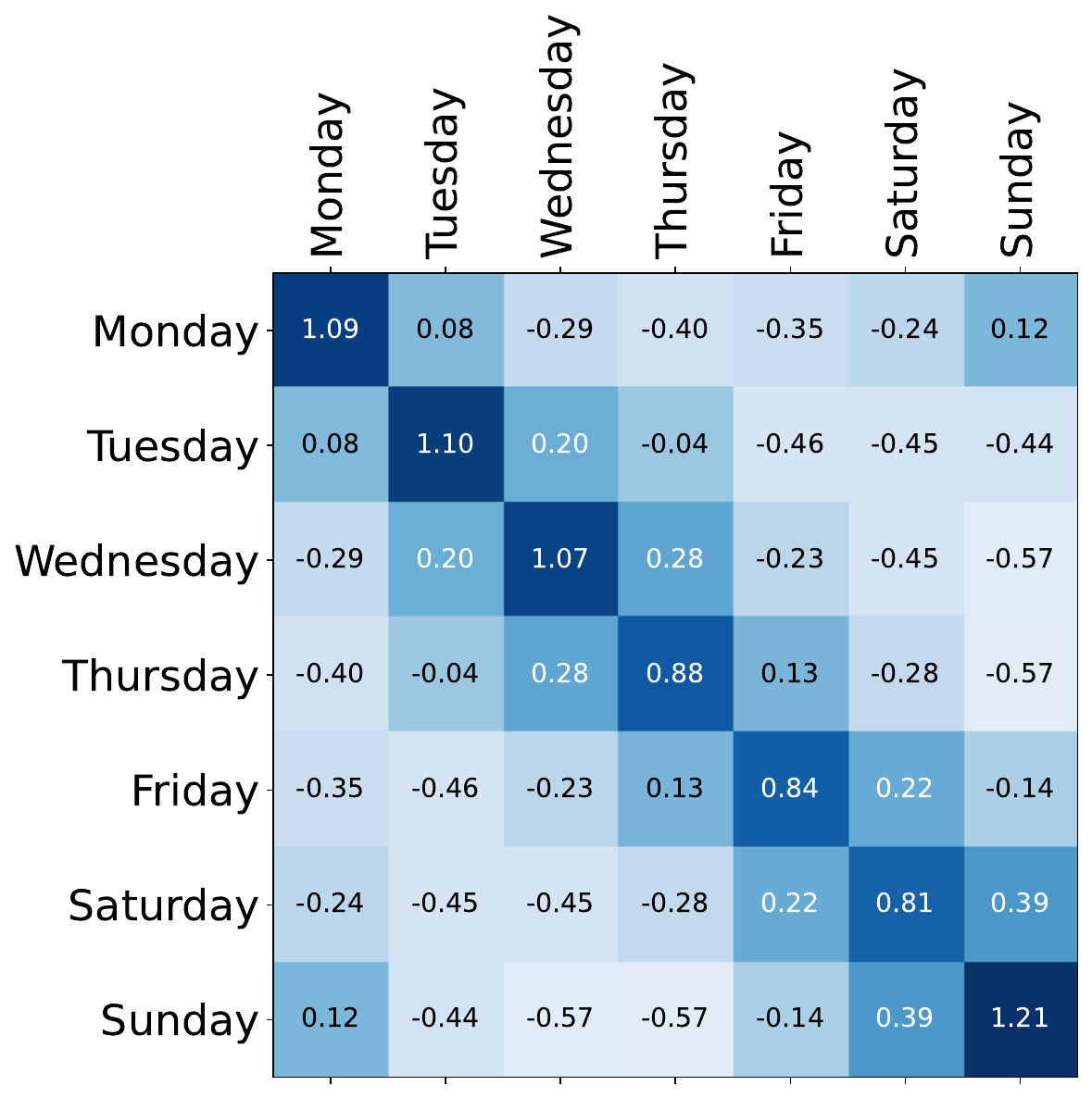}
    \caption{Mistral-7B-Instruct-v0.3 (\(\delta_{\mathrm{circ}}=0.214\))}
  \end{subfigure}\hfill
  \begin{subfigure}[t]{0.32\textwidth}
    \centering
    \includegraphics[width=\linewidth]{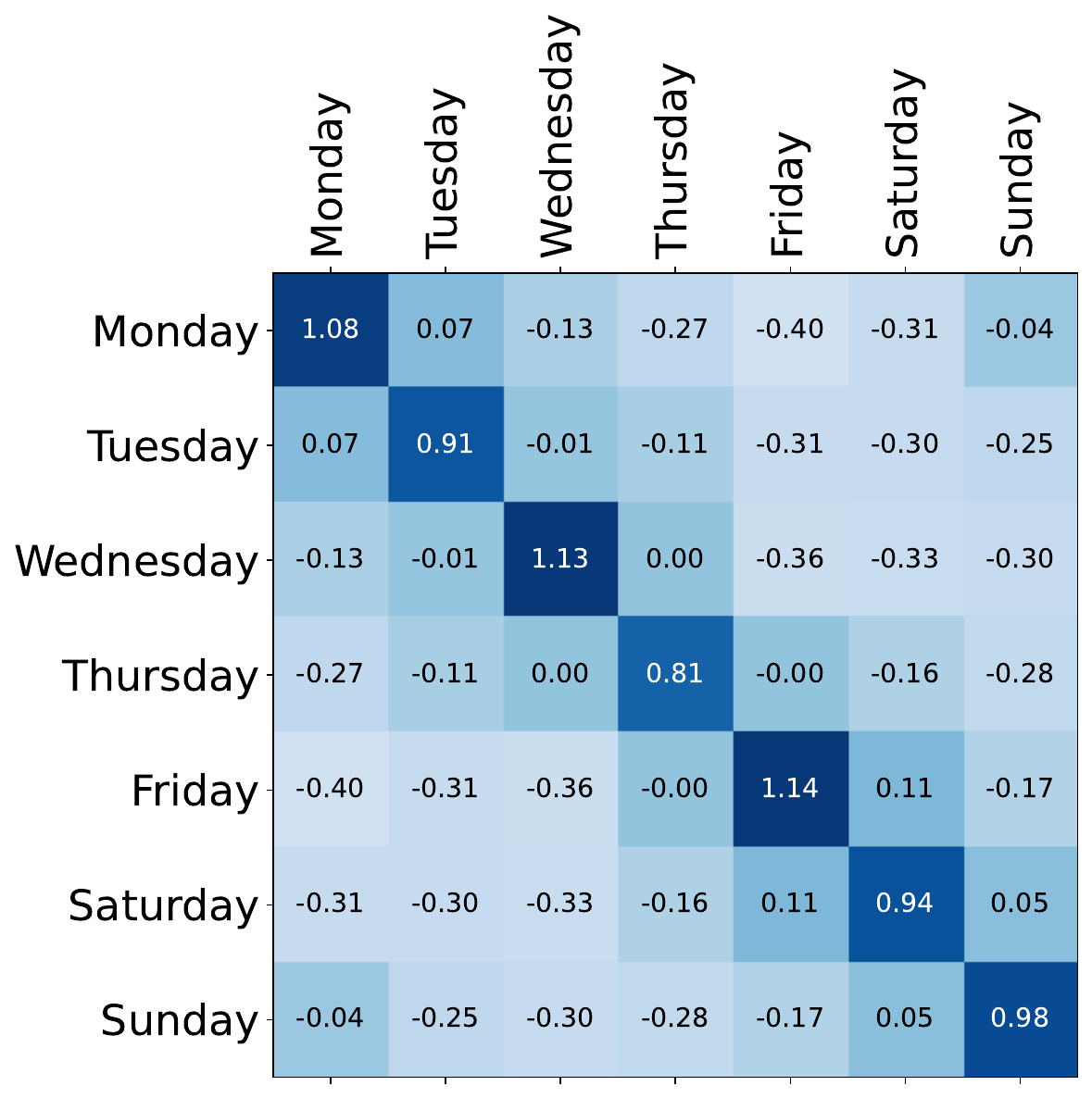}
    \caption{RWKV7-7.2B (\(\delta_{\mathrm{circ}}=0.165\))}
  \end{subfigure}
  \caption{Context embedding Grams for cyclic-shift weekday prompts.}
  \label{fig:ctx-cyclic-gram}
\end{figure}

\section{Conclusions}

In summary, we have investigated how exact target distribution symmetries govern the geometry of the global minimizers in a constrained layer-peeled model for next-token prediction. By analyzing this nonconvex optimization program, we demonstrated that when the target distribution matrix is invariant under a certain permutation group action, the globally optimal output projection matrix and context embeddings systematically inherit the same symmetric structure. We rigorously established this symmetry transfer phenomenon in two settings: cyclic-shift symmetries, which we proved lead to a circulant geometry, and permutation symmetries---including more general two-transitive group actions---which we showed induce a simplex ETF and matching orbit structures. We further generalized these geometric characterizations to multi-block orbit models. The core of our theoretical analysis relies on the exact reduction of the constrained nonconvex factorized problem to an explicit logit-level convex characterization and a sharp analysis of the optimal factorizations. Empirically, we verified that the last-layer weights and representations of open-source LLMs naturally self-organize into these mathematically predicted geometric configurations, even without explicit geometric regularization.

Our theoretical framework and empirical findings open up several directions for future research. First, from an optimization perspective, a natural generalization is to characterize the global minimizers under broader classes of group actions that induce other refined geometric structures, such as the composite permutation symmetries explored in our numerical experiments. Furthermore, our current global optimality guarantees rely on the overparameterized assumption that the hidden dimension is bounded below by the size of the local candidate token set (\(d \ge m\)). Because the entire token vocabulary in practice is exceedingly large while the hidden dimension remains comparatively small, extending our exact symmetry transfer results to the rank-constrained, underparameterized regime (\(d < m\)) is an important future direction. Third, our present analysis deliberately isolates local geometric properties by restricting the target matrix to small, semantically cohesive subsets of words; that is, we consider a small $m$. Extending this framework to include the entire vocabulary across diverse contexts implies transitioning from exact, local symmetries to global, approximate ones. Characterizing the stability of the optimal solutions under approximate symmetries is an interesting challenge for future work. Finally, in a broader context, it would be valuable to investigate how the explicitly symmetric patterns unveiled by our work can be leveraged for the mechanistic interpretability of LLMs.

\bibliographystyle{abbrvnat}
\bibliography{refs}

\clearpage
\appendix 
\appendixpage
\addappheadtotoc 

\section{Notation}
Bold lower-case letters denote column vectors, capital letters denote matrices, and calligraphic letters denote sets, groups, spaces, objectives, feasible domains, or operators. We use \(m\) for the number of candidate tokens, \(d\) for the hidden dimension, and \(n\) for the number of contexts; \(W\in\mathbb{R}^{m\times d}\) is the output projection matrix, \(H\in\mathbb{R}^{d\times n}\) is the context embedding matrix, \(Y\in\mathbb{R}^{m\times n}\) is the target distribution matrix, and \(Z=WH\) is the logit matrix. The Frobenius norm budgets are \(E_W,E_H>0\). For a matrix \(X\), \(X_{i,j}\) is its \((i,j)\)-entry, \(X_{:,j}\) is its \(j\)-th column, and \(X_{i,:}\) is its \(i\)-th row; \(X^\top\) and \(X^{\mathsf H}\) denote transpose and conjugate transpose. We write \(\boldsymbol{1}_m,\boldsymbol{0}_m,I_m,J_m=\boldsymbol{1}_m\boldsymbol{1}_m^\top\) for the all-ones vector, all-zeros vector, identity matrix, and all-ones matrix, and we write \(\mathbb R,\mathbb C\) for the real and complex fields. The norms \(\|\cdot\|_2,\|\cdot\|_F,\|\cdot\|_*,\|\cdot\|_{\mathrm{op}}\) are the Euclidean, Frobenius, nuclear, and operator norms; \(s_i(X)\), \(\mathrm{rank}(X)\), \(\mathrm{tr}(X)\), \(\mathrm{diag}(\cdot)\), and \(\langle\cdot,\cdot\rangle_F\) denote singular values, rank, trace, diagonal matrices, and Frobenius inner products. The probability simplex is \(\Delta^{m-1}\), the vector softmax is \(\sigma\), the columnwise matrix softmax is \(\boldsymbol{\sigma}\), with \(\boldsymbol{\sigma}(X)_{:,j}=\sigma(X_{:,j})\), the cross-entropy loss is \(\mathcal L(\boldsymbol q,\boldsymbol p)=-\sum_i p_i\log q_i\), and the main objective is \(\mathcal J(W,H;Y)=\sum_{j=1}^n\mathcal L(\sigma((WH)_{:,j}),Y_{:,j})\); unless stated otherwise, \(\log\) denotes the natural logarithm. Superscript \(^*\) denotes optimal quantities, such as \(W^*,H^*,Z^*\), \(\boldsymbol z^*\), or \(\boldsymbol{\alpha}^*\), while tildes denote symmetrized or projected quantities. The one-step cyclic-shift matrix is \(\Pi\), the cyclic-shift group is \(C_m\), and \(S_m\) is the symmetric group. For a group \(\mathcal G\), \(g\circ\boldsymbol{x}\) denotes its permutation action on a vector, \(\rho(g)\) is the corresponding permutation matrix, \(\Lambda(g)\) denotes the induced column permutation when needed, and \(|\mathcal G|\) is the group order. The zero-sum subspace is \(\mathcal V_0=\{\boldsymbol{x}\in\mathbb R^m:\boldsymbol{1}_m^\top\boldsymbol{x}=0\}\). In the permutation-symmetry analysis, \(A\) denotes an auxiliary probability matrix, \(C=A-Y\) denotes the residual matrix, \(\boldsymbol{\alpha}\) denotes an auxiliary probability vector, and \(k,\gamma,\lambda_1,\lambda_2,\nu\) denote scalar parameters or Lagrange multipliers defined locally. In cyclic and Fourier arguments, \(F_m\), \(\omega\), and diagonal matrices \(D,D_A,D_B,D_P\) are defined locally when diagonalizing circulant matrices. In multi-block models, \(n_{\mathrm{blk}}\) is the number of blocks and \(Y=[Y_1|\cdots|Y_{n_{\mathrm{blk}}}]\), with block-specific objects carrying matching subscripts. For simplex ETF notation, \(M^*\) denotes the canonical simplex ETF matrix and \(M=cM^*U^\top\) denotes a rotated and rescaled realization. In the experiments, \(G\) denotes a Gram matrix, \(\mathcal C\) is the circulant subspace, \(\mathcal P\) is the circulant projection, and \(\delta_{\mathrm{ETF}}\) and \(\delta_{\mathrm{circ}}\) are the relative distances defined in~\cref{app:measure}.

\section{Proofs}
\subsection{Proof of Symmetry Transfer~I, Multi-Block Extension}
\label{pf:multi_block_cyclic}

\begin{proof}[Proof of~\cref{thm:multi_block_cyclic}]
The proof is similar to the single-block case in~\cref{thm:single_block_cyclic}. We first establish that any optimal solution to the original optimization problem~\cref{eq:obj_function_constrained} must have a product matrix \(Z=WH\) that is block-circulant. Let the original optimization problem be:
\begin{equation} \label{eq:prob_original_multi_block}
\mathcal J_1^* = \min_{(W,H) \in \mathcal{F}_{\mathrm{feas}} } \mathcal J(W,H;Y),
\end{equation}
where, writing \(Z=WH=[Z_1|\cdots|Z_{n_{\mathrm{blk}}}]\), \(\mathcal J(W,H;Y) = \sum_{i=1}^{n_{\mathrm{blk}}} \sum_{j=1}^{m}\mathcal L(\sigma((Z_i)_{:,j}), (Y_i)_{:,j})\) and the feasible set is \(\mathcal{F}_{\mathrm{feas}}  = \{(W,H) \mid \|W\|_F^2\le E_W, \|H\|_F^2\le E_H\}\). Let the restricted problem, where the logit matrix \(WH\) must be block-circulant as described in the theorem, be:
\begin{equation} \label{eq:prob_circulant_multi_block}
\mathcal J_2^* = \min_{(W,H) \in \mathcal{F}_{\mathrm{feas}} , WH \text{ is block-circulant}} \mathcal J(W,H;Y).
\end{equation}
We will prove that any optimal solution to~\cref{eq:prob_original_multi_block} must have a block-circulant product matrix, which implies \(\mathcal J_1^* = \mathcal J_2^*\) and that the solution sets are equivalent. The inequality \(\mathcal J_2^* \ge \mathcal J_1^*\) is immediate, as the feasible set of~\cref{eq:prob_circulant_multi_block} is a subset of that of~\cref{eq:prob_original_multi_block}. The remaining task is to show that for any feasible pair \((W,H)\), we can construct a new feasible pair \((\widetilde{W}, \widetilde{H})\) with a block-circulant product \(\widetilde{Z}=\widetilde{W}\widetilde{H}\) such that \(\mathcal J(\widetilde{W},\widetilde{H};Y) \le \mathcal J(W,H;Y)\).

Let \((W,H)\) be any feasible solution to~\cref{eq:prob_original_multi_block}. Let the logit matrix be \(Z = WH\), partitioned into blocks corresponding to \(Y\), so \(Z = [Z_1 | Z_2 | \cdots | Z_{n_{\mathrm{blk}}}]\). We apply a symmetrization procedure to each block \(Z_i\) independently. For each block \(i\), define the average aligned logit vector \(\boldsymbol{q}_i\) by applying the inverse shift to each column of \(Z_i\) and averaging:
\[ \boldsymbol{q}_i := \frac{1}{m}\sum_{j=1}^{m} \Pi^{-(j-1)}(Z_i)_{:,j}. \]
From each \(\boldsymbol{q}_i\), we construct a circulant logit block \(\widetilde{Z}_i\) by setting
\[ (\widetilde{Z}_i)_{:,j}:=\Pi^{j-1}\boldsymbol q_i,\qquad j=1,\ldots,m. \]
The symmetrized block predictor is the concatenation of these blocks: \(\widetilde{Z} = [\widetilde{Z}_1 | \widetilde{Z}_2 | \cdots | \widetilde{Z}_{n_{\mathrm{blk}}}]\).
Equivalently, if \(I_{n_{\mathrm{blk}}}\otimes \Pi^{-(t-1)}\) denotes the block-diagonal matrix that applies the shift
\(\Pi^{-(t-1)}\) within each block of columns, then
\[
\widetilde Z
=
\frac1m\sum_{t=1}^{m} \Pi^{t-1} Z (I_{n_{\mathrm{blk}}}\otimes \Pi^{-(t-1)}).
\]
This makes clear that \(\widetilde Z\) is block-circulant.
For each block \(i\), define
\[
f_i(\boldsymbol u):=\mathcal L(\sigma(\boldsymbol u),\boldsymbol{y}_i),
\qquad
\boldsymbol z_{i,j}:=\Pi^{-(j-1)}(Z_i)_{:,j},\qquad j=1,\ldots,m.
\]
Then
\[
\mathcal J(W,H;Y)=\sum_{i=1}^{n_{\mathrm{blk}}}\sum_{j=1}^{m} f_i(\boldsymbol z_{i,j}),
\]
while the loss of the symmetrized predictor is
\[
\mathcal J(\widetilde W,\widetilde H;Y)
=
m\sum_{i=1}^{n_{\mathrm{blk}}} f_i \left(\frac1m\sum_{j=1}^{m}\boldsymbol z_{i,j}\right).
\]
By~\cref{lem:softmax_ce_strict_convexity_V0}, each \(f_i\) is convex, so Jensen's inequality applied blockwise gives
\[
\mathcal J(\widetilde W,\widetilde H;Y)\le \mathcal J(W,H;Y).
\]
We also need feasibility of the symmetrized predictor. Since \(\widetilde Z\) is an average of matrices obtained from \(Z\) by left and right multiplication by orthogonal matrices, the nuclear norm is nonincreasing:
\[
\|\widetilde Z\|_*
=
\left\|
\frac1m\sum_{t=1}^{m} \Pi^{t-1} Z (I_{n_{\mathrm{blk}}}\otimes \Pi^{-(t-1)})
\right\|_*
\le
\frac1m\sum_{t=1}^{m}\|\Pi^{t-1} Z (I_{n_{\mathrm{blk}}}\otimes \Pi^{-(t-1)})\|_*
=
\|Z\|_*.
\]
Since \(Z=WH\), \cref{lem:nuclear_frobenius_combined} gives
\[
\|Z\|_*\le \|W\|_F\|H\|_F\le \sqrt{E_WE_H},
\]
hence \(\|\widetilde Z\|_*\le \sqrt{E_WE_H}\). Therefore, exactly as in
~\cref{thm:single_block_cyclic}, taking an SVD \(\widetilde Z=U\Sigma V^\top\), choosing \(Q\in\mathbb R^{d\times r}\) with \(Q^\top Q=I_r\), and defining
\[
\widetilde W:=\sqrt[4]{\frac{E_W}{E_H}} U\Sigma^{1/2}Q^\top,
\qquad
\widetilde H:=\sqrt[4]{\frac{E_H}{E_W}} Q\Sigma^{1/2}V^\top,
\]
produces a feasible pair \((\widetilde W,\widetilde H)\) with
\(\widetilde W\widetilde H=\widetilde Z\).
Now let \((W^*,H^*)\) be an optimal solution to~\cref{eq:prob_original_multi_block}, and write
\(Z^*=W^*H^*=[Z_1^*|\cdots|Z_{n_{\mathrm{blk}}}^*]\).
Applying the symmetrization above to \(Z^*\) yields a feasible block-circulant predictor
\(\widetilde Z^*\) with
\[
\mathcal J(\widetilde W,\widetilde H;Y)\le \mathcal J(W^*,H^*;Y).
\]
By optimality, equality must hold.
Because equality holds in the blockwise Jensen inequalities, for each block \(i\) the aligned columns
\[
\boldsymbol z_{i,j}^*:=\Pi^{-(j-1)}(Z_i^*)_{:,j},\qquad j=1,\ldots,m,
\]
must satisfy the same equality condition as in the single-block proof:
\[
\boldsymbol z_{i,j}^*=\boldsymbol z_{i,1}^*+\beta_{i,j}^*\boldsymbol 1_m,
\qquad
\boldsymbol 1_m^\top \boldsymbol z_{i,1}^*=0.
\]
Define the circulant part of each block by
\[
(N_i^*)_{:,j}:=\Pi^{j-1}\boldsymbol z_{i,1}^*,\qquad j=1,\ldots,m,
\]
and let \(\boldsymbol\beta_i^*:=(\beta_{i,1}^*,\dots,\beta_{i,m}^*)^\top\).
Then
\[
Z_i^*=N_i^*+\boldsymbol 1_m(\boldsymbol\beta_i^*)^\top.
\]
Concatenating the blocks gives
\[
Z^*=N^*+\boldsymbol 1_m(\boldsymbol\beta^*)^\top,
\]
where
\[
N^*:=[N_1^*|\cdots|N_{n_{\mathrm{blk}}}^*],
\qquad
\boldsymbol\beta^*:=[(\boldsymbol\beta_1^*)^\top|\cdots|(\boldsymbol\beta_{n_{\mathrm{blk}}}^*)^\top]^\top.
\]
Since the loss is invariant under columnwise additive shifts, replacing the product \(WH=Z^*\) by \(N^*\) leaves the objective value unchanged.
Moreover, every column of \(N^*\) lies in \(\mathcal V_0\), because each generator
\(\boldsymbol z_{i,1}^*\) has zero sum and cyclic shifts preserve zero sum. Hence
\[
\boldsymbol 1_m^\top N^*=0,
\]
and therefore
\[
Z^{*\top}Z^*
=
N^{*\top}N^*
+
m \boldsymbol\beta^*(\boldsymbol\beta^*)^\top.
\]
If \(\boldsymbol\beta^*\neq \boldsymbol{0}_n\), then exactly the same Weyl inequality and trace argument as in
~\cref{thm:single_block_cyclic} shows that
\[
\|Z^*\|_*>\|N^*\|_*.
\]
Since \(\|N^*\|_*<\|Z^*\|_*\le \sqrt{E_WE_H}\), the SVD construction above produces a feasible pair
realizing \(N^*\) with strictly smaller Frobenius norms, contradicting
~\cref{ass:active_constraint}. Therefore \(\boldsymbol\beta^*=\boldsymbol{0}_n\), so each block \(Z_i^*\) is circulant.
Hence every optimal logit matrix is block-circulant, and the original problem is equivalent to the restricted block-circulant problem.
Therefore, we only need to solve the restricted problem~\cref{eq:prob_circulant_multi_block}. The optimal logit matrix \(Z\) must be block-circulant:
\[ Z = [Z_1 | Z_2 | \cdots | Z_{n_{\mathrm{blk}}}], \]
where each block is defined by the cyclic shifts of a generating vector:
\[ (Z_i)_{:,j}=\Pi^{j-1}\boldsymbol z_i,\qquad j=1,\ldots,m. \]
The set of these optimal generating vectors \(\{\boldsymbol{z}_{1}, \ldots, \boldsymbol{z}_{n_{\mathrm{blk}}}\}\) is found as the solution to the following convex optimization problem, which reformulates the original objective and constraint in terms of \(\{\boldsymbol{z}_{i}\}\):
\[
\begin{aligned}
\min_{\boldsymbol{z}_{1}, \ldots, \boldsymbol{z}_{n_{\mathrm{blk}}} \in \mathbb{R}^m} \quad & \sum_{i=1}^{n_{\mathrm{blk}}} \mathcal L \left(\sigma(\boldsymbol{z}_{i}), \boldsymbol{y}_i\right) \\
\text{s.t.} \quad & \left\| [ [\Pi^{j-1} \boldsymbol{z}_{1}]_{j=1}^{m} | \cdots | [\Pi^{j-1} \boldsymbol{z}_{n_{\mathrm{blk}}}]_{j=1}^{m} ] \right\|_* \le \sqrt{E_W E_H}.
\end{aligned}
\]
Once this convex problem is solved for \(\{\boldsymbol{z}_{i}\}\), the optimal logit matrix \(Z\) is determined. Furthermore, applying the same procedure in~\cref{thm:single_block_cyclic}, the products of the optimal factors must satisfy:
\[ WW^\top = \sqrt{\frac{E_W}{E_H}} (Z Z^\top)^{1/2}. \]
By~\cref{lem:svd_factor_symmetry_block}, the matrix \((Z Z^\top)^{1/2}\) is circulant.
This completes the proof.
\end{proof}

\subsection{Proof of Symmetry Transfer~II, 2-Transitive Group Extension}
\label{pf:2_transitive_optimal_embeddings}

\begin{proof}[Proof of~\cref{thm:2_transitive_optimal_embeddings}]
The proof structure largely follows that of~\cref{thm:single_block_perm}.
The derivation of the lower bound on the loss function \(\mathcal J(W,H;Y)\) proceeds similarly as in~\cref{thm:single_block_perm}. The cross-entropy loss involves terms \(\mathrm{tr}(C ^\top WH)\) and an entropy term related to \(\boldsymbol{\alpha}\). The number of columns is now \(|\mathcal G|\) instead of \(m!\).
The optimization for \(\boldsymbol{\alpha}\) leads to the system of equations:
\[ k (\alpha_i-y_i) + \log\alpha_i = \frac{1}{m}\sum_{r=1}^m\log\alpha_r, \quad \sum_{i=1}^m \alpha_i=1, \]
where \(k  = \sqrt{E_W E_H}/((m-1)\gamma) \) and \(\gamma  = \sqrt{|\mathcal G|/(m-1)}\|\boldsymbol{\alpha}-\boldsymbol{y}\|_2\). The existence and uniqueness of \(\boldsymbol{\alpha}^*\) solving this system (given \(\boldsymbol{y}\) is nonuniform and \(\boldsymbol{\alpha}\neq \boldsymbol{y}\)) is proven similarly by concavity arguments as in~\cref{thm:single_block_perm}. The lower bound on the loss is achieved when these optimal \(\boldsymbol{\alpha}^*\) are used.

\paragraph{Sufficiency.}
We choose \(\boldsymbol{\alpha}=\boldsymbol{\alpha}^*\) and define \(C =A-Y \). The proposed optimal \(W=\sqrt{E_W/(m-1)}UQ^\top\) and \(H=-\sqrt{E_H/(m-1)}QV^\top\) can achieve this lower bound, as in~\cref{thm:single_block_perm}.

\paragraph{Necessity.}
If \((W,H)\) is an optimal solution, it must saturate the bounds. This implies that \(\boldsymbol{\sigma}(WH)=A\). As in~\cref{thm:single_block_perm}, \cref{lem:nuclear_frobenius_combined} implies that the SVDs of \(W\) and \(H\) can be written as \(W=U_W\Sigma_W Q^\top\) and \(H=Q\Sigma_H V_H^\top\).
By~\cref{lem:von_neumann_rectangular} we can write
\[ WH = U\Sigma_{WH}V^\top, \]
where \(\Sigma_{WH} = \mathrm{diag}(\mu_1, \dots, \mu_{m-1})\) contains the singular values of \(WH\).
To establish the isotropy of \(\Sigma_{WH}\), we use the condition \(\boldsymbol{\sigma}(WH)=A\). The columns of \(A\) are \(g\circ\boldsymbol{\alpha}^*\), forming the orbit of the vector \(\boldsymbol{\alpha}^*\) under the group \(\mathcal G\). We then apply~\cref{lem:orbit_S_softmax_2_transitive}. This lemma states that if the columns of \(\boldsymbol{\sigma}(U\Sigma_{WH}V^\top)\) form such an orbit under the 2-transitive group \(\mathcal G\), then the diagonal matrix \(\Sigma_{WH}\) must be a scalar multiple of the identity matrix. Therefore,
\[ \Sigma_{WH} = k_0 I_{m-1} \]
for some scalar \(k_0 \ge 0\). The remainder of the proof follows the same algebraic steps as in the detailed proof for~\cref{thm:single_block_perm}. The derivations for the structures of \(WW^\top\), \(H^\top H\), and \(WH\) then follow directly as shown in the proof of~\cref{thm:embedding_products_structure}.
\end{proof}

\subsection{Proof of Symmetry Transfer~II, Multi-Block Extension}
\label{pf:2_transitive_multiblock}

\begin{proof}[Proof of~\cref{thm:2_transitive_multiblock}]
The argument largely follows that of~\cref{thm:single_block_perm}. We highlight the main adaptations. Set $\boldsymbol{v}_i:=\boldsymbol{\alpha}_i-\boldsymbol{y}_i$ and let \(C_i = A_i - Y_i = [g\circ\boldsymbol{v}_i]_{g \in \mathcal G_i}\) for each block \(i\). The overall matrix is \(C = [C_1|\dots|C_{n_{\mathrm{blk}}}]\).
The derivation of the lower bound for the loss function \(\mathcal J(W,H;Y)\) using Jensen's inequality (analogous to~\cref{eq:jensen} in prior proofs) remains structurally similar:
\[ \mathcal J(W,H;Y) \ge \mathrm{tr}(C^{\top}WH) - \sum_{i=1}^{n_{\mathrm{blk}}} |\mathcal G_i|\sum_{\ell=1}^{m}\alpha_{i\ell}\log\alpha_{i\ell}. \]
First, consider the structure of \(CC^\top\). Since \(\mathcal G_i\) is a 2-transitive group and \(\boldsymbol{1}_m^\top \boldsymbol{v}_i = 0\), by~\cref{lem:2_transitive_svd}, \(C_i C_i^\top = \lambda'_i (I_m - \boldsymbol{1}_m\boldsymbol{1}_m^\top/m)\) where \(\lambda'_i = |\mathcal G_i|\|\boldsymbol{v}_i\|_2^2/(m-1)\).
Summing over blocks:
\[ CC^{\top} = \sum_{i=1}^{n_{\mathrm{blk}}} C_i C_i^\top = \left(\sum_{i=1}^{n_{\mathrm{blk}}} \lambda'_i\right) \left(I_m-\frac1m\boldsymbol{1}_m\boldsymbol{1}_m^{\top}\right). \]
Let \(\lambda = \sum_i \lambda'_i \). Then \(CC^\top = \lambda (I_m - \boldsymbol{1}_m\boldsymbol{1}_m^\top/m)\). This confirms that \(C\) has a rank-$(m-1)$ SVD \(C=U(\gamma I_{m-1})V^\top\) with \(\gamma = \sqrt{\lambda}\). \( \gamma > 0 \) follows from the assumption that at least one \(\boldsymbol{y}_i\) is nonuniform, ensuring that at least one \(\boldsymbol{v}_i\) is nonzero.
The trace inequality \(\mathrm{tr}(C^{\top}WH)\ge-\sqrt{E_W E_H}\gamma\) follows as previously established. This leads to \(\mathcal J(W,H;Y)\ge\phi(\boldsymbol{\alpha}_1,\dots,\boldsymbol{\alpha}_{n_{\mathrm{blk}}})\) with
\[ \phi(\boldsymbol{\alpha}_1,\dots,\boldsymbol{\alpha}_{n_{\mathrm{blk}}}) = -\sqrt{E_W E_H}\gamma(\boldsymbol{\alpha}_1,\dots,\boldsymbol{\alpha}_{n_{\mathrm{blk}}}) - \sum_i |\mathcal G_i|\sum_{\ell=1}^{m}\alpha_{i\ell}\log\alpha_{i\ell}. \]
We now establish the properties of the maximizer \(\boldsymbol{\alpha}^* = (\boldsymbol{\alpha}_1^*, \ldots, \boldsymbol{\alpha}_{n_{\mathrm{blk}}}^*)\) of \(\phi(\boldsymbol{\alpha}_1,\dots,\boldsymbol{\alpha}_{n_{\mathrm{blk}}})\) subject to \(\boldsymbol{\alpha}_i \in \Delta^{m-1}\) for each \(i\). The domain \(\mathcal{A} = \times_{i=1}^{n_{\mathrm{blk}}} \Delta^{m-1}\) is compact and convex.
To establish uniqueness of the maximizer, we first examine the concavity of \(\phi\).
The first term depends on \(\gamma\), which is \(\gamma = \sqrt{\sum_{j=1}^{n_{\mathrm{blk}}}|\mathcal G_j|\|\boldsymbol{\alpha}_j-\boldsymbol{y}_j\|_{2}^{2}/(m-1)}\).
Let \(r_{j\ell} = \sqrt{|\mathcal G_j|/(m-1)}(\alpha_{j\ell}-y_{j\ell})\). Each \(r_{j\ell}\) is an affine function of \(\alpha_{j\ell}\). If we form a high-dimensional vector \(\boldsymbol{r}\) by concatenating all such \(r_{j\ell}\) components, the mapping from \(\{\boldsymbol{\alpha}_j\}\) to \(\boldsymbol{r}\) is an affine transformation.
Then, \(\gamma\) can be expressed as
\[ \gamma = \sqrt{\sum_{j=1}^{n_{\mathrm{blk}}}\sum_{\ell=1}^m r_{j\ell}^2} = \|\boldsymbol{r}\|_2. \]
Since the Euclidean norm is a convex function, and convexity is preserved under affine transformations of the input variables, \(\gamma\) is a convex function of \((\boldsymbol{\alpha}_1, \ldots, \boldsymbol{\alpha}_{n_{\mathrm{blk}}})\).
The term \(-\sqrt{E_W E_H}\gamma\) is therefore concave.
The Shannon entropy term for each block, \(\mathcal{H}(\boldsymbol{\alpha}_i) = -\sum_{\ell=1}^m \alpha_{i\ell}\log\alpha_{i\ell}\), is strictly concave on the interior of \(\Delta^{m-1}\). Since each \(|\mathcal G_i| > 0\), the sum \(\sum_{i=1}^{n_{\mathrm{blk}}} |\mathcal G_i|\mathcal{H}(\boldsymbol{\alpha}_i)\) is strictly concave with respect to the collection \(\{\boldsymbol{\alpha}_i\}\) on the interior of \(\mathcal{A}\).
Because \(\phi\) is the sum of a concave function (\(-\sqrt{E_W E_H}\gamma\)) and a strictly concave function (\(\sum |\mathcal G_i|\mathcal{H}(\boldsymbol{\alpha}_i)\)), it follows that \(\phi\) is strictly concave on the interior of its domain \(\mathcal{A}\).
Therefore, \(\phi\) attains a unique maximum value on \(\mathcal{A}\).
Next, the fact that this unique maximizer \(\boldsymbol{\alpha}^*\) lies in the interior of \(\mathcal{A}\) follows from the same directional derivative argument as in~\cref{thm:single_block_perm}.
Finally, since \(\boldsymbol{\alpha}^*\) is an interior point maximizer subject to the \(n_{\mathrm{blk}}\) constraints \(\sum_{\ell=1}^m \alpha_{i\ell}=1\) (for \(i=1,\dots,n_{\mathrm{blk}}\)), we use the method of Lagrange multipliers. The Lagrangian is:
\[ \mathscr{J}_{\boldsymbol{\alpha}}(\{\boldsymbol{\alpha}_j\}_{j=1}^{n_{\mathrm{blk}}}, \{\nu_j\}_{j=1}^{n_{\mathrm{blk}}}) = -\sqrt{E_W E_H}\gamma - \sum_{j=1}^{n_{\mathrm{blk}}} |\mathcal G_j|\sum_{\ell=1}^m \alpha_{j\ell}\log\alpha_{j\ell} - \sum_{j=1}^{n_{\mathrm{blk}}} \nu_j \left(\sum_{\ell=1}^m \alpha_{j\ell}-1\right). \]
Setting the partial derivative \(\partial \mathscr{J}_{\boldsymbol{\alpha}}/\partial \alpha_{i\ell} = 0\) for a specific block \(i\) and component \(\ell\):
\[ -\sqrt{E_W E_H} \frac{\partial \gamma}{\partial \alpha_{i\ell}} - |\mathcal G_i| (\log\alpha_{i\ell}+1) - \nu_i = 0. \]
We have \(\partial \gamma/\partial \alpha_{i\ell} = |\mathcal G_i|(\alpha_{i\ell}-y_{i\ell})/((m-1)\gamma)\). Substitution yields
\[ -\frac{\sqrt{E_W E_H} |\mathcal G_i|}{(m-1)\gamma} (\alpha_{i\ell}-y_{i\ell}) - |\mathcal G_i| (\log\alpha_{i\ell}+1) - \nu_i = 0. \]
Let \(k = \sqrt{E_W E_H}/((m-1)\gamma)\). The equation becomes:
\[ -k |\mathcal G_i| (\alpha_{i\ell}-y_{i\ell}) - |\mathcal G_i| (\log\alpha_{i\ell}+1) - \nu_i = 0. \]
Dividing by \(-|\mathcal G_i|\) (since \(|\mathcal G_i| > 0\)):
\begin{equation}
  \label{eq:alpha_i_ell}
  k (\alpha_{i\ell}-y_{i\ell}) + \log\alpha_{i\ell}+1 + \frac{\nu_i}{|\mathcal G_i|} = 0.
\end{equation}
Summing this equation over \(\ell=1,\dots,m\) for the fixed block \(i\), and using the fact that \(\sum_{\ell=1}^m (\alpha_{i\ell}-y_{i\ell}) = \sum_{\ell}\alpha_{i\ell} - \sum_{\ell}y_{i\ell} = 1-1=0\):
\[ \sum_{\ell=1}^m \log\alpha_{i\ell} + m\left(1 + \frac{\nu_i}{|\mathcal G_i|}\right) = 0. \]
This allows us to express the term involving the Lagrange multiplier for block \(i\) as \(1 + \nu_i/|\mathcal G_i| = -\sum_{r=1}^m \log\alpha_{ir}/m\).
Substituting this back into~\cref{eq:alpha_i_ell} gives:
\[ k (\alpha_{i\ell}-y_{i\ell}) + \log\alpha_{i\ell} - \frac{1}{m}\sum_{r=1}^m \log\alpha_{ir} = 0. \]
Rearranging gives \( k\left(\alpha_{i\ell}-y_{i\ell}\right) + \log\alpha_{i\ell} = \sum_{r=1}^{m}\log\alpha_{ir}/m \). This, along with \(\sum_{\ell}\alpha_{i\ell}=1\) and \(\alpha_{i\ell}>0\), is precisely the system of equations~\cref{eq:alpha_system_2trans} stated in~\cref{thm:2_transitive_multiblock}. This confirms that the unique, interior maximizer \(\boldsymbol{\alpha}^*\) is indeed determined by this system.

\paragraph{Sufficiency.}
Set \(\boldsymbol{\alpha}_i = \boldsymbol{\alpha}_i^*\), thereby defining \(A\) and \(C=A-Y\) as in the theorem. The forms \(W^*=\sqrt{E_W/(m-1)} UQ^{\top}\) and \(H^*=-\sqrt{E_H/(m-1)} QV^{\top}\) satisfy the constraint in~\cref{eq:obj_function_constrained}. The product is \(W^*H^* = -kC\). The conditions for the lower bound to be met (equality in trace bound and Jensen's inequality, implying \(\boldsymbol{\sigma}(W^*H^*)=A\)) are verified analogously to the detailed sufficiency proof for the single-block symmetric group case, using the system~\cref{eq:alpha_system_2trans} for \(\boldsymbol{\alpha}_i^*\). Thus \((W^*,H^*)\) is globally optimal.

\paragraph{Necessity.}
If \((W,H)\) is an optimal solution, all inequalities in deriving the lower bound become equalities. Thus, \(\boldsymbol{\sigma}(WH)=A\). The KKT conditions remain \(C H^\top + \lambda_1 W = 0\) and \(W^\top C + \lambda_2 H = 0\).
\Cref{lem:nuclear_frobenius_combined} (equality condition) implies \(W=U_W\Sigma_W Q^\top, H=Q\Sigma_H V_H^\top\) with \(\Sigma_W=\eta \Sigma_H\).
\Cref{lem:von_neumann_rectangular} implies that \(C\) and \(WH\) share singular directions \(U,V\). We can write
\[ WH = U\Sigma_{WH}V^\top, \]
where \(\Sigma_{WH} = \mathrm{diag}(\mu_1, \dots, \mu_{m-1})\) contains the singular values of \(WH\).

Now, we establish that \(\Sigma_{WH}\) is a scalar multiple of an identity matrix. The theorem's condition that at least one \(\boldsymbol{y}_i\) is nonuniform ensures that \(\boldsymbol{v}_i = \boldsymbol{\alpha}_i^* - \boldsymbol{y}_i \neq \boldsymbol{0}_m\) for at least one block \(i\). This, in turn, guarantees that the overall singular value \(\gamma\) of \(C\) is strictly positive (\(\gamma > 0\)). 
Since \(\boldsymbol{\sigma}(WH)=A\), each block of \(\boldsymbol{\sigma}(WH)\) equals the corresponding orbit block \(A_i=[g\circ\boldsymbol{\alpha}_i^*]_{g\in\mathcal G_i}\). Hence the columns of \((WH)_i\) form the same \(\mathcal G_i\)-orbit up to the softmax shift invariance, as detailed in the proof of~\cref{lem:orbit_S_softmax_2_transitive}. Let \(\rho_i(g)\) denote the permutation matrix satisfying \(\rho_i(g)\boldsymbol{x}=g\circ\boldsymbol{x}\) for this block action.
Write the right singular factor blockwise as
\[
V^\top=[V_1^\top|\cdots|V_{n_{\mathrm{blk}}}^\top],
\]
so that
\[
C_i = U(\gamma I_{m-1})V_i^\top,
\qquad
(WH)_i = U\Sigma_{WH}V_i^\top.
\]
For each \(i\) and \(g\in \mathcal G_i\), let \(\Lambda_i(g)\) be the permutation matrix on the \(|\mathcal G_i|\) columns of block \(i\) induced by the group action on the orbit indexing. Then the orbit structure gives
\[
\rho_i(g)C_i = C_i\Lambda_i(g),
\qquad
\rho_i(g)(WH)_i = (WH)_i\Lambda_i(g).
\]
Let
\[
R_{g,i}:=U^\top \rho_i(g)U.
\]
Substituting the SVD forms of \(C_i\) and \((WH)_i\) yields
\[
R_{g,i}(\gamma I_{m-1})V_i^\top = (\gamma I_{m-1})V_i^\top \Lambda_i(g),
\qquad
R_{g,i}\Sigma_{WH}V_i^\top = \Sigma_{WH}V_i^\top \Lambda_i(g).
\]
Since \(\gamma>0\), the first identity simplifies to
\[
R_{g,i}V_i^\top = V_i^\top \Lambda_i(g).
\]
Now choose a block \(i_0\) with \(C_{i_0}\neq 0\); such a block exists because \(\gamma>0\).
For this block, \(\lambda'_{i_0}>0\), hence
\[
C_{i_0}C_{i_0}^\top=\lambda'_{i_0} \left(I_m-\frac1m\boldsymbol 1_m\boldsymbol 1_m^\top\right)
\]
has rank \(m-1\). Therefore \(C_{i_0}\) has rank \(m-1\), and since
\[
C_{i_0}=U(\gamma I_{m-1})V_{i_0}^\top,
\]
the matrix \(V_{i_0}^\top\) has full row rank \(m-1\).
Using the two displayed identities for \(i=i_0\), we obtain
\[
R_{g,i_0}\Sigma_{WH}V_{i_0}^\top
=
\Sigma_{WH}V_{i_0}^\top \Lambda_{i_0}(g)
=
\Sigma_{WH}R_{g,i_0}V_{i_0}^\top,
\]
hence
\[
\bigl(R_{g,i_0}\Sigma_{WH}-\Sigma_{WH}R_{g,i_0}\bigr)V_{i_0}^\top=0.
\]
Because \(V_{i_0}^\top\) has full row rank, this implies
\[
R_{g,i_0}\Sigma_{WH}=\Sigma_{WH}R_{g,i_0}
\qquad\text{for all }g\in \mathcal G_{i_0}.
\]
Thus \(\Sigma_{WH}\) lies in the commutant of the representation
\(\{R_{g,i_0}\}_{g\in \mathcal G_{i_0}}\).
By~\cref{lem:irreducibility_2_transitive_action}, this representation is irreducible, and then~\cref{lem:two_orbit_matrix_form} implies that its commutant consists only of scalar multiples of the identity. Therefore
\[
\Sigma_{WH}=k_0 I_{m-1}
\]
for some \(k_0\ge 0\).
The remainder of the proof follows the algebraic steps analogous to those in~\cref{thm:single_block_perm}. The product forms in~\cref{eq:opt_products_2trans} are direct consequences.
\end{proof}

\section{Auxiliary Lemmas}

\begin{lemma}[von Neumann Trace Inequality]
  \label{lem:von_neumann_rectangular}
  Let \(C \in \mathbb{R}^{m \times n}\) and \(X \in \mathbb{R}^{m \times n}\) be arbitrary real matrices. Let \(s_i(C)\) and \(s_i(X)\) denote their singular values ordered nonincreasingly. Then:
  \[
  \left|\mathrm{tr}\left(C^\top X\right)\right|
  \le
  \sum_{i=1}^{\min(m,n)} s_i(C)s_i(X).
  \]
  Equality in the absolute-value bound holds if and only if \(C\) and \(X\) share singular vectors up to a common sign~\citep{carlsson2021neumann}, i.e., there exist \(\varepsilon \in \{-1,1\}\) and orthogonal matrices \(U \in \mathbb{R}^{m \times m}\), \(V \in \mathbb{R}^{n \times n}\) such that:
  \[
  C = U D_C V^\top, \quad
  \varepsilon X = U D_X V^\top,
  \]
  where \(D_C,D_X\in\mathbb{R}^{m\times n}\) are rectangular diagonal matrices with \((D_C)_{ii}=s_i(C)\) and \((D_X)_{ii}=s_i(X)\) for \(1\le i\le \min(m,n)\), and all other entries zero.
\end{lemma}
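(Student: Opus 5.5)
The inequality itself goes through the Frobenius inner product and SVDs. Write full SVDs $C=U_1D_CV_1^\top$ and $X=U_2D_XV_2^\top$ with orthogonal $U_i\in\mathbb R^{m\times m}$ and $V_i\in\mathbb R^{n\times n}$. Then
\[
\mathrm{tr}(C^\top X)=\mathrm{tr}(D_C^\top P D_X R),
\qquad P:=U_1^\top U_2,\quad R:=V_2^\top V_1,
\]
and both $P$ and $R$ are orthogonal. Expanding in coordinates gives
\[
\mathrm{tr}(C^\top X)=\sum_{i,j\le \min(m,n)} s_i(C)\,s_j(X)\,P_{ij}R_{ji}.
\]
The coefficient matrix $B_{ij}:=|P_{ij}R_{ji}|$ is doubly substochastic. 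Indeed, by Cauchy--Schwarz, $\sum_j|P_{ij}R_{ji}|\le \|P_{i,:}\|_2\|R_{:,i}\|_2=1$, and the same bound holds for column sums. So $|\mathrm{tr}(C^\top X)|\le \sum_{ij}B_{ij}a_ib_j$ with $a,b$ nonincreasing and nonnegative. A doubly substochastic matrix is dominated entrywise by a doubly stochastic one (von Neumann's augmentation), and Birkhoff's theorem writes that as a convex combination of permutation matrices. By the rearrangement inequality, each permutation gives $\sum_i a_ib_{\pi(i)}\le\sum_i a_ib_i$. Together these yield the bound $\sum_i s_i(C)s_i(X)$.

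Sufficiency of the equality condition is a direct computation. If $C=UD_CV^\top$ and $\varepsilon X=UD_XV^\top$, then
\[
\mathrm{tr}(C^\top X)=\varepsilon\,\mathrm{tr}(D_C^\top D_X)=\varepsilon\sum_i s_i(C)s_i(X),
\]
whose absolute value is the bound.

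Necessity is the hard part. I would not rederive it from the inequality above, since the cases with repeated or zero singular values make a self-contained equality analysis delicate. Instead I would cite \citet{carlsson2021neumann}, which is where the paper already attributes this characterization. Two checks tie the citation to our setting. First, replacing $X$ by $-X$ changes the sign of the trace, which reduces the absolute-value version to the signed case $\mathrm{tr}(C^\top X)=\sum_i s_i(C)s_i(X)$. Second, Carlsson's statement should give simultaneous SVDs with the singular values in matched nonincreasing order, exactly in the rectangular diagonal form $D_C,D_X$ stated here. If a self-contained argument is preferred, I would use the variational characterization $\sum_i s_i(C)s_i(X)=\max_{U,V}\mathrm{tr}(C^\top U D_X V^\top)$. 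Equality then forces $X$ to be a maximizer, and I would analyze the maximizers through the polar decomposition on each block of equal singular values of $C$. This block-by-block handling of ties is where I expect the main difficulty.
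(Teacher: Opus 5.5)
Your proposal is correct, and it goes further than the paper. The paper gives no proof of this lemma: it treats the inequality as the classical von Neumann result and attributes the equality characterization to \citet{carlsson2021neumann}.

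Your Mirsky-style argument for the inequality checks out. The matrices $P=U_1^\top U_2$ and $R=V_2^\top V_1$ are orthogonal. The $k\times k$ array $B_{ij}=|P_{ij}R_{ji}|$, with $k=\min(m,n)$, has row and column sums at most $1$ by Cauchy--Schwarz against full rows and columns of $P$ and $R$. Augmenting $B$ to a doubly stochastic matrix, then applying Birkhoff and the rearrangement inequality, finishes the bound, since the weights $s_i(C)s_j(X)$ are nonnegative. Your sufficiency computation is right, and so is the reduction of the absolute-value case to the signed case via $X\mapsto -X$, using $s_i(-X)=s_i(X)$.

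On necessity you and the paper are in the same position: both rest on the citation. This is the direction the paper relies on in Step~5 of the proof of \cref{thm:single_block_perm}, to conclude that $C$ and $WH$ share singular vectors. So your route makes the inequality and the easy direction self-contained, but gives no leverage on the equality case. The augmentation and rearrangement steps discard exactly the information about $P$ and $R$ that necessity needs, so you are right not to try to extract it from that chain.

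One small check when you cite: the lemma requires real orthogonal $U,V$. Confirm that the cited characterization, or its proof, applies over $\mathbb{R}$ and not only with complex unitary factors.
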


\begin{lemma}[Nuclear-Frobenius Norm Inequality~\citep{fazel2002matrix}]
  \label{lem:nuclear_frobenius_combined}
  Let \(W \in \mathbb{R}^{m \times d}\) and \(H \in \mathbb{R}^{d \times n}\) be nonzero matrices. Then
  \[
  \|W H\|_{*} \le \|W\|_{F} \|H\|_{F}.
  \]
  Moreover, \(\|W H\|_{*} = \|W\|_{F} \|H\|_{F}\) if and only if
  \begin{itemize}
  \item \(W\) and \(H\) share their singular vectors. That is, there exist SVDs
  \[
  W = U \Sigma_W V^\top, \qquad
  H = V \Sigma_H Q^\top
  \]
  for some common factor \(V \in \mathbb{R}^{d \times d}\),
  \item \(W\) and \(H\) have the same rank \(r\), and their nonzero singular values are proportional: there exists \(k>0\) such that
  \[
  s_i(W) = k s_i(H), \qquad i = 1, \dots, r.
  \]
  \end{itemize}
\end{lemma}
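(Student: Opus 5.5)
We prove the inequality by polar decomposition of each factor. For the equality characterization we reduce to the equality cases of von Neumann's trace inequality (\cref{lem:von_neumann_rectangular}) and of Cauchy--Schwarz.

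\emph{The inequality.} By duality of the nuclear and operator norms, we have \(\|WH\|_* = \max_{\|X\|_{\mathrm{op}}\le 1}\mathrm{tr}(X^\top WH)\). Take a maximizer \(X=PR^\top\), where \(WH=P\Sigma R^\top\) is a compact SVD. Then
\[
\|WH\|_*=\mathrm{tr}(P^\top W H R)=\langle W^\top P,\; HR\rangle_F\le \|W^\top P\|_F\|HR\|_F\le \|W\|_F\|H\|_F .
\]
The last step holds because \(P\) and \(R\) have orthonormal columns, so multiplying by them cannot increase the Frobenius norm.

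\emph{Sufficiency of the conditions.} Write \(W=U\Sigma_W V^\top\) and \(H=V\Sigma_H Q^\top\) with a shared middle factor, and suppose \(\Sigma_W=k\Sigma_H\) on the common rank \(r\). Then \(WH=U\Sigma_W\Sigma_H Q^\top\) is an SVD, so \(\|WH\|_*=\sum_i s_i(W)s_i(H)=k\sum_i s_i(H)^2\). On the other hand, \(\|W\|_F\|H\|_F=\sqrt{k^2\sum_i s_i(H)^2}\,\sqrt{\sum_i s_i(H)^2}\), which is the same quantity.

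\emph{Necessity.} Assume equality holds and \(W,H\neq 0\). Each of the two inequalities in the chain above must then be tight.
\begin{itemize}
\item Tightness of the Cauchy--Schwarz step gives \(W^\top P=c\,HR\) for some \(c>0\); positivity of \(c\) follows because the inner product is \(\|WH\|_*>0\). The case \(WH=0\) is excluded: it would force \(\|W\|_F\|H\|_F=0\).
\item Tightness of the projection steps gives \(\|W^\top P\|_F=\|W\|_F\) and \(\|HR\|_F=\|H\|_F\). Hence the rows of \(W\) lie in \(\mathrm{col}(P)\), so \(PP^\top W=W\), and similarly \(HRR^\top=H\).
\end{itemize}
Combining these, \(W^\top = W^\top PP^\top = c\,HRP^\top\), so \(W=c^{-1}PR^\top H^\top\) (taking the constant as \(1/c\) after transposing). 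Next substitute \(H=HRR^\top\) and use \(WH=P\Sigma R^\top\). This shows \(H R=c\,W^\top P\) and \(HH^\top\) relate by \(H^\top H\propto R\Sigma R^\top\) and \(WW^\top\propto P\Sigma P^\top\). More concretely, \(WW^\top = c^{-1}W (HR)P^\top = c^{-1}P\Sigma P^\top\), and symmetrically \(H^\top H = c\,R\Sigma R^\top\).

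So \(W\) has left singular vectors \(P\) with singular values \(c^{-1/2}\Sigma^{1/2}\), and \(H\) has right singular vectors \(R\) with singular values \(c^{1/2}\Sigma^{1/2}\). The ranks agree and the singular values are proportional with ratio \(k=c^{-1}\). Define \(V:=W^\top P\,\Sigma^{-1/2}c^{1/2}\). Since \(W^\top P=c\,HR\), we have \(V^\top V=c\,\Sigma^{-1/2}P^\top WW^\top P\Sigma^{-1/2}=I\), so \(V\) has orthonormal columns. We get \(W=P(c^{-1/2}\Sigma^{1/2})V^\top\) and \(H=V(c^{1/2}\Sigma^{1/2})R^\top\). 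Both are SVDs sharing \(V\), and \(V\) may be completed to a \(d\times d\) orthogonal matrix by padding zero singular values.

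\emph{Main obstacle.} The delicate step is the necessity argument: turning the two tightness conditions into a single common middle factor \(V\) while keeping careful track of the constant \(c\) (or \(c^{-1}\)). Repeated singular values need no special treatment, because \(V\) is defined directly from \(W^\top P\) rather than chosen from a possibly non-unique SVD.
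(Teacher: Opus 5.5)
Your proof is correct and follows essentially the paper's route. Both arguments use the same dual certificate $PR^\top$ from the compact SVD of $WH$, Cauchy--Schwarz plus the contraction bound, and then read a shared middle factor off the equality conditions. The only real difference is that you apply Cauchy--Schwarz to the projected pair $(W^\top P, HR)$ and build $V$ explicitly from $W^\top P$, whereas the paper pairs $W^\top$ with $HQ_rU_r^\top$ and reads off an SVD of $W$ from a compact SVD of $H$.

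Make the constant convention consistent. Your formulas $WW^\top=c^{-1}P\Sigma P^\top$, $H^\top H=c\,R\Sigma R^\top$, $V^\top V=I$ and $H=V(c^{1/2}\Sigma^{1/2})R^\top$ all hold under $HR=c\,W^\top P$, not under $W^\top P=c\,HR$ as written in your first bullet and again when you define $V$. Also, $PP^\top W=W$ places the columns, not the rows, of $W$ in $\mathrm{col}(P)$.
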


\begin{proof}[Proof of~\cref{lem:nuclear_frobenius_combined}]
The nuclear norm \( \|WH\|_* \) can be expressed as an optimization problem:
  \[
  \|WH\|_* = \max_{\|B\|_{\mathrm{op}} \le 1} \mathrm{tr}(B^\top WH),
  \]
  where \( \|B\|_{\mathrm{op}} \) is the operator norm (maximum singular value of \( B \)). Applying the Cauchy--Schwarz inequality for the Frobenius inner product gives
  \[
  \mathrm{tr}\left(B^\top W H\right) = \mathrm{tr}\left(H B^\top W\right)
  =
  \langle HB^\top,W^\top\rangle_F
  \le \|H B^\top\|_{F} \|W\|_{F}.
  \]
  Moreover, since \(\|B\|_{\mathrm{op}} \le 1\), multiplication by
  \(B^\top\) is a contraction in Euclidean norm. Applying this row by row to
  \(H\), it follows that
  \[
  \|H B^\top\|_{F} \le \|H\|_{F}   \|B^\top\|_{\mathrm{op}} \le \|H\|_{F}.
  \]
  Therefore,
  \[
  \mathrm{tr}\left(H B^\top W\right) \le \|H\|_{F}   \|W\|_{F}.
  \]
  Taking the maximum over all \(B\) with \(\|B\|_{\mathrm{op}} \le 1\), we conclude
  \[
  \|W H\|_{*} \le \|W\|_{F}    \|H\|_{F}.
  \]
  \paragraph{Necessity.}
  Conversely, assume that
  \[
  \|WH\|_* = \|W\|_{F}   \|H\|_{F}.
  \]
  Let
  \[
  WH = U_r \Sigma_r Q_r^\top
  \]
  be a compact SVD, where \(r = \mathrm{rank}(WH)\), \(U_r \in \mathbb{R}^{m \times r}\), \(Q_r \in \mathbb{R}^{n \times r}\), and \(\Sigma_r = \mathrm{diag}(s_1(WH), \dots, s_r(WH))\). Then
\(B_* = U_r Q_r^\top\) satisfies \(\|B_*\|_{\mathrm{op}} \le 1\) and is a maximizer in the nuclear-norm dual formula. Hence
  \[
\|WH\|_* = \mathrm{tr}(B_*^\top WH)
  = \mathrm{tr}(Q_r U_r^\top WH)
  = \mathrm{tr}(H Q_r U_r^\top W)
  = \langle W^\top, H Q_r U_r^\top \rangle_F.
  \]
  By the Cauchy--Schwarz inequality:
  \[
  \langle W^\top, H Q_r U_r^\top \rangle_F \le
  \|W\|_F   \|H Q_r U_r^\top \|_F.
  \]
  Combining this with the contraction bound used above gives
  \[
  \|WH\|_*
  \le \|W\|_F   \|H Q_r U_r^\top\|_F
  \le \|W\|_F   \|H\|_F
  =
  \|WH\|_*.
  \]
  Since \(W\neq 0\) and \(H\neq 0\), equality holds throughout. Therefore,
  \[
  W^\top = k   H Q_r U_r^\top,
  \]
  for some scalar \( k > 0 \); positivity follows because the common inner product equals the positive number \(\|WH\|_*\). Moreover,
  \[
  \|H Q_r U_r^\top \|_F = \|H\|_F.
  \]
  Now,
  \[
  \|H Q_r U_r^\top \|_F^2
  = \mathrm{tr}(U_r Q_r^\top H^\top H Q_r U_r^\top)
  = \mathrm{tr}(Q_r^\top H^\top H Q_r)
  = \|H Q_r Q_r^\top\|_F^2.
  \]
  Let \(P = Q_r Q_r^\top\), the orthogonal projector onto \(\mathrm{span}(Q_r)\). Since \(P\) is an orthogonal projector,
  \[
  \|H\|_F^2 = \|HP\|_F^2 + \|H(I-P)\|_F^2.
  \]
  Because \(\|HP\|_F = \|H\|_F\), it follows that
  \[
  H(I-P)=0.
  \]
  Hence the row space of \(H\) is contained in \(\mathrm{span}(Q_r)\). Therefore,
  \[
  \mathrm{rank}(H) \le r.
  \]
  On the other hand, \(r = \mathrm{rank}(WH) \le \mathrm{rank}(H)\), so \(\mathrm{rank}(H)=r\), and consequently
  \[
  \operatorname{range}(H^\top)=\mathrm{span}(Q_r).
  \]
  Let
  \[
  H = \widetilde U_H D_H V_H^\top
  \]
  be a compact SVD of \(H\), where \(\widetilde U_H \in \mathbb{R}^{d \times r}\), \(V_H \in \mathbb{R}^{n \times r}\), and
  \[
  D_H = \mathrm{diag}(s_1(H), \dots, s_r(H)).
  \]
  Since \(\mathrm{span}(V_H)=\operatorname{range}(H^\top)=\mathrm{span}(Q_r)\), there exists an orthogonal matrix \(O \in \mathbb{R}^{r \times r}\) such that \(V_H = Q_r O\). Hence
  \[
  H = \widetilde U_H D_H O^\top Q_r^\top.
  \]
  Substituting this into \(W^\top = k H Q_r U_r^\top\) gives
  \[
  W^\top = k   \widetilde U_H D_H O^\top U_r^\top,
  \]
  and therefore
  \[
  W = (U_r O)   (k D_H)   \widetilde U_H^\top.
  \]
  This is a compact SVD of \(W\). Thus the right singular vectors of \(W\) are the left singular vectors of \(H\), \(W\) and \(H\) have the same rank \(r\), and
  \[
  s_i(W) = k  s_i(H), \qquad i = 1, \dots, r.
  \]
  Extending these compact SVDs to full SVDs yields
  \[
  W = U \Sigma_W V^\top, \qquad H = V \Sigma_H Q^\top
  \]
  for some common factor \(V \in \mathbb{R}^{d \times d}\), as claimed.

  \paragraph{Sufficiency.}
  Conversely, suppose
  \[
  W = U \Sigma_W V^\top, \qquad H = V \Sigma_H Q^\top,
  \]
  and suppose that \(W\) and \(H\) have the same rank \(r\), and that there exists \(k>0\) such that
  \[
  s_i(W) = k s_i(H), \qquad i = 1, \dots, r.
  \]
  Let \(U_r\), \(V_r\), and \(Q_r\) denote the columns corresponding to the nonzero singular values. Then
  \[
  W = U_r D_W V_r^\top, \qquad H = V_r D_H Q_r^\top,
  \]
  where
  \[
  D_W = \mathrm{diag}(s_1(W), \dots, s_r(W)), \qquad
  D_H = \mathrm{diag}(s_1(H), \dots, s_r(H)),
  \]
  and \(D_W = k D_H\). Therefore,
  \[
  WH = U_r D_W D_H Q_r^\top = U_r (k D_H^2) Q_r^\top.
  \]
  Hence the nonzero singular values of \(WH\) are \(k s_i(H)^2\), so
  \[
  \|WH\|_* = k \sum_{i=1}^r s_i(H)^2 = k \|H\|_F^2.
  \]
  Moreover,
  \[
  \|W\|_F^2 = \sum_{i=1}^r s_i(W)^2 = k^2 \sum_{i=1}^r s_i(H)^2 = k^2 \|H\|_F^2,
  \]
  and therefore \(\|W\|_F = k \|H\|_F\). Consequently,
  \[
  \|WH\|_* = \|W\|_F \|H\|_F.
  \]
  This completes the proof.
\end{proof}

\begin{lemma}[Strict Convexity of Softmax Cross-Entropy on the Zero-Sum Subspace]
\label{lem:softmax_ce_strict_convexity_V0}
Let \(f(\boldsymbol u) \coloneqq \mathcal L(\sigma(\boldsymbol u), \boldsymbol p)\)
with \(\boldsymbol p\in\Delta^{m-1}\) fixed and \(\boldsymbol q \coloneqq \sigma(\boldsymbol u)\).
Then \(f\) is convex on \(\mathbb R^m\) with Hessian
\[
\nabla^2 f(\boldsymbol u)   =   \mathrm{diag}(\boldsymbol q) - \boldsymbol q \boldsymbol q^\top \succeq 0.
\]
Moreover, \(f\) is strictly convex along every direction in the zero-sum subspace
\[
\mathcal V_0 \coloneqq \{\boldsymbol v\in\mathbb R^m : \boldsymbol{1}_m^\top \boldsymbol v = 0 \}.
\]
Equivalently, for every nonzero \(\boldsymbol v\in \mathcal V_0\),
\(\boldsymbol v^\top \nabla^2 f(\boldsymbol u)   \boldsymbol v > 0\).
\end{lemma}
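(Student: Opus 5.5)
The plan is a direct computation: compute the gradient and Hessian of \(f\), identify the Hessian as a covariance matrix under \(\boldsymbol q\), and then read off strict positivity on \(\mathcal V_0\).

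First, expand \(f\) using \(\sum_i p_i=1\):
\[
f(\boldsymbol u)=-\sum_{i=1}^m p_i u_i+\log\sum_{k=1}^m e^{u_k}.
\]
The first term is linear, so the Hessian of \(f\) equals the Hessian of the log-sum-exp function. Differentiating once gives \(\nabla f(\boldsymbol u)=\boldsymbol q-\boldsymbol p\). Differentiating \(q_i=e^{u_i}/\sum_k e^{u_k}\) again gives \(\partial q_i/\partial u_j=q_i\delta_{ij}-q_iq_j\). This yields
\[
\nabla^2 f(\boldsymbol u)=\mathrm{diag}(\boldsymbol q)-\boldsymbol q\boldsymbol q^\top .
\]

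Next, for any \(\boldsymbol v\in\mathbb R^m\),
\[
\boldsymbol v^\top\nabla^2 f(\boldsymbol u)\boldsymbol v=\sum_i q_i v_i^2-\Bigl(\sum_i q_i v_i\Bigr)^2 .
\]
This is the variance of the random variable taking value \(v_i\) with probability \(q_i\). Equivalently, by Cauchy--Schwarz with weights \(q_i\), it equals \(\sum_i q_i(v_i-\bar v)^2\) where \(\bar v=\sum_i q_iv_i\). Hence it is nonnegative, so \(\nabla^2 f\succeq 0\) everywhere, and convexity of \(f\) on \(\mathbb R^m\) follows from the standard second-order criterion.

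For strict positivity, the key observation is that every \(q_i=\sigma(\boldsymbol u)_i\) is strictly positive. Therefore the quadratic form \(\sum_i q_i(v_i-\bar v)^2\) vanishes only if \(v_i=\bar v\) for all \(i\), i.e. only if \(\boldsymbol v\in\mathrm{span}(\boldsymbol 1_m)\). If also \(\boldsymbol v\in\mathcal V_0\), then \(\boldsymbol 1_m^\top\boldsymbol v=m\bar v=0\), so \(\boldsymbol v=\boldsymbol 0\). Thus \(\boldsymbol v^\top\nabla^2 f(\boldsymbol u)\boldsymbol v>0\) for every nonzero \(\boldsymbol v\in\mathcal V_0\). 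Since this holds at every point \(\boldsymbol u\), the restriction \(t\mapsto f(\boldsymbol u+t\boldsymbol v)\) has strictly positive second derivative, so \(f\) is strictly convex along each such direction.

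The only point needing care, and the one Step~3 of the proof of \cref{thm:single_block_cyclic} relies on, is the equality case of Jensen's inequality. If \(\frac1m\sum_j f(\boldsymbol z_j)=f(\bar{\boldsymbol z})\), then the differences \(\boldsymbol z_j-\bar{\boldsymbol z}\) must lie in the kernel of the Hessian. I would record this as a consequence: write \(\boldsymbol z_j=\bar{\boldsymbol z}+\boldsymbol w_j+c_j\boldsymbol 1_m\) with \(\boldsymbol w_j\in\mathcal V_0\). Since \(f\) is invariant under adding multiples of \(\boldsymbol 1_m\), \(f\) restricted to the affine slice through \(\bar{\boldsymbol z}\) parallel to \(\mathcal V_0\) is strictly convex, and strict Jensen on that slice forces all \(\boldsymbol w_j\) to be equal. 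This is routine, so I expect no real obstacle beyond the positivity of the \(q_i\).
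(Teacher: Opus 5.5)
Your proposal is correct and follows the paper's proof essentially line for line. Both compute the gradient $\boldsymbol q-\boldsymbol p$ and the Hessian $\mathrm{diag}(\boldsymbol q)-\boldsymbol q\boldsymbol q^\top$, read the quadratic form as a $\boldsymbol q$-weighted variance, and use $q_i>0$ to show the kernel is $\mathrm{span}(\boldsymbol 1_m)$, which meets $\mathcal V_0$ only at $\boldsymbol 0$.

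Your closing remark on the Jensen equality case goes beyond the lemma but is correct. It is exactly how the paper uses the lemma in Step~3 of the proof of \cref{thm:single_block_cyclic}. One small slip: $\sum_i q_iv_i^2-(\sum_i q_iv_i)^2=\sum_i q_i(v_i-\bar v)^2$ is an algebraic identity, not an application of Cauchy--Schwarz.
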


\begin{proof}[Proof of~\cref{lem:softmax_ce_strict_convexity_V0}]
First, we compute the gradient of \(f(\boldsymbol{u})\),
\[ \frac{\partial f}{\partial u_j} = \frac{\partial}{\partial u_j} \left( \log\left(\sum_{k=1}^m e^{u_k}\right) \right) - \frac{\partial}{\partial u_j} \left( \sum_{k=1}^m p_k u_k \right). \]
The derivative of the log-sum-exp term is
\[ \frac{\partial}{\partial u_j} \log\left(\sum_{k=1}^m e^{u_k}\right) = \frac{1}{\sum_{k=1}^m e^{u_k}} \cdot e^{u_j} = (\sigma(\boldsymbol{u}))_j. \]
The derivative of the linear term is simply \(p_j\).
Letting \(\boldsymbol{q} = \sigma(\boldsymbol{u})\), the gradient vector is
\[ \nabla_{\boldsymbol{u}} f(\boldsymbol{u}) = \boldsymbol{q} - \boldsymbol{p}. \]
Next, we compute the Hessian matrix, \(H_f = \nabla^2_{\boldsymbol{u}} f\), whose entries are \( (H_f)_{ij} = \partial^2 f/(\partial u_i \partial u_j) \). This is equivalent to finding the Jacobian matrix of the gradient \(\nabla_{\boldsymbol{u}} f = \boldsymbol{q} - \boldsymbol{p}\). Since \(\boldsymbol{p}\) is a constant vector, the Hessian is simply the Jacobian of \(\boldsymbol{q} = \sigma(\boldsymbol{u})\).
The partial derivative of a component \(q_j\) with respect to a logit \(u_i\) is
\[ \frac{\partial q_j}{\partial u_i} = \frac{\partial}{\partial u_i}\left( \frac{e^{u_j}}{\sum_k e^{u_k}} \right). \]
If \(i = j\),
    \[ \frac{\partial q_j}{\partial u_j} = \frac{e^{u_j}\left(\sum_k e^{u_k}\right) - e^{u_j}(e^{u_j})}{\left(\sum_k e^{u_k}\right)^2} = \frac{e^{u_j}}{\sum_k e^{u_k}} - \left(\frac{e^{u_j}}{\sum_k e^{u_k}}\right)^2 = q_j - q_j^2 = q_j(1-q_j). \]
If \(i \neq j\),
    \[ \frac{\partial q_j}{\partial u_i} = \frac{0 \cdot (\sum_k e^{u_k}) - e^{u_j}(e^{u_i})}{\left(\sum_k e^{u_k}\right)^2} = -\frac{e^{u_j}}{\sum_k e^{u_k}}\frac{e^{u_i}}{\sum_k e^{u_k}} = -q_i q_j. \]
Combining these results, the Hessian matrix is
\[ H_f = \nabla^2_{\boldsymbol{u}} f = \mathrm{diag}(\boldsymbol{q}) - \boldsymbol{q}\boldsymbol{q}^\top. \]
To prove that \(f(\boldsymbol{u})\) is convex, it suffices to show that \(H_f\) is positive semidefinite. For any \(\boldsymbol{v} \in \mathbb{R}^m\),
\[
\boldsymbol{v}^\top H_f \boldsymbol{v} = \boldsymbol{v}^\top (\mathrm{diag}(\boldsymbol{q}) - \boldsymbol{q}\boldsymbol{q}^\top) \boldsymbol{v} = \boldsymbol{v}^\top \mathrm{diag}(\boldsymbol{q}) \boldsymbol{v} - \boldsymbol{v}^\top \boldsymbol{q}\boldsymbol{q}^\top \boldsymbol{v}.
\]
Expanding both terms gives
\[
\boldsymbol{v}^\top H_f \boldsymbol{v} = \sum_{i=1}^m q_i v_i^2 - \left(\sum_{i=1}^m q_i v_i\right)^2.
\]
Thus \(\boldsymbol{v}^\top H_f \boldsymbol{v}\) is exactly the variance of the discrete random variable taking values \(\{v_1,\dots,v_m\}\) with weights \(\{q_1,\dots,q_m\}\):
\[
\boldsymbol{v}^\top H_f \boldsymbol{v} = \mathbb{E}[v^2] - (\mathbb{E}[v])^2 = \mathrm{Var}_q(v).
\]
Equivalently,
\[
\boldsymbol{v}^\top H_f \boldsymbol{v} = \mathrm{Var}_q(v) = \mathbb{E} \left[(v-\mathbb{E}[v])^2\right] \ge 0.
\]
Hence \(H_f \succeq 0\), so \(f\) is convex on \(\mathbb{R}^m\).

Now let \(\boldsymbol{v}\in \mathcal V_0\) be nonzero. To prove strict convexity on \(\mathcal V_0\), it remains to show that \(\mathrm{Var}_q(v) > 0\). Since \(\boldsymbol{q} = \sigma(\boldsymbol{u})\), every coordinate satisfies \(q_i > 0\). Because \(q_i>0\) for every \(i\), \(\mathrm{Var}_q(v)=0\) holds if and only if all coordinates \(v_i\) are equal. Therefore
\[ \boldsymbol{v}^\top H_f \boldsymbol{v} = 0 \quad \iff \quad v_1 = v_2 = \cdots = v_m. \]
Thus \(\boldsymbol{v}^\top H_f \boldsymbol{v} = 0\) if and only if \(\boldsymbol{v}\) is a multiple of \(\boldsymbol{1}_m\). But a nonzero vector in
\[
\mathcal V_0 = \{\boldsymbol{v} \in \mathbb{R}^m : \boldsymbol{1}_m^\top \boldsymbol{v} = 0\}
\]
cannot be a multiple of \(\boldsymbol{1}_m\). Therefore \(\boldsymbol{v}^\top H_f \boldsymbol{v} > 0\) for every nonzero \(\boldsymbol{v} \in \mathcal V_0\). Hence the Hessian is positive definite on \(\mathcal V_0\), and \(f\) is strictly convex along that subspace.
\end{proof}

\begin{lemma}[Weyl's Inequality for Eigenvalues of a Sum~\citep{horn2012matrix}]
\label{lem:weyl_inequality}
Let \(A\) and \(B\) be \(n \times n\) Hermitian matrices. Let the eigenvalues $\lambda_i$ for each matrix (\(A\), \(B\), and \(A+B\)) be real and ordered nonincreasingly.
Then for any indices \(i,j\) with \(1\le i,j\le n\),
\[
\lambda_{i+j-1}(A+B)\le \lambda_i(A)+\lambda_j(B)\quad \text{if } i+j-1\le n,
\]
and
\[
\lambda_i(A)+\lambda_j(B)\le \lambda_{i+j-n}(A+B)\quad \text{if } i+j-n\ge 1.
\]
\end{lemma}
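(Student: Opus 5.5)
The statement is Weyl's inequality for Hermitian matrices $A,B\in\mathbb C^{n\times n}$. I would prove both parts with the Courant--Fischer min-max characterization of eigenvalues. For a Hermitian matrix $M$ with eigenvalues ordered nonincreasingly, it reads
\[
\lambda_k(M)=\max_{\dim\mathcal S=k}\ \min_{\boldsymbol x\in\mathcal S,\ \|\boldsymbol x\|_2=1}\boldsymbol x^{\mathsf H}M\boldsymbol x=\min_{\dim\mathcal T=n-k+1}\ \max_{\boldsymbol x\in\mathcal T,\ \|\boldsymbol x\|_2=1}\boldsymbol x^{\mathsf H}M\boldsymbol x .
\]
I take this as standard. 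If a self-contained argument is preferred, I would instead work directly with spans of eigenvectors, which is what the proof below does.

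\textbf{First inequality.} Fix $i,j$ with $i+j-1\le n$. Let $\boldsymbol a_1,\dots,\boldsymbol a_n$ and $\boldsymbol b_1,\dots,\boldsymbol b_n$ be orthonormal eigenvectors of $A$ and $B$, ordered to match their eigenvalues. Set
\[
\mathcal T_A=\mathrm{span}\{\boldsymbol a_i,\dots,\boldsymbol a_n\},\qquad \mathcal T_B=\mathrm{span}\{\boldsymbol b_j,\dots,\boldsymbol b_n\},
\]
of dimensions $n-i+1$ and $n-j+1$. Let $\mathcal S$ be the span of the top $i+j-1$ eigenvectors of $A+B$. Counting dimensions gives
\[
\dim(\mathcal T_A\cap\mathcal T_B\cap\mathcal S)\ge (n-i+1)+(n-j+1)+(i+j-1)-2n=1,
\]
so there is a unit vector $\boldsymbol x$ in the triple intersection. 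Because $\boldsymbol x\in\mathcal S$, we have $\boldsymbol x^{\mathsf H}(A+B)\boldsymbol x\ge\lambda_{i+j-1}(A+B)$. Because $\boldsymbol x\in\mathcal T_A$, we have $\boldsymbol x^{\mathsf H}A\boldsymbol x\le\lambda_i(A)$, and because $\boldsymbol x\in\mathcal T_B$, we have $\boldsymbol x^{\mathsf H}B\boldsymbol x\le\lambda_j(B)$. Adding the last two bounds and combining with the first yields $\lambda_{i+j-1}(A+B)\le\lambda_i(A)+\lambda_j(B)$.

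\textbf{Second inequality.} I would deduce it from the first by negation. Since $\lambda_k(-M)=-\lambda_{n-k+1}(M)$, apply the first inequality to $-A$ and $-B$ with indices $i'=n-i+1$ and $j'=n-j+1$. The hypothesis $i'+j'-1\le n$ is exactly $i+j-n\ge1$. The first inequality then gives
\[
-\lambda_{n-(i'+j'-1)+1}(A+B)\le -\lambda_i(A)-\lambda_j(B).
\]
The index on the left is $n-(i'+j'-1)+1=i+j-n$, so this rearranges to $\lambda_i(A)+\lambda_j(B)\le\lambda_{i+j-n}(A+B)$.

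The only step needing care is the intersection-dimension count, which relies on $\dim(\mathcal U\cap\mathcal W)\ge\dim\mathcal U+\dim\mathcal W-n$ applied twice, and on getting the index bookkeeping right in the negation step. Beyond that, the argument is routine.
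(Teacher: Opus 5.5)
Your proof is correct. The three-subspace dimension count yields a unit vector $\boldsymbol x$ with $\lambda_{i+j-1}(A+B)\le \boldsymbol x^{\mathsf H}(A+B)\boldsymbol x\le\lambda_i(A)+\lambda_j(B)$. The second inequality follows from the first via $\lambda_k(-M)=-\lambda_{n-k+1}(M)$ with $i'=n-i+1$, $j'=n-j+1$, and your index bookkeeping there is right.

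The paper gives no proof of its own and simply quotes the lemma from Horn and Johnson. Your argument is the standard subspace-intersection proof behind that citation, written directly in the paper's nonincreasing ordering. The paper only uses the case $j=n$ of the second inequality, $\lambda_i(A+B)\ge\lambda_i(A)+\lambda_n(B)$.
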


\begin{lemma}[Fourier Diagonalization of Circulant Matrices]
\label{lem:fourier_diagonalization_circulant}
Let \(F_m\in\mathbb C^{m\times m}\) denote the unitary Discrete Fourier Transform matrix with entries
\[
(F_m)_{k\ell}=m^{-1/2}\exp(-2\pi \mathrm{i}(k-1)(\ell-1)/m),
\qquad 1\le k,\ell\le m.
\]
A matrix \(A\in\mathbb C^{m\times m}\) is circulant with respect to the shift matrix \(\Pi\) in~\cref{eq:cyclic} if and only if there exists a diagonal matrix \(D\in\mathbb C^{m\times m}\) such that
\[
A=F_m^{\mathsf{H}}DF_m.
\]
In particular, every circulant matrix is normal, and its singular values are the absolute values of its eigenvalues.
\end{lemma}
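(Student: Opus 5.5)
We will prove the equivalence by relating circulant matrices to the cyclic group algebra and then diagonalizing that algebra by the DFT. The plan is to work with the shift $\Pi$ from~\cref{eq:cyclic} and to use the characterization that $A$ is circulant if and only if $A\Pi=\Pi A$. Once this is in hand, we verify that $F_m$ diagonalizes $\Pi$ itself. From these two facts both directions follow, and the statements about normality and singular values come for free.

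The first step is to show that circulant is equivalent to commuting with $\Pi$. Note that $\Pi^{j-1}\boldsymbol e_1=\boldsymbol e_j$. If $A_{:,j}=\Pi^{j-1}\boldsymbol a$, then $A=\sum_{\ell}a_\ell \Pi^{\ell-1}$. To see this, check column $j$: $\sum_\ell a_\ell\Pi^{\ell-1}\boldsymbol e_j=\Pi^{j-1}\sum_\ell a_\ell \boldsymbol e_\ell=\Pi^{j-1}\boldsymbol a$. Hence every circulant matrix is a polynomial in $\Pi$. Conversely, every polynomial $p(\Pi)$ is circulant with generator $p(\Pi)\boldsymbol e_1$, since $p(\Pi)\boldsymbol e_j=p(\Pi)\Pi^{j-1}\boldsymbol e_1=\Pi^{j-1}p(\Pi)\boldsymbol e_1$. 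So the circulant matrices are exactly the set $\{p(\Pi)\}$.

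The second step is to diagonalize $\Pi$. Set $\omega=e^{-2\pi\mathrm i/m}$ and let $\boldsymbol f_k$ be the $k$-th row of $F_m$ written as a column, so that $(\boldsymbol f_k)_\ell=m^{-1/2}\omega^{(k-1)(\ell-1)}$. Since $(\Pi\boldsymbol x)_\ell=x_{\ell-1}$ with indices taken mod $m$, a direct computation gives $F_m\Pi=D_\Pi F_m$ with $D_\Pi=\mathrm{diag}(\omega^{k-1})$. Equivalently, $\Pi=F_m^{\mathsf H}D_\Pi F_m$. We will check the index bookkeeping once: row $k$ of $F_m\Pi$ picks up a factor $\omega^{k-1}$ from the reindexing $\ell\mapsto\ell+1$. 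Unitarity of $F_m$ is the standard geometric-sum orthogonality of the roots of unity.

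The third step proves the two directions. For ($\Rightarrow$), we have $A=p(\Pi)=F_m^{\mathsf H}p(D_\Pi)F_m$, and $p(D_\Pi)$ is diagonal. For ($\Leftarrow$), the entries $\omega^{k-1}$ with $k=1,\dots,m$ are distinct, so Lagrange interpolation gives a polynomial $p$ with $p(\omega^{k-1})=D_{kk}$ for every $k$. Then $F_m^{\mathsf H}DF_m=p(\Pi)$, which is circulant by the first step. Normality is immediate because $A=F_m^{\mathsf H}DF_m$ with $F_m$ unitary, so $A^{\mathsf H}A=F_m^{\mathsf H}|D|^2F_m=AA^{\mathsf H}$. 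Moreover $F_m^{\mathsf H}DF_m$ is an SVD up to phases: writing $D=|D|\Theta$ with $\Theta$ a unitary diagonal phase matrix, we get $A=F_m^{\mathsf H}|D|(\Theta F_m)$. Hence the singular values are the $|D_{kk}|$, which are the absolute values of the eigenvalues.

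The only real care needed is the sign and index convention in the second step. We must make sure the eigenvalues come out as the DFT of the first column exactly as used in Step~4 of the proof of~\cref{thm:single_block_cyclic}. Concretely, $D_{kk}=\sum_\ell a_\ell\omega^{(k-1)(\ell-1)}$. We will verify this by applying $F_m A=DF_m$ to the first column.
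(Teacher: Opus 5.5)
Your proof is correct and follows essentially the same route as the paper: you express a circulant matrix as $\sum_r a_r\Pi^{r-1}$, diagonalize $\Pi=F_m^{\mathsf H}\Omega F_m$ with $\Omega=\mathrm{diag}(\omega^{k-1})$, and for the converse produce coefficients with $D=\sum_r a_r\Omega^{r-1}$; your Lagrange interpolation at the distinct nodes $\omega^{k-1}$ is precisely the paper's inverse-DFT step. The only loose end is cosmetic: the characterization ``$A$ circulant iff $A\Pi=\Pi A$'' announced in your plan is never used, because the polynomial-in-$\Pi$ characterization you actually prove carries the whole argument.
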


\begin{proof}
Let \(\omega=\exp(-2\pi \mathrm{i}/m)\), and let \(\Omega\) be the diagonal matrix with entries \(\Omega_{kk}=\omega^{k-1}\) for \(k=1,\ldots,m\). With the convention for \(\Pi\) in~\cref{eq:cyclic}, the columns of \(F_m^{\mathsf{H}}\) are eigenvectors of \(\Pi\), so
\[
\Pi=F_m^{\mathsf{H}}\Omega F_m.
\]
If \(A\) is circulant with first column \(\boldsymbol a=(a_1,\ldots,a_m)^\top\), then
\[
A_{:,j}=\Pi^{j-1}\boldsymbol a,\qquad j=1,\ldots,m,
\]
and hence
\[
A=\sum_{r=1}^{m}a_r\Pi^{r-1}
  =F_m^{\mathsf{H}}\left(\sum_{r=1}^{m}a_r\Omega^{r-1}\right)F_m,
\]
which has the required form with \(D=\sum_{r=1}^{m}a_r\Omega^{r-1}\). Conversely, if \(A=F_m^{\mathsf{H}}DF_m\) with \(D=\mathrm{diag}(\lambda_1,\ldots,\lambda_m)\), then the inverse Discrete Fourier Transform gives coefficients \(a_1,\ldots,a_m\) such that
\[
D=\sum_{r=1}^{m}a_r\Omega^{r-1}.
\]
Therefore \(A=\sum_{r=1}^{m}a_r\Pi^{r-1}\), so \(A\) is circulant. Finally, the displayed diagonalization is unitary, hence every circulant matrix is normal, and the singular values are the absolute values of the diagonal entries of \(D\), equivalently the absolute values of the eigenvalues of \(A\).
\end{proof}

\begin{lemma}[Symmetry of SVD Factors for Circulant Matrices]
\label{lem:svd_factor_symmetry}
Let \(Z \in \mathbb{R}^{m \times m}\) be a real circulant matrix, and let \(Z=U\Sigma V^\top\) be any real singular value decomposition of \(Z\), where \(U, V \in \mathbb{R}^{m \times m}\) are orthogonal and \(\Sigma \in \mathbb{R}^{m \times m}\) is the diagonal matrix of singular values.
Then the matrices \(U\Sigma U^\top\) and \(V\Sigma V^\top\) are both circulant, and are equal to each other and to the principal square root of \(ZZ^\top\):
\[ U\Sigma U^\top = V\Sigma V^\top = (ZZ^\top)^{1/2}. \]
\end{lemma}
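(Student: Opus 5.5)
The plan is to work entirely in the Fourier basis from \cref{lem:fourier_diagonalization_circulant}. Since \(Z\) is a real circulant matrix, we can write \(Z=F_m^{\mathsf H}DF_m\) with \(D=\mathrm{diag}(\lambda_1,\ldots,\lambda_m)\). Then \(ZZ^\top=ZZ^{\mathsf H}=F_m^{\mathsf H}|D|^2F_m\), and its principal square root is \(P:=F_m^{\mathsf H}|D|F_m\). This matrix is real, because it is a polynomial in the real symmetric positive semidefinite matrix \(ZZ^\top\); equivalently, it is the unique PSD square root. It is also circulant by the converse direction of \cref{lem:fourier_diagonalization_circulant}. So the circulance of \((ZZ^\top)^{1/2}\) comes essentially for free.

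Next, from any real SVD \(Z=U\Sigma V^\top\) we have \(ZZ^\top=U\Sigma^2U^\top\). Since \(U\Sigma U^\top\) is PSD and squares to \(ZZ^\top\), uniqueness of the PSD square root gives \(U\Sigma U^\top=(ZZ^\top)^{1/2}=P\). Similarly, \(V\Sigma V^\top=(Z^\top Z)^{1/2}\).

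It then remains to show \((Z^\top Z)^{1/2}=(ZZ^\top)^{1/2}\). Because \(Z\) is normal, \(Z^\top Z=Z^{\mathsf H}Z=F_m^{\mathsf H}|D|^2F_m=ZZ^\top\). Hence the two PSD square roots coincide, and \(V\Sigma V^\top=U\Sigma U^\top=P\).

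The only point needing care is the uniqueness claim. It holds even when singular values are repeated or some are zero, so the argument does not depend on which SVD is chosen; the PSD-square-root argument handles this cleanly. One could also derive normality directly from \(Z^\top=\sum_r a_r\Pi^{-(r-1)}\), which commutes with \(Z\). I expect no real obstacle here; the main thing to check is that realness of \(P\) is justified, which the uniqueness of the real PSD square root of the real PSD matrix \(ZZ^\top\) provides.
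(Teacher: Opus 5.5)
Your proposal is correct and follows essentially the same route as the paper. Both arguments use the Fourier diagonalization to get $ZZ^\top=Z^\top Z$ and a circulant PSD square root, then invoke uniqueness of the PSD square root to identify both $U\Sigma U^\top$ and $V\Sigma V^\top$ with it. The differences are cosmetic: you write the root explicitly as $F_m^{\mathsf H}|D|F_m$ and get normality from $Z^\top=Z^{\mathsf H}$, whereas the paper derives $ZZ^\top=Z^\top Z$ from the general commutativity of circulant matrices.
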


\begin{proof}[Proof of~\cref{lem:svd_factor_symmetry}]
By~\cref{lem:fourier_diagonalization_circulant}, for two real circulant matrices \(A\) and \(B\), we can write
\[ A = F_m^{\mathsf{H}} D_A F_m \quad \text{and} \quad B = F_m^{\mathsf{H}} D_B F_m, \]
where \(D_A\) and \(D_B\) are diagonal matrices containing the eigenvalues of \(A\) and \(B\), respectively.
Consider the product \(AB\):
\[ AB = (F_m^{\mathsf{H}} D_A F_m)(F_m^{\mathsf{H}} D_B F_m) = F_m^{\mathsf{H}} (D_A D_B) F_m. \]
Now consider the product in the reverse order, \(BA\):
\[ BA = (F_m^{\mathsf{H}} D_B F_m)(F_m^{\mathsf{H}} D_A F_m) = F_m^{\mathsf{H}} (D_B D_A) F_m. \]
Because diagonal matrices always commute (\(D_A D_B = D_B D_A\)), we can conclude that
\[ AB = BA, \]
which shows that \(A\) and \(B\) commute. Since both \(AB\) and \(BA\) are expressed in the form \(F_m^{\mathsf{H}} D F_m\) for some diagonal matrix \(D\), it follows that \(AB\) is also a circulant matrix.
Since both \(Z\) and \(Z^\top\) are circulant matrices, they must commute. Let us define the matrix \(P\) as this product: \(P := ZZ^\top = Z^\top Z\), which is circulant because it is the product of two circulant matrices.
We can write \(P\) in two ways using the SVD of \(Z = U\Sigma V^\top\):
\[ P = ZZ^\top = (U\Sigma V^\top)(U\Sigma V^\top)^\top = U\Sigma V^\top V \Sigma^\top U^\top = U\Sigma^2 U^\top = (U\Sigma U^\top)^2. \]
\[ P = Z^\top Z = (U\Sigma V^\top)^\top(U\Sigma V^\top) = V\Sigma^\top U^\top U \Sigma V^\top = V\Sigma^2 V^\top = (V\Sigma V^\top)^2. \]
A symmetric positive semidefinite matrix has a unique symmetric positive semidefinite square root. Since both \(U\Sigma U^\top\) and \(V\Sigma V^\top\) are symmetric, PSD, and square to the same matrix \(P\), they must be equal to this unique square root, which we denote \(P^{1/2}\).
\[ U\Sigma U^\top = V\Sigma V^\top = P^{1/2}. \]
Since \(P\) is a real symmetric circulant matrix, \cref{lem:fourier_diagonalization_circulant} gives its unitary DFT diagonalization. Let the eigendecomposition of \(P\) be:
\[
P = F_m^{\mathsf{H}} D_P F_m,
\]
where \(D_P = \mathrm{diag}(\lambda_1, \ldots, \lambda_m)\) is the diagonal matrix of the real eigenvalues of \(P\). As \(P\) is positive semidefinite, all \(\lambda_i \ge 0\). The unique symmetric PSD square root of \(P\) is constructed using this same diagonalization:
\[
P^{1/2} = F_m^{\mathsf{H}} D_P^{1/2} F_m, \quad \text{where} \quad D_P^{1/2} = \mathrm{diag}(\sqrt{\lambda_1}, \ldots, \sqrt{\lambda_m}).
\]
This resulting matrix \(P^{1/2}\) is also circulant because any matrix that can be written in the form \(F_m^{\mathsf{H}} D F_m\) for some diagonal matrix \(D\) (here, \(D = D_P^{1/2}\)) is circulant.
\end{proof}

\begin{lemma}[SVD Factor Structure for Block-Circulant Matrices]
\label{lem:svd_factor_symmetry_block}
Let \(Z \in \mathbb{R}^{m \times n}\) be the concatenation of \(n_{\mathrm{blk}}\) real circulant matrices, \(Z = [C_{1}, C_{2}, \cdots, C_{n_{\mathrm{blk}}}]\), and \(n=n_{\mathrm{blk}}m\). Let the thin singular value decomposition (SVD) of \(Z\) be
\[ Z = U\Sigma V^\top, \]
where \(U \in \mathbb{R}^{m \times r}\) and \(V \in \mathbb{R}^{n \times r}\) are matrices with orthonormal columns (\(r = \mathrm{rank}(Z)\)), and \(\Sigma \in \mathbb{R}^{r \times r}\) is the diagonal matrix of positive singular values.
Then the \(m \times m\) matrix \(U\Sigma U^\top\) is a circulant matrix.
\end{lemma}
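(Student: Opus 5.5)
The plan is to reduce the claim to a statement about \(ZZ^\top\). Since \(Z=U\Sigma V^\top\) is a thin SVD, we have \(ZZ^\top=U\Sigma^2U^\top\). The matrix \(U\Sigma U^\top\) is symmetric positive semidefinite and squares to \(U\Sigma^2U^\top\). By uniqueness of the PSD square root, \(U\Sigma U^\top=(ZZ^\top)^{1/2}\). It therefore suffices to show two things: that \(ZZ^\top\) is circulant, and that the PSD square root of a real symmetric PSD circulant matrix is circulant.

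For the first step, expand blockwise:
\[
ZZ^\top=\sum_{i=1}^{n_{\mathrm{blk}}} C_iC_i^\top .
\]
Each \(C_i\) is real circulant, so \(C_i^\top\) is also circulant (its transpose corresponds to the reversed generating vector \(\Pi^{-1}\)). By \cref{lem:fourier_diagonalization_circulant}, each of \(C_i\) and \(C_i^\top\) has the form \(F_m^{\mathsf H}DF_m\) with \(D\) diagonal. The product of two such matrices again has this form, exactly as in the proof of \cref{lem:svd_factor_symmetry}, so \(C_iC_i^\top\) is circulant. The circulant matrices form a linear subspace (the image of the diagonal matrices under \(D\mapsto F_m^{\mathsf H}DF_m\)), so the sum \(P:=ZZ^\top\) is circulant as well.

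For the second step, argue as at the end of \cref{lem:svd_factor_symmetry}. Write \(P=F_m^{\mathsf H}D_PF_m\). Because \(P\) is Hermitian PSD, the diagonal entries of \(D_P\) are real and nonnegative. Then \(F_m^{\mathsf H}D_P^{1/2}F_m\) is Hermitian PSD and squares to \(P\), so it equals the unique PSD square root \(P^{1/2}\). By \cref{lem:fourier_diagonalization_circulant} it is circulant. It is also real, since \(P\) is real and the real PSD square root coincides with the complex one.

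The only point needing care is this last identification: the PSD square root computed over \(\mathbb C\) must equal the real one, and "circulant over \(\mathbb C\)" must transfer to a real circulant. Both follow from uniqueness of the PSD square root. The step that requires the most attention is confirming that \(U\Sigma U^\top\) uses the thin SVD correctly, but the identity \((U\Sigma U^\top)^2=U\Sigma^2U^\top\) holds because \(U^\top U=I_r\), so no zero-padding is required.
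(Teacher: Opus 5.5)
Your proposal is correct and follows essentially the same route as the paper. Both arguments identify \(U\Sigma U^\top\) with \((ZZ^\top)^{1/2}\) by uniqueness of the PSD square root, show that \(ZZ^\top=\sum_i C_iC_i^\top\) is circulant via the Fourier diagonalization, and conclude that \(F_m^{\mathsf H}D_P^{1/2}F_m\) is circulant; your remark that the complex and real PSD square roots coincide (so this circulant square root is real) fills in a detail the paper leaves implicit.
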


\begin{proof}
First, we compute the matrix \(P = ZZ^\top\). Given the block structure of \(Z\), the product is:
\[
P = ZZ^\top = \left( [C_1 | \cdots | C_{n_{\mathrm{blk}}}] \right) \begin{pmatrix} C_1^\top \\ \vdots \\ C_{n_{\mathrm{blk}}}^\top \end{pmatrix} = \sum_{i=1}^{n_{\mathrm{blk}}} C_i C_i^\top.
\]
Each block \(C_i\) is a real circulant matrix. As established in~\cref{lem:svd_factor_symmetry}, each term \(C_i C_i^\top\) is a \(m \times m\) circulant matrix. Since the set of circulant matrices is closed under addition, their sum \(P = \sum_i C_i C_i^\top\) is also a circulant matrix.
Furthermore, \(P\) is symmetric and positive semidefinite.
Next, we express \(P\) in terms of the SVD of \(Z = U\Sigma V^\top\). Note that \(V\) is a rectangular matrix with orthonormal columns, so \(V^\top V = I_r\).
\[
P = ZZ^\top = (U\Sigma V^\top)(U\Sigma V^\top)^\top = U\Sigma V^\top V \Sigma^\top U^\top = U\Sigma^2 U^\top.
\]
Now, consider the matrix \(A := U\Sigma U^\top\):
\[ A^2 = (U\Sigma U^\top)(U\Sigma U^\top) = U\Sigma(U^\top U)\Sigma U^\top = U\Sigma^2 U^\top = P. \]
The matrix \(A\) is also symmetric and positive semidefinite.
A symmetric positive semidefinite matrix \(P\) has a unique symmetric positive semidefinite square root, denoted \(P^{1/2}\). Since \(A = U\Sigma U^\top\) is a symmetric PSD matrix whose square is \(P\), it must be this unique square root:
\[ U\Sigma U^\top = P^{1/2} = (ZZ^\top)^{1/2}. \]
As established in the proof of~\cref{lem:svd_factor_symmetry}, the unique symmetric PSD square root of a symmetric PSD circulant matrix is itself circulant.
Therefore, the matrix \(U\Sigma U^\top = (ZZ^\top)^{1/2}\) must be a circulant matrix.
\end{proof}

\begin{lemma}[Gram Spectrum from 2-Transitive Group Actions]
\label{lem:2_transitive_svd}
Let \(m \ge 2\). Let \(\mathcal G\) be a 2-transitive group of permutations acting on the set of indices \(\mathcal X = \{1,2,\dots,m\}\). Let \(\boldsymbol{v} = (v_1,\cdots,v_m) \in \mathbb{C}^m\) be a nonzero vector such that its components sum to zero, i.e., \(\boldsymbol{1}_m^{\mathsf H}\boldsymbol{v} = 0\), ensuring \(\boldsymbol{v} \in \mathcal V_0\). For each \(g \in \mathcal G\), let \(C = [g\circ\boldsymbol{v}]_{g \in \mathcal G} \in \mathbb{C}^{m \times |\mathcal G|}\). Equivalently, \(C=[\rho(g)\boldsymbol v]_{g\in\mathcal G}\), where \(\rho(g)\) is the permutation matrix satisfying \(\rho(g)\boldsymbol{x}=g\circ\boldsymbol{x}\). Then \((CC^{\mathsf H})\boldsymbol{1}_m=\boldsymbol{0}_m\), and there exists a scalar \(\lambda\) such that \((CC^{\mathsf H})\boldsymbol{y}=\lambda \boldsymbol{y}\) for every \(\boldsymbol{y}\in \mathcal V_0\). Moreover,
\[
\mathrm{tr}(CC^{\mathsf H})=|\mathcal G|\sum_{k=1}^m |v_k|^2.
\]
Equivalently, the Gram operator acts as a scalar multiple of the identity on \(\mathcal V_0\), with
\[
\lambda=\frac{|\mathcal G|}{m-1}\sum_{k=1}^m |v_k|^2.
\]
Thus the nonzero singular values of \(C\), when one chooses an SVD compatible with this Gram decomposition, are all equal to
\[
\gamma=\sqrt{\lambda}
=\left(\frac{|\mathcal G|}{m-1}\sum_{k=1}^m |v_k|^2\right)^{1/2}.
\]
\end{lemma}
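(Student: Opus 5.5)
The plan is to compute the Gram matrix $CC^{\mathsf H}=\sum_{g\in\mathcal G}\rho(g)\boldsymbol v\boldsymbol v^{\mathsf H}\rho(g)^{\mathsf H}$ entrywise and exploit 2-transitivity to show it has only two distinct entry values: one on the diagonal and one off it. First, the invariance $\rho(h)\,CC^{\mathsf H}\rho(h)^{\mathsf H}=CC^{\mathsf H}$ for every $h\in\mathcal G$ follows by reindexing the sum $g\mapsto hg$, since left multiplication by $h$ permutes the group. In entries this says $(CC^{\mathsf H})_{h(i),h(j)}=(CC^{\mathsf H})_{i,j}$. Transitivity, which is implied by 2-transitivity, then makes all diagonal entries equal to a common value $a$. 2-transitivity makes all off-diagonal entries, indexed by ordered pairs of distinct elements, equal to a common value $b$. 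Hence
\[
CC^{\mathsf H}=(a-b)I_m+b\,\boldsymbol 1_m\boldsymbol 1_m^{\top}.
\]

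Next I would use the zero-sum hypothesis to pin down $b$. Each column $\rho(g)\boldsymbol v$ still sums to zero because permutations preserve coordinate sums. So $\boldsymbol 1_m^{\top}C=\boldsymbol 0^{\top}$, and therefore $CC^{\mathsf H}\boldsymbol 1_m=\boldsymbol 0_m$. Substituting into the two-parameter form gives $(a-b)+mb=0$, i.e.\ $b=-a/(m-1)$, so
\[
CC^{\mathsf H}=\frac{ma}{m-1}\Bigl(I_m-\frac1m\boldsymbol 1_m\boldsymbol 1_m^{\top}\Bigr).
\]
This operator kills $\boldsymbol 1_m$ and acts as the scalar $\lambda:=ma/(m-1)$ on $\mathcal V_0$, which gives the first two claims.

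For the trace, $\mathrm{tr}(CC^{\mathsf H})=\|C\|_F^2=\sum_{g}\|\rho(g)\boldsymbol v\|_2^2=|\mathcal G|\sum_k|v_k|^2$, since permutations preserve the norm. On the other hand, the trace of $\lambda(I_m-\tfrac1m\boldsymbol 1\boldsymbol 1^{\top})$ is $\lambda(m-1)$. Equating the two yields $\lambda=\frac{|\mathcal G|}{m-1}\sum_k|v_k|^2$. Because $\boldsymbol v\neq\boldsymbol 0$, we get $\lambda>0$, so $CC^{\mathsf H}$ has eigenvalue $\lambda$ with multiplicity $m-1$ and eigenvalue $0$ once. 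The nonzero singular values of $C$ are the square roots of the nonzero eigenvalues of $CC^{\mathsf H}$, so all $m-1$ of them equal $\gamma=\sqrt\lambda$, and $C$ has rank $m-1$.

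The only step requiring care is the invariance and the entry-orbit argument. One must check the index convention: with $(\rho(g)\boldsymbol x)_i=x_{g^{-1}(i)}$, or whatever convention the paper's $g\circ$ uses, the map is a homomorphism or anti-homomorphism. In either case $\{\rho(hg)\}_g=\{\rho(g)\}_g$ as multisets, so the reindexing goes through. The rest of the argument is routine linear algebra.
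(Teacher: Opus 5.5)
Your proof is correct and is essentially the paper's argument: the same reindexing $g\mapsto hg$ to obtain $\rho(h)\,CC^{\mathsf H}\rho(h)^{\mathsf H}=CC^{\mathsf H}$, the same entrywise argument that transitivity equalizes the diagonal and 2-transitivity equalizes the off-diagonal (this is exactly \cref{lem:two_orbit_matrix_form}), and the same trace computation to identify $\lambda$. The only differences are minor. The paper also routes through irreducibility of $\mathcal V_0$ (\cref{lem:irreducibility_2_transitive_action}) before applying the two-orbit form, and your direct argument shows that step is unnecessary. You, for your part, additionally use $CC^{\mathsf H}\boldsymbol 1_m=\boldsymbol 0_m$ to pin down $b=-a/(m-1)$, which gives the explicit form $\lambda\bigl(I_m-\tfrac1m\boldsymbol 1_m\boldsymbol 1_m^{\top}\bigr)$.
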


\begin{proof}[Proof of~\cref{lem:2_transitive_svd}]
For each \(g\in \mathcal G\), let \(\rho(g)\in\mathbb{C}^{m\times m}\) be the matrix realization of the action \(g\circ\). Since \(\rho\) is a homomorphism, \(\rho(h)\rho(g)=\rho(hg)\), and permutation matrices are unitary, i.e., \(\rho(g)^{\mathsf H}=\rho(g)^{-1}\). The singular values of \(C\) are the square roots of the eigenvalues of the Hermitian positive semidefinite matrix \(CC^{\mathsf H}\). Let \(M_0 = \boldsymbol{v}\boldsymbol{v}^{\mathsf H}\). Then
\[
CC^{\mathsf H} = \sum_{g \in \mathcal G} \rho(g) M_0 \rho(g)^{\mathsf H}.
\]
Given \(\boldsymbol{1}_m^{\mathsf H} \boldsymbol{v} = 0\), it follows that \(M_0 \boldsymbol{1}_m = \boldsymbol{v}(\boldsymbol{v}^{\mathsf H} \boldsymbol{1}_m) = \boldsymbol{v}(0) = \boldsymbol{0}_m\). Since \(\rho(g)\) is a permutation matrix, hence unitary, \(\rho(g)^{\mathsf H} \boldsymbol{1}_m = \boldsymbol{1}_m\). Therefore,
\[
(CC^{\mathsf H})\boldsymbol{1}_m
= \sum_{g \in \mathcal G} \rho(g) M_0 \rho(g)^{\mathsf H} \boldsymbol{1}_m
= \sum_{g \in \mathcal G} \rho(g) (M_0 \boldsymbol{1}_m)
= \sum_{g \in \mathcal G} \rho(g)\boldsymbol{0}_m
= \boldsymbol{0}_m.
\]
Thus, \(\boldsymbol{1}_m\) is an eigenvector of \(CC^{\mathsf H}\) with eigenvalue \(0\). This corresponds to one zero singular value of \(C\).
The subspace \(\mathcal V_0 = \{\boldsymbol{y} \in \mathbb{C}^m : \boldsymbol{1}_m^{\top}\boldsymbol{y} = 0\}\) is the \((m-1)\)-dimensional orthogonal complement of \(\mathrm{span}(\boldsymbol{1}_m)\). As stated in~\cref{lem:irreducibility_2_transitive_action}, \(\mathcal V_0\) is \(\mathcal G\)-invariant under the permutation representation \(\rho\). The vector \(\boldsymbol{v}\) itself belongs to \(\mathcal V_0\).

Next, we show that \(CC^{\mathsf H}\) commutes with all representation matrices \(\rho(h)\) for \(h \in \mathcal G\), i.e., \(\rho(h)(CC^{\mathsf H}) = (CC^{\mathsf H})\rho(h)\). First, consider
\[
\rho(h)(CC^{\mathsf H}) \rho(h)^{\mathsf H}
= \rho(h)\left(\sum_{g \in \mathcal G} \rho(g) M_0 \rho(g)^{\mathsf H}\right)\rho(h)^{\mathsf H}
= \sum_{g \in \mathcal G} \rho(h)\rho(g) M_0 \rho(g)^{\mathsf H}\rho(h)^{\mathsf H}.
\]
Since \(\rho\) is a representation, \(\rho(h)\rho(g) = \rho(hg)\). Also, as \(\rho(x)\) is unitary, \(\rho(x)^{\mathsf H} = (\rho(x))^{-1} = \rho(x^{-1})\), so \(\rho(g)^{\mathsf H}\rho(h)^{\mathsf H} = (\rho(h)\rho(g))^{\mathsf H} = (\rho(hg))^{\mathsf H}\). Hence
\[
\rho(h)(CC^{\mathsf H}) \rho(h)^{\mathsf H}
= \sum_{g \in \mathcal G} \rho(hg) M_0 (\rho(hg))^{\mathsf H}.
\]
Let \(g' = hg\). As \(g\) ranges over all elements of \(\mathcal G\), so does \(g'\). Therefore,
\[
\sum_{g' \in \mathcal G} \rho(g') M_0 \rho(g')^{\mathsf H} = CC^{\mathsf H},
\]
so \(\rho(h)(CC^{\mathsf H}) \rho(h)^{\mathsf H} = CC^{\mathsf H}\). Right-multiplying by \(\rho(h)\) and using \(\rho(h)^{\mathsf H}\rho(h)=I_m\), we get \(\rho(h)(CC^{\mathsf H}) = (CC^{\mathsf H})\rho(h)\).

Since \(\mathcal G\) is 2-transitive, \cref{lem:irreducibility_2_transitive_action} states that the representation of \(\mathcal G\) restricted to \(\mathcal V_0\) is irreducible. The matrix \(CC^{\mathsf H}\) maps \(\mathcal V_0\) to itself: indeed, \(CC^{\mathsf H}\) is Hermitian and \((CC^{\mathsf H})\boldsymbol{1}_m=\boldsymbol{0}_m\), so for any \(\boldsymbol{y}\in \mathcal V_0=\boldsymbol{1}_m^\perp\),
\[
\boldsymbol{1}_m^{\mathsf H}(CC^{\mathsf H})\boldsymbol{y}
=((CC^{\mathsf H})\boldsymbol{1}_m)^{\mathsf H}\boldsymbol{y}=0.
\]
Thus the commutation relation restricts to \(\mathcal V_0\). As \(CC^{\mathsf H}\) commutes with this irreducible representation of \(\mathcal G\) on \(\mathcal V_0\), we apply~\cref{lem:two_orbit_matrix_form}. This implies that \(CC^{\mathsf H}\) restricted to \(\mathcal V_0\) is a scalar multiple of the identity operator on \(\mathcal V_0\). Thus, for any \(\boldsymbol{y} \in \mathcal V_0\), \((CC^{\mathsf H})\boldsymbol{y} = \lambda \boldsymbol{y}\) for some scalar \(\lambda\). This means \(\lambda\) is an eigenvalue of \(CC^{\mathsf H}\) with multiplicity \(m-1\), and its eigenspace is \(\mathcal V_0\).

To determine \(\lambda\), we compute the trace of \(CC^{\mathsf H}\) in two ways. First, from its eigenvalues, \(0\) with multiplicity \(1\) and \(\lambda\) with multiplicity \(m-1\),
\[
\mathrm{tr}(CC^{\mathsf H}) = (1 \cdot 0) + ((m-1)\cdot \lambda) = (m-1)\lambda.
\]
Alternatively, from the expansion of \(CC^{\mathsf H}\),
\[
\mathrm{tr}(CC^{\mathsf H})
= \mathrm{tr}\left(\sum_{g \in \mathcal G} \rho(g) M_0 \rho(g)^{\mathsf H}\right)
= \sum_{g \in \mathcal G} \mathrm{tr}(\rho(g) M_0 \rho(g)^{\mathsf H}).
\]
Using cyclicity of trace, \(\mathrm{tr}(\rho(g) M_0 \rho(g)^{\mathsf H}) = \mathrm{tr}(M_0 \rho(g)^{\mathsf H}\rho(g)) = \mathrm{tr}(M_0 I_m) = \mathrm{tr}(M_0)\). The trace of \(M_0 = \boldsymbol{v}\boldsymbol{v}^{\mathsf H}\) is \(\mathrm{tr}(\boldsymbol{v}\boldsymbol{v}^{\mathsf H}) = \boldsymbol{v}^{\mathsf H}\boldsymbol{v} = \|\boldsymbol{v}\|_2^2 = \sum_{k=1}^m |v_k|^2\). So
\[
\mathrm{tr}(CC^{\mathsf H}) = \sum_{g \in \mathcal G} \|\boldsymbol{v}\|_2^2 = |\mathcal G| \|\boldsymbol{v}\|_2^2.
\]
Equating the two expressions for the trace gives \((m-1)\lambda = |\mathcal G| \|\boldsymbol{v}\|_2^2\), hence \(\lambda = (|\mathcal G|/(m-1))\|\boldsymbol{v}\|_2^2\). Since \(\boldsymbol{v} \neq \boldsymbol{0}_m\) and \(\mathcal G\) is nontrivial, indeed \(|\mathcal G| \ge 1\) and for 2-transitivity on \(m \ge 2\) elements \(|\mathcal G| \ge 2\), we have \(\lambda > 0\).

The nonzero eigenvalues of \(CC^{\mathsf H}\) on \(\mathcal V_0\) are thus all equal to \(\lambda\). The singular values of \(C\) associated with this Gram decomposition are the square roots of these eigenvalues, hence they are all equal to \(\gamma = \sqrt{\lambda}\):
\[
\gamma = \sqrt{\frac{|\mathcal G|}{m-1} \|\boldsymbol{v}\|_2^2}
= \sqrt{\frac{|\mathcal G|}{m-1}} \|\boldsymbol{v}\|_2.
\]
This completes the proof.
\end{proof}

\begin{lemma}[Orbit Structure of Softmax Columns for 2-Transitive Group Actions]
\label{lem:orbit_S_softmax_2_transitive}
Let \(m \ge 2\) and let \(\mathcal G\) be a 2-transitive permutation group on \(\mathcal X=\{1,2,\ldots,m\}\). For \(g\in \mathcal G\), let \(\rho(g)\in\mathbb{R}^{m\times m}\) be the permutation matrix satisfying \(\rho(g)\boldsymbol{x}=g\circ\boldsymbol{x}\). Let \(\boldsymbol{v}\in\mathbb{R}^m\) be nonzero with \(\boldsymbol{1}_m^{\top}\boldsymbol{v}=0\). Fix an enumeration \(\mathcal G=\{g_1,\ldots,g_{|\mathcal G|}\}\), and set \(C=[g_j\circ\boldsymbol{v}]_{j=1}^{|\mathcal G|}\in\mathbb{R}^{m\times |\mathcal G|}\). From~\cref{lem:2_transitive_svd}, choose the compatible SVD \(C=\gamma U V^{\top}\) whose left singular vectors form an orthonormal basis of \(\mathcal V_0=\{\boldsymbol{x}:\boldsymbol{1}_m^\top \boldsymbol{x}=0\}\). Thus \(U\in\mathbb{R}^{m\times(m-1)}\), \(V\in\mathbb{R}^{|\mathcal G|\times(m-1)}\) have orthonormal columns and \(\gamma=\sqrt{|\mathcal G|/(m-1)}\|\boldsymbol{v}\|_2\). For any diagonal \(\Sigma=\mathrm{diag}(s_1,\ldots,s_{m-1})\) with \(s_k\ge 0\), define \(Z=U\Sigma V^{\top}\in\mathbb{R}^{m\times |\mathcal G|}\), and let \(\Xi\in\mathbb{R}^{m\times|\mathcal G|}\) have columns \(\Xi_{:,j}=\sigma(Z_{:,j})\). Then the columns of \(\Xi\) form the orbit of a single vector under \(\mathcal G\), i.e., there exists \(\boldsymbol{y}_{\Xi} \in \mathbb{R}^m\) such that \(\Xi_{:,j} = g_j\circ\boldsymbol{y}_{\Xi}\) for all \(j=1,\ldots,|\mathcal G|\), if and only if \(\Sigma\) is a scalar multiple of the identity matrix \(I_{m-1}\).
\end{lemma}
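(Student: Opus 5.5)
The plan is to turn both directions into statements about how $Z$ intertwines the permutation action on rows with an induced permutation action on columns, and then use irreducibility of $\mathcal V_0$ to force $\Sigma$ to be scalar.

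\textbf{Setup.} For $g\in\mathcal G$, let $\Lambda(g)\in\mathbb R^{|\mathcal G|\times|\mathcal G|}$ be the permutation matrix sending column index $j$ to the index $j'$ with $g_{j'}=g g_j$. Since $C_{:,j}=\rho(g_j)\boldsymbol v$, we have $\rho(g)C_{:,j}=\rho(gg_j)\boldsymbol v=C_{:,j'}$, so $\rho(g)C=C\Lambda(g)$ with this fixed convention. The columns of $U$ form an orthonormal basis of $\mathcal V_0$, and $\mathcal V_0$ is $\rho(g)$-invariant. Hence $\rho(g)U=UR_g$ with $R_g:=U^\top\rho(g)U$ orthogonal, and $g\mapsto R_g$ is the representation of $\mathcal G$ on $\mathcal V_0$ written in the basis $U$. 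Substituting $C=\gamma UV^\top$ (with $\gamma>0$ because $\boldsymbol v\neq 0$) gives $UR_gV^\top=UV^\top\Lambda(g)$. Left-multiplying by $U^\top$ yields the intertwining identity
\[
R_gV^\top=V^\top\Lambda(g)\qquad\text{for all } g\in\mathcal G .
\]

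\textbf{Sufficiency.} If $\Sigma=sI_{m-1}$, then $Z=(s/\gamma)C$, so $Z_{:,j}=g_j\circ\bigl((s/\gamma)\boldsymbol v\bigr)$. Softmax commutes with coordinate permutations, so $\Xi_{:,j}=g_j\circ\sigma\bigl((s/\gamma)\boldsymbol v\bigr)$. This is an orbit, with $\boldsymbol y_\Xi=\sigma\bigl((s/\gamma)\boldsymbol v\bigr)$.

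\textbf{Necessity.} Suppose $\Xi_{:,j}=g_j\circ\boldsymbol y_\Xi$ for all $j$. Every column of $Z$ lies in $\mathcal V_0$ because the columns of $U$ do. On $\mathcal V_0$ softmax is injective, with inverse $\boldsymbol q\mapsto\log\boldsymbol q-\tfrac1m(\boldsymbol 1_m^\top\log\boldsymbol q)\boldsymbol 1_m$, and this inverse also commutes with coordinate permutations. Setting $\boldsymbol w:=\log\boldsymbol y_\Xi-\tfrac1m(\boldsymbol 1_m^\top\log\boldsymbol y_\Xi)\boldsymbol 1_m$, we get $Z_{:,j}=g_j\circ\boldsymbol w$. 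By the same computation as for $C$, this gives $\rho(g)Z=Z\Lambda(g)$ with the same $\Lambda(g)$.

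Now substitute $Z=U\Sigma V^\top$ and left-multiply by $U^\top$:
\[
R_g\Sigma V^\top=\Sigma V^\top\Lambda(g)=\Sigma R_gV^\top ,
\]
where the second equality uses the intertwining identity. Since $V^\top V=I_{m-1}$, $V^\top$ has full row rank, and cancelling it gives $R_g\Sigma=\Sigma R_g$ for every $g$. Thus $\Sigma$ lies in the commutant of the representation $\{R_g\}$ on $\mathcal V_0$. By~\cref{lem:irreducibility_2_transitive_action} this representation is irreducible, and by~\cref{lem:two_orbit_matrix_form} its commutant consists of scalars. Therefore $\Sigma=k_0I_{m-1}$, with $k_0\ge0$ because the $s_k$ are nonnegative.

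\textbf{Expected obstacle.} The delicate point is that the commutant conclusion must hold over $\mathbb R$, which requires absolute irreducibility. If~\cref{lem:two_orbit_matrix_form} is only stated over $\mathbb C$, I would first pass to $\mathbb C$: 2-transitivity makes the permutation character equal to $1+\chi$ with $\langle\chi,\chi\rangle=1$, so $\mathcal V_0\otimes\mathbb C$ is irreducible and Schur's lemma applies there. As a fallback avoiding representation theory, I would argue directly. A real matrix commuting with the action corresponds to an operator on $\mathbb R^m$ that commutes with all $\rho(g)$ and kills $\boldsymbol 1_m$. By 2-transitivity such an operator has the form $aI+bJ_m$, so its restriction to $\mathcal V_0$ is a scalar.
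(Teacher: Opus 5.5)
Your proof is correct and follows the paper's argument essentially step for step. For sufficiency you both use permutation-equivariance of softmax. For necessity you both use the intertwining relations $\rho(g)Z=Z\Lambda(g)$ and $R_gV^\top=V^\top\Lambda(g)$ to get $R_g\Sigma=\Sigma R_g$, and then conclude from irreducibility of $\mathcal V_0$ that $\Sigma$ is scalar.

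The differences are cosmetic. You invert softmax on $\mathcal V_0$ via the centered logarithm, whereas the paper uses the identity column and removes the shift constants $c_j$ with the zero-sum condition. For the scalar conclusion, your real-field fallback (lifting $\Sigma$ to $U\Sigma U^\top$, which commutes with every $\rho(g)$ and so equals $aI_m+bJ_m$) is a slightly more explicit route than the paper's complexification.
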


\begin{proof}[Proof of~\cref{lem:orbit_S_softmax_2_transitive}]
All matrices are real up to the brief complexification step below. We use the fixed enumeration \(\mathcal G=\{g_1,\ldots,g_{|\mathcal G|}\}\) from the lemma statement.

\paragraph{Sufficiency.}
Assume \(\Sigma = sI_{m-1}\) for some scalar \(s \ge 0\). Then
\[
Z = U(sI_{m-1})V^{\top} = s(UV^{\top}).
\]
From the SVD \(C = \gamma U V^{\top}\), and since \(\gamma > 0\) because \(\boldsymbol{v} \neq \boldsymbol{0}_m\), we have \(UV^{\top} = C/\gamma\). Substituting gives
\[
Z = \frac{s}{\gamma}C.
\]
Let \(\boldsymbol{x}_Z = (s/\gamma)\boldsymbol{v}\). Since the columns of \(C\) are \(C_{:,j} = g_j\circ\boldsymbol{v}\), the columns of \(Z\) are
\[
Z_{:,j}
= \frac{s}{\gamma}(g_j\circ\boldsymbol{v})
= g_j\circ\left(\frac{s}{\gamma}\boldsymbol{v}\right)
= g_j\circ\boldsymbol{x}_Z.
\]
The \(j\)-th column of \(\Xi\) is \(\Xi_{:,j} = \sigma(Z_{:,j}) = \sigma(g_j\circ\boldsymbol{x}_Z)\). The softmax function is permutation equivariant: for any group element \(g\) and vector \(\boldsymbol{z}\), \(\sigma(g\circ\boldsymbol{z}) = g\circ\sigma(\boldsymbol{z})\). Thus
\[
\Xi_{:,j} = g_j\circ\sigma(\boldsymbol{x}_Z).
\]
Let \(\boldsymbol{y}_{\Xi} = \sigma(\boldsymbol{x}_Z)\). Then \(\Xi_{:,j} = g_j\circ\boldsymbol{y}_{\Xi}\). This shows that the columns of \(\Xi\) form the orbit of the single vector \(\boldsymbol{y}_{\Xi}\) under the action of \(\mathcal G\).

\paragraph{Necessity.}
Assume the columns of \(\Xi\) form the orbit of a single vector \(\boldsymbol{y}_{\Xi}\), i.e., \(\Xi_{:,j} = g_j\circ\boldsymbol{y}_{\Xi}\). Let \(j_e\) be the index such that \(g_{j_e}=e\), and set \(\boldsymbol{z}_{e}:=Z_{:,j_e}\). Then \(\boldsymbol{y}_{\Xi} = \sigma(\boldsymbol{z}_{e})\), since \(e\circ\boldsymbol{x}=\boldsymbol{x}\). So \(\Xi_{:,j} = g_j\circ\sigma(\boldsymbol{z}_{e})\). Using permutation equivariance, \(g_j\circ\sigma(\boldsymbol{z}_{e}) = \sigma(g_j\circ\boldsymbol{z}_{e})=\sigma(\rho(g_j)\boldsymbol{z}_{e})\). Thus
\[
\sigma(Z_{:,j}) = \sigma(g_j\circ\boldsymbol{z}_{e}).
\]
A property of the softmax function is that if \(\sigma(\boldsymbol{a}) = \sigma(\boldsymbol{b})\), then \(\boldsymbol{a} = \boldsymbol{b} + c\boldsymbol{1}_m\) for some scalar \(c\). Therefore, for each column \(j\), associated with \(g_j \in \mathcal G\),
\[
Z_{:,j} = g_j\circ\boldsymbol{z}_{e} + c_j\boldsymbol{1}_m
\]
for some scalar \(c_j\), which may depend on \(j\).

The columns of \(Z = U\Sigma V^{\top}\) lie in the column space of \(U\). Since the columns of \(U\) are a basis for \(\mathcal V_0 = \{\boldsymbol{y} \in \mathbb{R}^m : \boldsymbol{1}_m^\top\boldsymbol{y} = 0\}\), they sum to zero, i.e., \(\boldsymbol{1}_m^\top U = \boldsymbol{0}_{(m-1)}^\top\). Thus, for any column \(Z_{:,j}\) of \(Z\),
\[
\boldsymbol{1}_m^\top Z_{:,j}
= (\boldsymbol{1}_m^\top U)\Sigma (V^{\top})_{:,j}
= \boldsymbol{0}_{(m-1)}^\top \Sigma (V^{\top})_{:,j}
= 0.
\]
Applying this to \(Z_{:,j} = g_j\circ\boldsymbol{z}_{e} + c_j\boldsymbol{1}_m\), we get
\[
\boldsymbol{1}_m^\top Z_{:,j}
= \boldsymbol{1}_m^\top (g_j\circ\boldsymbol{z}_{e}) + \boldsymbol{1}_m^\top (c_j\boldsymbol{1}_m).
\]
Since \(g_j\circ\) permutes entries, the sum remains the same, and \(\boldsymbol{1}_m^\top \boldsymbol{z}_{e} = 0\), as \(\boldsymbol{z}_{e}\) is a column of \(Z\) and thus in \(\mathcal V_0\), this gives
\[
0 = \boldsymbol{1}_m^\top \boldsymbol{z}_{e} + c_j (\boldsymbol{1}_m^\top \boldsymbol{1}_m) = 0 + c_j m.
\]
Since \(m \ge 2\), this implies \(c_j=0\) for all \(j\). Therefore, \(Z_{:,j} = g_j\circ\boldsymbol{z}_{e}\) for all \(j=1,\ldots,|\mathcal G|\). This means the columns of \(Z\) form the orbit of the single vector \(\boldsymbol{z}_{e}\) under the action of \(\mathcal G\).

Now we show that this property of \(Z\) implies \(\Sigma = sI_{m-1}\). The condition \(Z_{:,j} = g_j\circ\boldsymbol{z}_{e}\) for all \(j=1,\ldots,|\mathcal G|\) implies an intertwining relation for \(Z\). For any \(h \in \mathcal G\), applying the matrix realization \(\rho(h)\) to all columns of \(Z\) results in a permutation of these columns:
\[
\rho(h)Z
= \rho(h)[g_j\circ\boldsymbol{z}_{e}]_{j=1}^{|\mathcal G|}
= [\rho(h)\rho(g_j)\boldsymbol{z}_{e}]_{j=1}^{|\mathcal G|}
= [\rho(hg_j)\boldsymbol{z}_{e}]_{j=1}^{|\mathcal G|}.
\]
This means there exists a permutation matrix \(\Lambda(h) \in \mathbb{R}^{|\mathcal G| \times |\mathcal G|}\), which permutes the columns of \(Z\) according to the permutation of the fixed enumeration induced by left multiplication by \(h\), such that
\[
\rho(h)Z = Z\Lambda(h).
\]
More precisely, \(\Lambda(h)\) is the column permutation matrix induced by left multiplication on the fixed enumeration of \(\mathcal G\). The same \(\Lambda(h)\) also satisfies \(\rho(h)C=C\Lambda(h)\), because the columns of \(C\) are \(g_j\circ\boldsymbol{v}\) and \(\rho(h)(g_j\circ\boldsymbol{v})=(hg_j)\circ\boldsymbol{v}\).

Substituting \(Z=U\Sigma V^{\top}\) gives
\[
\rho(h)U\Sigma V^{\top} = U\Sigma V^{\top} \Lambda(h).
\]
Left-multiplying by \(U^{\top}\), we obtain
\[
U^{\top} \rho(h)U\Sigma V^{\top}
= (U^{\top} U)\Sigma V^{\top} \Lambda(h)
= \Sigma V^{\top} \Lambda(h),
\]
since \(U^{\top} U=I_{m-1}\). Let \(R_h = U^{\top} \rho(h)U\). The compatible choice of \(U\) means that \(\rho(h)U=UR_h\) and \(U^{\top}\rho(h)=R_hU^{\top}\), so \(R_h\) is the \((m-1)\times(m-1)\) matrix representation of \(h \in \mathcal G\) acting on \(\mathcal V_0\) in the basis defined by \(U\). Thus \(R_h \Sigma V^{\top} = \Sigma V^{\top} \Lambda(h)\).

To relate \(\Lambda(h)\) to \(R_h\), we use the SVD \(C = \gamma U V^{\top}\) and the fact that \(C\) also satisfies \(\rho(h)C = C\Lambda(h)\), as its columns \(g\circ\boldsymbol{v}\) are permuted in the same way:
\[
\rho(h)\gamma U V^{\top} = \gamma U V^{\top} \Lambda(h).
\]
Since \(\boldsymbol{v}\ne\boldsymbol{0}\), we have \(\gamma>0\), so dividing by \(\gamma\) gives \(\rho(h)U V^{\top} = U V^{\top} \Lambda(h)\). Left-multiplying by \(U^{\top}\), we get \(U^{\top}\rho(h)U V^{\top} = U^{\top} U V^{\top} \Lambda(h)\), i.e., \(R_h V^{\top} = V^{\top} \Lambda(h)\). Now substitute \(V^{\top} \Lambda(h) = R_h V^{\top}\) into the equation for \(\Sigma\):
\[
R_h \Sigma V^{\top} = \Sigma (R_h V^{\top}).
\]
Since \(V^{\top} V = I_{m-1}\), right-multiplying by \(V\) yields
\[
R_h \Sigma = \Sigma R_h \quad \text{for all } h \in \mathcal G.
\]

At this point we pass to the complexification: the same commutation relation holds over \(\mathbb{C}\) for the complexified representation on \(\mathcal V_0 \otimes \mathbb{C}\). By the compatible choice of \(U\) above, \(R_h=U^{\top}\rho(h)U\) is exactly the matrix of the restricted real \(\mathcal G\)-action on \(\mathcal V_0\) in this orthonormal basis, and its complex-linear extension is the corresponding matrix of the complexified action. By~\cref{lem:irreducibility_2_transitive_action}, this complex representation is irreducible; hence, by~\cref{lem:two_orbit_matrix_form}, any matrix commuting with all \(R_h\) is a scalar multiple of the identity on \(\mathcal V_0 \otimes \mathbb{C}\). Therefore \(\Sigma = s I_{m-1}\) over \(\mathbb{C}\). Since \(\Sigma\) is a real diagonal matrix, \(s \in \mathbb{R}\), yielding \(\Sigma = s I_{m-1}\) in the real setting. This completes the necessity proof.
\end{proof}

\begin{lemma}[Irreducibility of the Standard Module for a 2-Transitive Group~\citep{serre1977linear}]
\label{lem:irreducibility_2_transitive_action}
Let \(\mathcal G\) be a permutation group on \(\mathcal X=\{1,2,\ldots,m\}\), \(m\ge 2\), acting on
\(\mathbb C^m\) by \(g\circ\boldsymbol{x}:=\rho(g)\boldsymbol{x}\), where \(\rho(g)\) is the corresponding permutation matrix.
Then
\[
\mathbb C^m=\mathcal W_1\oplus \mathcal V_0,
\qquad
\mathcal W_1=\mathbb C\boldsymbol{1}_m,
\qquad
\mathcal V_0=\left\{\boldsymbol{y}\in\mathbb C^m:\sum_{i=1}^m y_i=0\right\},
\]
and both \(\mathcal W_1\) and \(\mathcal V_0\) are \(\mathcal G\)-invariant.
If the action of \(\mathcal G\) on \(\mathcal X\) is 2-transitive, then the restriction of the
permutation representation to \(\mathcal V_0\) is irreducible.
That is, the only \(\mathcal G\)-invariant complex subspaces of \(\mathcal V_0\) are \(\{0\}\) and \(\mathcal V_0\) itself.
\end{lemma}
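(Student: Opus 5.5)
The plan has two parts. First, the decomposition and invariance claims are elementary. Every \(\boldsymbol{x}\in\mathbb C^m\) splits as \(\boldsymbol{x}=\bar x\boldsymbol 1_m+(\boldsymbol{x}-\bar x\boldsymbol 1_m)\) with \(\bar x=\frac1m\boldsymbol 1_m^\top\boldsymbol{x}\), which gives \(\mathbb C^m=\mathcal W_1\oplus\mathcal V_0\). Because \(\rho(g)\) is a permutation matrix, \(\rho(g)\boldsymbol 1_m=\boldsymbol 1_m\) and \(\boldsymbol 1_m^\top\rho(g)=\boldsymbol 1_m^\top\). Hence both \(\mathcal W_1\) and \(\mathcal V_0\) are \(\mathcal G\)-invariant.

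For irreducibility, I will use the standard character argument rather than hunting for invariant subspaces directly. Let \(\chi(g)=\mathrm{tr}\,\rho(g)=|\mathrm{Fix}(g)|\) be the permutation character. Then \(\chi=\chi_{\mathrm{triv}}+\chi_{\mathcal V_0}\). A complex representation with character \(\psi\) is irreducible if and only if \(\langle\psi,\psi\rangle=\frac1{|\mathcal G|}\sum_g|\psi(g)|^2=1\), which is a consequence of Schur orthogonality and complete reducibility by Maschke. So it suffices to compute \(\langle\chi,\chi\rangle\) and \(\langle\chi,\chi_{\mathrm{triv}}\rangle\).

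I will compute these by Burnside counting. First, \(\langle\chi,\chi_{\mathrm{triv}}\rangle=\frac1{|\mathcal G|}\sum_g|\mathrm{Fix}(g)|\) equals the number of \(\mathcal G\)-orbits on \(\mathcal X\), which is \(1\) by transitivity. Second, \(\langle\chi,\chi\rangle=\frac1{|\mathcal G|}\sum_g|\mathrm{Fix}(g)|^2\) equals the number of orbits on \(\mathcal X\times\mathcal X\). Under 2-transitivity there are exactly two such orbits, the diagonal and the off-diagonal pairs, so this value is \(2\). Expanding,
\[
\langle\chi_{\mathcal V_0},\chi_{\mathcal V_0}\rangle=\langle\chi,\chi\rangle-2\langle\chi,\chi_{\mathrm{triv}}\rangle+\langle\chi_{\mathrm{triv}},\chi_{\mathrm{triv}}\rangle=2-2+1=1,
\]
so \(\mathcal V_0\) is irreducible.

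The only delicate step is justifying that the Burnside-type counts apply to \(\mathcal X\times\mathcal X\). This follows because \(|\mathrm{Fix}(g)|^2\) is the number of fixed points of \(g\) on pairs, and real characters avoid any conjugation issues. If a purely linear-algebraic route is preferred instead, the alternative is to show the commutant of \(\rho\) has dimension 2. A matrix \(M\) commuting with all \(\rho(g)\) must be constant on orbits of index pairs \((i,j)\), hence \(M=aI_m+bJ_m\). An invariant proper subspace \(\mathcal U\subset\mathcal V_0\) would then yield an orthogonal projector onto \(\mathcal U\) that commutes with \(\rho\) and is not of this form, a contradiction. I would present the character computation and note this alternative.
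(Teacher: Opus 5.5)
Your proof is correct, but it takes a different route from the paper's.

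\textbf{The paper's route.} The paper argues directly, without characters. It takes a nonzero invariant subspace $\mathcal Y\subseteq\mathcal V_0$ and a vector $\boldsymbol y\in\mathcal Y$ with $y_a\neq 0$, and averages $g\circ\boldsymbol y$ over the stabilizer $\mathcal G_a$. Since 2-transitivity makes $\mathcal G_a$ transitive on $\mathcal X\setminus\{a\}$, this average is a nonzero multiple of $\boldsymbol e_a-\frac1m\boldsymbol 1_m$. Transitivity then places every $\boldsymbol e_i-\frac1m\boldsymbol 1_m$ in $\mathcal Y$, hence every difference $\boldsymbol e_i-\boldsymbol e_j$, so $\mathcal Y=\mathcal V_0$. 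This is fully elementary and constructive: it needs no Maschke and no orthogonality relations.

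\textbf{Your character route.} It imports more machinery: Maschke, Schur orthogonality, the criterion $\langle\psi,\psi\rangle=1$, and Burnside's lemma on $\mathcal X\times\mathcal X$. In return it is shorter and more conceptual. For a transitive group, $\langle\chi_{\mathcal V_0},\chi_{\mathcal V_0}\rangle$ equals the number of orbits on $\mathcal X\times\mathcal X$ minus one. So $\mathcal V_0$ is irreducible exactly when the action is 2-transitive, and you get the converse for free.

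\textbf{Your commutant alternative.} This one is closest to the paper's own toolkit. The fact that the commutant is $\{aI_m+bJ_m\}$ is precisely \cref{lem:two_orbit_matrix_form}. Combine it with your projector observation: the orthogonal projector onto an invariant $\mathcal U$ commutes with the unitary $\rho(g)$, yet $aI_m+bJ_m$ acts as a scalar on $\mathcal V_0$. That proves irreducibility from a result already in the paper.

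For the direct-sum claim, you show only that $\mathcal W_1+\mathcal V_0=\mathbb C^m$. Add the one-line check that $\mathcal W_1\cap\mathcal V_0=\{0\}$.
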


\begin{proof}
The vector \(\boldsymbol{1}_m\) is fixed by every permutation, so \(\mathcal W_1\) is
\(\mathcal G\)-invariant. Permutations preserve coordinate sums, so \(\mathcal V_0\) is also
\(\mathcal G\)-invariant. Clearly \(\mathcal W_1\cap \mathcal V_0=\{0\}\), and
\(\dim \mathcal W_1+\dim \mathcal V_0=1+(m-1)=m\), hence
\(\mathbb C^m=\mathcal W_1\oplus \mathcal V_0\).
Let \(\mathcal Y\subseteq \mathcal V_0\) be a nonzero \(\mathcal G\)-invariant subspace. Choose
\(\boldsymbol{y}\in \mathcal Y\setminus\{0\}\), and pick \(a\) with \(y_a\neq 0\). Let
\[
\mathcal G_a=\{g\in \mathcal G:g(a)=a\}
\]
be the stabilizer of \(a\), and define
\[
\boldsymbol w=\sum_{g\in \mathcal G_a}g\circ\boldsymbol{y}.
\]
Then \(\boldsymbol w\in \mathcal Y\). Its \(a\)-th coordinate is
\[
w_a=\sum_{g\in \mathcal G_a}(g\circ\boldsymbol{y})_a
=\sum_{g\in \mathcal G_a} y_a
=|\mathcal G_a|\,y_a\neq 0,
\]
so \(\boldsymbol w\neq 0\). Since the action is 2-transitive, \(\mathcal G_a\) acts
transitively on \(\mathcal X\setminus\{a\}\). Hence all coordinates \(w_i\) with
\(i\neq a\) are equal. Because \(\boldsymbol w\in \mathcal V_0\), we must have
\[
\boldsymbol w=c\left(\boldsymbol e_a-\frac1m\boldsymbol{1}_m\right)
\]
for some nonzero scalar \(c\).
Now \(\mathcal Y\) is \(\mathcal G\)-invariant and \(\mathcal G\) is transitive on \(\mathcal X\), so \(\mathcal Y\)
contains
\[
\boldsymbol e_i-\frac1m\boldsymbol{1}_m
\qquad\text{for every } i=1,\ldots,m.
\]
Therefore \(\mathcal Y\) contains every difference \(\boldsymbol e_i-\boldsymbol e_j\), and these differences
span \(\mathcal V_0\). Hence \(\mathcal Y=\mathcal V_0\). So the restricted representation on \(\mathcal V_0\) is
irreducible.
\end{proof}

\begin{lemma}[Two-Orbit Form for Matrices Invariant under a 2-Transitive Action~\citep{serre1977linear}]
\label{lem:two_orbit_matrix_form}
Let \(\mathbb F\in\{\mathbb R,\mathbb C\}\). Let \(\mathcal G\) act 2-transitively on
\(\mathcal X=\{1,\ldots,m\}\), with \(m\ge 2\), and let \(\rho(g)\) be the permutation matrix satisfying \(\rho(g)\boldsymbol{x}=g\circ\boldsymbol{x}\). If \(T\in\mathbb F^{m\times m}\) satisfies
\[
\rho(g)T\rho(g)^{-1}=T
\qquad\text{for all } g\in \mathcal G,
\]
then there exist scalars \(a,b\in\mathbb F\) such that
\[
T=(a-b)I_m+bJ_m,
\qquad J_m=\boldsymbol{1}_m\boldsymbol{1}_m^{\top}.
\]
Moreover, for every \(\boldsymbol{y}\in \mathcal V_0 =\left\{\boldsymbol{y}\in\mathbb F^m:\sum_{i=1}^m y_i=0\right\}\),
\[
T\boldsymbol{y}=(a-b)\boldsymbol{y}.
\]
\end{lemma}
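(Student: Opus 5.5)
The plan is to exploit 2-transitivity through the \emph{orbital} structure of \(\mathcal G\) acting on ordered pairs \((i,j)\in\mathcal X\times\mathcal X\). I will not appeal to Schur's lemma here, since the irreducibility route depends on this very lemma. The hypothesis \(\rho(g)T\rho(g)^{-1}=T\) is an entrywise statement. Since \(\rho(g)\) is the permutation matrix of \(g\), conjugation relabels indices, and with the convention \(\rho(g)\boldsymbol e_j=\boldsymbol e_{g(j)}\) it gives
\[
T_{g(i),g(j)}=T_{i,j}\qquad\text{for all } g\in\mathcal G,\ i,j\in\mathcal X.
\]
I will verify this by computing \((\rho(g)T\rho(g)^{\top})_{g(i),g(j)}=\boldsymbol e_{g(i)}^\top\rho(g)T\rho(g)^\top\boldsymbol e_{g(j)}=\boldsymbol e_i^\top T\boldsymbol e_j\). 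The other index convention only replaces \(g\) by \(g^{-1}\), which is harmless because we quantify over the whole group. So \(T\) is constant on each \(\mathcal G\)-orbit of ordered pairs.

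Next, I will identify these orbits. The diagonal \(\{(i,i)\}\) is a single orbit, because 2-transitivity implies transitivity: for \(m\ge2\), take any \(i\neq k\) and \(j\neq l\), and map \((i,k)\) to \((j,l)\). The off-diagonal set \(\{(i,j):i\neq j\}\) is also a single orbit, which is exactly the definition of 2-transitivity. Therefore \(T_{ii}=a\) for all \(i\), and \(T_{ij}=b\) for all \(i\neq j\). This is precisely \(T=(a-b)I_m+bJ_m\). The argument works verbatim over \(\mathbb F=\mathbb R\) or \(\mathbb C\), since only the equality of entries was used. When \(m=2\) there is a single off-diagonal orbit, \(\{(1,2),(2,1)\}\), so the same conclusion holds.

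Finally, for \(\boldsymbol y\in\mathcal V_0\) we have \(J_m\boldsymbol y=\boldsymbol 1_m(\boldsymbol 1_m^\top\boldsymbol y)=\boldsymbol 0_m\). Hence \(T\boldsymbol y=(a-b)\boldsymbol y\).

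The only step needing care is the first one: fixing the correct index convention for the conjugation action, and checking that it really yields invariance along orbits of ordered pairs rather than some twisted version. Everything else is immediate from the definition of 2-transitivity. I would also remark that, combined with \cref{lem:irreducibility_2_transitive_action}, this gives the commutant statement used in \cref{lem:2_transitive_svd,lem:orbit_S_softmax_2_transitive}. That use requires extending an operator commuting with the action on \(\mathcal V_0\) to \(\mathbb F^m\) by acting as zero on \(\boldsymbol 1_m\). The extension still commutes with every \(\rho(g)\), because both summands of \(\mathbb F^m=\mathbb F\boldsymbol 1_m\oplus\mathcal V_0\) are invariant.
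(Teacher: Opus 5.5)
Your proof is correct and follows the same route as the paper: \(T\) is constant on the diagonal orbital (by transitivity) and on the off-diagonal orbital (by 2-transitivity), which gives \(T=(a-b)I_m+bJ_m\), and then \(J_m\boldsymbol{y}=\boldsymbol{0}_m\) on \(\mathcal V_0\) gives the scalar action. You are more explicit than the paper in two places: you derive \(T_{g(i),g(j)}=T_{i,j}\) from the conjugation identity, and you deduce transitivity from 2-transitivity, which the paper's proof uses without comment.
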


\begin{proof}
Because the action is transitive, all diagonal entries of \(T\) are equal; call
this common value \(a\). Because the action is 2-transitive, any two ordered
pairs of distinct indices lie in the same \(\mathcal G\)-orbit, so all off-diagonal
entries are equal; call this common value \(b\). Therefore
\[
T=(a-b)I_m+bJ_m.
\]
Now \(J_m\boldsymbol{y}=(\sum_{i=1}^m y_i)\boldsymbol{1}_m=0\) for every
\(\boldsymbol{y}\in \mathcal V_0\), so
\[
T\boldsymbol{y}=((a-b)I_m+bJ_m)\boldsymbol{y}=(a-b)\boldsymbol{y}.
\]
In particular, \(T\) acts as a scalar on \(\mathcal V_0\).
\end{proof}

\section{Distance Metrics and Projection Operators}
\label{app:measure}

To measure the proximity of a matrix \(G\in\mathbb{R}^{m\times m}\) to the predicted geometric structures, namely the simplex ETF Gram matrix and the subspace of circulant matrices, we introduce the following scale-invariant distances based on Frobenius norm projections.

\begin{definition}[Distance to a Simplex ETF]
\label{def:dis_etf}
Let \(M^*\) be the canonical simplex ETF matrix in~\cref{eq:simplex-etf-mstar}. For a nonzero Gram matrix \(G\), define the best Frobenius scalar fit of \(G\) to \(M^*\) by
\[
c^*  =  \arg\min_{c\in\mathbb{R}}   \left\| cG - M^* \right\|_F
=  \frac{\langle G, M^*\rangle_F}{\langle G, G\rangle_F}.
\]
The relative (scale-invariant) ETF distance is
\[
\delta_{\mathrm{ETF}}(G)  =  \frac{\left\| c^* G - M^* \right\|_F}{\left\| M^* \right\|_F}.
\]
This vanishes iff \(G\) is a positive scalar multiple of \(M^*\). Moreover, if \(G\mapsto \alpha G\) for any \(\alpha>0\), then \(c^*\mapsto c^*/\alpha\) and \(\delta_{\mathrm{ETF}}(G)\) is unchanged.
\end{definition}

\begin{definition}[Distance to the Circulant Subspace]\label{def:circ-distance}
Let \(\mathcal{C}\subset\mathbb{R}^{m\times m}\) denote the linear subspace of real circulant matrices.
Given a nonzero symmetric Gram matrix \(G\in\mathbb{R}^{m\times m}\), define its nearest circulant approximation in Frobenius norm by
\[
C^*  \in  \arg\min_{C\in\mathcal{C}}  \| G-C \|_F   .
\]
Then \( C^* \) is given by
\[
C^*  =  \frac{1}{m}\sum_{k=1}^{m}\Pi^{k-1} G (\Pi^{k-1})^{ \top},
\]
where \(\Pi\) is the one-step cyclic-shift permutation matrix:
\[
\Pi=\begin{bmatrix}
0 & 0 & \cdots & 0 & 1\\
1 & 0 & \cdots & 0 & 0\\
0 & 1 & \cdots & 0 & 0\\
\vdots & \vdots & \ddots & \vdots & \vdots\\
0 & 0 & \cdots & 1 & 0
\end{bmatrix}
 \in \mathbb{R}^{m\times m}.
\]
The associated relative (scale-invariant) circulant distance is
\[
\delta_{\mathrm{circ}}(G)  =  \frac{\| G-C^* \|_F}{\| G \|_F}  .
\]
Moreover, \(\delta_{\mathrm{circ}}(\alpha G)=\delta_{\mathrm{circ}}(G)\) for any \(\alpha>0\),
so the metric is invariant under global rescaling of \(G\).
\end{definition}

\begin{proof}[Proof of the form of \(C^*\) in~\cref{def:circ-distance}]
Define the averaging operator
\[
\mathcal{P}:\mathbb{R}^{m\times m}\to\mathbb{R}^{m\times m},
\qquad
\mathcal{P}(X) = \frac{1}{m}\sum_{k=1}^{m}\Pi^{k-1}X(\Pi^{k-1})^{ \top}.
\]
For any \(X\), each term \(\Pi^{k-1}X(\Pi^{k-1})^{ \top}\) cyclically shifts the rows and columns of \(X\).
Hence \(\mathcal{P}(X)\) is invariant under simultaneous row/column cyclic shift:
\[
\Pi \mathcal{P}(X) \Pi^{ \top}
=\frac{1}{m}\sum_{k=1}^{m}\Pi^{k}X(\Pi^{k})^{ \top}
=\mathcal{P}(X),
\]
so \(\mathcal{P}(X)\) is circulant and \(\mathrm{range}(\mathcal{P})\subseteq\mathcal{C}\).
For any \(X,Y\),
\[
\left\langle \Pi^{k-1}X(\Pi^{k-1})^{ \top}, \Pi^{k-1}Y(\Pi^{k-1})^{ \top}\right\rangle_F
=\langle X,Y\rangle_F,
\]
since conjugation by an orthogonal matrix preserves the Frobenius inner product.
Averaging over \(k\) yields
\[
\langle \mathcal{P}(X), Y\rangle_F
=\frac{1}{m}\sum_{k=1}^{m}\left\langle \Pi^{k-1}X(\Pi^{k-1})^{ \top}, Y\right\rangle_F
=\frac{1}{m}\sum_{k=1}^{m}\left\langle X, \Pi^{k-1}Y(\Pi^{k-1})^{ \top}\right\rangle_F
=\langle X, \mathcal{P}(Y)\rangle_F,
\]
so \(\mathcal{P}\) is self-adjoint. Moreover,
\[
\mathcal{P}(\mathcal{P}(X))
=\frac{1}{m}\sum_{k=1}^{m}\Pi^{k-1}\left(\frac{1}{m}\sum_{\ell=1}^{m}\Pi^{\ell-1}X(\Pi^{\ell-1})^{ \top}\right)(\Pi^{k-1})^{ \top}
=\frac{1}{m^2}\sum_{k,\ell=1}^{m}\Pi^{k+\ell-2}X(\Pi^{k+\ell-2})^{ \top}.
\]
Because the map \((k,\ell)\mapsto r\), where \(r\in\{1,\ldots,m\}\) is determined by \(r-1\equiv k+\ell-2\pmod m\), hits each \(r\) exactly \(m\) times,
\[
\mathcal{P}(\mathcal{P}(X))
=\frac{1}{m}\sum_{r=1}^{m}\Pi^{r-1}X(\Pi^{r-1})^{ \top}
=\mathcal{P}(X),
\]
so \(\mathcal{P}\) is idempotent. Combined with self-adjointness, \(\mathcal{P}\) is the orthogonal projector
(with respect to \(\langle\cdot,\cdot\rangle_F\)) onto its range.
If \(Y\in\mathcal{C}\) is circulant, then \(\Pi Y\Pi^{ \top}=Y\), hence
\(\Pi^{k-1}Y(\Pi^{k-1})^{ \top}=Y\) for all \(k=1,\ldots,m\), so \(\mathcal{P}(Y)=Y\).
Thus \(\mathcal{C}\subseteq\mathrm{range}(\mathcal{P})\), and 
\(\mathrm{range}(\mathcal{P})=\mathcal{C}\).
Since \(\mathcal{C}\) is a closed linear subspace of the Hilbert space
\((\mathbb{R}^{m\times m},\langle\cdot,\cdot\rangle_F)\),
the orthogonal projection onto \(\mathcal{C}\) is the unique minimizer of the Frobenius distance, hence
\[
C^*=\mathcal{P}(G)
=\frac{1}{m}\sum_{k=1}^{m}\Pi^{k-1}G(\Pi^{k-1})^{ \top}
\quad\text{and}\quad
\|G-C^*\|_F=\min_{C\in\mathcal{C}}\|G-C\|_F.
\]
This completes the proof.
\end{proof}

\section{Additional Experiments}\label{app:more_exp}

All experimental settings are identical to those in the main text; see~\cref{sec:exp}. Here we report additional results with GPT-OSS-120B~\citep{openai2025gptoss120bgptoss20bmodel}, Llama-3.2-3B-Instruct~\citep{dubey2024llama}, GPT2-XL~\citep{radford2019language}, and Gemma-2-9B-IT~\citep{team2024gemma}.

\subsection{Output projections}

\paragraph{Simplex ETF pattern.} First, we report kitchen utensil results for the additional models \{\texttt{fork}, \texttt{knife}, \texttt{plate}, \texttt{cup}, \texttt{glass}\}. The theory motivates comparing the output projection Gram matrices with the simplex ETF reference (\cref{fig:app-word-kitchen}). 

\begin{figure}[htbp]
  \centering
  \captionsetup[subfigure]{justification=centering}
  \begin{subfigure}[t]{0.24\textwidth}
    \centering
    \includegraphics[width=\linewidth]{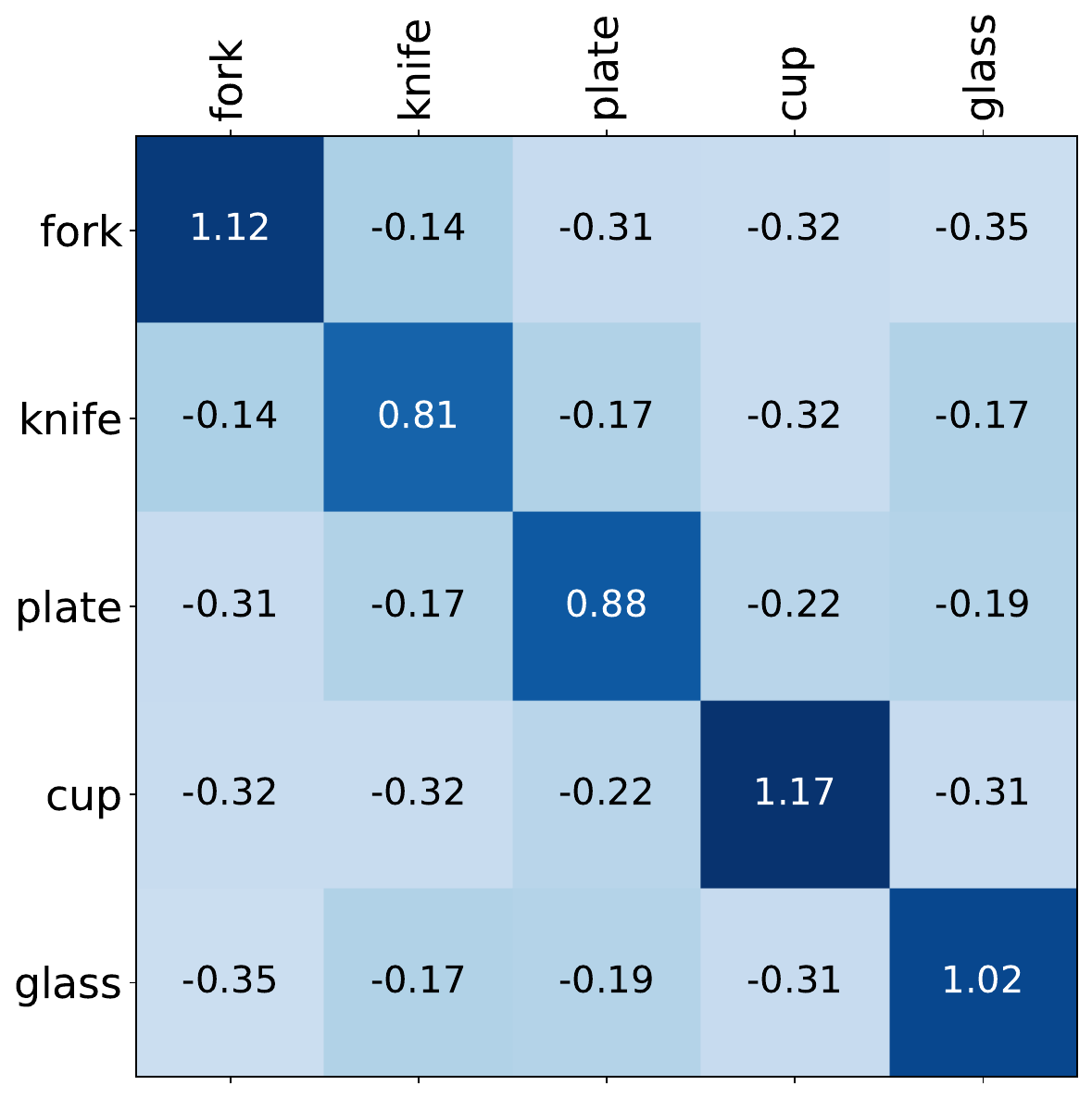}
    \caption{GPT-OSS-120B (\(\delta_{\mathrm{ETF}}=0.180\))}
  \end{subfigure}\hfill
  \begin{subfigure}[t]{0.24\textwidth}
    \centering
    \includegraphics[width=\linewidth]{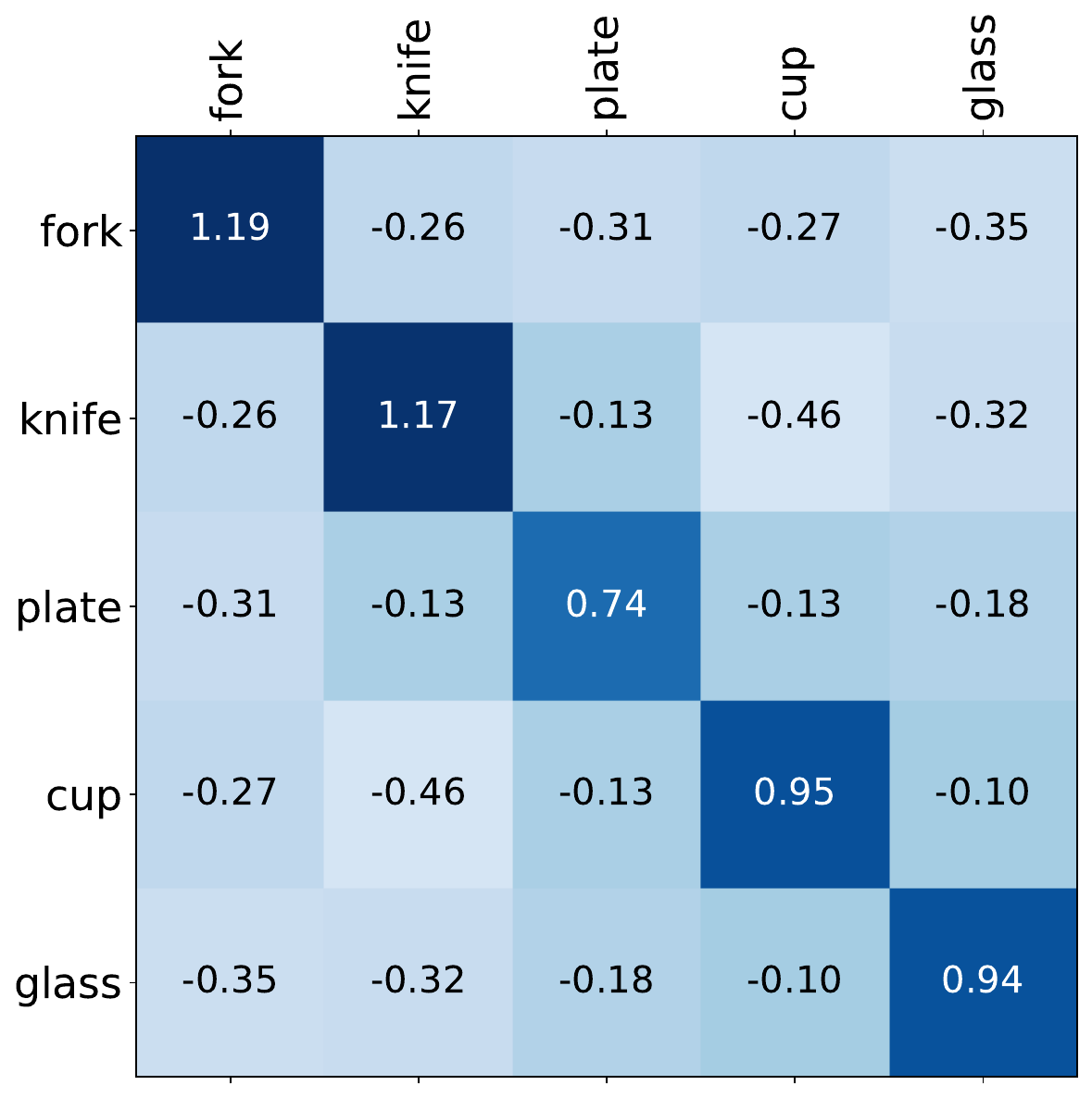}
    \caption{Llama-3.2-3B-Instruct (\(\delta_{\mathrm{ETF}}=0.242\))}
  \end{subfigure}\hfill
  \begin{subfigure}[t]{0.24\textwidth}
    \centering
    \includegraphics[width=\linewidth]{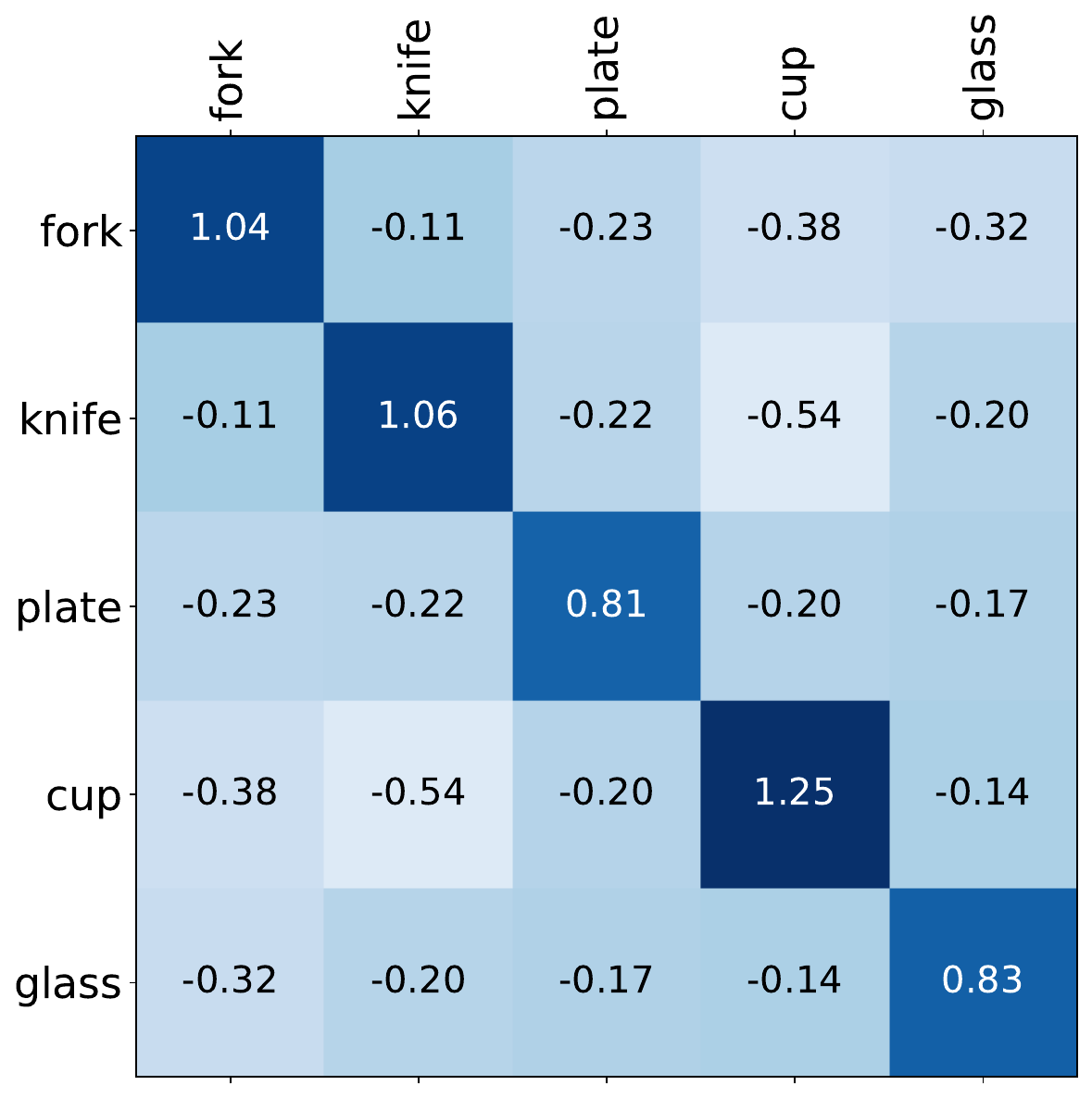}
    \caption{GPT2-XL (\(\delta_{\mathrm{ETF}}=0.256\))}
  \end{subfigure}\hfill
  \begin{subfigure}[t]{0.24\textwidth}
    \centering
    \includegraphics[width=\linewidth]{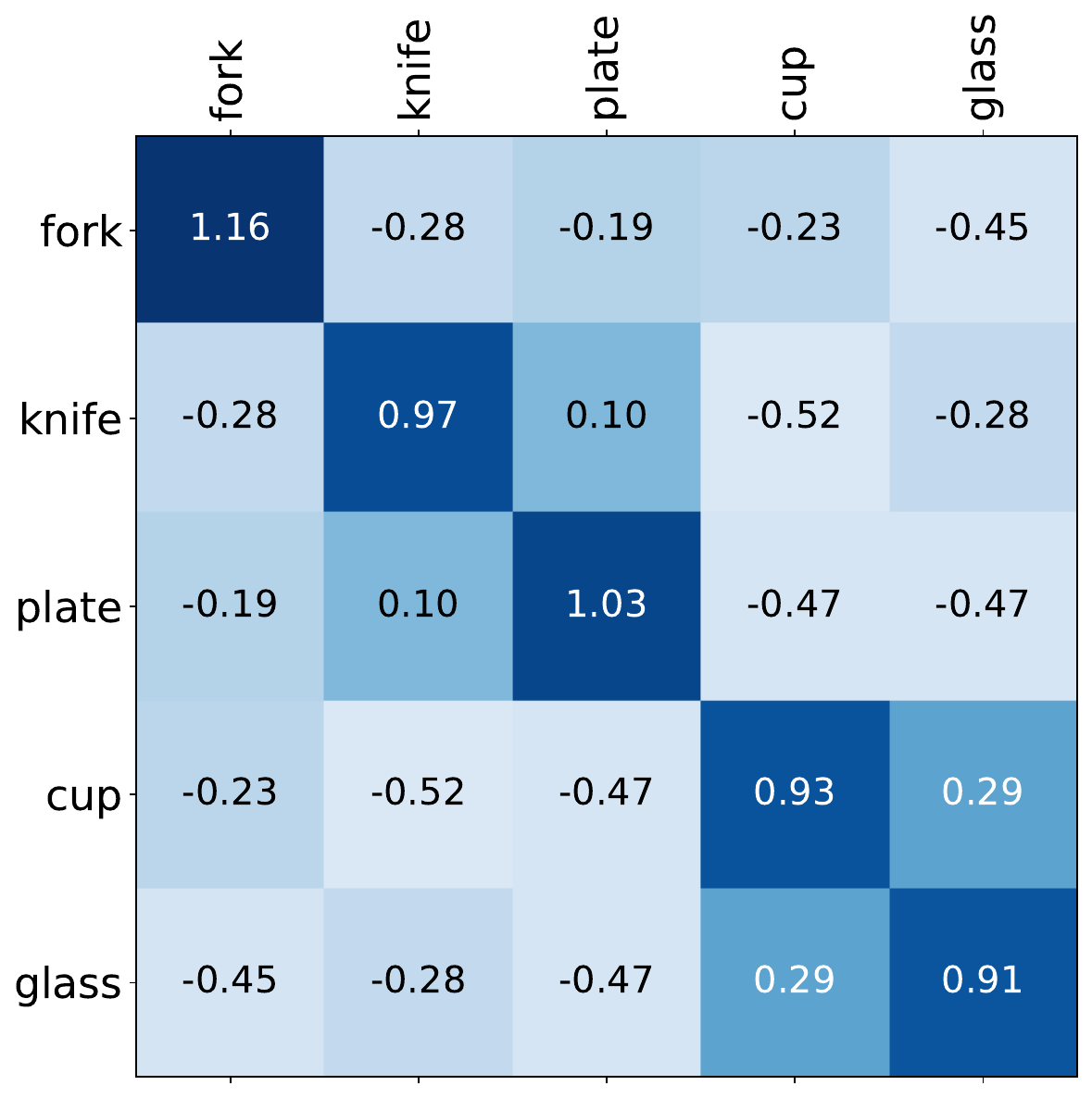}
    \caption{Gemma-2-9B-IT (\(\delta_{\mathrm{ETF}}=0.414\))}
  \end{subfigure}
  \caption{Output projection Grams for kitchen utensils.}
  \label{fig:app-word-kitchen}
\end{figure}

Next, we consider an additional word group, modes of transportation \{\texttt{car}, \texttt{bus}, \texttt{bicycle}, \texttt{train}\}, for all models. \Cref{fig:word-transport,fig:app-word-transport} show the output projection Gram matrices for these words, which we compare with the simplex ETF reference.

\begin{figure}[htbp]
  \centering
  \captionsetup[subfigure]{justification=centering}
  \begin{subfigure}[t]{0.32\textwidth}
    \centering
    \includegraphics[width=\linewidth]{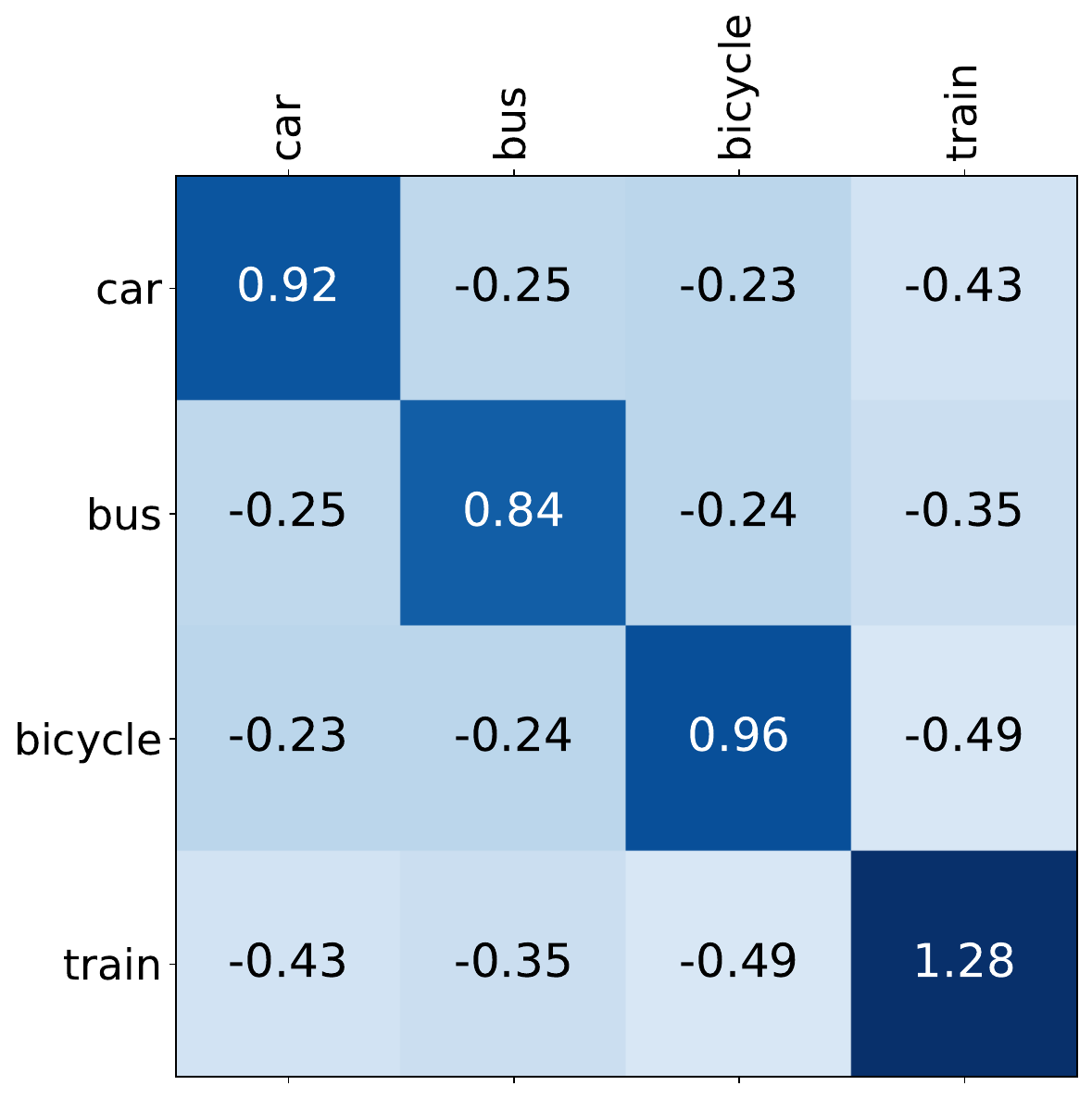}
    \caption{GPT-OSS-20B (\(\delta_{\mathrm{ETF}}=0.204\))}
  \end{subfigure}\hfill
  \begin{subfigure}[t]{0.32\textwidth}
    \centering
    \includegraphics[width=\linewidth]{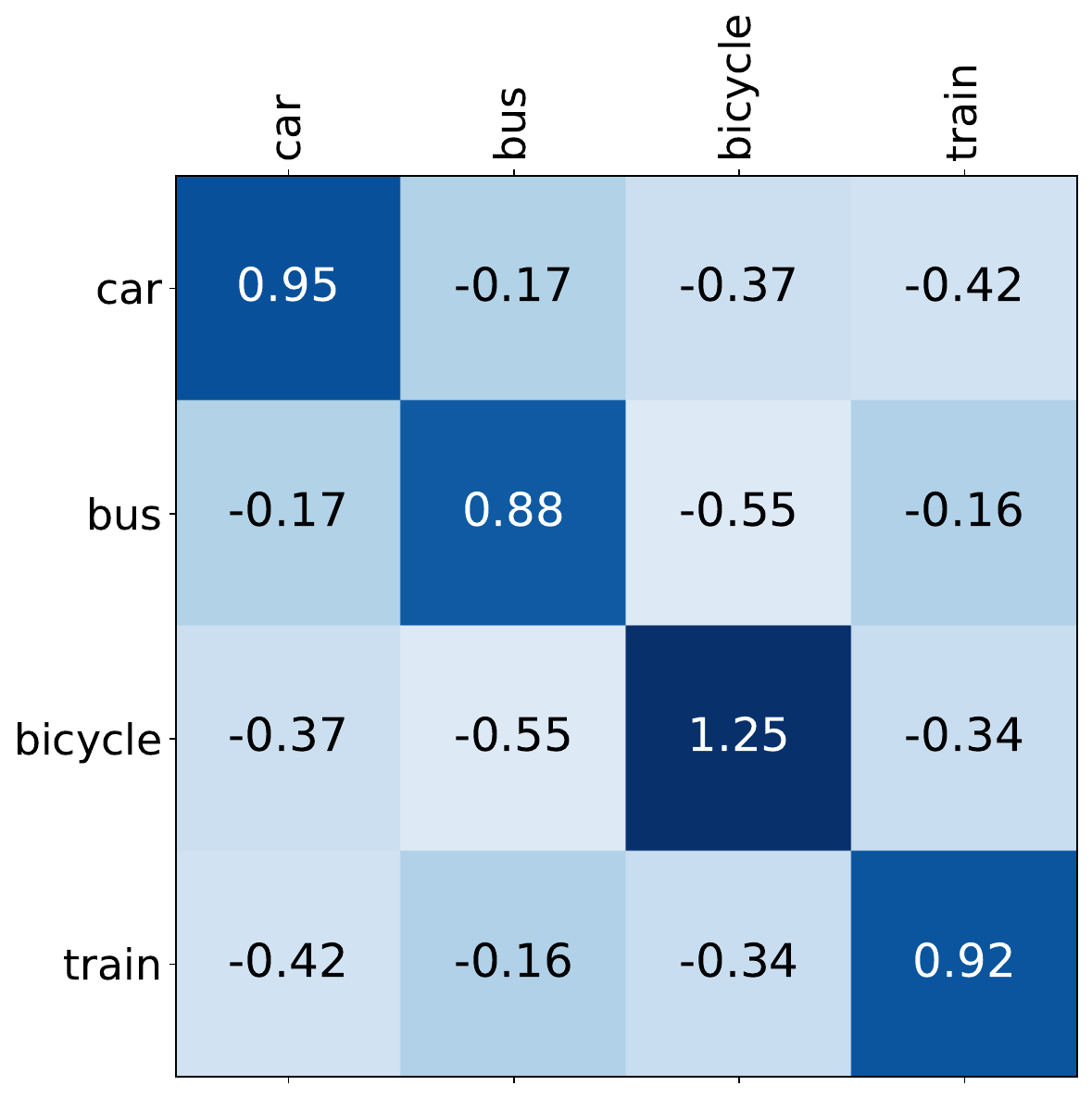}
    \caption{Mistral-7B-Instruct-v0.3 (\(\delta_{\mathrm{ETF}}=0.233\))}
  \end{subfigure}\hfill
  \begin{subfigure}[t]{0.32\textwidth}
    \centering
    \includegraphics[width=\linewidth]{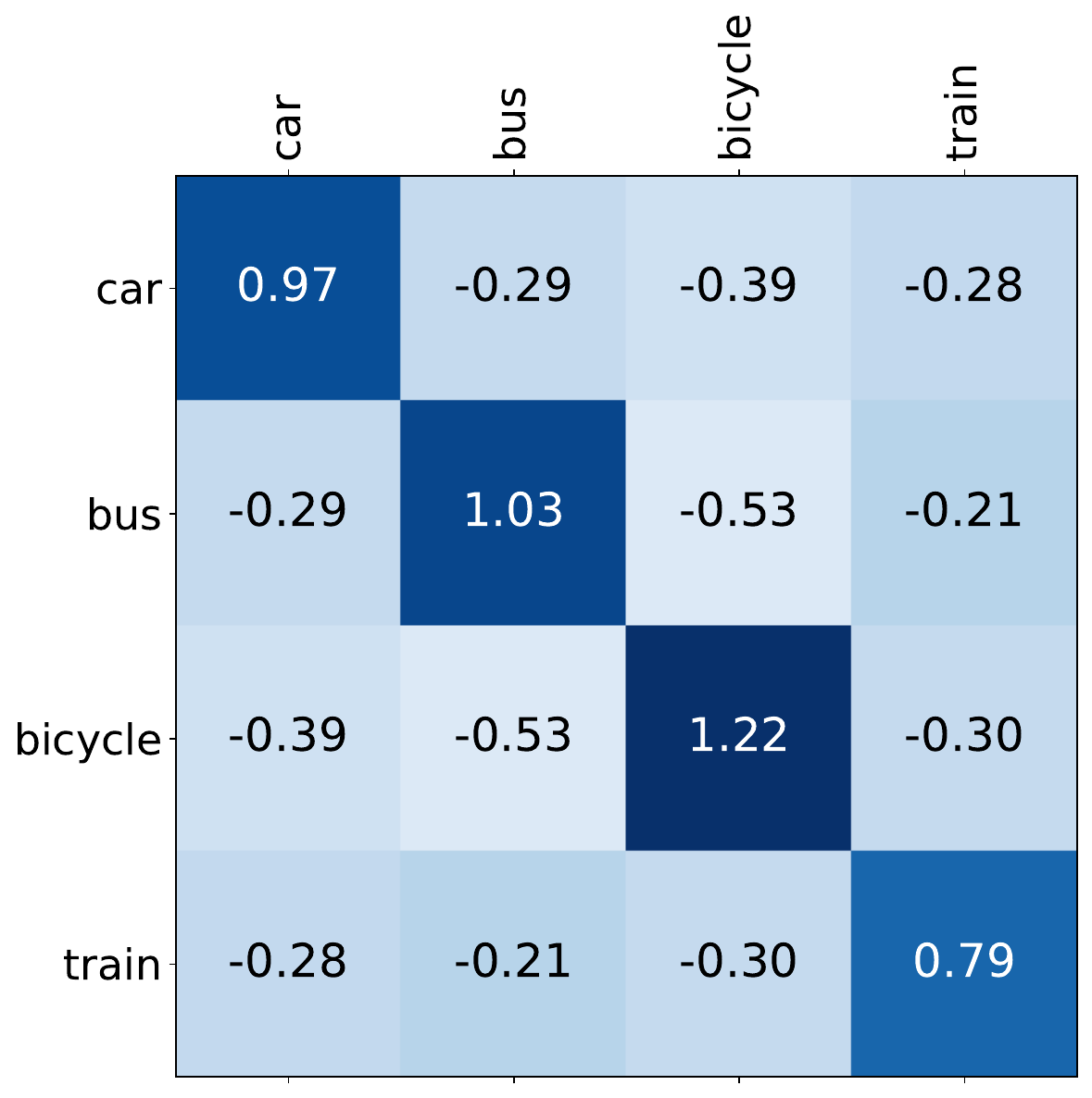}
    \caption{RWKV7-7.2B (\(\delta_{\mathrm{ETF}}=0.199\))}
  \end{subfigure}
  \caption{Output projection Grams for modes of transportation.}
  \label{fig:word-transport}
\end{figure}

\begin{figure}[htbp]
  \centering
  \captionsetup[subfigure]{justification=centering}
  \begin{subfigure}[t]{0.24\textwidth}
    \centering
    \includegraphics[width=\linewidth]{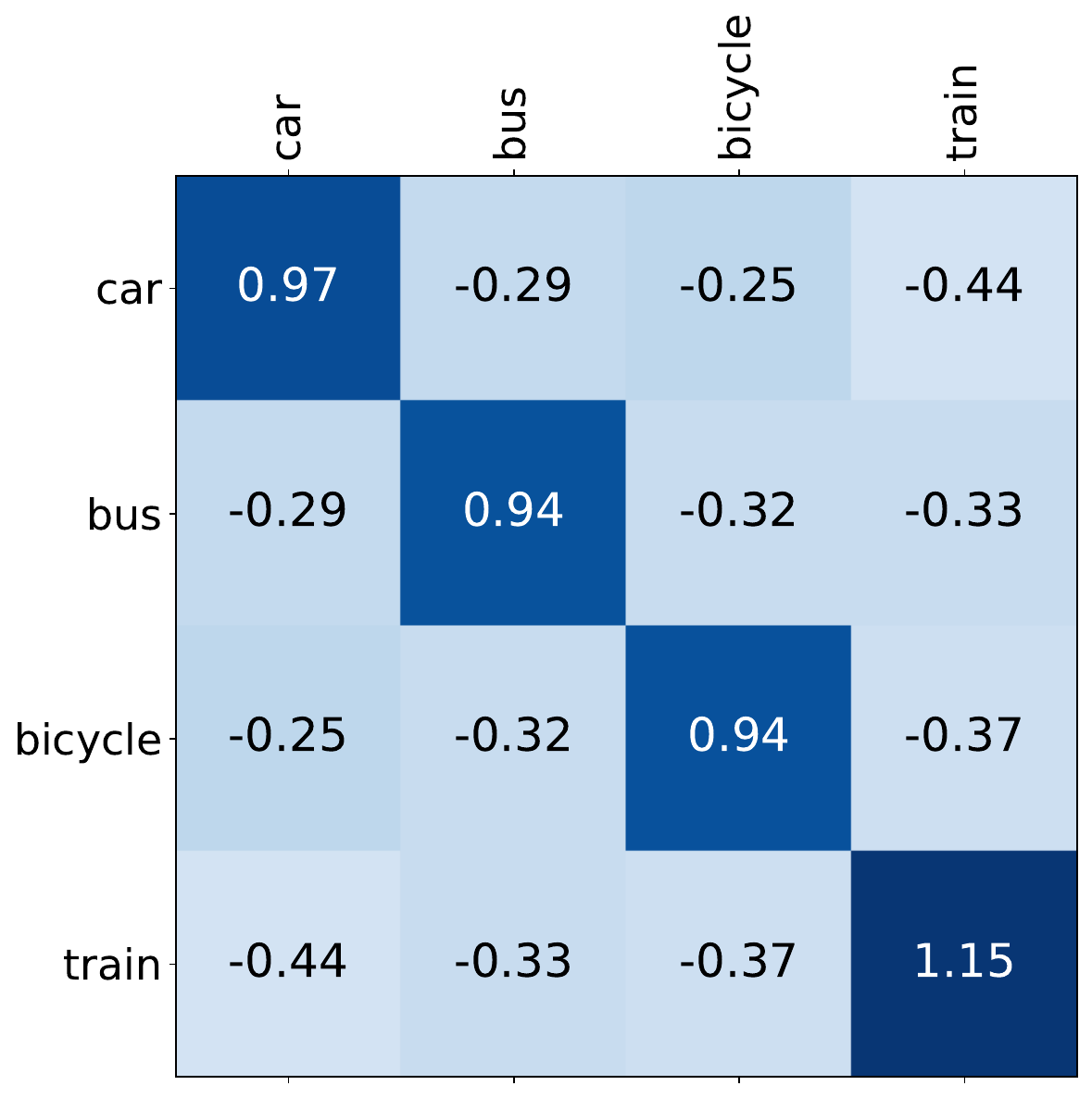}
    \caption{GPT-OSS-120B (\(\delta_{\mathrm{ETF}}=0.118\))}
  \end{subfigure}\hfill
  \begin{subfigure}[t]{0.24\textwidth}
    \centering
    \includegraphics[width=\linewidth]{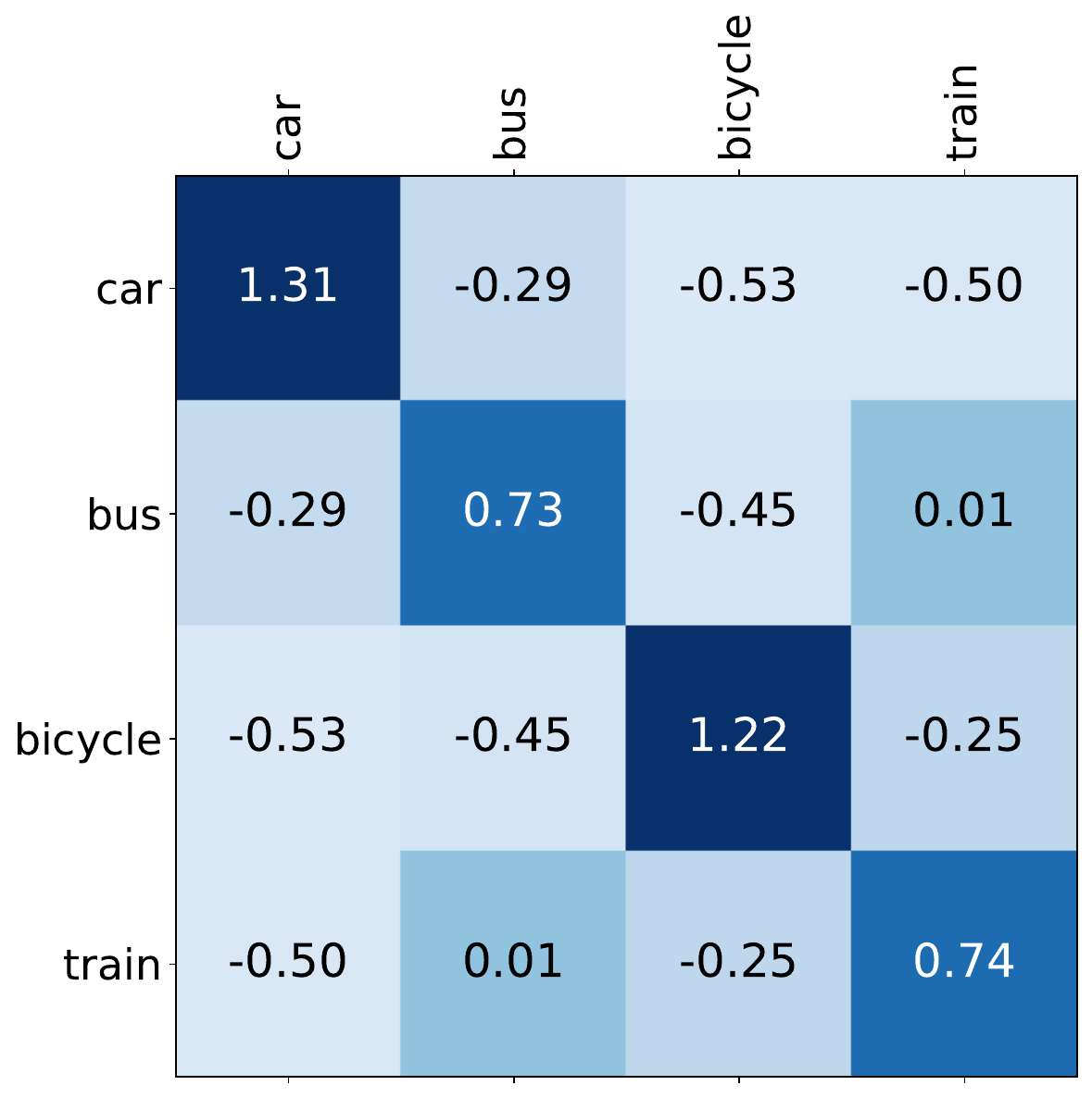}
    \caption{Llama-3.2-3B-Instruct (\(\delta_{\mathrm{ETF}}=0.340\))}
  \end{subfigure}\hfill
  \begin{subfigure}[t]{0.24\textwidth}
    \centering
    \includegraphics[width=\linewidth]{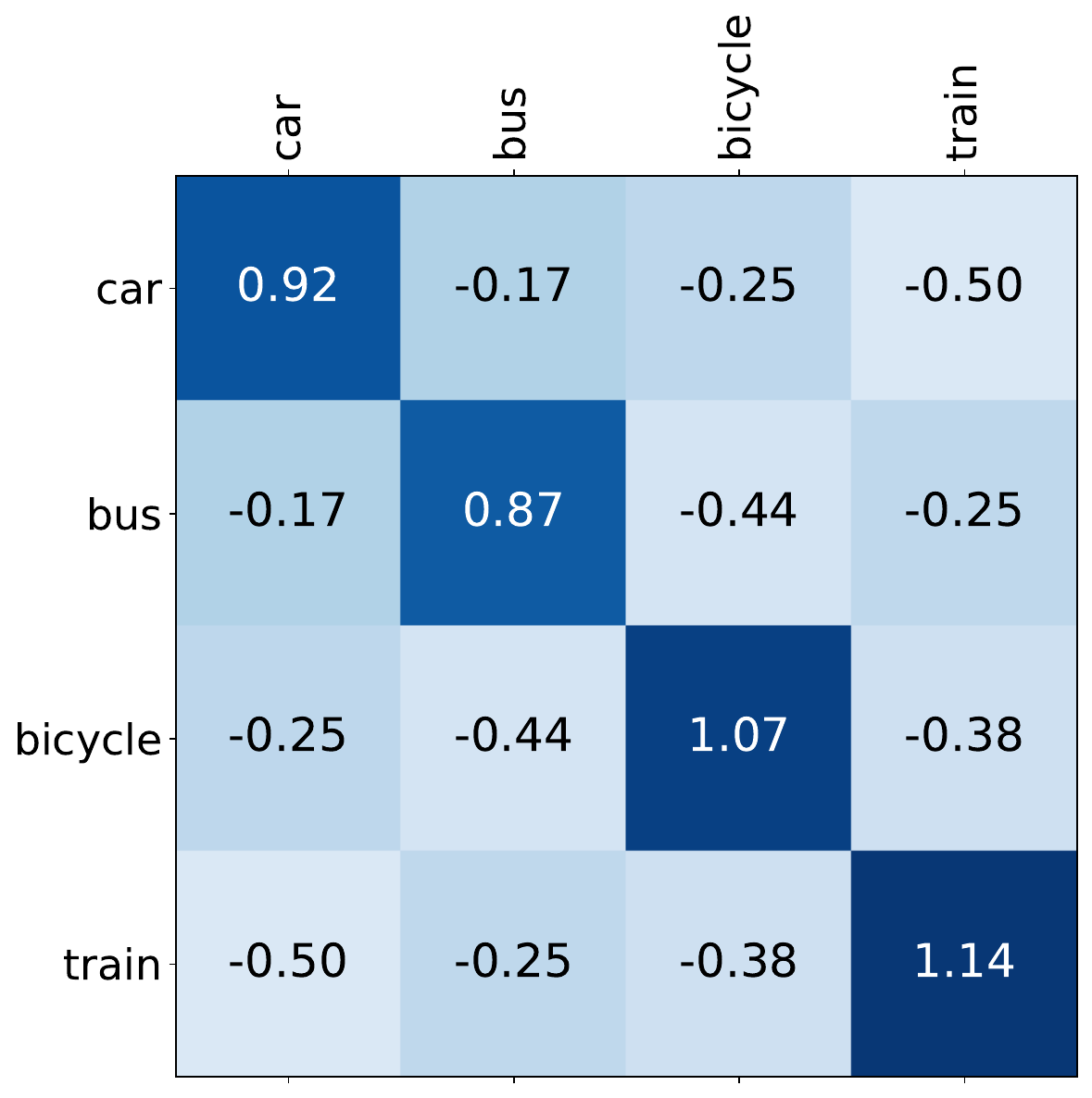}
    \caption{GPT2-XL (\(\delta_{\mathrm{ETF}}=0.196\))}
  \end{subfigure}\hfill
  \begin{subfigure}[t]{0.24\textwidth}
    \centering
    \includegraphics[width=\linewidth]{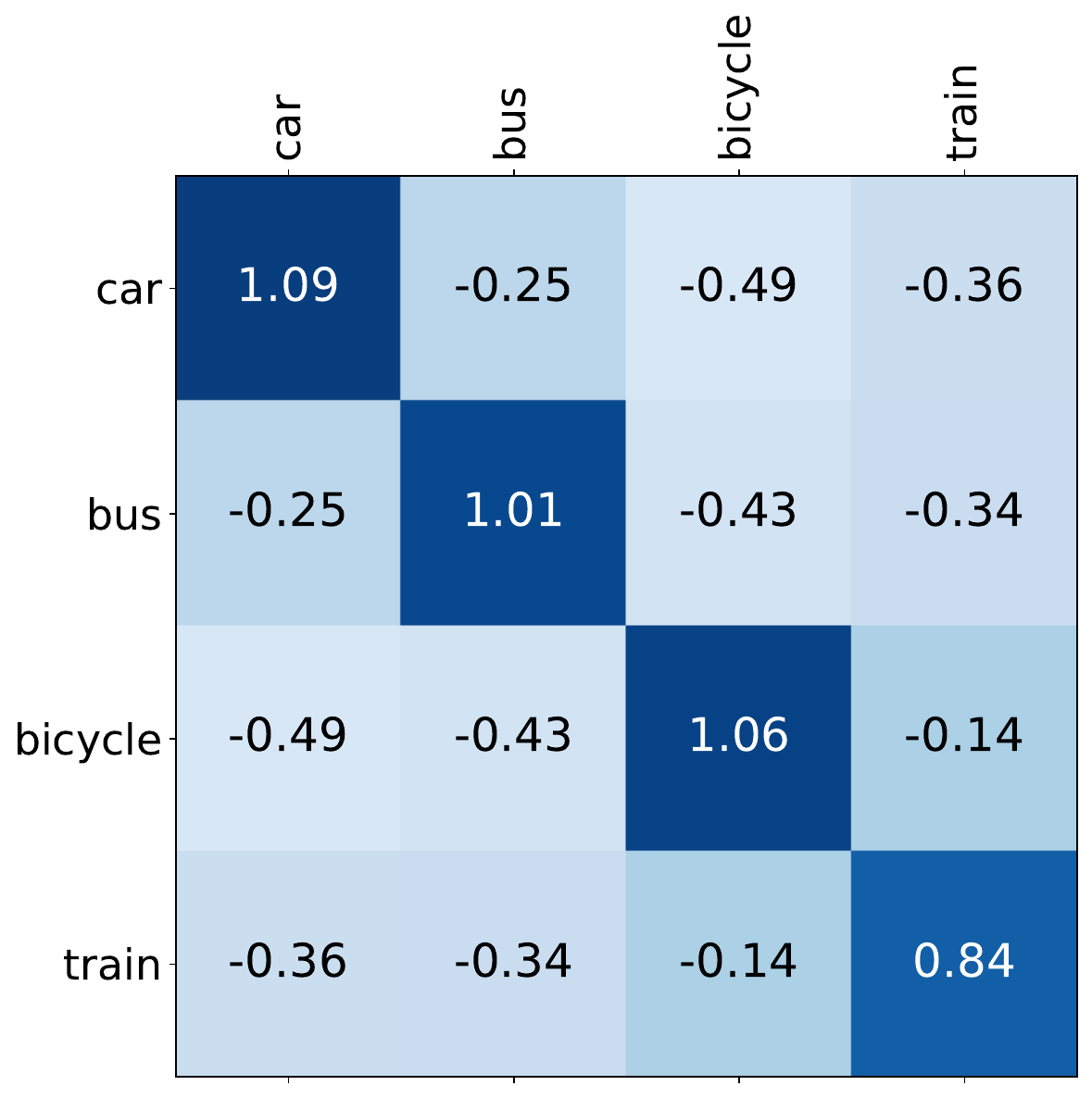}
    \caption{Gemma-2-9B-IT (\(\delta_{\mathrm{ETF}}=0.189\))}
  \end{subfigure}
  \caption{Output projection Grams for modes of transportation.}
  \label{fig:app-word-transport}
\end{figure}

\paragraph{Circulant pattern.} For weekdays \{\texttt{Monday}, \texttt{Tuesday}, \texttt{Wednesday}, \texttt{Thursday}, \texttt{Friday}, \texttt{Saturday}, \texttt{Sunday}\}, the theory motivates testing whether the output projection Gram matrices are approximately circulant (\cref{fig:app-word-weekdays}).

\begin{figure}[htbp]
  \centering
  \captionsetup[subfigure]{justification=centering}
  \begin{subfigure}[t]{0.24\textwidth}
    \centering
    \includegraphics[width=\linewidth]{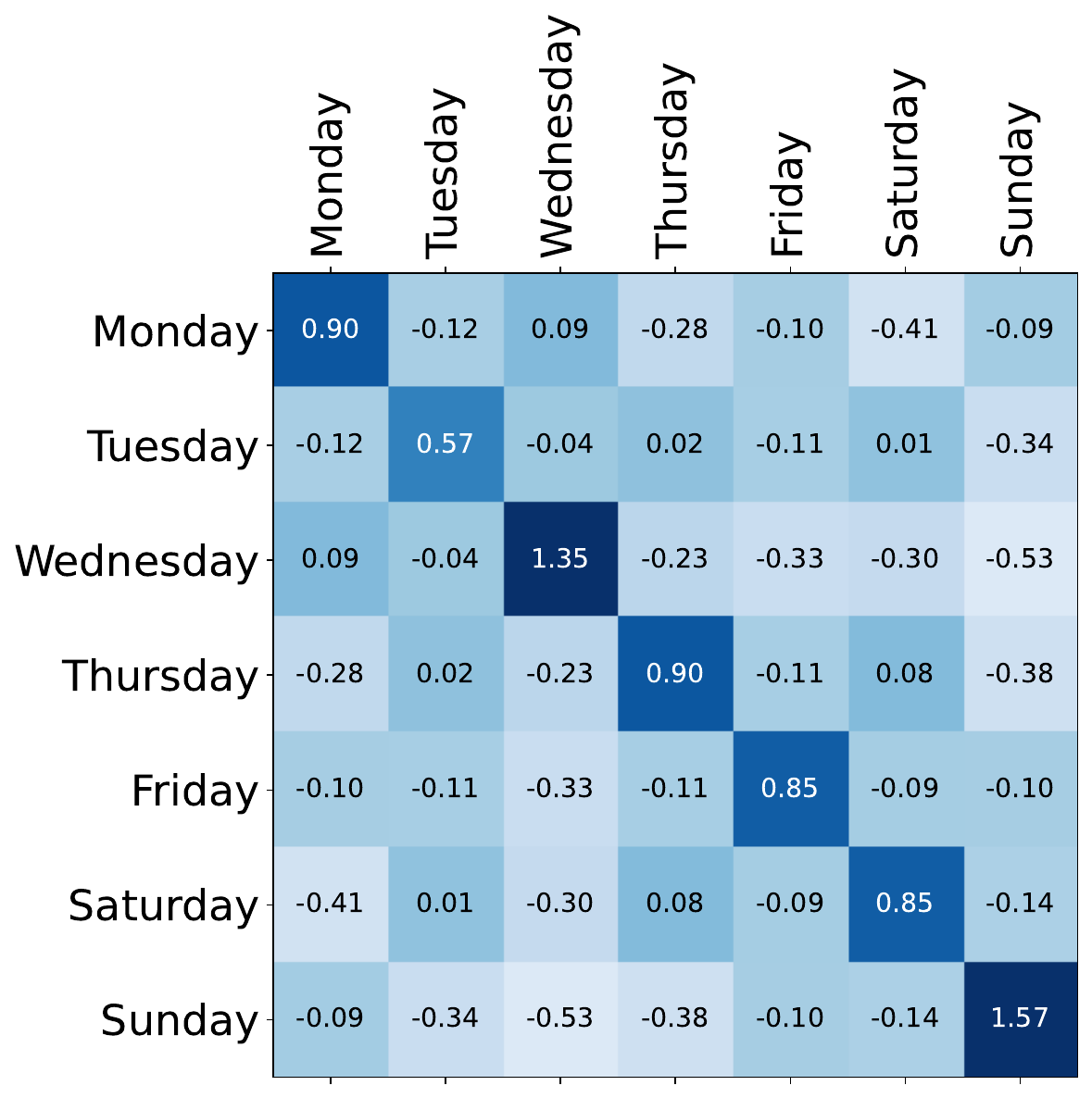}
    \caption{GPT-OSS-120B (\(\delta_{\mathrm{circ}}=0.413\))}
  \end{subfigure}\hfill
  \begin{subfigure}[t]{0.24\textwidth}
    \centering
    \includegraphics[width=\linewidth]{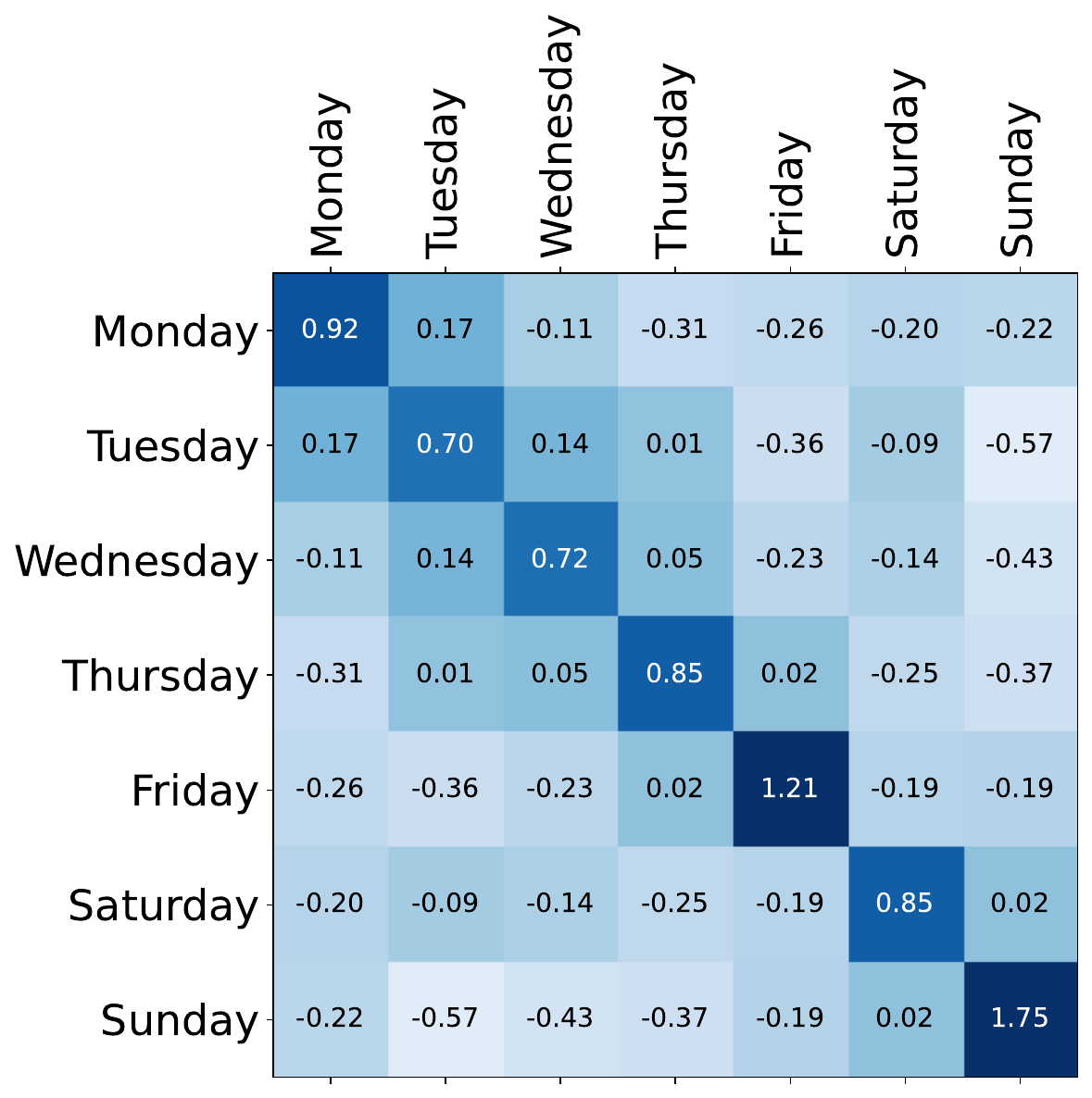}
    \caption{Llama-3.2-3B-Instruct (\(\delta_{\mathrm{circ}}=0.401\))}
  \end{subfigure}\hfill
  \begin{subfigure}[t]{0.24\textwidth}
    \centering
    \includegraphics[width=\linewidth]{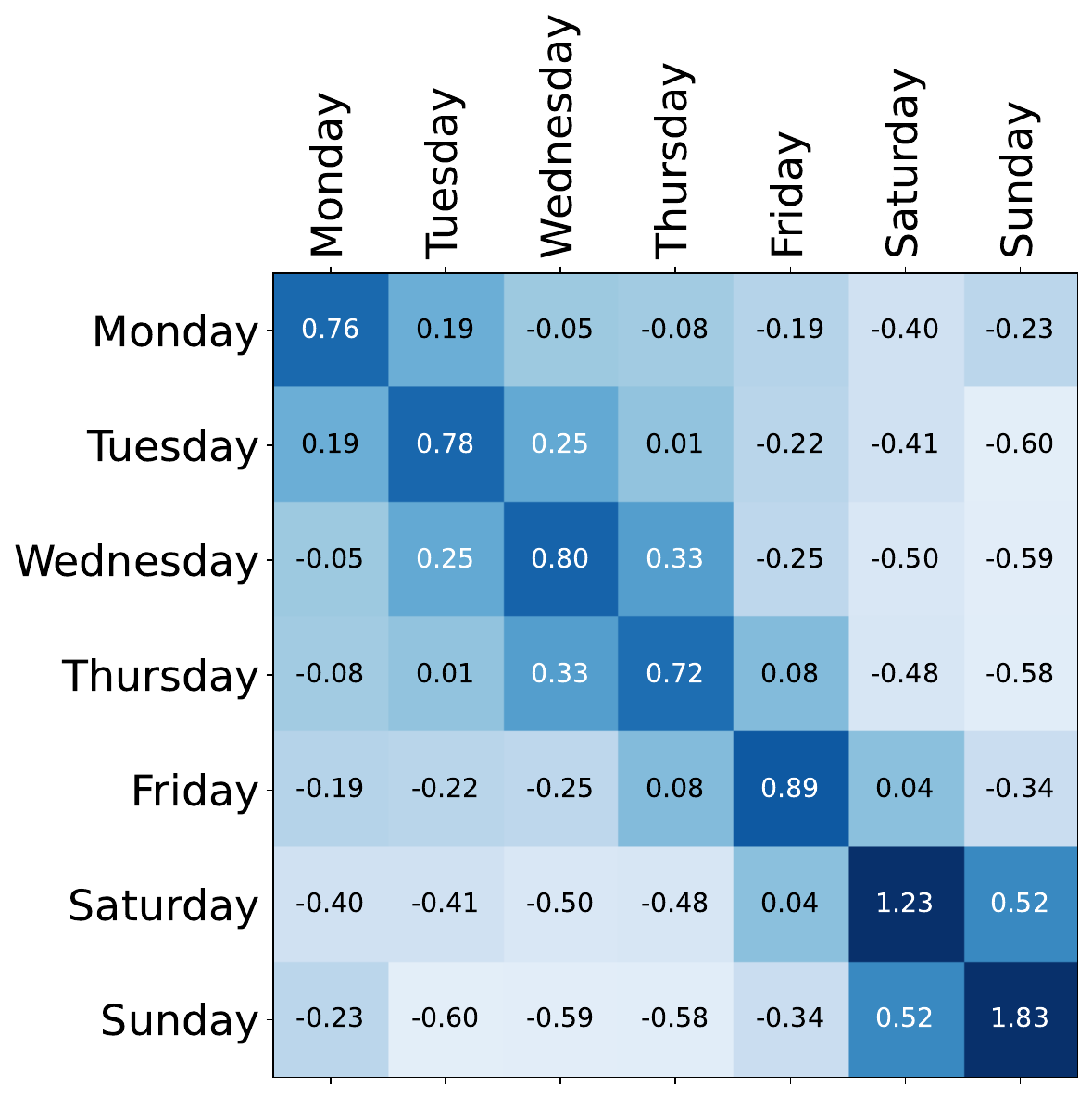}
    \caption{GPT2-XL (\(\delta_{\mathrm{circ}}=0.456\))}
  \end{subfigure}\hfill
  \begin{subfigure}[t]{0.24\textwidth}
    \centering
    \includegraphics[width=\linewidth]{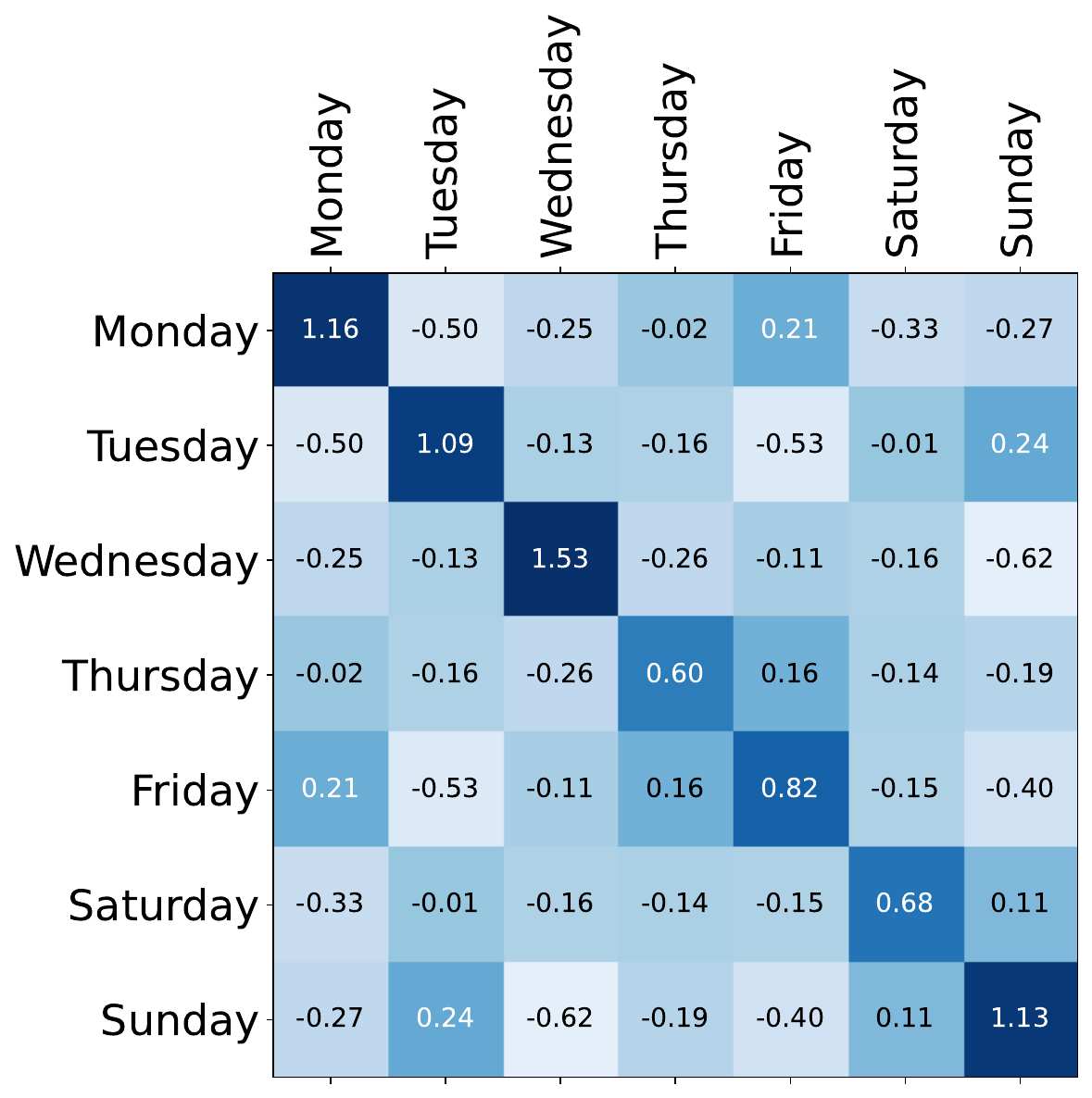}
    \caption{Gemma-2-9B-IT (\(\delta_{\mathrm{circ}}=0.505\))}
  \end{subfigure}
  \caption{Output projection Grams for weekdays.}
  \label{fig:app-word-weekdays}
\end{figure}

\subsection{Context Embeddings and Logits}

\paragraph{Permutation.}
``\texttt{Among three primary colors, <TOKEN1> is a mix of <TOKEN2> and \textvisiblespace}''. The six contexts permute (\texttt{<TOKEN1>}, \texttt{<TOKEN2>}) as (\texttt{orange}, \texttt{red}), (\texttt{orange}, \texttt{yellow}), (\texttt{green}, \texttt{yellow}), (\texttt{green}, \texttt{blue}), (\texttt{purple}, \texttt{red}), and (\texttt{purple}, \texttt{blue}). \Cref{fig:app-perm-probs} shows the predicted next-token probability matrices; except for GPT2-XL, the models assign most probability to the correct color. The larger instruction-tuned models are broadly consistent with the predicted context embedding Gram pattern in~\cref{fig:app-perm-gram}, while GPT2-XL deviates more substantially.

\begin{figure}[htbp]
  \centering
  \captionsetup[subfigure]{justification=centering}
  \begin{subfigure}[t]{0.24\textwidth}
    \centering
    \includegraphics[width=\linewidth]{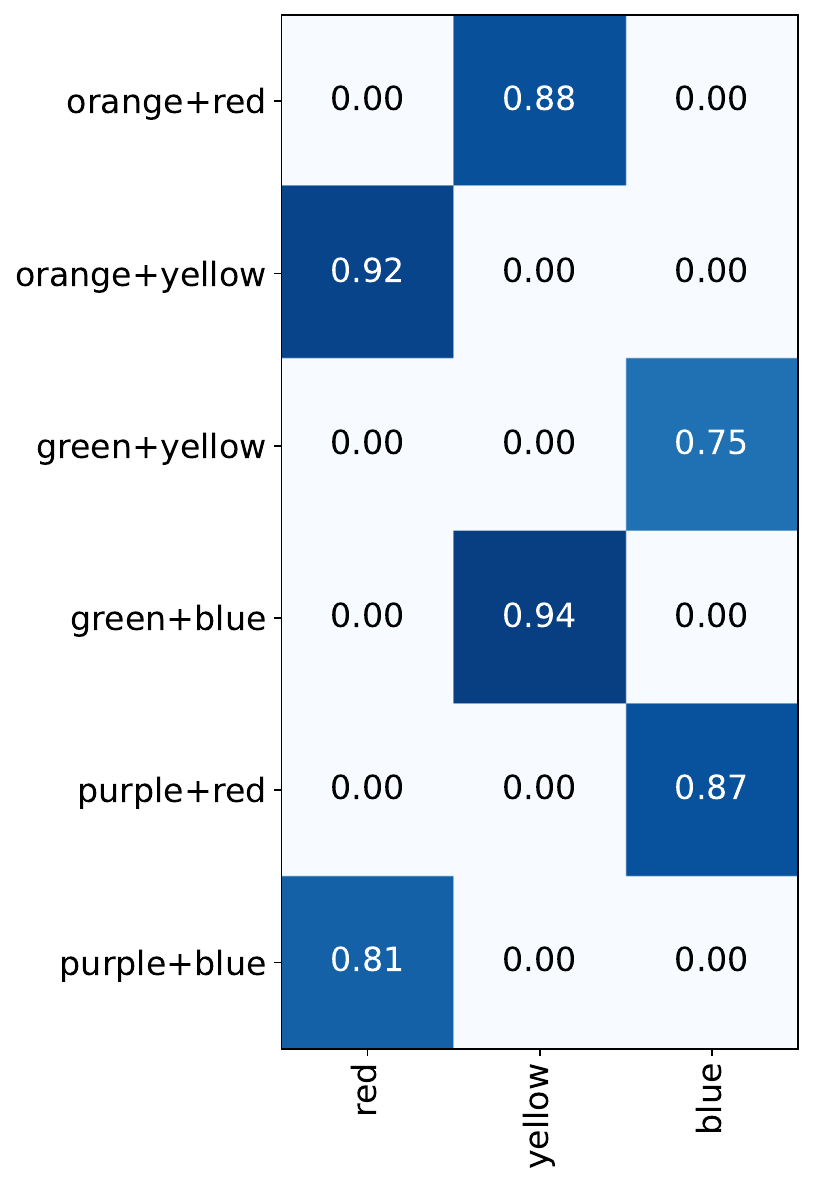}
    \caption{GPT-OSS-120B}
  \end{subfigure}\hfill
  \begin{subfigure}[t]{0.24\textwidth}
    \centering
    \includegraphics[width=\linewidth]{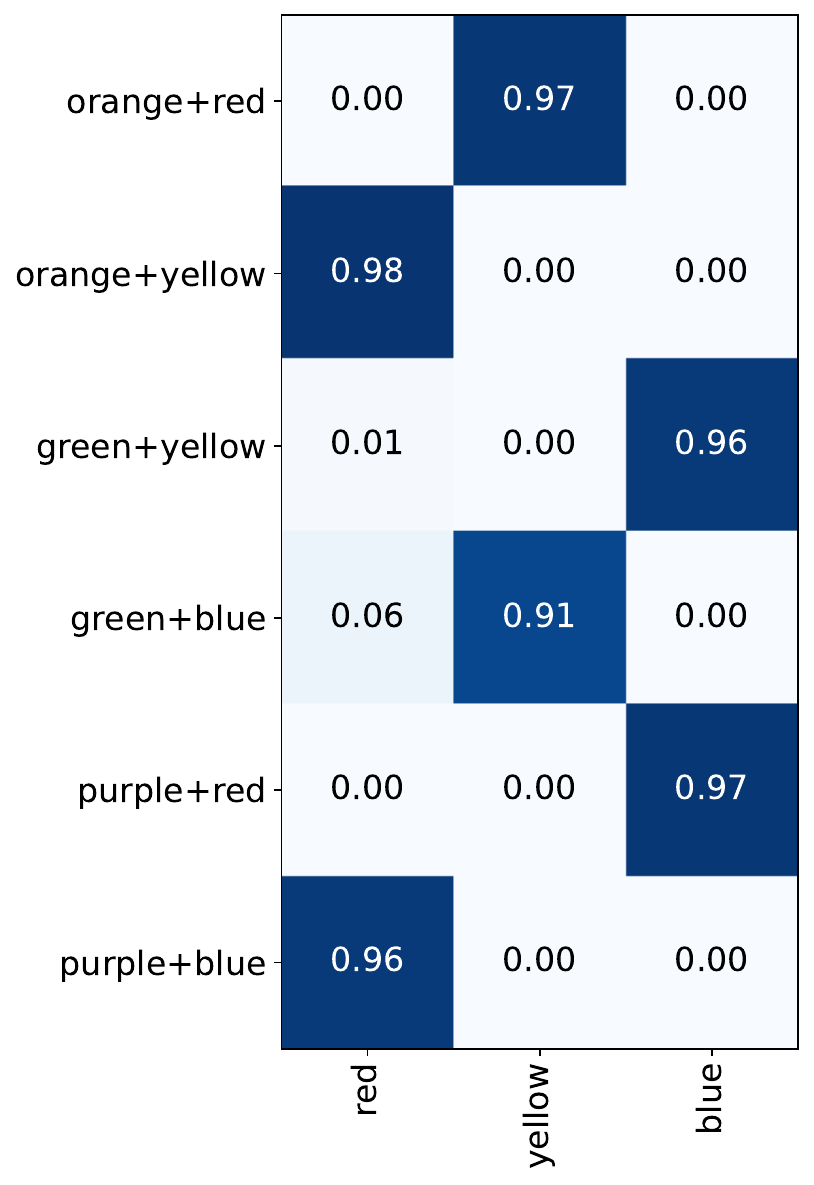}
    \caption{Llama-3.2-3B-Instruct}
  \end{subfigure}\hfill
  \begin{subfigure}[t]{0.24\textwidth}
    \centering
    \includegraphics[width=\linewidth]{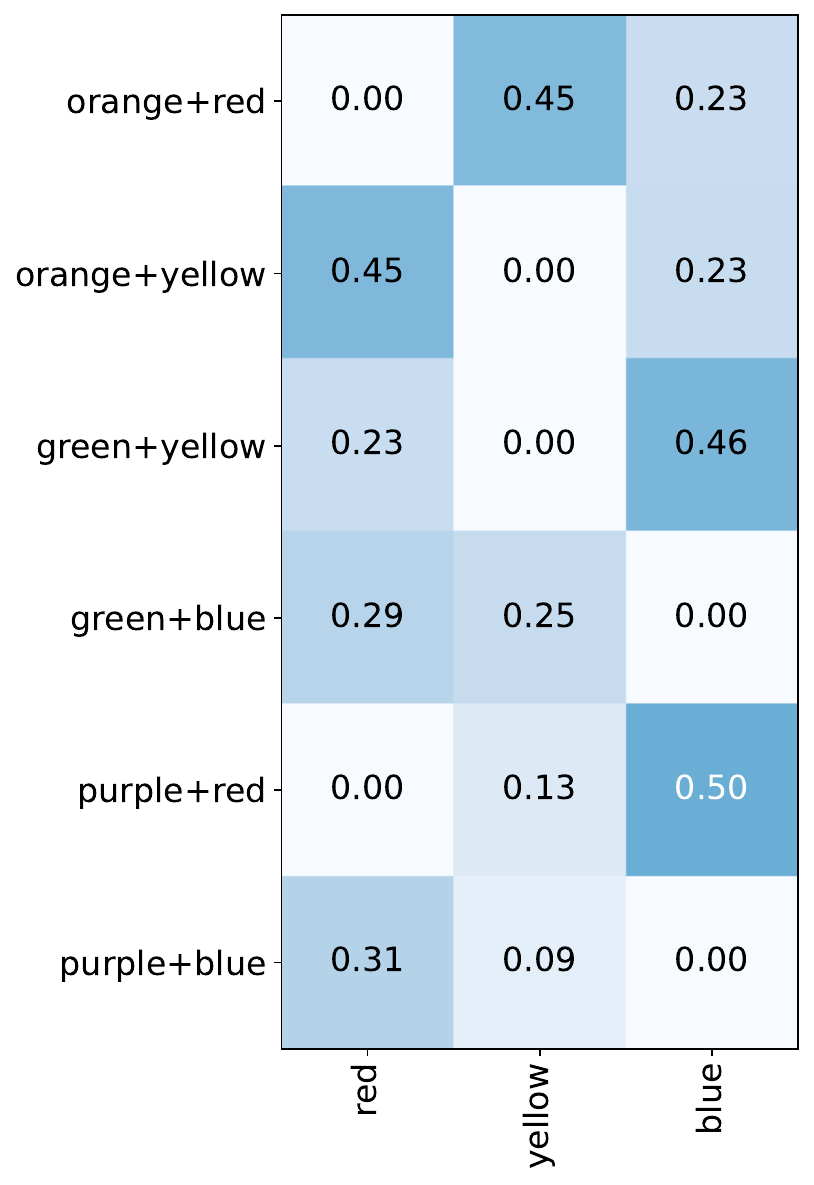}
    \caption{GPT2-XL}
  \end{subfigure}\hfill
  \begin{subfigure}[t]{0.24\textwidth}
    \centering
    \includegraphics[width=\linewidth]{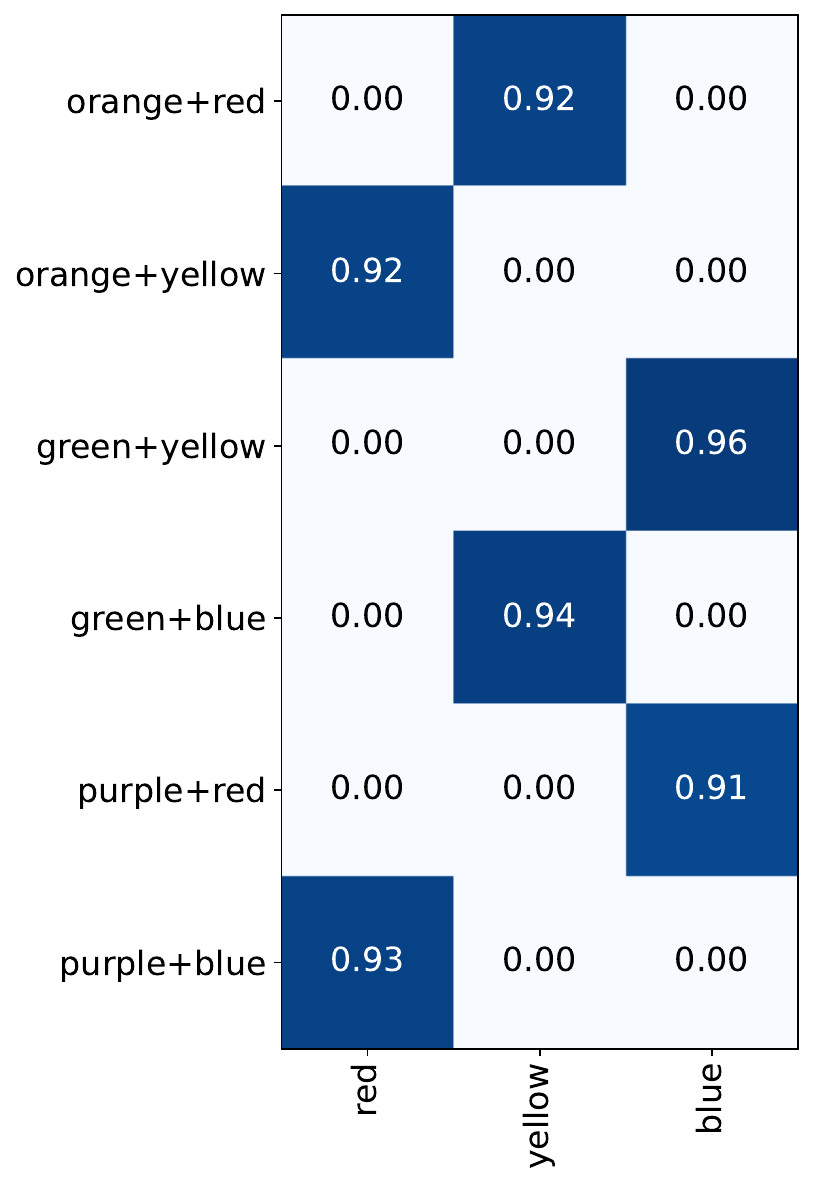}
    \caption{Gemma-2-9B-IT}
  \end{subfigure}
  \caption{Predicted next-token probability matrices for color-mix prompts.}
  \label{fig:app-perm-probs}
\end{figure}

\begin{figure}[htbp]
  \centering
  \captionsetup[subfigure]{justification=centering}
  \begin{subfigure}[t]{0.24\textwidth}
    \centering
    \includegraphics[width=\linewidth]{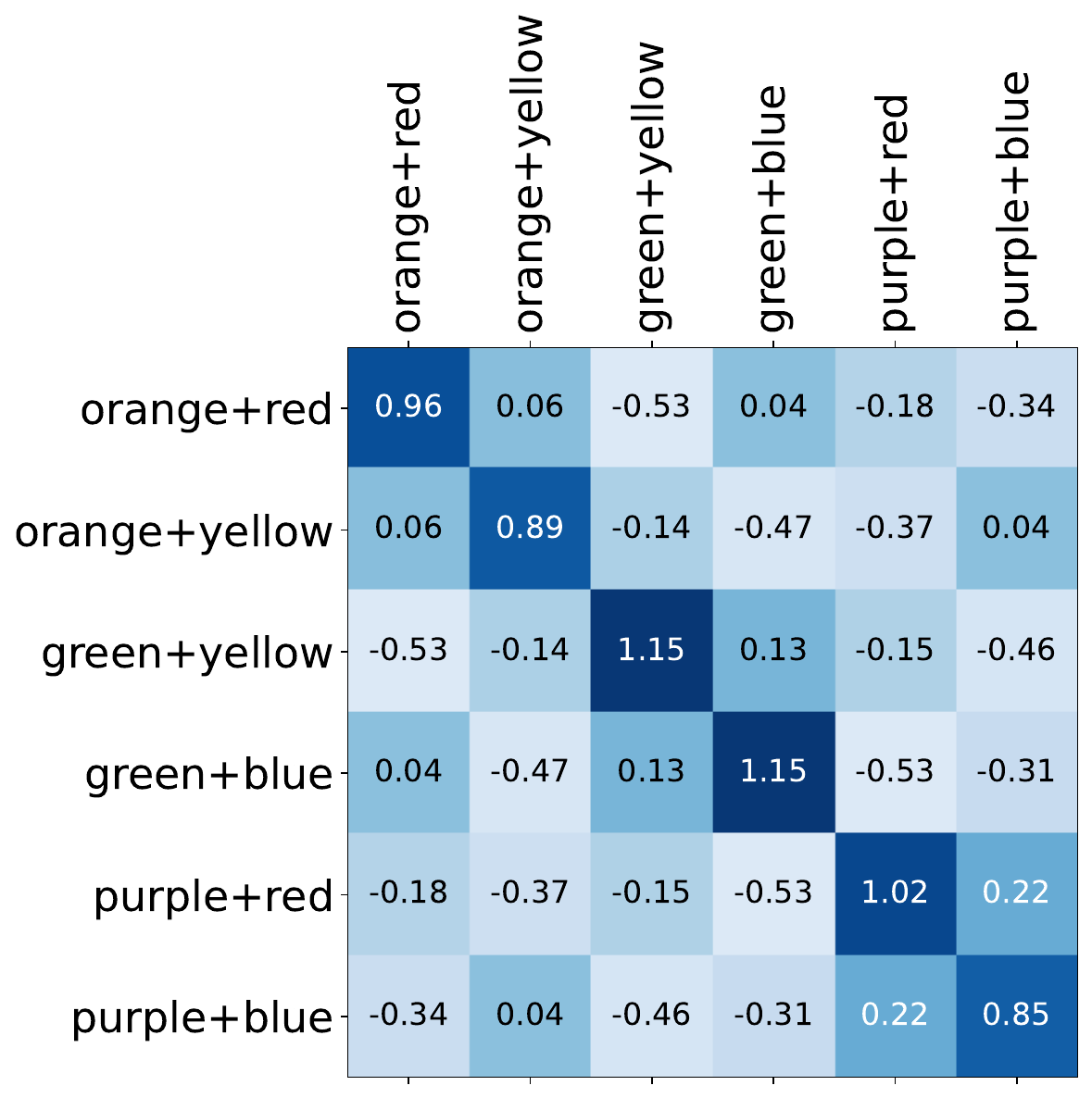}
    \caption{GPT-OSS-120B}
  \end{subfigure}\hfill
  \begin{subfigure}[t]{0.24\textwidth}
    \centering
    \includegraphics[width=\linewidth]{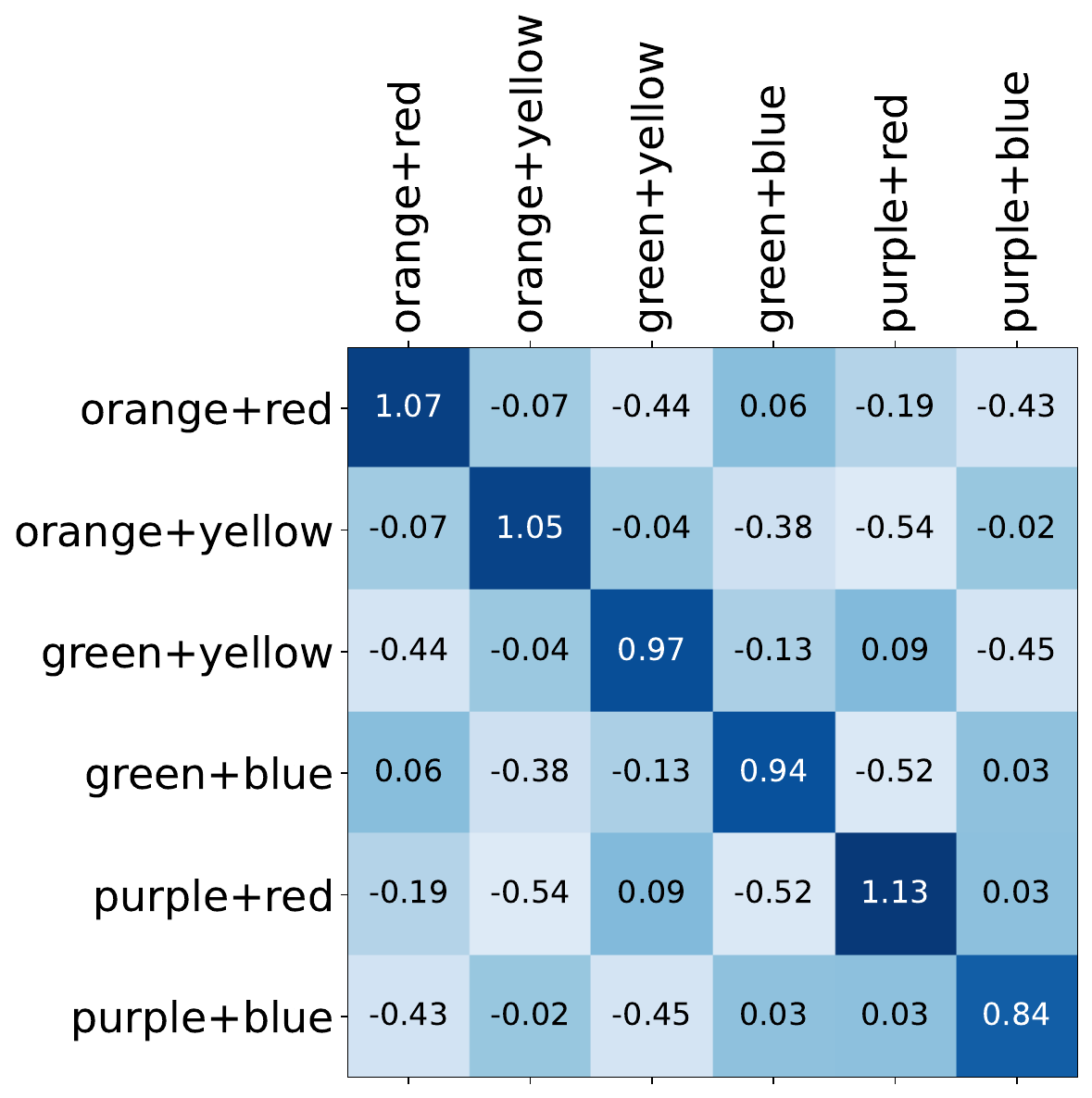}
    \caption{Llama-3.2-3B-Instruct}
  \end{subfigure}\hfill
  \begin{subfigure}[t]{0.24\textwidth}
    \centering
    \includegraphics[width=\linewidth]{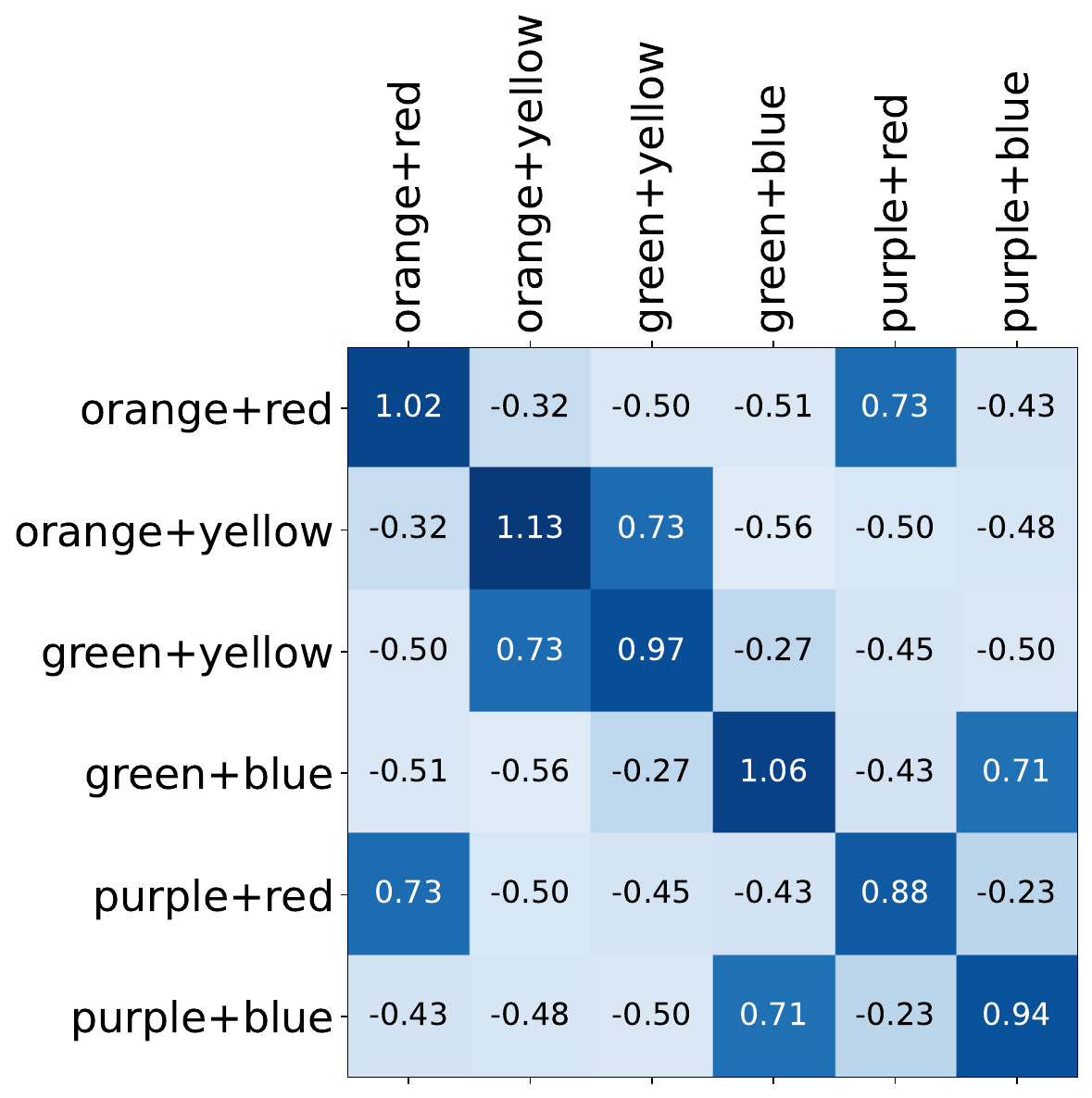}
    \caption{GPT2-XL}
  \end{subfigure}\hfill
  \begin{subfigure}[t]{0.24\textwidth}
    \centering
    \includegraphics[width=\linewidth]{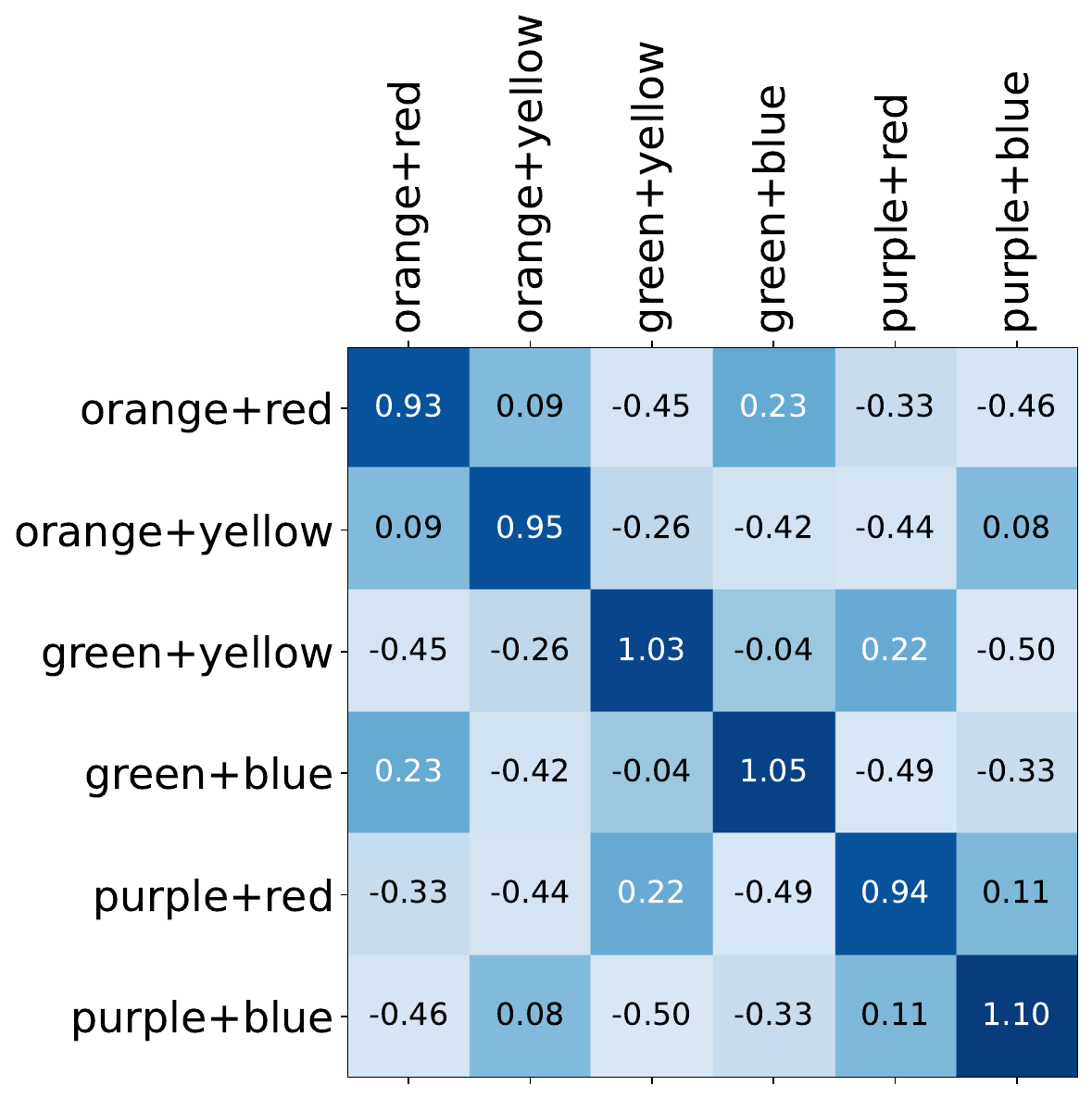}
    \caption{Gemma-2-9B-IT}
  \end{subfigure}
  \caption{Context embedding Grams for color-mix prompts.}
  \label{fig:app-perm-gram}
\end{figure}

\paragraph{Cyclic-shift prompts.}
``\texttt{Today is <TOKEN>. Tomorrow is \textvisiblespace}'' with \texttt{<TOKEN>} ranging over \{\texttt{Monday}, \texttt{Tuesday}, \texttt{Wednesday}, \texttt{Thursday}, \texttt{Friday}, \texttt{Saturday}, \texttt{Sunday}\}. \Cref{fig:app-ctx-cyclic-probs} shows the predicted next-token probability matrices, and~\cref{fig:app-ctx-cyclic-gram} shows the context embedding Grams. The cyclic-shift model motivates testing whether both displayed matrices are approximately circulant.

\begin{figure}[htbp]
  \centering
  \captionsetup[subfigure]{justification=centering}
  \begin{subfigure}[t]{0.24\textwidth}
    \centering
    \includegraphics[width=\linewidth]{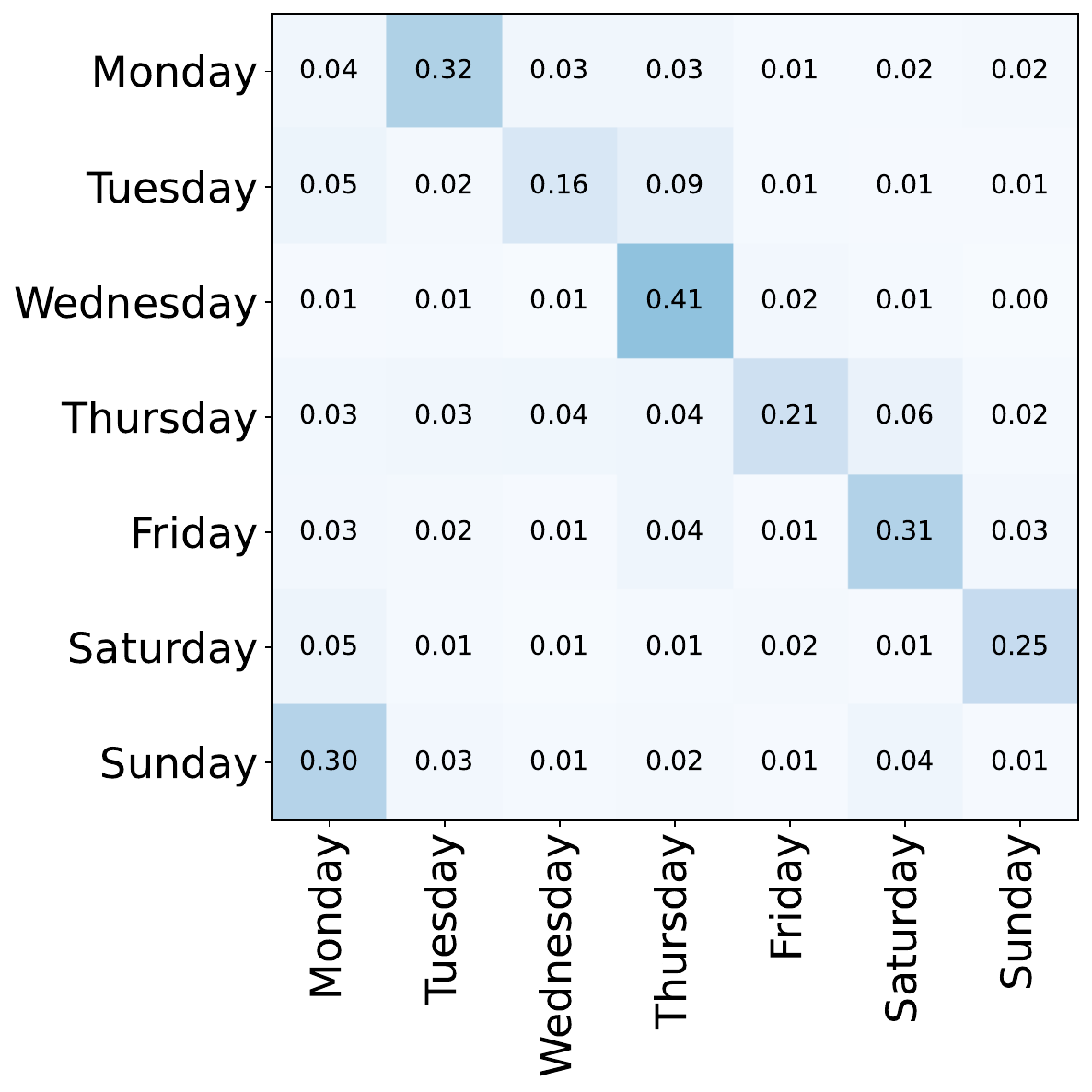}
    \caption{GPT-OSS-120B}
  \end{subfigure}\hfill
  \begin{subfigure}[t]{0.24\textwidth}
    \centering
    \includegraphics[width=\linewidth]{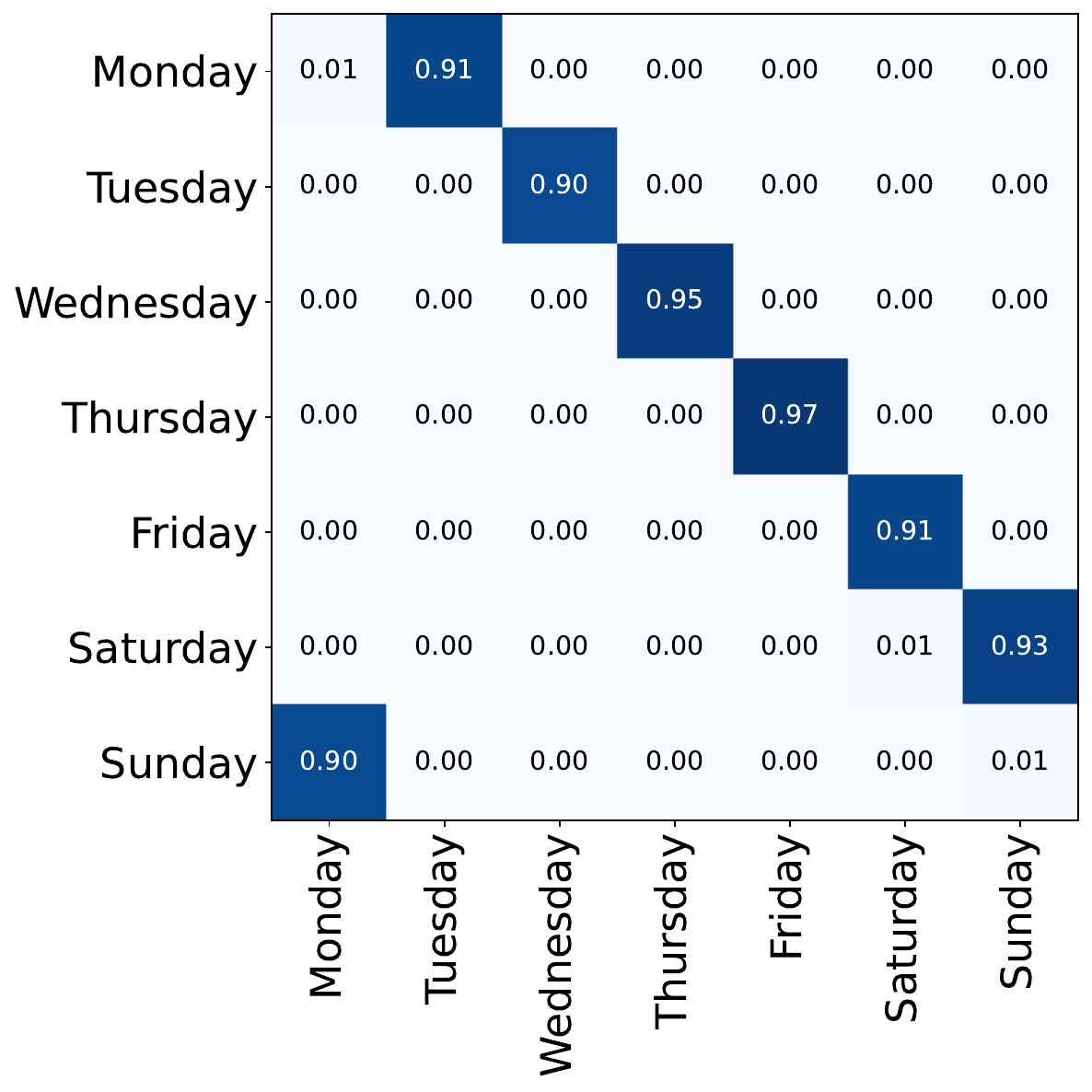}
    \caption{Llama-3.2-3B-Instruct}
  \end{subfigure}\hfill
  \begin{subfigure}[t]{0.24\textwidth}
    \centering
    \includegraphics[width=\linewidth]{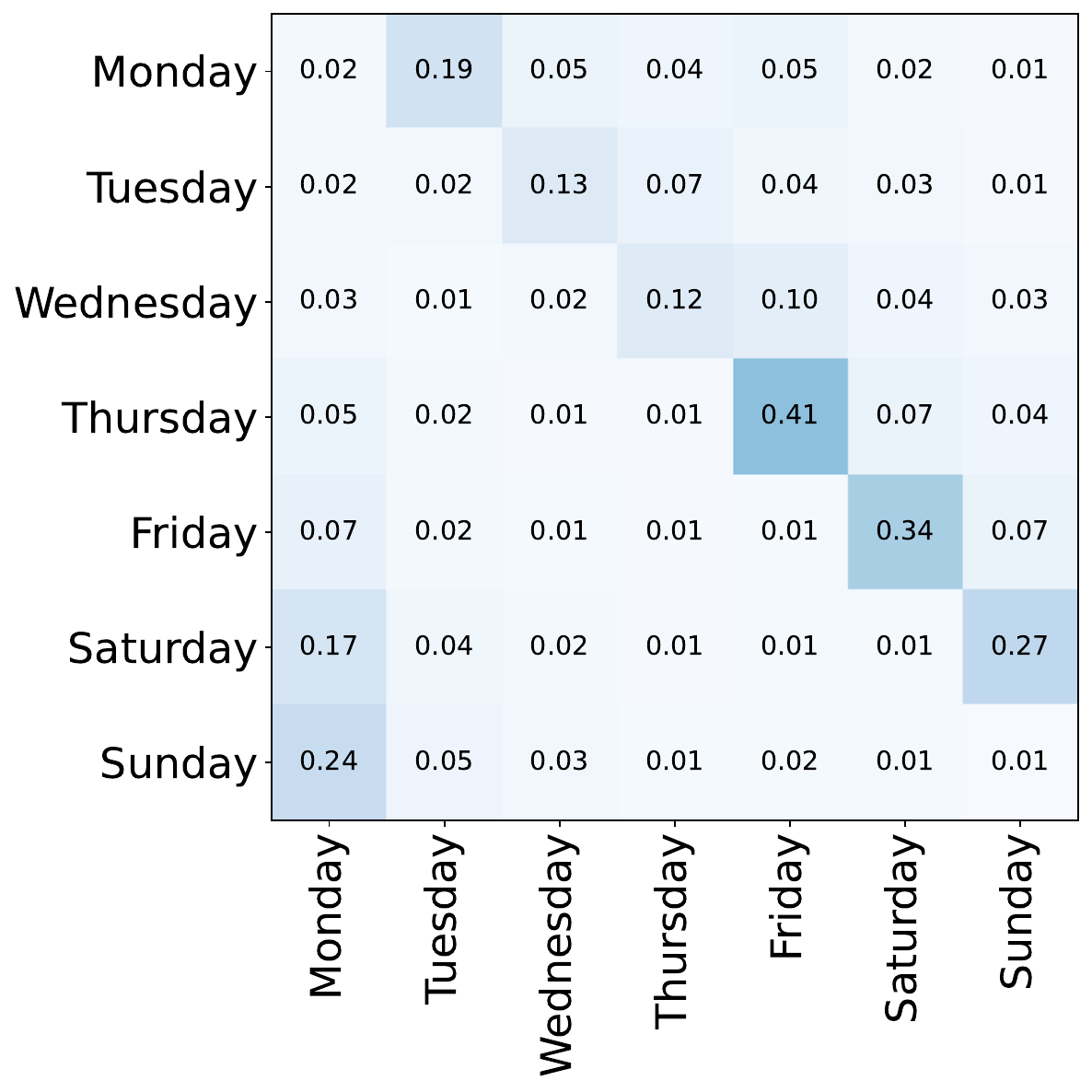}
    \caption{GPT2-XL}
  \end{subfigure}\hfill
  \begin{subfigure}[t]{0.24\textwidth}
    \centering
    \includegraphics[width=\linewidth]{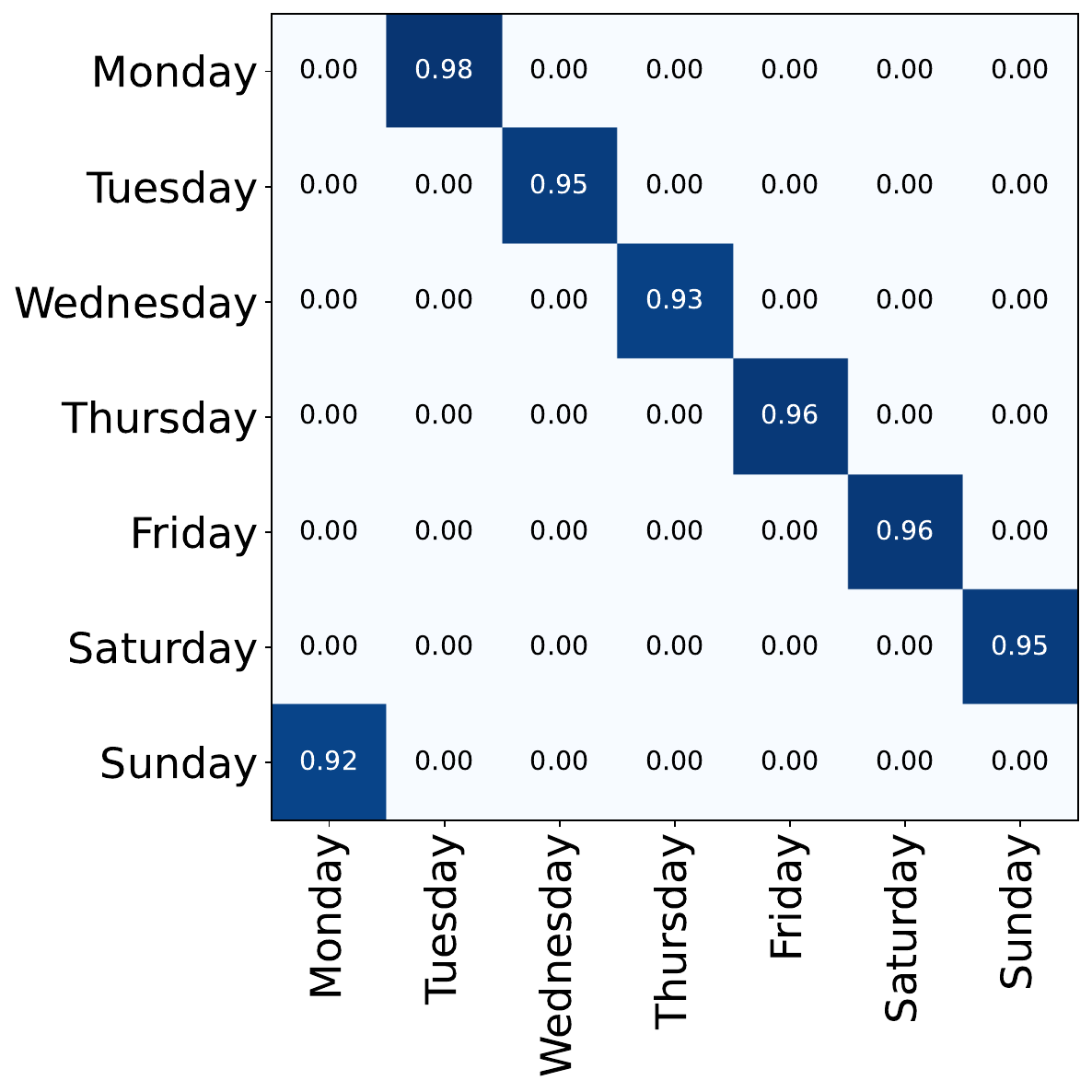}
    \caption{Gemma-2-9B-IT}
  \end{subfigure}
  \caption{Predicted next-token probability matrices for cyclic-shift weekday prompts.}
  \label{fig:app-ctx-cyclic-probs}
\end{figure}

\begin{figure}[htbp]
  \centering
  \captionsetup[subfigure]{justification=centering}
  \begin{subfigure}[t]{0.24\textwidth}
    \centering
    \includegraphics[width=\linewidth]{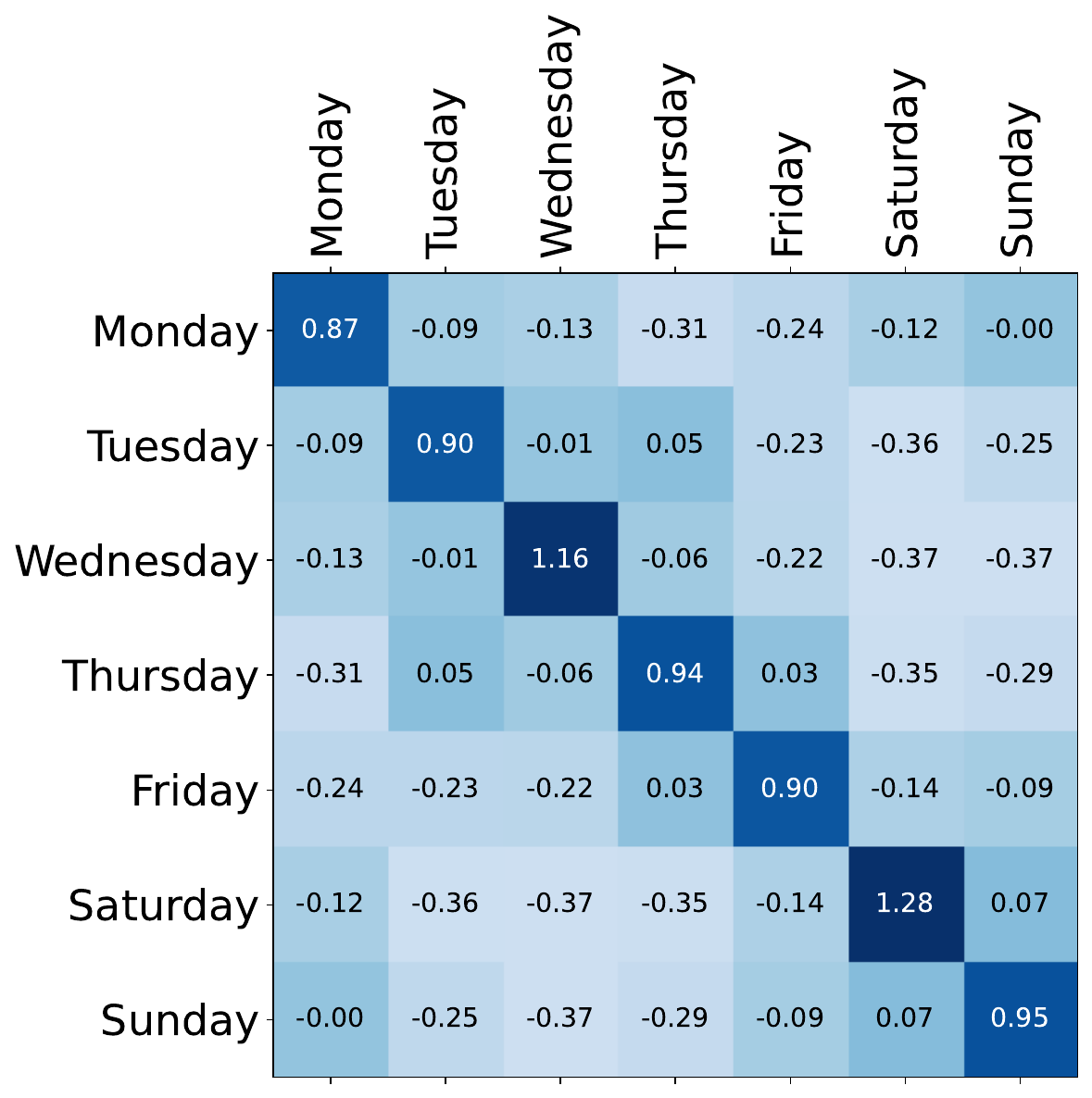}
    \caption{GPT-OSS-120B (\(\delta_{\mathrm{circ}}=0.222\))}
  \end{subfigure}\hfill
  \begin{subfigure}[t]{0.24\textwidth}
    \centering
    \includegraphics[width=\linewidth]{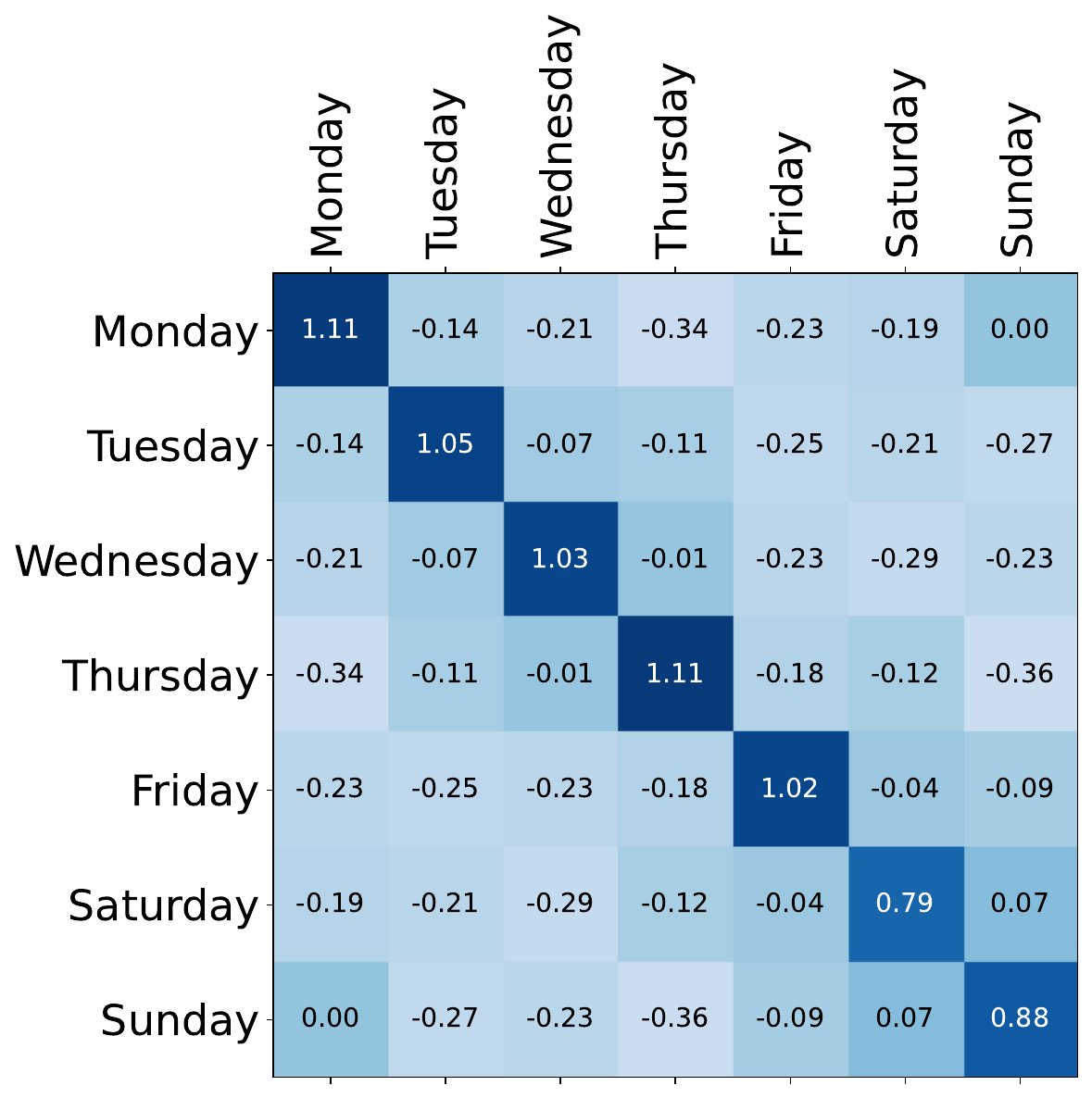}
    \caption{Llama-3.2-3B-Instruct (\(\delta_{\mathrm{circ}}=0.178\))}
  \end{subfigure}\hfill
  \begin{subfigure}[t]{0.24\textwidth}
    \centering
    \includegraphics[width=\linewidth]{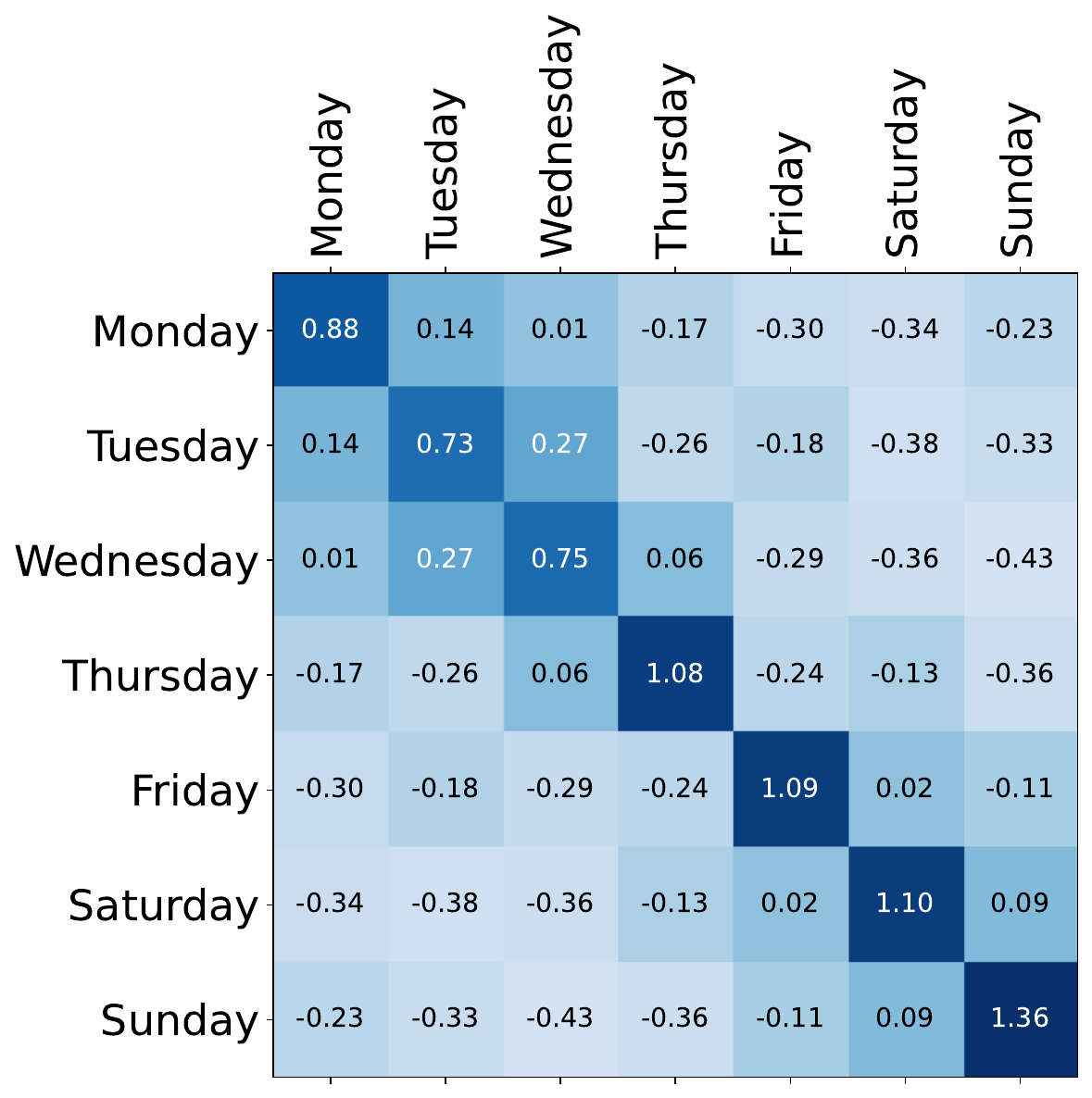}
    \caption{GPT2-XL (\(\delta_{\mathrm{circ}}=0.325\))}
  \end{subfigure}\hfill
  \begin{subfigure}[t]{0.24\textwidth}
    \centering
    \includegraphics[width=\linewidth]{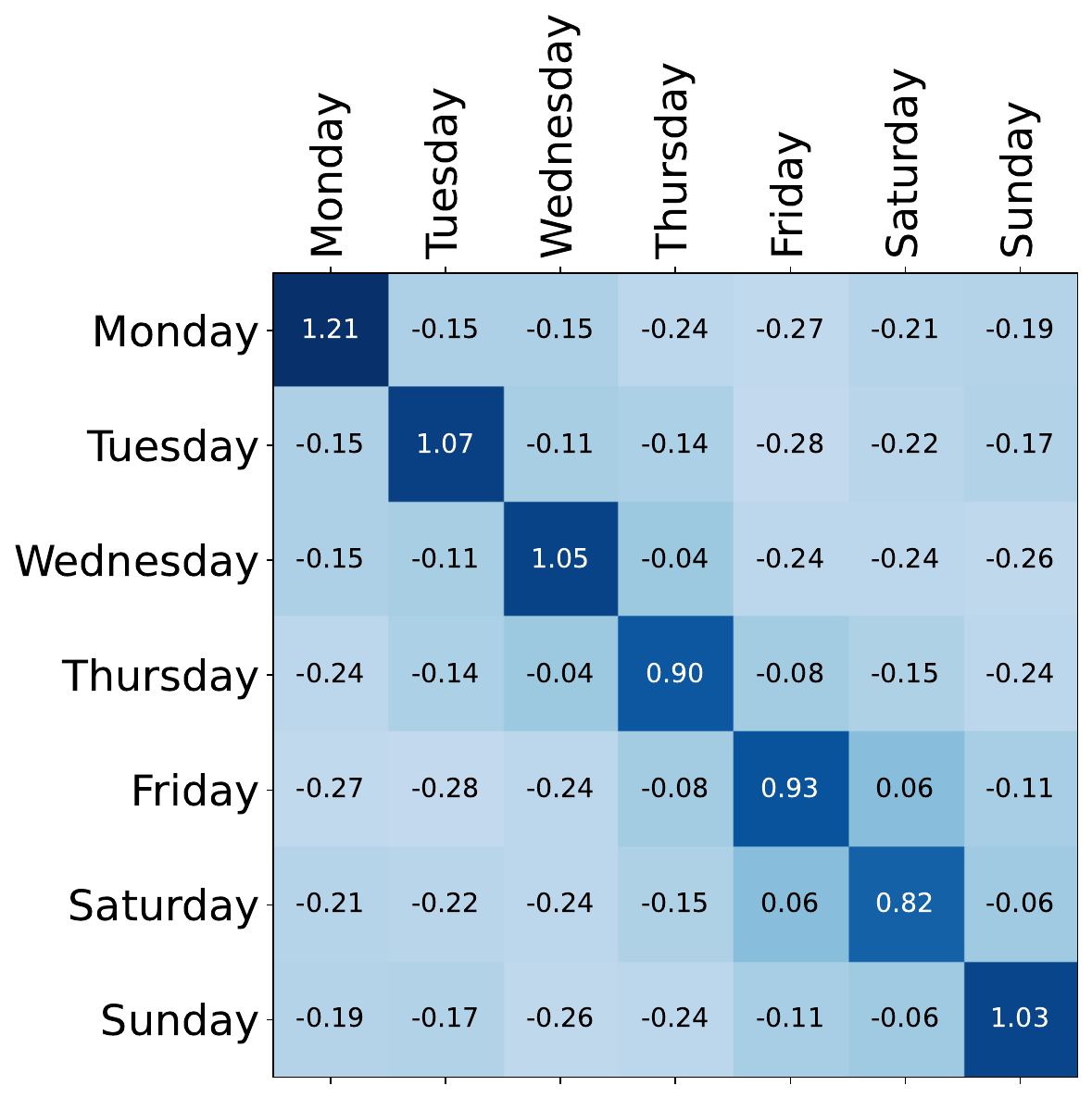}
    \caption{Gemma-2-9B-IT (\(\delta_{\mathrm{circ}}=0.154\))}
  \end{subfigure}
  \caption{Context embedding Grams for cyclic-shift weekday prompts.}
  \label{fig:app-ctx-cyclic-gram}
\end{figure}

\section{Numerical Observations for Composite Permutation Symmetries}

In this section, we report numerical results for several composite group actions whose output projection Gram matrices \(WW^{\top}\) display special structure. These patterns are not yet proved; we record them as numerical observations and leave rigorous analysis for future work.

We still consider the optimization problem~\cref{eq:obj_function_constrained}, but the matrix \(Y\) is generated by three types of groups: direct sum, direct product, and wreath product, built from symmetric groups. We adopt a convex relaxation of~\cref{eq:obj_function_constrained} by lifting to a positive semidefinite block Gram variable \(X\in\mathbb{R}^{(n+m)\times(n+m)}\) that encodes the context embedding Gram, logits, and output projection Gram:
\[
X = 
\begin{bmatrix}
\underbrace{H^{\top}H}_{\text{context embedding Gram }(n\times n)} &
\underbrace{H^{\top}W^{\top}}_{\text{logits }(n\times m)}\\[4pt]
\underbrace{WH}_{\text{logits }(m\times n)} &
\underbrace{WW^{\top}}_{\text{output projection Gram }(m\times m)}
\end{bmatrix}
\succeq 0.
\]
Let \(\boldsymbol{y}_k\in\Delta^{m-1}\) be the target for context \(k\). Since the bottom-left block of \(X\) is the logit matrix \(Z=WH\), write \(\boldsymbol z_k \coloneqq X_{n+1:n+m,k}\in\mathbb{R}^{m}\). We solve
\[
\min_{X\succeq 0} \sum_{k=1}^{n} \mathcal L  \left(\sigma(\boldsymbol z_k),  \boldsymbol{y}_k\right)
\quad\text{s.t.}\quad
\sum_{k=1}^{n}X_{kk}\le E_H,
\sum_{i=1}^{m}X_{n+i,n+i}\le E_W,
\]
which is convex because \(X\mapsto z_k\) is linear and \(\log  \sum_j e^{(\cdot)}\) is convex. We implement this in CVXPY~\citep{diamond2016cvxpy,agrawal2018rewriting} and solve with MOSEK~\citep{mosek}; the blocks of \(X^*\) provide relaxed analogues of the context embedding Gram (top-left block), the logits (off-diagonal blocks), and the output projection Gram (bottom-right block) visualized in the figures.

\subsection{Direct Sums}

We split the \(m\) coordinates into blocks of sizes \(\left(m_1,\dots,m_r\right)\) and write a base probability vector as
\[
\boldsymbol{y}=\left[ (\boldsymbol{y}^{(1)})^{\top}\mid\cdots\mid(\boldsymbol{y}^{(r)})^{\top} \right]^{\top},
\qquad
\boldsymbol{y}^{(i)}\in\mathbb{R}^{m_i}_{\ge 0},\quad
\sum_{i=1}^r \boldsymbol{1}_{m_i}^{\top}\boldsymbol{y}^{(i)}=1.
\]
The group \(\mathcal G_{\mathrm{sum}}=\prod_{i=1}^r S_{m_i}\) acts by permuting within each block: for \(\boldsymbol{\tau}=\left(\tau_1,\dots,\tau_r\right)\in\mathcal G_{\mathrm{sum}}\), define
\[
\boldsymbol{\tau}\circ\boldsymbol{y}
:=\left[  (P_{\tau_1}\boldsymbol{y}^{(1)})^{\top}\mid \cdots \mid (P_{\tau_r}\boldsymbol{y}^{(r)})^{\top}  \right]^{\top},
\qquad
Y=\left[  \boldsymbol{\tau}\circ\boldsymbol{y}  \right]_{\boldsymbol{\tau}\in\mathcal G_{\mathrm{sum}}},\quad
n=\left|\mathcal G_{\mathrm{sum}}\right|=\prod_i m_i!.
\]
Here \(P_{\tau_i}\) is the concrete permutation matrix implementing the component action \(\tau_i\circ\) on block \(i\).
In the first experiment, two blocks of sizes \(a=2\) and \(b=3\) are used with
\[
\boldsymbol{y}=\left[(\boldsymbol{y}^{(1)})^{\top}\mid (\boldsymbol{y}^{(2)})^{\top}\right]^{\top}
= \left[  0, \frac14 \ \mid\ \frac12, \frac16, \frac1{12}  \right]^{\top},
\]
the acting group is \(S_2\times S_3\), the full orbit gives \(n=12\), and
\(
Y=\left[  \boldsymbol{\tau}\circ\boldsymbol{y}  \right]_{\boldsymbol{\tau}=(\tau_1,\tau_2)\in S_2\times S_3}
\)
(see~\cref{fig:ds1}).

\begin{figure}[H]
  \centering
  \includegraphics[width=\textwidth]{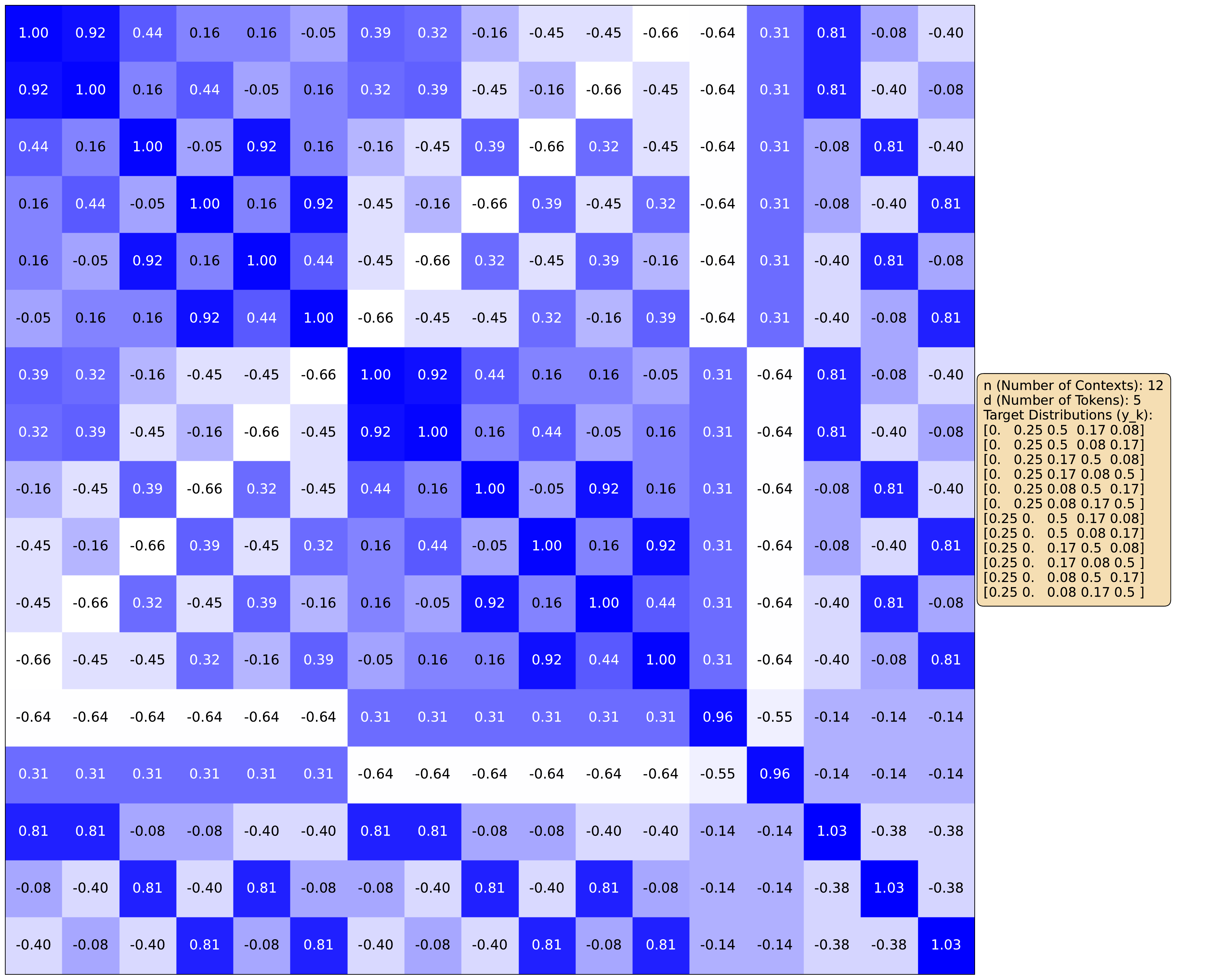}
  \caption{Direct sum \(\left(S_2\times S_3\right)\); full orbit \(n=12\).}
  \label{fig:ds1}
\end{figure}

In the second experiment, three blocks of sizes \(2,3,2\) are used with
\[
\boldsymbol{y}=\left[(\boldsymbol{y}^{(1)})^{\top}\mid (\boldsymbol{y}^{(2)})^{\top}\mid (\boldsymbol{y}^{(3)})^{\top}\right]^{\top}
=\left[  \frac1{16}, \frac13 \ \mid\ \frac14, \frac3{16}, 0 \ \mid\ \frac18, \frac1{24}  \right]^{\top},
\]
the acting group is \(S_2\times S_3\times S_2\), the full orbit gives \(n=24\), and
\(
Y=\left[  \boldsymbol{\tau}\circ\boldsymbol{y}  \right]_{\boldsymbol{\tau}=(\tau_1,\tau_2,\tau_3)\in S_2\times S_3\times S_2}
\)
(see~\cref{fig:ds2}).

\begin{figure}[H]
  \centering
  \includegraphics[width=\textwidth]{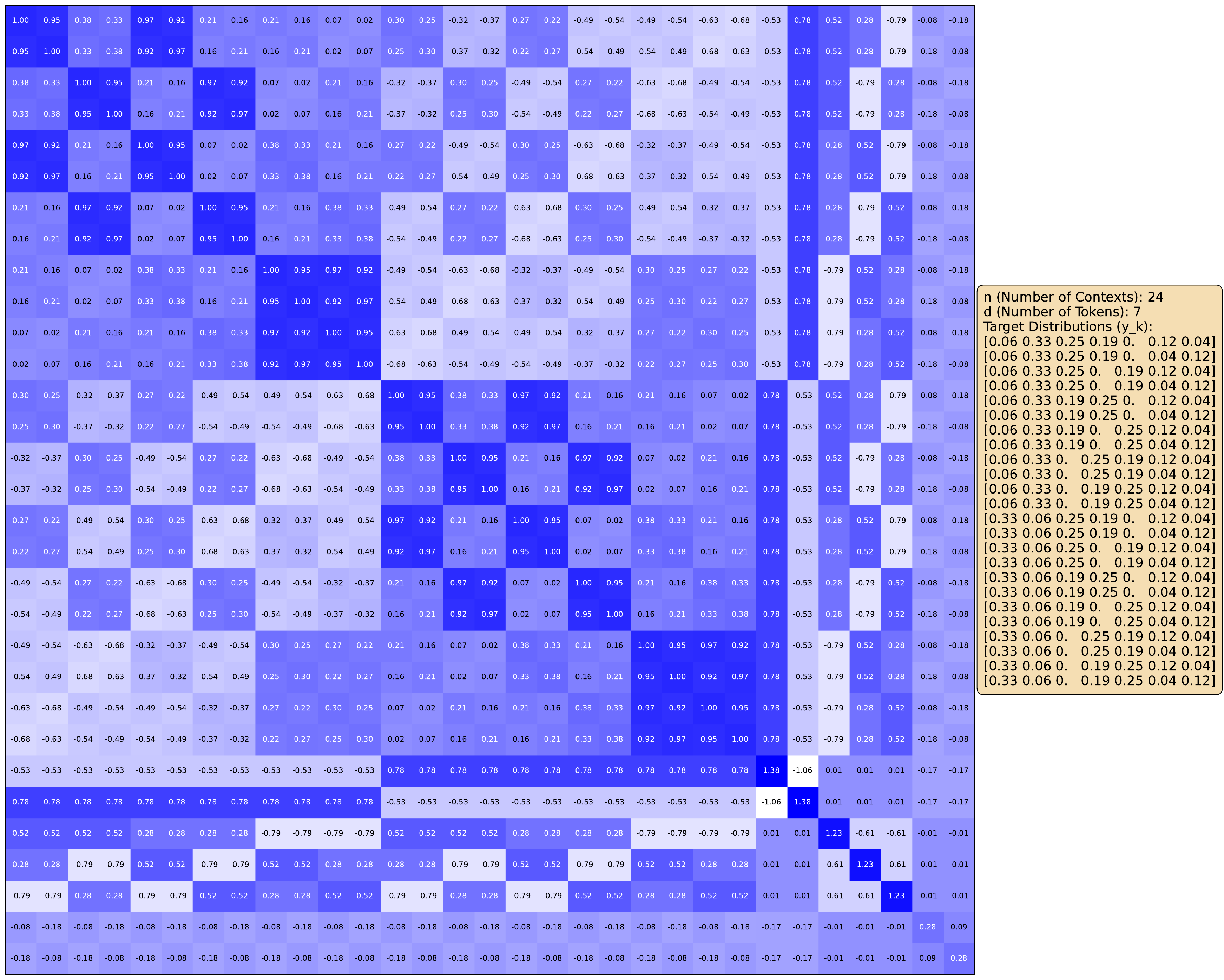}
  \caption{Direct sum \(\left(S_2\times S_3\times S_2\right)\); full orbit \(n=24\).}
  \label{fig:ds2}
\end{figure}

In the numerical solutions, \(WW^{\top}\) has diagonal blocks with a two-level within-block pattern and off-diagonal blocks with constant entries. Concretely, for the partition \(m=m_1+\cdots+m_r\),
\[
WW^{\top}  =
\begin{bmatrix}
A_1 & \kappa_{12} \boldsymbol{1}_{m_1}\boldsymbol{1}_{m_2}^{\top} & \cdots & \kappa_{1r} \boldsymbol{1}_{m_1}\boldsymbol{1}_{m_r}^{\top}\\
\kappa_{21} \boldsymbol{1}_{m_2}\boldsymbol{1}_{m_1}^{\top} & A_2 & \cdots & \kappa_{2r} \boldsymbol{1}_{m_2}\boldsymbol{1}_{m_r}^{\top}\\
\vdots & \vdots & \ddots & \vdots\\
\kappa_{r1} \boldsymbol{1}_{m_r}\boldsymbol{1}_{m_1}^{\top} & \kappa_{r2} \boldsymbol{1}_{m_r}\boldsymbol{1}_{m_2}^{\top} & \cdots & A_r
\end{bmatrix},
\qquad
A_i  =  \alpha_i I_{m_i} + \beta_i\left(J_{m_i}-I_{m_i}\right),
\]
where each diagonal block \(A_i\) has constant diagonal and constant off-diagonal entries, each off-diagonal block is a constant matrix with all entries \(\kappa_{ij}\), and \(J_{q}:=\boldsymbol{1}_{q}\boldsymbol{1}_{q}^{\top}\).

\subsection{Direct Products}

We identify coordinates with an \(a\times \ell\) grid so that \(m=a\ell\), and reshape a base probability vector
\(\boldsymbol{y}\in\Delta^{m-1}\) into a matrix
\(P\in\mathbb{R}^{a\times \ell}_{\ge 0}\) satisfying
\(\sum_{i=1}^{a}\sum_{j=1}^{\ell}P_{ij}=1\).
We flatten \(\boldsymbol{y}=\mathrm{vec}(P)\) in row-major order.
The group \(\mathcal G_{\mathrm{prod}}=S_a\times S_{\ell}\) acts by row/column relabeling. For \(\boldsymbol{\tau}=(\tau_1,\tau_2)\), define
\[
\boldsymbol{\tau}\circ\boldsymbol{y}
=
\mathrm{vec}\!\left(P_{\tau_1} P P_{\tau_2}^{\top}\right),
\qquad
Y=\left[  \boldsymbol{\tau}\circ\boldsymbol{y}  \right]_{\boldsymbol{\tau}=(\tau_1,\tau_2)\in S_a\times S_{\ell}},
\quad
n=a!\ell!.
\]
Here \(P_{\tau_1}\) and \(P_{\tau_2}\) are the concrete permutation matrices implementing the row and column actions in \(\boldsymbol{\tau}\circ\boldsymbol{y}\).
The acting group is \(S_2\times S_3\), and \(n=12\) (see~\cref{fig:dp1}).

\begin{figure}[H]
  \centering
  \includegraphics[width=\textwidth]{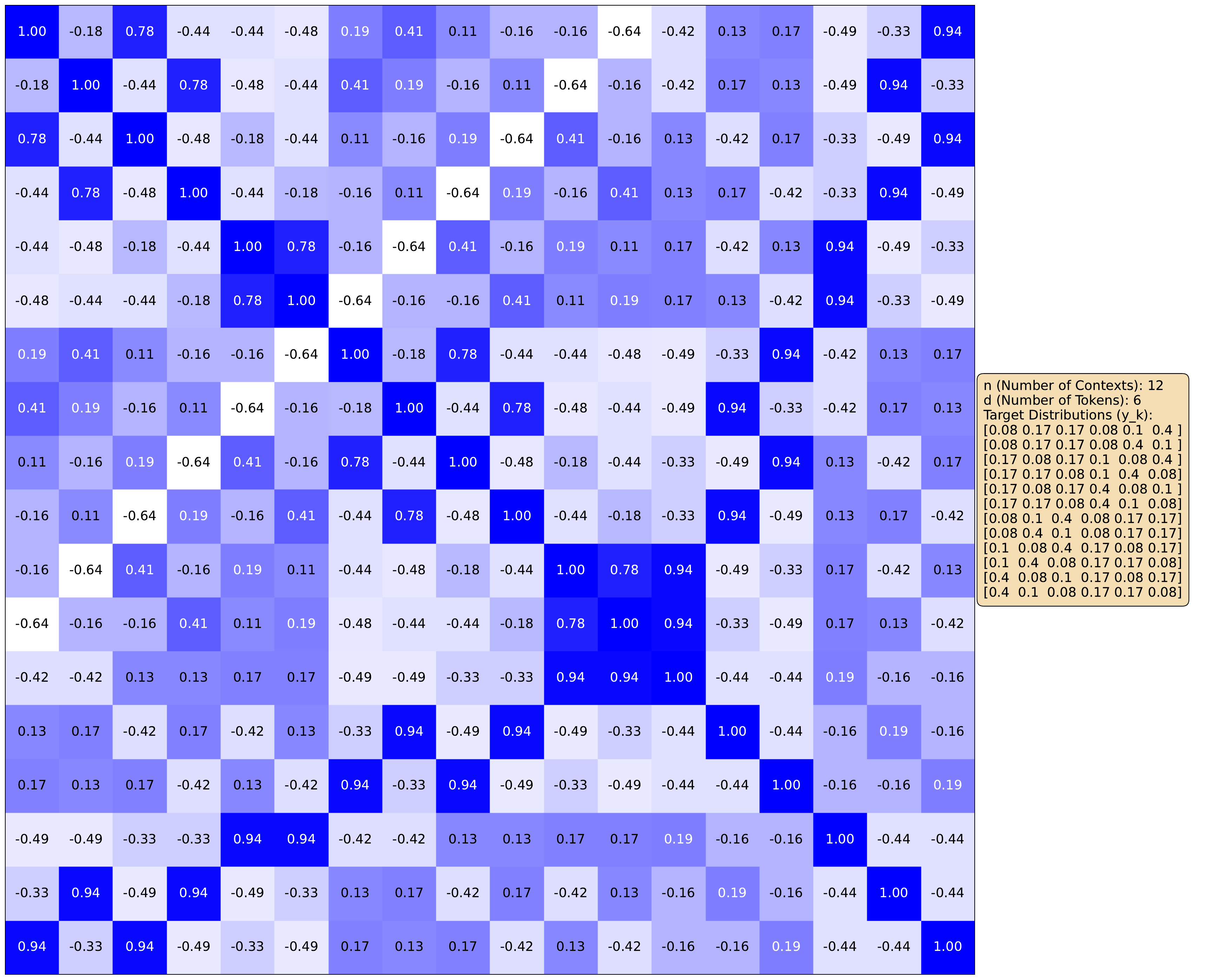}
  \caption{Direct product \(\left(S_2\times S_3\right)\); full orbit \(n=12\).}
  \label{fig:dp1}
\end{figure}

The second configuration reshapes the same entries as
\[
P=\begin{bmatrix}
\frac1{12} & \frac2{12}\\[4pt]
\frac16 & \frac1{12}\\[4pt]
\frac1{10} & \frac4{10}
\end{bmatrix},
\qquad
\boldsymbol{y}=\mathrm{vec} \left(P\right),
\]
The acting group is \(S_3\times S_2\), and \(n=12\) (see~\cref{fig:dp2}).

\begin{figure}[H]
  \centering
  \includegraphics[width=\textwidth]{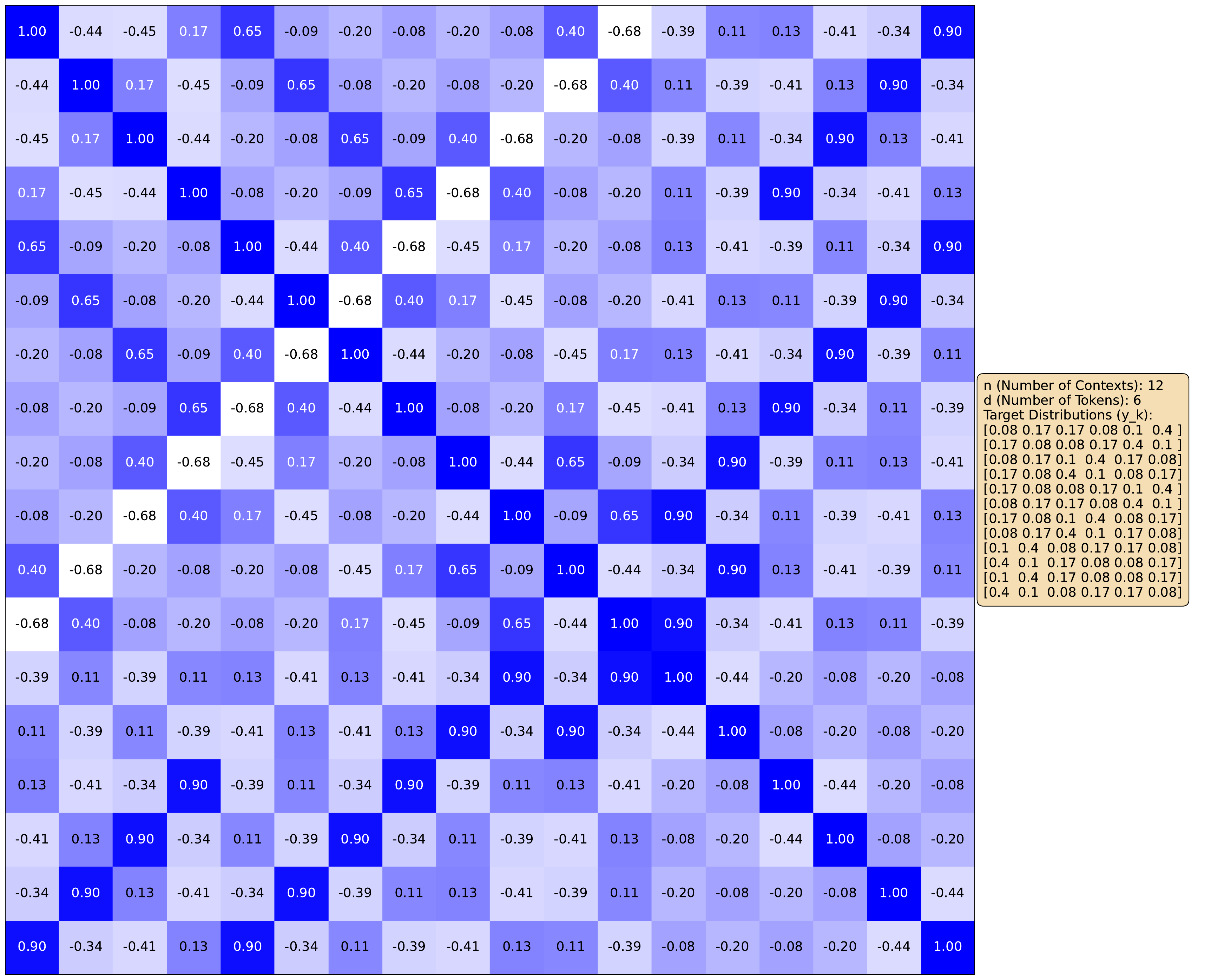}
  \caption{Direct product \(\left(S_3\times S_2\right)\); full orbit \(n=12\).}
  \label{fig:dp2}
\end{figure}

The third configuration uses
\[
P=\begin{bmatrix}
0 & \frac1{9} & \frac1{18}\\[4pt]
\frac1{18} & \frac1{9} & \frac16\\[4pt]
\frac1{12} & \frac16 & \frac14
\end{bmatrix},
\qquad
\boldsymbol{y}=\mathrm{vec} \left(P\right),
\]
The acting group is \(S_3\times S_3\), and \(n=36\) (see~\cref{fig:dp3}).

\begin{figure}[H]
  \centering
  \includegraphics[width=\textwidth]{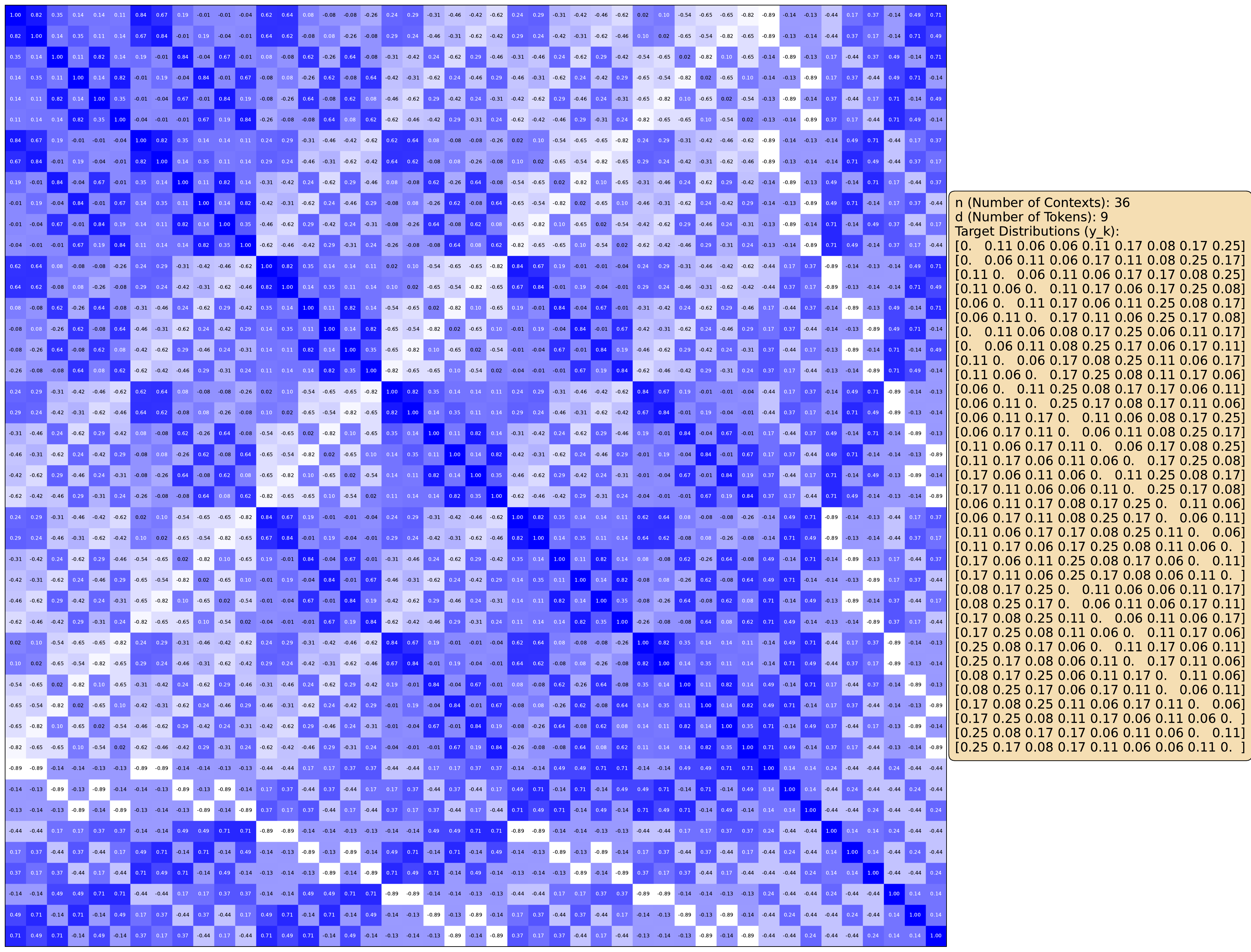}
  \caption{Direct product \(\left(S_3\times S_3\right)\); full orbit \(n=36\).}
  \label{fig:dp3}
\end{figure}

In the computed solution, we partition \(WW^{\top}\) into \(a\times a\) blocks, each of size \(\ell\times \ell\), following the row-major \(a\times \ell\) grid convention. Every diagonal block equals
\[
\alpha I_{\ell} + \beta \left(J_{\ell} - I_{\ell}\right),
\]
and every off-diagonal block equals
\[
\alpha' I_{\ell} + \beta' \left(J_{\ell} - I_{\ell}\right).
\]
Moreover, all diagonal blocks are identical to one another, and all off-diagonal blocks are identical to one another.

\subsection{Wreath Products}

We partition the \(m = b s\) coordinates into \(b\) blocks of size \(s\) and write
\[
\boldsymbol{y}=\left[  (\boldsymbol{y}^{(1)})^{\top}\mid\cdots\mid (\boldsymbol{y}^{(b)})^{\top}  \right]^{\top},\qquad
\boldsymbol{y}^{(i)}\in\mathbb{R}^{s}_{\ge 0}.
\]
An element \(\boldsymbol{\tau}=\left(\tau_1,\ldots,\tau_b; \tau_{\mathrm{blk}}\right)\in\left(S_s\right)^b\rtimes S_b\) acts by first permuting within blocks and then permuting blocks:
\[
\boldsymbol{\tau}\circ\boldsymbol{y}
 :=  B_{\tau_{\mathrm{blk}}}  \mathrm{diag} \left(P_{\tau_1},\ldots,P_{\tau_b}\right)  \boldsymbol{y},
\qquad
Y=\left[  \boldsymbol{\tau}\circ\boldsymbol{y}  \right]_{\boldsymbol{\tau}\in S_s\wr S_b},\quad
n=\left(s!\right)^b  b!,
\]
where the concrete action \(\boldsymbol{\tau}\circ\boldsymbol{y}\) is implemented by the \(bs\times bs\) block permutation matrix \(B_{\tau_{\mathrm{blk}}}\) induced by \(\tau_{\mathrm{blk}}\), and \(\mathrm{diag}(\cdot)\) denotes a block-diagonal concatenation. The first experiment uses three 2-dimensional vectors
\[
\boldsymbol{y}^{(1)}=\left[\frac1{12}, \frac14\right]^{\top},\quad
\boldsymbol{y}^{(2)}=\left[\frac14, 0\right]^{\top},\quad
\boldsymbol{y}^{(3)}=\left[\frac16, \frac14\right]^{\top},
\]
so that \(\boldsymbol{y}=\left[  (\boldsymbol{y}^{(1)})^{\top} \mid (\boldsymbol{y}^{(2)})^{\top} \mid (\boldsymbol{y}^{(3)})^{\top}  \right]^{\top}\),
with action by \(S_2\wr S_3\) and \(n=48\) (see~\cref{fig:wr1}).

\begin{figure}[H]
  \centering
  \includegraphics[width=\textwidth]{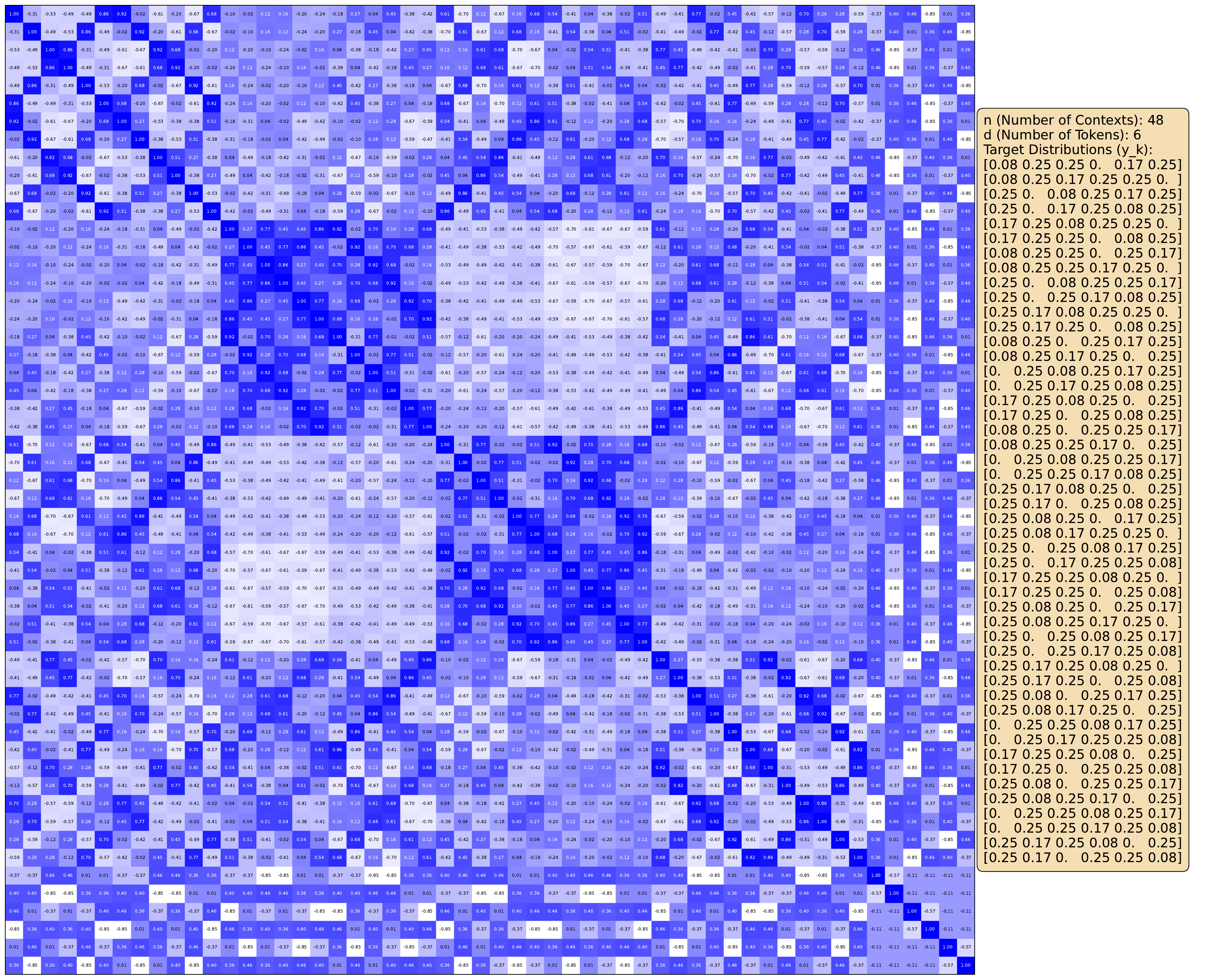}
  \caption{Wreath \(\left(S_2\wr S_3\right)\); full orbit \(n=48\).}
  \label{fig:wr1}
\end{figure}

The second experiment uses two 3-dimensional vectors
\[
\boldsymbol{y}^{(1)}=\left[\frac1{12}, \frac14, \frac14\right]^{\top},\qquad
\boldsymbol{y}^{(2)}=\left[0, \frac16, \frac14\right]^{\top},
\]
so that \(\boldsymbol{y}=\left[  (\boldsymbol{y}^{(1)})^{\top} \mid (\boldsymbol{y}^{(2)})^{\top}  \right]^{\top}\),
with action by \(S_3\wr S_2\) and \(n=72\) (see~\cref{fig:wr2}).

\begin{figure}[H]
  \centering
  \includegraphics[width=\textwidth]{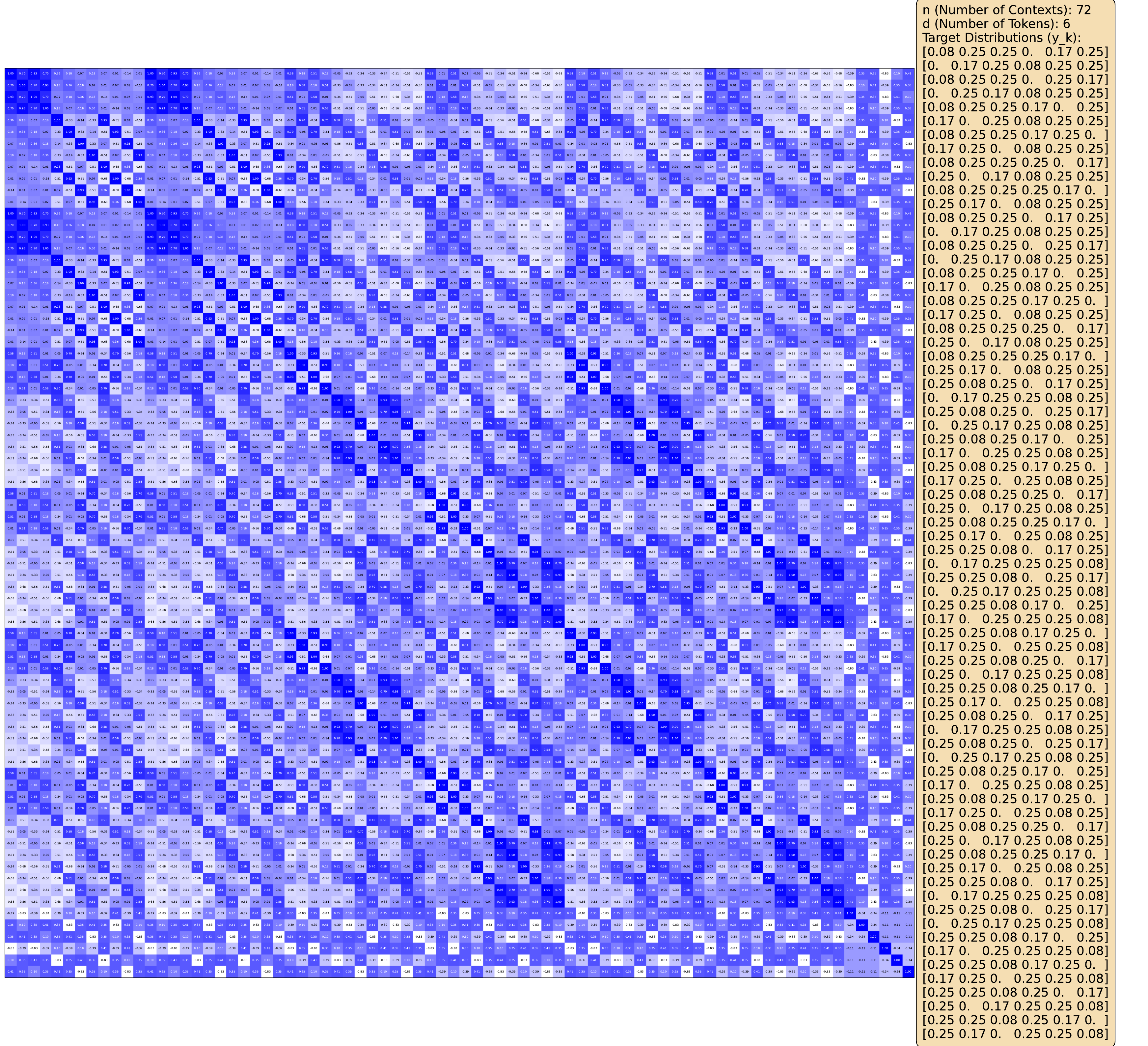}
  \caption{Wreath \(\left(S_3\wr S_2\right)\); full orbit \(n=72\).}
  \label{fig:wr2}
\end{figure}

In the computed solution, we partition \(WW^{\top}\) into \(s\times s\) blocks. Every diagonal block equals
\[
\alpha I_{s} + \beta \left(J_{s} - I_{s}\right),
\]
and every off-diagonal block equals
\[
\alpha' I_{s} + \beta' \left(J_{s} - I_{s}\right).
\]
Moreover, all diagonal blocks are identical to one another, and all off-diagonal blocks are identical to one another. Thus the observed block pattern coincides with the one described above for the direct product experiments. 

\end{document}